\numberwithin{equation}{section}
\renewcommand{\@biblabel}[1]{#1\hfill \hspace{-0.2cm}}
\theoremstyle{plain}
\newtheorem{thm}{Theorem}[section]
\newtheorem{lem}[thm]{Lemma}
\newtheorem{proposition}[thm]{Proposition}
\newtheorem{cor}[thm]{Corollary}
\newtheorem*{theorem}{Theorem}
\theoremstyle{remark}
\newtheorem{remark}[thm]{Remark}
\theoremstyle{definition}
\newtheorem{definition}[thm]{Definition}
\begin{document}

\title{Global existence and stability for the modified Mullins--Sekerka and surface diffusion flow}

\author{Serena Della Corte\affil{1}, Antonia Diana\affil{2} and Carlo Mantegazza\affil{3,}\corrauth}

\keywords{Nonlocal Area functional, Mullins--Sekerka flow, surface diffusion flow, global existence, asymptotic stability}

\shortauthors{the Author(s)}

\address{%
  \addr{\affilnum{1}}{Delft Institute of Applied Mathematics, Delft University of Technology, The Netherlands}
  \addr{\affilnum{2}}{Scuola Superiore Meridionale, Universit\`{a} degli Studi di Napoli Federico II, Italy}
    \addr{\affilnum{3}}{Dipartimento di Matematica e Applicazioni ``Renato Caccioppoli'' \& Scuola Superiore Meridionale, Universit\`{a} degli Studi di Napoli Federico II, Italy}}

\corraddr{Email: c.mantegazza@sns.it}

\def\OO{{\mathrm{O}}}
\def\II{{\mathrm{I}}}
\newcommand{\T}{\mathbb{T}}
\newcommand{\R}{\mathbb{R}}
\newcommand{\N}{\mathbb{N}}
\def\Z{\mathbb Z}
\newcommand{\SSS}{\mathbb S}
\newcommand{\BBB}{B}
\newcommand{\RRR}{{\mathrm R}}
\newcommand{\Ric}{{\mathrm {Ric}}}
\newcommand{\Scal}{{\RRR}}
\newcommand{\LB}{\Delta_t}
\newcommand{\Lebes}{\mathscr{L}}
\DeclarePairedDelimiter{\normainf}{\lVert}{\rVert_{\infty}}
\DeclarePairedDelimiter{\norma}{\lVert}{\rVert}
\DeclarePairedDelimiter{\abs}{\lvert}{\rvert}
\newcommand{\dmu}{d \mu}
\newcommand{\Htilde}{\widetilde{H}}
\newcommand{\Wduep}{W^{2,p}}
\newcommand{\Tort}{T^{\perp}}
\newcommand{\de}{\delta}
\newcommand{\pa}{\partial}
\newcommand{\C}{\mathbb{C}}
\newcommand{\Hdue}{\mathcal{H}^2}
\newcommand{\varempty}{\O}
\newcommand{\HHH}{\mathrm{H}} 
\newcommand{\vol}{\mathrm{Vol}}
\newcommand{\grad}{\nabla}
\newcommand{\A}{\mathcal A}
\newcommand{\J}{\mathcal J}
\newcommand{\eps}{\varepsilon}
\newcommand{\medint}{-\kern -,375cm\int}
\newcommand{\medintinrigo}{-\kern -,315cm\int}
\newcommand{\hdueM}{\mathfrak{h}^{2, \alpha}_M(F, U)}
\newcommand{\CunoM}{\mathfrak{C}^{1}_M(F, U)}
\newcommand{\gij}{g_{ij}}
\newcommand{\Id}{\mathrm{Id}}
\newcommand{\I}{\mathrm{I}}
\newcommand{\Xtilde}{\widetilde{X}}
\newcommand{\bigvert}{\biggl\vert}
\newcommand{\wto}{\rightharpoonup}
\newcommand{\beq}{\begin{equation}}
\newcommand{\eeq}{\end{equation}}
\def\Div{\operatorname*{div}\nolimits}
\newcommand{\dist}{\mathrm{dist\,}}

\begin{abstract}
In this survey we present the state of the art about the asymptotic behavior and stability of the \emph{modified Mullins--Sekerka flow} and the {\em surface diffusion flow} of smooth sets, mainly due to E.~Acerbi, N.~Fusco, V.~Julin and M.~Morini. First we discuss in detail the properties of the nonlocal Area functional under a volume constraint, of which the two flows are the gradient flow with respect to suitable norms, in particular, we define the {\em strict stability} property for a critical set of such functional and we show that it is a necessary and sufficient condition for minimality under $W^{2,p}$--perturbations, holding in any dimension. Then, we show that, in dimensions two and three, for initial sets sufficiently ``close'' to a smooth {\em strictly stable critical} set $E$, both flows exist for all positive times and asymptotically ``converge'' to a translate of $E$.
\end{abstract}

\keywords{\textbf{Nonlocal Area functional, Mullins--Sekerka flow, surface diffusion flow, global existence, asymptotic stability}}

\maketitle

\section{Introduction}

Geometric evolutions are a fascinating topic naturally arising from the study of dynamical models in physics and material sciences. Concrete examples are, for instance, the analysis of the behavior in time of the interfaces surfaces in phase changes of materials or in the flows of immiscible fluids. From the mathematical point of view, they describe the motion of geometric objects or structures, usually driven by systems of partial differential equations.\\ 
In this work we rethink, expand the details and present in a unified treatment the results of E.~Acerbi, N.~Fusco, V.~Julin and M.~Morini~\cite{AcFuMoJu,AcFuMo} about two of the most recent of such geometric motions, namely, the \emph{modified Mullins--Sekerka flow} and the {\em surface diffusion flow}. 

Both flows deal with an evolution in time of smooth subsets $E_t$ of an open set $\Omega\subseteq\R^n$, with $d(E_t,\partial\Omega)>0$, for every $t$ in a time interval $[0,T)$, such that their boundaries $\pa E_t$, which are smooth hypersurfaces, move with some ``outer'' normal velocity $V_t$ that, in the first case, is obtained as solution of the following ``mixed'' system
\begin{equation}\label{msfSistema}\tag{mMSF}
\begin{cases}
V_t=[\partial_{\nu_t} w_t]  & \text{on  } \partial E_t \\
\Delta w_t=0 & \text{in  } \Omega \setminus \partial E_t\\
w_t=\HHH_t + 4 \gamma v_t & \text{on  } \partial E_t\\
-\Delta v_t = u_{E_t} - \fint_{\Omega} u_{E_t} \,dx & \text{in  } \Omega\,\,\text{(distributionally)} 
\end{cases}
\end{equation}
where $\gamma$ is a nonnegative parameter, $v,w:[0,T)\times\overline{\Omega}\to\R$ are continuous functions such that, setting $w_t=w(t,\cdot)$ and $v_t=v(t,\cdot)$, the functions $v_t$ and $w_t$ are smooth in $\Omega \setminus \partial E_t$, for every $t\in[0,T)$; the functions $\nu_t,\HHH_t$ are the ``outer'' normal and the relative mean curvature of $\partial E_t $ and $u_{E_t}= 2  \chi_{\text{\raisebox{-.5ex}{$\scriptstyle E_t$}}} -1$; finally, the velocity of the motion is given by $[\partial_{\nu_t} w_t]$ which denotes $\partial_{\nu_t}w_t^{+}- \partial_{\nu_t}w_t^{-}$, that, is the ``jump'' of the normal derivative of $w_t$ on $\pa E_t$, where $w_t^{+}$ and $w_t^{-}$ are the restrictions of $w_t$ to  $\Omega \setminus\overline{E}_t$ and $E_t$, respectively.\\
The resulting motion, called {\em modified Mullins--Sekerka flow}~\cite{MS} (see also~\cite{Crank,Gurtin1} and~\cite{EscherSi4} for a very clear and nice introduction to such flow), arises as a singular limit of a nonlocal version of the Cahn--Hilliard equation~\cite{alikakos,pego,Le}, to describe phase separation in diblock copolymer melts (see also~\cite{OK}). It has been also called {\em Hele--Shaw model}~\cite{XChen}, or {\em Hele--Shaw model with surface tension}~\cite{EscherSi1,EscherSi2,EscherSi3}. We mention that the adjective ``modified'' comes from the introduction of the parameter $\gamma > 0$ in the system~\eqref{msfSistema}, while choosing $\gamma=0$ we have the original flow proposed by Mullins and Sekerka in~\cite{MS}. 

In the second case, we will say that a flow of sets $E_t$ as above, is a solution of the {\em surface diffusion flow} if the normal velocity is pointwise given by
\begin{equation}\label{sdf}\tag{SDF}
V_t = \LB \HHH_t  \qquad\text{on $\pa E_t $,}
\end{equation}
where $\LB$ is the Laplacian of the hypersurface $\pa E_t$, for all $t \in [0,T)$. Such flow was first proposed by Mullins in~\cite{Mullins} to study thermal grooving in material sciences (see also~\cite{escmaysim} for a nice presentation), in particular, in the physically relevant case of three--dimensional space, it describes the evolution of interfaces between solid phases of a system, which are studied  in a variety of physical settings including phase transitions, epitaxial deposition and grain
growth (see for instance~\cite{GurJab} and the references therein).

Notice that, while in this latter case, the velocity flow is immediately well defined, the system~\eqref{msfSistema} is clearly undetermined as it is, since the behavior of the functions $w_t$ and $v_t$ is not prescribed on the boundary of $\Omega$ (which is also possibly not bounded). By simplicity, we will consider flows in the whole Euclidean space and we assume that all the functions and sets involved are periodic with respect to the standard lattice $\Z^n$ of $\R^n$. It is then clear that this is equivalent to ``ambient'' the problem in the $n$--dimensional ``flat'' torus $\mathbb{T}^n = \R^n/ \Z^n $, hence in the sequel we will assume $\Omega=\mathbb{T}^n$, modifying the definitions above accordingly. Another possibility would be asking that $\Omega\subseteq\R^n$ is bounded, the moving sets do not ``touch'' the boundary of $\Omega$ and that all the functions $w_t$ and $v_t$ are subject to homogeneous (zero) Neumann boundary conditions on $\partial \Omega$ (see Subsection~\ref{Neucase}).

A very important property of these geometric flows is that both are the {\em gradient flow} of a functional, which clearly gives a natural ``energy'', decreasing in time during the evolution (the velocity $V_t$ is minus the gradient, that is, the {\em Euler--Lagrange equation} of a functional).\\
Precisely, in any dimension $n\in\N$, the modified Mullins--Sekerka flow is the $H^{-1/2}$--{gradient flow} of the following {\em nonlocal Area functional}
 \begin{equation}
 \label{eq:J}
 J(E)= \A(\pa E) + \gamma \int_{\T^n} \int_{\T^n} G(x,y) u_E(x) u_E(y) \, dx \, dy \,,
 \end{equation}
under the constraint that the volume $\vol(E)=\mathscr{L}^n(E)$ is fixed, where (here and in the whole paper),
$$
\A(\pa E)= \int_{\partial E} \, d\mu
$$
is the classical {\em Area functional} that gives the {\em area} of the $(n-1)$--dimensional smooth boundary of any sets $E$ ($\mu$ is the ``canonical'' measure associated to the Riemannian metric on $\pa E$ induced by metric of $\T^n$ coming from the scalar product of $\R^n$, which coincides with the $n$--dimensional Hausdorff measure $\mathcal{H}^n$) and $G$ is the Green function of $\T^n$ (see~\cite{Le}, for details).\\
Similarly, the surface diffusion flow can be regarded as the $H^{-1}$--gradient flow of the Area functional $\A$ with fixed volume.

Then, it clearly follows that, in both cases, the volume of the evolving sets $\vol(E_t)$ is constant in time, while neither convexity (see~\cite{Conv} and~\cite{Ito}) is maintained, nor there holds the so--called ``comparison property'' asserting that if two initial sets are one contained into the other, they stay so during the two flows. This is due to the lack of the {\em maximum principle} for parabolic equations or systems of order larger than two. We remind that such properties are shared by the more famous {\em mean curvature flow}, which is also a gradient flow of the Area functional (without the constraint on the volume), but with respect to the $L^2$--norm (see~\cite{Man}, for instance).

Parametrizing the moving smooth surfaces $\partial E_t$ by some maps (embeddings) $\psi_t:M\to\T^n$ such that $\psi_t(M)=\partial E_t$, where $M$ is a fixed smooth, compact $(n-1)$--dimensional differentiable manifold and $\nu_t$ is the outer unit normal vector to $\partial E_t$ as above, the evolution laws~\eqref{msfSistema} and~\eqref{sdf} can be respectively expressed as 
$$
\frac{\partial}{\partial t}\psi_t=V_t \nu_t=[\partial_{\nu_t} w_t] \nu_t\,,
$$
and
$$
\frac{\partial}{\partial t}\psi_t=(\Delta_t\HHH_t)\nu_t \,.
$$
Due to the parabolic nature (not actually so explicit in the first case) of these systems of PDEs, it is known that for every smooth initial set $E_0$ in $\T^n$, with boundary described by $\psi_0:M\to\T^n$, both flows with such initial data exist unique and are smooth in some positive time interval $[0,T)$. 
Indeed, such short time existence and uniqueness results were proved by Escher and Simonett~\cite{EscherSi1,EscherSi2,EscherSi3} and independently by Chen, Hong and Yi~\cite{chenhong} for the modified Mullins--Sekerka flow and by Escher, Mayer and Simonett in~\cite{escmaysim} for the surface diffusion flow of a smooth compact hypersurface in domains of the Euclidean space of any dimension. With minor modifications, their proof can be adapted to get the same conclusion also for smooth initial hypersurfaces of $\T^n$.
 
The aim of this work is to show that, in dimensions two and three, for initial data sufficiently ``close'' to a smooth {\em strictly stable critical} set $E$ for the relative ``energy'' functional (the nonlocal or the usual Area functional) under a volume constraint, the flows exist for all positive times and asymptotically converge {\em in some sense} to a ``translate'' of $E$.\\
The notions of criticality and stability are as usual defined in terms of first and second variations of $J$ and $\A$. We say that a smooth subset $E \subseteq \T^n$ is \emph{critical} for $J$ (or for $\A$, simply choosing $\gamma=0$ in formula~\eqref{eq:J}) if for any smooth one--parameter family of diffeomorphisms $\Phi_t:\T^n\to\T^n$, such that $\mathrm{Vol}(\Phi_{t}(E))=\mathrm{Vol}(E)$, for $t\in(-\eps,\eps)$ and $\Phi_0=\mathrm{Id}$ ($E_t=\Phi_t(E)$ will be called {\em volume--preserving variation} of $E$), we have 
$$
\frac{d}{dt} J(\Phi_t(E))\Bigl|_{t=0}=0 \, .
$$
We will see that this condition is equivalent to the existence of a constant $\lambda \in \R$ such that
\[
\HHH+ 4 \gamma v_E = \lambda \qquad \text{on $\partial E$},
\]
where $\HHH$ is the mean curvature of $\pa E$ and  $v_E$ is the potential defined as
\begin{equation}
v_E(x)=\int_{\T^n} G(x,y)u_E(y) dy \, ,
\end{equation}
with $G$ the Green function of the torus $\T^n$ and $u_E= \chi_{\text{\raisebox{-.5ex}{$\scriptstyle E$}}} - \chi_{\text{\raisebox{-.5ex}{$\scriptstyle \T^n \setminus E$}}}$.\\
The second variation of $J$ at a critical set $E$, leading to the central notion of {\em stability}, is more involved and, differently by the original papers, we will compute it with the tools and methods of differential/Riemannian geometry (like the first variation). We will see that at a critical set $E$, the second variation of $J$ (the second derivative at $t=0$ of $J(E_t)$) along a volume--preserving variation $E_t=\Phi_t(E)$ only depends on the normal component $\varphi$  on $\partial E$ of the {\em infinitesimal generator} field $X=\frac{\partial\Phi_t}{\partial t} \bigl|_{t=0}$ of the variation. The volume constraint on the admissible deformations of $E$ implies that the functions $\varphi$ must have zero integral on $\pa E$, hence it is natural to define a quadratic form $\Pi_E$ on such space of functions which is related to the second variation of $J$ by the following equality
\begin{equation}\label{PI0}
\Pi_E(\varphi)=\frac{d^2}{dt^2} J(\Phi_t(E))\Bigr \vert_{t=0}
\end{equation}
where $E_t=\Phi_t(E)$ is a volume--preserving variation of $E$ such that 
$$
\Bigl\langle\nu_E \,\Bigr\vert\frac{ \pa \Phi_t}{ \pa t}\Bigr \vert_{t=0}\Bigr\rangle=\varphi
$$
on $\pa E$, with $\nu_E$ the {\em outer unit normal vector} of $\pa E$.\\
Because of the obvious {\em translation invariance} of the functional $J$, it is easy to see (by means of the formula~\eqref{PI0}) that the form $\Pi_E$ vanishes on the finite dimensional vector space given by the functions $\psi= \langle\nu_E\vert \eta\rangle$, for every vector $\eta\in\R^n$. We underline that the presence of such ``natural'' degenerate subspace of the quadratic form $\Pi_E$ (or, equivalently, the translation invariance of $J$) is the main reason of several technical difficulties.\\
We then say that a smooth critical set $E \subseteq \T^n$ is {\em strictly stable} if 
\begin{equation}
\Pi_E( \varphi ) > 0 
\end{equation}
for all non--zero functions $\varphi:\pa E\to\R$, with zero integral and $L^2$--orthogonal to every function $\psi= \langle\nu_E\vert \eta\rangle$.

Then, the heuristic idea is that in a region around a strictly stable critical set $E$, we have a ``potential well'' for the ``energy'' $J$ (and the set $E$ is a local minimum) and, defining a suitable notion of ``closedness'', if one set starts close enough to $E$, during its evolution by (minus) the gradient of such energy, it cannot ``escape'' the well and asymptotically converges to a set of (local) minimal energy, which must be a translate of $E$. That is, the strict stability of $E$ implies a ``dynamical'' stability in a neighborhood.

At the moment, this conclusion, that we state precisely below, can be shown only in dimension at most three, because of missing estimates in higher dimensions (see the discussion at the beginning of Section~\ref{globalex}). When $n>3$ this and several other questions on these flows remain open. Anyway, this is sufficient for the application to some physically relevant models, since the evolution laws~\eqref{msfSistema} and~\eqref{sdf} describe, respectively, pattern--forming processes such as the solidification in pure liquids and the evolution of interfaces between solid phases of a system, driven by surface diffusion of atoms under the action of a chemical potential (see for instance~\cite{GurJab} and the references therein). In this paper, we will only deal with the three--dimensional case, but we underline that all the results and arguments hold, without relevant modifications, also in the two--dimensional situation of $\T^2=\R^2/\Z^2$, where the moving boundaries of the sets are curves.\\ 
Moreover, we mention here that all the results also hold in a bounded open subset $\Omega$ of $\R^2$ or $\R^3$, for moving sets which do not ``touch'' the boundary of $\Omega$, imposing that the functions $w_t$ and $v_t$ in the definition of the modified Mullins--Sekerka flow satisfy a {\em zero Neumann boundary condition} (as we mentioned above), instead than choosing the ``toric ambient'' (see Subsection~\ref{Neucase} for more details).

\begin{theorem}[Theorem~\ref{existence} and Remark~\ref{existence+}]
Let $E\subseteq\T^3$ be a smooth strictly stable critical set for the nonlocal Area functional under a volume constraint and $N_\eps$ a suitable tubular neighborhood of $\pa E$. For every $\alpha\in (0,1/2)$ there exists $M>0$ such that, if $E_0$ is a smooth set satisfying
\begin{itemize}
\item $\vol( E_0)= \vol( E )$, 
\item $\vol( E_0\triangle E)  \leq M$, 
\item the boundary of $E_0$ is contained in $N_\eps$ and can be represented as
\begin{equation}
\pa E_0= \{ y+ \psi_{E_{0}} (y) \nu_E(y) \, : \, y \in \pa E \},
\end{equation}
for some function $\psi _{E_0} : \pa E \to \R$ such that $ \norma { \psi _ {E_0}}_{ C^{1,\alpha} ( \pa E)} \leq M$,\\
\item there holds\ \vspace{-15pt}
$$ 
\int_{\T^3} \vert \nabla w_{E_0 }\vert^2\,dx \leq M\,,
$$
where $w_0=w_{E_0}$ is the function relative to $E_0$, as in system~\eqref{msfSistema},
\end{itemize}
then, there exists a unique smooth solution $E_t$ of the modified Mullins--Sekerka flow (with parameter $\gamma\geq 0$) starting from $E_0$, which is defined for all $t\geq0$. Moreover, $E_t\to E+\eta$ exponentially fast in $C^k$ as $t\to +\infty$, for every $k\in\N$, for some $\eta\in \R^3$, with the meaning that the functions 
$\psi_{\eta, t} : \pa E+ \eta \to \R$ representing $\pa E_t$ as ``normal graphs'' on $\pa E + \eta$, that is,
$$
\pa E_t= \{ y+ \psi_{\eta,t} (y) \nu_{E+\eta}(y) \, : \, y \in \pa E+\eta \},
$$
satisfy for every $k\in\N$, the estimates
$$
\Vert \psi_{\eta, t}\Vert_{C^k(\pa E + \eta)}\leq C_ke^{-\beta_k t}
$$
for every $t\in[0,+\infty)$, for some positive constants $C_k$ and $\beta_k$.
\end{theorem}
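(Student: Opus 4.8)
The plan is a continuation argument resting on a single \emph{a priori} estimate. By the short--time existence and uniqueness result recalled above (Escher--Simonett), the flow starting from $E_0$ exists, is smooth, and is unique on a maximal interval $[0,T_{\max})$; the whole point is to show that, under the four smallness hypotheses, the solution stays in a fixed small neighborhood of the orbit $\{E+\eta:\eta\in\R^3\}$ for all times --- hence $T_{\max}=+\infty$ --- and converges. First I would fix the tubular neighborhood $N_\eps$ and, as long as $\pa E_t\subseteq N_\eps$, write $\pa E_t$ as a normal graph $\psi_t:\pa E\to\R$. Using that $\vol(E_t\triangle E)$ stays small and that the functions $\langle\nu_E\,|\,e_i\rangle$ span the degenerate subspace of $\Pi_E$, an implicit function theorem argument produces, for each $t$, a translation vector $\eta(t)\in\R^3$ such that the graph function $\psi_{\eta(t),t}$ of $\pa E_t$ over $\pa E+\eta(t)$ is $L^2$--orthogonal to $\{\langle\nu_{E+\eta}\,|\,\zeta\rangle:\zeta\in\R^3\}$ (and carries the average fixed by the volume constraint). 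The quantity to be propagated is the smallness of $\norma{\psi_{\eta(t),t}}_{H^3(\pa E+\eta(t))}$ --- note that $H^3$ embeds into $C^{1,\alpha}$ on these surfaces --- together with the energy gap $J(E_t)-\min_{\eta}J(E+\eta)$.

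Next I would exploit the variational structure. Since \eqref{msfSistema} is the $H^{-1/2}$--gradient flow of $J$ at fixed volume, along the flow
\[
\frac{d}{dt}\,J(E_t)=-\int_{\T^3}\abs{\grad w_t}^2\,dx\le 0,
\]
so $J(E_t)$ is nonincreasing and the dissipation $\int_{\T^3}\abs{\grad w_t}^2\,dx$ is integrable in $t$. Combining the strict stability of $E$ with the $W^{2,p}$--local minimality proved earlier in the paper --- more precisely, with the coercivity $\Pi_E(\varphi)\ge c_0\norma{\varphi}_{H^1}^2$ on the $L^2$--orthogonal complement of the translations and, crucially, with its \emph{persistence under small $C^{1,\alpha}$ perturbations of the reference set}, $\Pi_F(\varphi)\ge \tfrac{c_0}{2}\norma{\varphi}_{H^1}^2$ for every $F$ near $E$ --- a Taylor expansion of $J$ along the normal--graph interpolation from $E+\eta$ to $E_t$ yields a ``potential well'' inequality
\[
J(E_t)-\min_{\eta\in\R^3}J(E+\eta)\ \ge\ c_1\,\norma{\psi_{\eta(t),t}}_{H^1(\pa E+\eta(t))}^2 .
\]

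On the other hand, unravelling the definition of $w_t$ as the Dirichlet--to--Neumann jump of $\HHH_t+4\gamma v_t-\lambda_t$ across $\pa E_t$ and linearizing at the critical set, one checks that $\int_{\T^3}\abs{\grad w_t}^2\,dx$ controls from below a fractional Sobolev norm of $\psi_{\eta(t),t}$, in particular $\int_{\T^3}\abs{\grad w_t}^2\,dx\ge c_2\,\norma{\psi_{\eta(t),t}}_{H^1}^2$ up to higher--order corrections. Hence $\tfrac{d}{dt}\big(J(E_t)-\min_\eta J(E+\eta)\big)\le -c_3\big(J(E_t)-\min_\eta J(E+\eta)\big)$, and Gronwall's inequality forces the energy gap, and therefore $\norma{\psi_{\eta(t),t}}_{H^1}$, to decay exponentially; since $\abs{\dot\eta(t)}\lesssim\big(\int_{\T^3}\abs{\grad w_t}^2\,dx\big)^{1/2}$ is then integrable on $[0,+\infty)$, the vector $\eta(t)$ converges to some $\eta_\infty$ as $t\to+\infty$, with $\abs{\eta(t)-\eta_\infty}$ exponentially small.

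The remaining task is to close the bootstrap and upgrade the convergence, and this is where the restriction $n\le 3$ enters: on the $2$--dimensional surfaces $\pa E_t$ one has $H^1\hookrightarrow L^q$ for every $q<\infty$ and the nonlocal potential term is subcritical, which lets one estimate every nonlinear and nonlocal error term produced above --- in particular those coming from $v_t$ and from the elliptic problem for $w_t$ on the moving domain $\T^3\setminus\pa E_t$ --- by an arbitrarily small multiple of the ``good'' quantities; thus, choosing $M$ small enough, the constant thresholds defining the neighborhood are never attained, so $\pa E_t$ stays in $N_\eps$, $T_{\max}=+\infty$, and $\norma{\psi_{\eta(t),t}}_{H^3}$ stays small and exponentially decaying (absorbing the short--time parabolic smoothing, which is what makes a merely $C^{1,\alpha}$--close, small--dissipation initial datum admissible). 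Finally, parabolic (Schauder) estimates on the unit intervals $[t,t+1]$, bootstrapped from this uniform $H^3$--smallness and the exponential decay of $\norma{\psi_{\eta(t),t}}_{H^1}$, give $\norma{\psi_{\eta(t),t}}_{C^k}\le C_k e^{-\beta_k t}$ for every $k$; replacing $\eta(t)$ by $\eta_\infty$ costs only a further exponentially small error, yielding the claimed $C^k$--convergence of $E_t$ to $E+\eta_\infty$. I expect the main obstacle to be precisely this last a priori estimate: propagating the $H^3$--smallness while keeping all the nonlinear and nonlocal error constants strictly below the coercivity constant $c_0$, uniformly in time and compatibly with the $H^{-1/2}$--structure of the flow --- it is exactly this step that is currently available only for $n\le 3$.
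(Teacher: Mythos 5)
Your overall framework---orthogonally decompose modulo a time-dependent translation $\eta(t)$ via the implicit function theorem, feed the coercivity of $\Pi$ into a Gronwall/\L{}ojasiewicz argument for the energy gap, then bootstrap to higher norms by parabolic smoothing---is a genuinely different route from the one in the paper; in fact it is exactly the classical ``modulation/semigroup'' strategy (used by Escher--Simonett and Chen for the ball) that the paper's Introduction says it is deliberately replacing. The paper never introduces a moving translation $\eta(t)$ nor an energy gap; instead it derives a \emph{second} energy identity (Lemma~\ref{calculations}, formula~\eqref{der of dw}) giving
\[
\frac{d}{dt}\,\frac12\int_{\T^3}|\nabla w_t|^2\,dx=-\Pi_{E_t}\bigl([\pa_{\nu_t}w_t]\bigr)+\frac12\int_{\pa E_t}(\pa_{\nu_t}w_t^++\pa_{\nu_t}w_t^-)[\pa_{\nu_t}w_t]^2\,d\mu_t,
\]
and runs Gronwall directly on the dissipation $\int_{\T^3}|\nabla w_t|^2dx$, with the translation invariance handled by showing (Step~2) that the velocity $[\pa_{\nu_t}w_t]$ stays a definite angular distance away from the degenerate subspace $T(\pa E_t)$ so that Proposition~\ref{2.6} applies. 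This avoids any \L{}ojasiewicz--Simon inequality.

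There is, moreover, a concrete gap in your argument as written. You state the two lower bounds
\[
J(E_t)-\min_{\eta}J(E+\eta)\ \ge\ c_1\,\Vert\psi_{\eta(t),t}\Vert_{H^1}^2,
\qquad
\int_{\T^3}|\nabla w_t|^2\,dx\ \ge\ c_2\,\Vert\psi_{\eta(t),t}\Vert_{H^1}^2,
\]
and then write ``hence'' the differential inequality
$\tfrac{d}{dt}\bigl(J(E_t)-\min_\eta J\bigr)\le -c_3\bigl(J(E_t)-\min_\eta J\bigr)$.
This ``hence'' does not follow: both displayed inequalities bound the \emph{same} quantity $\Vert\psi\Vert_{H^1}^2$ from below, so together they say nothing about the relative size of the dissipation and the energy gap. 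To close Gronwall you need the dissipation to dominate the energy gap, and for that the potential-well inequality must be applied \emph{in the reverse direction}, i.e.\ you need an upper bound $J(E_t)-\min_\eta J(E+\eta)\le C\Vert\psi_{\eta(t),t}\Vert_{H^1}^2$. That upper bound is not provided by Theorem~\ref{W2pMin} (which only gives the lower bound) and is not automatic: the Taylor remainder of the second-order functional $J$ involves derivatives of $\psi$ beyond $H^1$, so controlling it by $\Vert\psi\Vert_{H^1}^2$ alone would require a nontrivial \L{}ojasiewicz--Simon inequality in the $\widetilde H^{-1/2}$ metric, which you would have to prove. The paper sidesteps this entirely by differentiating $\int_{\T^3}|\nabla w_t|^2dx$ in time, obtaining the stable coercive term $\Pi_{E_t}$ plus a cubic error it can absorb via Lemma~\ref{harmonic estimates}; the decay of the dissipation then drives the convergence of $D(E_t)$ and hence of the sets, with the final translation $\eta$ produced only at the end by compactness (Lemma~\ref{w52conv}). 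If you wish to pursue your route you should either establish the missing \L{}ojasiewicz--Simon inequality or replace your Gronwall quantity by the dissipation and reproduce the second energy identity.
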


\begin{theorem}[Theorem~\ref{existence2} and Remark~\ref{existence2+}]
Let $E\subseteq\T^3$ be a strictly stable critical set for the Area functional under a volume constraint and let $N_\eps$ be a tubular neighborhood of $\pa E$. For every $\alpha\in (0,1/2)$ there exists $M>0$ such that, if $E_0$ is a smooth set satisfying
\begin{itemize}
\item $\vol( E_0)= \vol( E )$, 
\item $\vol( E_0\triangle E)  \leq M$, 
\item the boundary of $E_0$ is contained in $N_\eps$ and can be represented as
\begin{equation}
\pa E_0= \{ y+ \psi_{E_{0}} (y) \nu_E(y) \, : \, y \in \pa E \} \, ,
\end{equation}
for some function $\psi _{E_0} : \pa F \to \R$ such that $ \norma { \psi _ {E_0}}_{ C^{1,\alpha} ( \pa E)} \leq M$,
\item there holds \ \vspace{-10pt}
$$
\int_{\pa E_0} \vert \nabla \HHH_0\vert^2\, \dmu_0 \leq M \, ,
$$
\end{itemize}
then there exists a unique smooth solution $E_t$ of the surface diffusion flow starting from $E_0$, which is defined for all $t\geq0$. Moreover, $E_t\to E+\eta$ exponentially fast in $C^k$ as $t\to +\infty$, for some $\eta\in \R^3$, with the same meaning as above.
\end{theorem}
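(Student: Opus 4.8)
The plan is to follow the strategy of Acerbi, Fusco, Julin and Morini, treating the global existence and the asymptotic behaviour simultaneously. Short--time existence and uniqueness of a smooth solution $E_t$ issuing from $E_0$ is already available (Escher--Mayer--Simonett), so the real task is to prove that, for $M$ small enough, this solution can never leave a fixed ``small'' neighbourhood $\mathcal U$ of $\pa E$ inside which $\pa E_t$ is a normal graph $\{y+\psi_t(y)\,\nu_E(y):y\in\pa E\}$ with $\|\psi_t\|_{C^{1,\alpha}}$ and $\int_{\pa E_t}|\nabla\HHH_t|^2\,d\mu_t$ both small --- and then to read off global existence, convergence and the decay rate. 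Two structural ingredients are used. First, the surface diffusion flow is the $H^{-1}$--gradient flow of $\A$ at fixed volume, so $\vol(E_t)\equiv\vol(E)$ and
\[
\frac{d}{dt}\,\A(\pa E_t)=\int_{\pa E_t}\HHH_t\,\Delta_t\HHH_t\,d\mu_t=-\int_{\pa E_t}|\nabla\HHH_t|^2\,d\mu_t\le 0.
\]
Second, the quantitative minimality property of $E$ that follows from its strict stability: there exist $c_0,\delta_0>0$ so that any smooth $F$ with $\vol(F)=\vol(E)$ whose boundary is a normal graph over $\pa E$ of $C^{1,\alpha}$--norm $\le\delta_0$ satisfies $\A(\pa F)\ge\A(\pa E)+c_0\,\inf_{\eta\in\R^3}\vol\big(F\triangle(E+\eta)\big)^2$, and the only critical sets in $\mathcal U$ are the translates $E+\eta$.

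Next I would establish the central a priori estimate keeping $\int_{\pa E_t}|\nabla\HHH_t|^2\,d\mu_t$ small. Differentiating this quantity along the flow and integrating by parts on the evolving surface one gets, schematically, a differential inequality
\[
\frac{d}{dt}\int_{\pa E_t}|\nabla\HHH_t|^2\,d\mu_t
\le -c_1\int_{\pa E_t}|\nabla\Delta_t\HHH_t|^2\,d\mu_t+(\text{lower order terms}),
\]
where the error terms, built from the second fundamental form of $\pa E_t$ and its derivatives, are absorbed into the leading dissipative term plus a contribution that is superlinearly small in $\int_{\pa E_t}|\nabla\HHH_t|^2\,d\mu_t$, using interpolation and Gagliardo--Nirenberg--Sobolev inequalities on the codimension--one surface $\pa E_t$; this is precisely where the assumption $n\le 3$ enters, making the relevant Sobolev exponents favourable. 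Feeding in $\int_{\pa E_0}|\nabla\HHH_0|^2\,d\mu_0\le M$ and the smallness of $\|\psi_t\|_{C^{1,\alpha}}$ in $\mathcal U$, a continuity/bootstrap argument then yields a uniform bound $\int_{\pa E_t}|\nabla\HHH_t|^2\,d\mu_t\le CM$ as long as $E_t\in\mathcal U$; elliptic estimates on $\pa E_t$ (the mean curvature controlling the full second fundamental form, once the graph is $C^{1,\alpha}$--small) upgrade this to a uniform bound on $\|\psi_t\|_{\Wduep}$, hence to an improved, still small, bound on $\|\psi_t\|_{C^{1,\alpha}}$.

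With these estimates I would run the continuation argument. Let $T^\ast$ be the first time at which $E_t$ touches $\pa\mathcal U$. On $[0,T^\ast)$ the energy gap $\A(\pa E_t)-\A(\pa E)\ge 0$ is nonincreasing, so by the quantitative minimality $\inf_\eta\vol\big(E_t\triangle(E+\eta)\big)$ stays small; picking $\eta_t$ essentially realizing this infimum, one checks $t\mapsto\eta_t$ is locally Lipschitz with $|\dot\eta_t|\lesssim\big(\int_{\pa E_t}|\nabla\HHH_t|^2\,d\mu_t\big)^{1/2}$, which combined with $\int_0^{T^\ast}\!\!\int_{\pa E_t}|\nabla\HHH_t|^2\,d\mu_t\,dt\le\A(\pa E_0)-\A(\pa E)\le CM$ shows $\eta_t$ has small total variation; using also $\vol(E_0\triangle E)\le M$, the set $E_t$ stays uniformly inside $\mathcal U$, so $T^\ast=+\infty$ and global existence follows, with smoothness and uniform $C^k$--bounds for all $t\ge0$ by parabolic regularity. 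Finally, since $\int_0^{\infty}\!\!\int_{\pa E_t}|\nabla\HHH_t|^2\,d\mu_t\,dt<\infty$ and $\int_{\pa E_t}|\nabla\HHH_t|^2\,d\mu_t$ obeys the differential inequality above, this quantity tends to $0$; by the uniform bounds $E_t$ subconverges smoothly to a critical set in $\mathcal U$, necessarily a translate $E+\eta$. For the exponential rate, combining the dissipation identity with the spectral gap provided by strict stability (which yields a {\L}ojasiewicz--type inequality $\int_{\pa E_t}|\nabla\HHH_t|^2\,d\mu_t\ge c\,(\A(\pa E_t)-\A(\pa E))$ once close) produces $\frac{d}{dt}\big(\A(\pa E_t)-\A(\pa E)\big)\le -c\,\big(\A(\pa E_t)-\A(\pa E)\big)$ past some time, hence exponential decay of the energy gap, of $\int_{\pa E_t}|\nabla\HHH_t|^2\,d\mu_t$, of $\inf_\eta\vol\big(E_t\triangle(E+\eta)\big)$ and of $|\eta_t-\eta|$; interpolating the resulting exponential decay of a low--order norm of $\psi_{\eta,t}$ against the uniform bounds on higher norms gives exponential decay of $\|\psi_{\eta,t}\|_{C^k}$ for every $k$, with rates $\beta_k\to0$.

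The main obstacle is the a priori differential inequality for $\int_{\pa E_t}|\nabla\HHH_t|^2\,d\mu_t$ and closing the resulting bootstrap: this requires a careful computation of the evolution of the geometric quantities along a genuinely fourth--order flow together with a tight use of the interpolation inequalities, and it is exactly this step that confines the argument to dimension $n\le3$. A second, ever--present technical nuisance is the translational degeneracy: selecting the translation $\eta_t$ in a regular way and controlling its drift, which pervades both the ``staying in $\mathcal U$'' step and the identification of the limit.
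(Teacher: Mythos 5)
Your overall architecture — stopping time and bootstrap, energy dissipation along the flow, subsequential compactness, identification of the limit as a translate, and an exponential rate at the end — matches the shape of the paper's proof, and the two structural ingredients you isolate (the $H^{-1}$--gradient structure and the quantitative minimality coming from strict stability) are indeed used. But there is a genuine gap in the a priori estimate step, which is the heart of the argument. The second energy identity (Lemma~\ref{calculations2}) reads
\begin{align}
\frac{d}{dt}\,\frac{1}{2}\int_{\pa E_t}|\nabla\HHH_t|^2\, d\mu_t
&= -\Pi_{E_t}(\Delta_t\HHH_t)
-\int_{\pa E_t} B_t(\nabla\HHH_t,\nabla\HHH_t)\,\Delta_t\HHH_t\, d\mu_t
+\frac{1}{2}\int_{\pa E_t}\HHH_t|\nabla\HHH_t|^2\,\Delta_t\HHH_t\, d\mu_t\,,
\end{align}
with $\Pi_{E_t}(\Delta_t\HHH_t)=\int|\nabla\Delta_t\HHH_t|^2-\int|B_t|^2(\Delta_t\HHH_t)^2$. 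The last two integrals are genuinely cubic and are absorbed by the geometric interpolation of Lemma~\ref{nasty} using the smallness of $\int|\nabla\HHH_t|^2$ and $n=3$, exactly as you say. But the term $\int|B_t|^2(\Delta_t\HHH_t)^2$ is \emph{not} a lower--order error of this kind: for $E_t$ close to $E$ it is comparable, via Poincar\'e for the zero--average function $\Delta_t\HHH_t$, to the dissipation $\int|\nabla\Delta_t\HHH_t|^2$ with a ratio that is exactly $1$ on the translational modes (on $\SSS^2$, $|B|^2=2$ and $\lambda_1=2$). So it cannot be ``absorbed into the leading dissipative term'' by smallness or interpolation. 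Controlling it requires the coercivity $\Pi_{E_t}(\Delta_t\HHH_t)\geq\sigma_\theta\|\Delta_t\HHH_t\|^2_{H^1(\pa E_t)}$ from Proposition~\ref{2.6}, which in turn requires showing that $\Delta_t\HHH_t$ stays quantitatively transversal to the degenerate directions $\langle\eta|\nu_t\rangle$, $\eta\in\R^3$: this is Step~2 of the paper's proof (``estimate of the translational component of the flow''), which hinges on the geometric Poincar\'e inequality (Lemma~\ref{lm:geopoinc}) and a separate contradiction argument. Your sketch invokes the spectral gap only for the asymptotic rate and treats the translational degeneracy only through the choice and drift of the translate $\eta_t$; it never confronts the transversality of the \emph{velocity} $\Delta_t\HHH_t$ to the kernel, without which the a priori bound simply does not close.

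A secondary remark: where the paper deduces exponential decay of $\int_{\pa E_t}|\nabla\HHH_t|^2\,d\mu_t$ directly from the coercivity of $\Pi_{E_t}$ (plus the geometric Poincar\'e inequality $\|\nabla\HHH_t\|_{L^2}\leq C\|\Delta_t\HHH_t\|_{L^2}$), you propose to go through a \L ojasiewicz--Simon inequality $\int|\nabla\HHH_t|^2\geq c\,(\A(\pa E_t)-\A(\pa E))$. This is plausible given the nondegeneracy of the Hessian transversal to the translational manifold, but it is not how the paper argues, it would require its own proof, and it does not obviate the transversality estimate above (you need the same spectral information to establish it). Once Step~2 is in place, the direct route is shorter.
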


We remark that the line of the proof in~\cite{AcFuMoJu} that we are going to present, is based on suitable energy identities and compactness arguments to establish these global existence and exponential stability results. This was actually a completely new approach to manage the translation invariance of the functional $J$, in previous literature dealt with by means of semigroup techniques. 

Summarizing, the work is organized as follows: in Section~\ref{nonlocsec} we study the nonlocal Area functional (constrained or not) and we compute its first and second variation, then we discuss the notions of criticality, stability and local minimality of a set and their mutual relations, in this context. In Section~\ref{msfsdf} we introduce the modified Mullins--Sekerka and the surface diffusion flow and we analyze their basic properties. Section~\ref{globalex} is devoted to show the two main theorems above, while finally in Section~\ref{classification}, we discuss the classification of the stable and strictly stable critical sets (to whom then the two stability results apply).

\section{The nonlocal Area functional}\label{nonlocsec}

We start by introducing the \emph{nonlocal Area functional} and its basic properties. 

In the following we denote by $\mathbb{T}^n$ the $n$--dimensional flat torus of unit volume which is defined as the Riemannian quotient of $\mathbb{R}^n$ with respect to the equivalence relation $x \sim y \iff x-y \in \mathbb{Z}^n$, with $\mathbb{Z}^n$ the standard integer lattice of $\R^n$. Then, the functional space $W^{k,p}(\mathbb{T}^n)$, with $k \in \mathbb{N}$ and $p \ge 1$, can be identified with the subspace of $W^{k,p}_{\mathrm{loc}}(\R^n)$ of the functions that are $1$--periodic with respect to all coordinate directions. A set $E \subseteq \mathbb{T}^n$ is of class $C^k$ (or smooth) if its ``$1$--periodic extension'' to $\mathbb{R}^n$ is of class $C^k$ (or smooth,) which means that its boundary is locally a graph of a function of class $C^k$ around every point. We will denote with $\vol(E)={\mathscr L}^n(E)$ the volume of $E\subseteq\T^n$.

Given a smooth set $E\subseteq \T^n$, we consider the associated potential 
\begin{equation}\label{potential1}
v_E(x)=\int_{\T^n} G(x,y)u_E(y) dy \, ,
\end{equation}
where $G$ is the Green function (of the Laplacian) of the torus $\T^n$ and $u_E= \chi_{\text{\raisebox{-.5ex}{$\scriptstyle E$}}} - \chi_{\text{\raisebox{-.5ex}{$\scriptstyle \T^n \setminus E$}}}$. More precisely, $G$ is the (distributional) solution of
\begin{equation}\label{G2}
-\Delta_x G(x,y)=\delta_y-1\quad\text{in $\T^n$} \quad \text{with}\quad \int_{\T^n} G(x,y)\, dx=0,
\end{equation}
for every fixed $y\in\T^n$, where $\delta_y$ denotes the Dirac delta measure at $y \in \T^n$ (the $n$--torus $\T^n$ has unit volume).\\
By the properties of the Green function, $v_E$ is then the unique solution of
\begin{equation}
\begin{cases}\label{potential}
{\displaystyle{-\Delta v_E= u_E- m \qquad \text{in $\T^n$ (distributionally)}}}\\
{\displaystyle{\int_{\T^n} v_E(x) \, dx=0}}
\end{cases}
\end{equation}
where $m= \vol(E) - \vol(\T^n \setminus E)=2\vol(E) - 1$.
\begin{remark}
By standard elliptic regularity arguments (see~\cite{gt}, for instance), $v_E \in W^{2,p}(\T^n)$ for all $p\in [1, +\infty)$. More precisely, there exists a constant $C=C(n,p)$ such that $\|v_E\|_{W^{2,p}(\T^n)}\leq C$, for all $E\subseteq\T^n$ such that $\vol(E)-\vol(\T^n\setminus E)=m$.
\end{remark}

Then, we define the following {\em nonlocal Area functional} (see~\cite{KnZe,MuZa,StTo}, for instance).

\begin{definition}[Nonlocal Area functional]\label{NAFdef}
Given $\gamma \ge 0$, the \emph{nonlocal Area functional} $J$ is defined as
\begin{equation}\label{area}
J(E)= \A(\pa E) + \gamma \int_{\T^n} |\nabla {v_E}(x)|^2 \, dx,
\end{equation}
for every smooth set $E \subseteq \T^n$, where the function $v_E:\T^n\to\R$ is given by 
formulas~\eqref{potential1}--\eqref{potential} and 
$$
\A(\pa E)= \int_{\partial E} \, d\mu
$$
is the \emph{Area functional} , where $\mu$ is the ``canonical'' measure associated to the Riemannian metric on $\partial E$  induced by the metric tensor of $\T^n$, coming from the scalar product of $\R^n$ (it is easy to see that $\mu$ coincides with the  $(n-1)$--dimensional Hausdorff measure restricted to $\partial E$).
\end{definition}

{\em Since the nonlocal Area functional is defined adding to the Area functional a constant $\gamma\geq 0$ times a nonlocal term, all the results of this section will also hold for the Area functional, taking $\gamma=0$.}

\medskip

Multiplying by $v_E$ both sides of the first equation in system~\eqref{potential} and integrating by parts (and using also the second equation), we obtain
\begin{align}\label{G1}
\int_{\T^n} |\nabla  v_E(x)|^2 \, dx
=&\,- \int_{\T^n} v_E(x) \Delta v_E(x) \, dx \nonumber\\
=&\,\int_{\T^n} v_E(x) (u_E(x) - m) \, dx\nonumber\\
=&\,\int_{\T^n} v_E(x) u_E(x) \, dx\nonumber\\
=&\,\int_{\T^n}\int_{\T^n} G(x,y) u_E(x) u_E(y) \, dx\,dy,
\end{align}
hence, the functional $J$ can be also written in the useful form
$$
J(E)= \A(\pa E) + \gamma \int_{\T^n} \int_{\T^n} G(x,y)u_E(x)u_E(y) \, dx\,dy .
$$
\subsection{First and second variation}\label{sec1.2}\ \vskip.3em

We start by computing the \emph{first variation} of the functional $J$.

\begin{definition}\label{admissiblevar}
Let $E \subseteq \T^n$ be a smooth set. Given a smooth map $\Phi:(-\eps,\eps)\times\T^n\to\T^n$, for $\eps>0$, such that $\Phi_t=\Phi(t,\cdot):\T^n\to\T^n$ is a one--parameter family of diffeomorphism with $\Phi_0=\mathrm{Id}$, we say that $E_t=\Phi_t(E)$ is the {\em variation} of $E$ associated to $\Phi$ (or to $\Phi_t$). If moreover there holds $\mathrm{Vol}(E_t)=\mathrm{Vol}(E)$ for every $t\in(-\eps,\eps)$, we call $E_t$ a {\em volume--preserving} variation of $E$.\\
The vector field $X\in C^\infty(\T^n; \R^n)$ defined as $X=\frac{\pa \Phi_t}{\pa t}\,\bigr\vert_{t=0}$, is called the {\em infinitesimal generator} of the variation $E_t$. 
\end{definition}

\begin{remark} 
As we are going to consider only smooth sets $E$, it is easy to see that this definition of variation is equivalent to have a family of diffeomorphisms $\Phi_t$ of $E$ only, indeed these latter can always be extended to the whole $\T^n$. Moreover, as the relevant objects are actually the boundaries of the sets $E$ and in view of the sequel, we could even consider only smooth ``deformations'' of $\partial E$. We chose the above definition since it is easier and more convenient for the computations that are following.
\end{remark}

\begin{definition}
Given a variation $E_t$ of $E$, coming from the one--parameter family of diffeomorphism $\Phi_t$, the \emph{first variation of $J$ at $E$ with respect to $\Phi_t$} is given by
\begin{equation}
\label{first variation:def}
\frac{d}{dt}J(E_t)\Bigl|_{t=0} \, .
\end{equation}
We say that $E$ is a {\em critical set} for $J$, if all the first variations relative to variations $E_t$ of $E$ are zero.\\
We say that $E$ is a {\em critical set} for $J$ under a volume constraint, if all the first variations relative to volume--preserving variations $E_t$ of $E$ are zero.
\end{definition}

It is clear that if $E$ is a minimum for $J$ (under a volume constraint), then it is a critical set for $J$ (under a volume constraint).
We are now going to compute the first variation of $J$ and see that it depends only on the restriction to $\partial E$ of the infinitesimal generator $X$ of the variation $E_t$ of $E$.
   
We briefly recall some ``geometric'' notations and results about the (Riemannian) geometry of the hypersurfaces in $\R^n$, referring to~\cite{gahula,Man,petersen2} for instance.

\smallskip

{\em In the whole work, we will adopt the convention of summing over the repeated indices.}

\smallskip

Given any smooth immersion $\psi:M \to \T^n$ of the smooth, $(n-1)$--dimensional, compact manifold $M$, representing a hypersurface $\psi(M)$ of $\T^n$, considering local coordinates around any $p\in M$, we have local bases of the tangent space $T_p M$, which can be identified with the $(n-1)$--dimensional hyperplane $d\psi_p(T_pM)$ of $\R^n\approx T_{\psi(p)}\T^n$ which is tangent to $\psi(M)$ at $\psi(p)$, and of the cotangent space $T_p^{*}M$, respectively given by vectors $\bigl\{\frac{\partial\,}{\partial x_i}\bigr\}$ and 1--forms $\{dx_j\}$. So we denote the vectors on $M$ by  $X=X^i\frac{\partial\,}{\partial x_i}$ and the 1--forms by $\omega=\omega_jdx_j$, where the indices refer to the chosen local coordinate chart of $M$. With the above identification, we have clearly $\frac{\partial\,}{\partial x_i}\approx \frac{\partial\psi}{\partial x_i}$, for every $i\in\{1,\dots,n-1\}$.

The manifold $M$ gets in a natural way a metric tensor $g$, pull--back via the map $\psi$ of the metric tensor of $\T^n$, coming from the standard scalar product of $\R^n$ (as $\T^n\approx\R^n/\Z^n$), hence, turning it into a Riemannian manifold $(M,g)$. Then, the components of $g$ in a local chart are
$$
g_{ij}=\left\langle\frac{\pa \psi}{\pa x_i}\,\right\vert\left.\!\frac{\pa \psi}{\pa x_j}\right\rangle
$$
and the ``canonical'' measure $\mu$, induced on $M$ by the metric $g$ is then given by $\mu=\sqrt{\det g_{ij}}\,{\Lebes}^{n-1}$, where ${\Lebes}^{n-1}$ is the standard Lebesgue measure on $\R^{n-1}$. 

Thus, supposing that $M$ has a {\em global} coordinate chart, we can write the Area functional on the hypersurface $\psi(M)$ in the following way, 
\begin{equation}\label{areachart}
\A(\psi (M))=\int_{M} d\mu = \int_{M} \sqrt{\det g_{ij}(x)} \, dx\,.
\end{equation}
When this is not the case (as it is usual), we need several local charts $(U_k,\varphi_k)$ and a subordinated partitions of unity $f_k:M\to[0,1]$ (that is, the compact support of $f_k:M\to[0,1]$ is contained in the open set $U_k\subseteq M$, for every $k\in\mathcal{I}$), then
\begin{equation}
\A(\psi (M))=\int_{M} d\mu =\sum_{k\in\mathcal I}\int_{M} f_k\,d\mu=\sum_{k\in\mathcal I}\int_{U_k} f_k(x)\sqrt{\det g_{ij}^k(x)} \, dx\,,
\end{equation}
where $g^k_{ij}$ are the coefficients of the metric $g$ in the local chart $(U_k,\varphi_k)$.

\smallskip

{\em In order to work with coordinates, in the computations with integrals in this section we will assume that all the hypersurfaces have a global coordinate chart, by simplicity. All the results actually hold also in the general case by using partitions of unity as above.}

\smallskip

The induced Levi--Civita covariant derivative on $(M,g)$ of a vector field $X$ and of a 1--form $\omega$ 
are respectively given by
$$
\nabla _jX^i=\frac{\partial X^i}{\partial
  x_j}+\Gamma^{i}_{jk}X^k\,, \qquad \nabla _j\omega_i=\frac{\partial \omega_i}{\partial
  x_j}-\Gamma^k_{ji}\omega_k\,, 
$$
where $\Gamma^{i}_{jk}$ are the Christoffel symbols of the connection $\nabla$, expressed
by the formula
$$
\Gamma^{i}_{jk}=\frac{1}{2} g^{il}\Bigl(\frac{\partial\,}{\partial
    x_j}g_{kl}+\frac{\partial\,}{\partial
    x_k}g_{jl}-\frac{\partial\,}{\partial
    x_l}g_{jk}\Bigr)\,.
$$
Moreover, the gradient $\nabla  f$ of a function, the divergence $\Div X$ of a tangent vector field and the Laplacian $\Delta f$ at a point $p \in M$, are defined respectively by
$$
g(\nabla  f(p) , v)=df_p(v)\qquad\forall v\in T_p M\,,
$$
\begin{equation}\label{divformcar}
\Div X={\mathrm {tr}} \nabla  X=\nabla _iX^i=\frac{\partial X^i}{\partial x_i}+\Gamma^{i}_{ik}X^k
\end{equation}
(in a local chart) and $\Delta f =\Div\nabla f$. 
We then recall that by the {\em divergence theorem} for compact manifolds (without boundary), there holds
\begin{equation}\label{divteo}
\int_{M}\Div X\,d\mu=0\,,
\end{equation}
for every tangent vector field $X$ on $M$, which in particular implies
\begin{equation}\label{corollariodivteo}
\int_{M}\Delta f\,d\mu=0\,,
\end{equation}
for every smooth function $f:M \to\R$.

Assuming that we have a globally defined unit {\em normal} vector field $\nu:M\to\R^n$ to $\varphi(M)$ (this will hold in our situation where the hypersurfaces will be boundaries of smooth sets $E\subseteq\T^n$, hence we will always consider $\nu$ to be the {\em outer unit normal vector} at every point of $\pa E$), we define the {\em second fundamental form} $B$ which is a symmetric bilinear form given, in a local charts, by its components
\begin{equation}\label{secform}
h_{ij} = - \biggl \langle \frac{\pa^2 \psi}{\pa x_i \pa x_j}\,\biggr\vert \,\nu  \biggr \rangle
\end{equation}
and whose trace is the {\em mean curvature} $\HHH= g^{ij} h_{ij}$ of the hypersurface (with these choices, the standard sphere of $\R^n$ has positive mean curvature).\\
The symmetry properties of the covariant derivative of $B$ are given by the {\em Codazzi--Mainardi equations}
\begin{equation}\label{codazzi}
\nabla _ih_{jk}=\nabla _jh_{ik}=\nabla _kh_{ij}\,.
\end{equation}
In the sequel, the following {\em Gauss--Weingarten relations} will be fundamental,
\begin{equation}\label{GW}
\frac{\pa^2\psi}{\pa x_i\pa
  x_j}=\Gamma_{ij}^k\frac{\pa\psi}{\pa
  x_k}- h_{ij}\nu\qquad\qquad\frac{\pa\nu}{\pa x_j}=
h_{jl}g^{ls}\frac{\pa\psi}{\pa x_s}\,,
\end{equation}
which imply
\begin{equation}\label{lap}
\Delta\psi=g^{ij}\Bigl(\frac{\partial^2\psi}{\partial x_i\partial
  x_j}-\Gamma_{ij}^k\frac{\partial\psi}{\partial
  x_k}\Bigr)=-g^{ij}h_{ij}\nu=-\HHH\nu\,.
\end{equation}
Moreover, we have the formula
\begin{equation}\label{Deltanu}
\Delta \nu = \nabla \HHH -|B|^2\nu\,,
\end{equation}
indeed, computing in {\em normal coordinates} at a point $p\in M$, 
\begin{align*}
\Delta\nu=&\,g^{ij}\Bigl(\frac{\partial^2\nu}{\partial x_i\partial x_j}-\Gamma_{ij}^k\frac{\partial\nu}{\partial  x_k}\Bigr)\\
=&\,g^{ij}\frac{\partial}{\partial x_i}\Bigl(h_{jl}g^{ls}\frac{\pa\psi}{\pa x_s}\Bigr)\\
=&\,g^{ij}\nabla_i h_{jl}g^{ls}\frac{\pa\psi}{\pa x_s}+g^{ij}h_{jl}g^{ls}\frac{\partial^2\psi}{\partial x_i\pa x_s}\\
=&\,g^{ij}\nabla_l h_{ij}g^{ls}\frac{\pa\psi}{\pa x_s}-g^{ij}h_{jl}g^{ls}h_{is}\nu\\
=&\,\nabla\HHH  -|B|^2\nu\,,
\end{align*}
since all $\Gamma_{ij}^k$ and $\frac{\partial}{\partial x_i}g^{jk}$ are zero at $p\in M$ in such coordinates and we used Codazzi--Mainardi equations~\eqref{codazzi}.

\medskip

{\em In the following, when it is clear by the context, we will write $\nabla $, $\Div$ and $\Delta$ for both the Riemannian operators on a hypersurface and the standard operators of $\T^n\approx\R^n/\Z^n$, but these latter will be instead denoted by $\nabla ^{\T^n}$, $\Div^{\!\T^n}$ and $\Delta^{\!\T^n}$ when they will be computed at a point of a hypersurface, in order to avoid any possibility of misunderstanding.}

\medskip

\begin{thm}[First variation of the functional $J$]\label{first var}
Let $E\subseteq \T^n$ a smooth set and $\Phi:(-\eps,\eps)\times \T^n\to\T^n$ a smooth map giving a variation $E_t=\Phi_t(E)$ with infinitesimal generator $X \in C^\infty (\T^n; \R^n)$. Then, 
\begin{equation}\label{eqcar2222}
\frac{d}{dt} J(E_t)\Bigl|_{t=0}=\int_{\partial E} (\HHH+ 4 \gamma v_E) \langle X \vert \nu_E\rangle \, d\mu
\end{equation}
where $\nu_E$ is the outer unit normal vector and $\HHH$ the mean curvature of the boundary 
$\partial E$ (as defined above, relative to $\nu_E$), while the function $v_E:\T^n\to \R$ is the potential associated to $E$, defined by formulas~\eqref{potential1}--\eqref{potential}.\\
In particular, the first variation of the functional $J$ depends only on the normal component of the restriction of the infinitesimal generator $X$ to $\pa E$.\\
Clearly, when $\gamma=0$ we get the well known first variation of the Area functional at a smooth set $E$,
\begin{equation}
\frac{d}{dt}\A(\pa E_t)\Bigl|_{t=0}= \int_{\partial E} \HHH\langle X \vert \nu_E\rangle \, d\mu\,.
\end{equation}
\end{thm}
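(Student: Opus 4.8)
\emph{Plan of proof.} The plan is to use the splitting $J(E)=\A(\pa E)+\gamma\,\mathcal N(E)$, where $\mathcal N(E)=\int_{\T^n}\int_{\T^n}G(x,y)u_E(x)u_E(y)\,dx\,dy$ by the identity~\eqref{G1}, and to compute separately $\tfrac{d}{dt}\A(\pa E_t)\big|_{t=0}$ and $\tfrac{d}{dt}\mathcal N(E_t)\big|_{t=0}$. I parametrize $\pa E$ by an embedding $\psi=\psi_0:M\to\T^n$ and set $\psi_t=\Phi_t\circ\psi_0$, so that $\pa E_t=\psi_t(M)$ and $\tfrac{\pa\psi_t}{\pa t}\big|_{t=0}=X\circ\psi_0$; along $\pa E$ I write $X=X^\top+\varphi\,\nu_E$ with $\varphi=\langle X\vert\nu_E\rangle$ and $X^\top$ tangent to $\pa E$. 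As announced, the computations are carried out assuming $M$ has a global chart, the general case following via partitions of unity.

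\emph{Area term.} Here $\A(\pa E_t)=\int_M\sqrt{\det g_{ij}(t)}\,dx$ with $g_{ij}(t)=\langle\tfrac{\pa\psi_t}{\pa x_i}\vert\tfrac{\pa\psi_t}{\pa x_j}\rangle$. Using $\tfrac{\pa}{\pa t}\sqrt{\det g_{ij}}=\tfrac12\sqrt{\det g_{ij}}\,g^{ij}\tfrac{\pa g_{ij}}{\pa t}$ together with $\tfrac{\pa g_{ij}}{\pa t}\big|_{t=0}=\langle\pa_i(X\circ\psi_0)\vert\pa_j\psi_0\rangle+\langle\pa_i\psi_0\vert\pa_j(X\circ\psi_0)\rangle$, one reduces to the quantity $g^{ij}\langle\pa_i(X\circ\psi_0)\vert\pa_j\psi_0\rangle$. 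Splitting $X=X^\top+\varphi\nu_E$ and expanding $\pa_i(\varphi\nu_E)$ by the Gauss--Weingarten relations~\eqref{GW}, the normal part produces $g^{ij}\varphi h_{ij}=\varphi\HHH$; expanding instead $\pa_i\pa_k\psi_0$ by~\eqref{GW}, the tangential part produces exactly $\Div X^\top$ in the coordinate form~\eqref{divformcar}. Hence $\tfrac{\pa}{\pa t}\sqrt{\det g_{ij}}\big|_{t=0}=(\Div X^\top+\varphi\HHH)\sqrt{\det g_{ij}}$, and integrating over $M$ while discarding the exact divergence by~\eqref{divteo} gives $\tfrac{d}{dt}\A(\pa E_t)\big|_{t=0}=\int_{\pa E}\HHH\langle X\vert\nu_E\rangle\,d\mu$.

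\emph{Nonlocal term.} By symmetry of the Green function and~\eqref{potential1}, differentiation under the integral sign gives $\tfrac{d}{dt}\mathcal N(E_t)\big|_{t=0}=2\int_{\T^n}v_E\,\pa_t u_{E_t}\big|_{t=0}\,dx=4\int_{\T^n}v_E\,\pa_t\chi_{E_t}\big|_{t=0}\,dx$, since $u_{E_t}=2\chi_{E_t}-1$. For any fixed $C^1$ function $f:\T^n\to\R$, the change of variables $x=\Phi_t(y)$ together with the divergence theorem on $\T^n$ gives $\tfrac{d}{dt}\int_{E_t}f\,dx\big|_{t=0}=\int_E\Div^{\T^n}(fX)\,dy=\int_{\pa E}f\langle X\vert\nu_E\rangle\,d\mu$; applying this with $f=v_E$ (which lies in $W^{2,p}\subseteq C^1$ for $p>n$) yields $\tfrac{d}{dt}\mathcal N(E_t)\big|_{t=0}=4\int_{\pa E}v_E\langle X\vert\nu_E\rangle\,d\mu$. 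Adding $\gamma$ times this to the Area contribution gives~\eqref{eqcar2222}; since the right--hand side involves only $\varphi=\langle X\vert\nu_E\rangle$, the last assertion follows, and $\gamma=0$ gives the classical Area formula.

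\emph{Main difficulty.} The Area computation is routine once the Gauss--Weingarten identities are in hand; the delicate point is the nonlocal term, namely justifying differentiation under the double integral and the transport identity $\tfrac{d}{dt}\int_{E_t}f\,dx\big|_{t=0}=\int_{\pa E}f\langle X\vert\nu_E\rangle\,d\mu$ for the merely $C^1$ potential $v_E$. The change of variables above settles this using only smoothness of $\Phi$ in $(t,x)$ and $C^1$ regularity of $f$; alternatively one may differentiate $\int_{\T^n}|\grad v_{E_t}|^2\,dx$ directly, integrate by parts, and use $-\Delta v_{E_t}=u_{E_t}-m$ together with $\int_{\T^n}v_{E_t}\,dx=0$ to reduce once more to $2\int_{\T^n}v_E\,\pa_t u_{E_t}\big|_{t=0}\,dx$, at the cost of also invoking the elliptic continuous dependence of $v_{E_t}$ on $t$.
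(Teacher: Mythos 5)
Your proposal is correct, and the Area computation follows the paper's route exactly: variation of $g_{ij}$, Gauss--Weingarten, recognition of $\Div X^\top$ in coordinate form, and the divergence theorem. For the nonlocal term you take a slightly different and more economical route than the paper: you differentiate $\mathcal N(E_t)=\int\!\!\int G\,u_{E_t}u_{E_t}$ directly, use the symmetry of $G$ to reduce to $4\,\tfrac{d}{dt}\int_{E_t}v_E\,dx\big|_{t=0}$, and then apply the moving--domain transport identity once, to the fixed $C^1$ function $v_E$. The paper instead differentiates $\int_{\T^n}|\nabla v_{E_t}|^2\,dx$, integrates by parts via $-\Delta v_E=u_E-m$ to land on $2\int(u_E-m)\,\partial_t v(t,\cdot)|_{t=0}$, computes $\partial_t v(t,x)$ by applying the same transport identity (to $y\mapsto G(x,y)$ for each fixed $x$, and also on $E_t^c$), and then swaps the $x$ and $y$ integrals to recover $v_E$ on the boundary. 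Both routes rest on the same key lemma (Reynolds--type differentiation of $\int_{E_t}f$ via the change of variable $x=\Phi_t(y)$ and the divergence theorem); yours reaches the answer with one application of it instead of two and avoids the exchange of integrals, at the modest cost of the slightly informal intermediate expression $\int v_E\,\partial_t u_{E_t}$, which your subsequent transport step makes rigorous. You also correctly identify that $v_E\in W^{2,p}\hookrightarrow C^1$ for $p>n$ is the regularity needed for the transport identity, a point the paper uses implicitly.
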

\begin{proof}
We start by computing the derivative of the Area functional term of $J$. We let $\psi_t:\partial E\to\T^n$ be the embedding given by 
$$
\psi_t(x)=\Phi(t,x)\,,
$$
for $x\in\partial E$ and $t\in(-\eps,\eps)$, then $\psi_t(\partial E)=\partial E_t$ and $\partial_t\psi_t\bigl\vert_{t=0}=X$ at every point of $\partial E$, moreover $\psi_0$ is simply the inclusion map of $\pa E$ in $\T^n$.\\
Denoting by $g_{ij}=g_{ij}(t)$ the induced metrics (via $\psi_t$, as above) on the smooth hypersurfaces $\partial E_t$ and setting $\psi=\psi_0$, in a local chart we have 
\begin{align*}
\frac{\partial\,}{\partial t}g_{ij}\,\Bigr\vert_{t=0}\,
=&\,\left.\frac{\partial\,}{\partial
  t}\left\langle\frac{\partial\psi_t}{\partial
    x_i}\,\right\vert\left.\frac{\partial\psi_t}{\partial
    x_j}\right\rangle\right\vert_{t=0}\\
\,=&\,\left\langle\frac{\partial X}{\partial
    x_i}\,\right\vert\left.\frac{\partial\psi}{\partial
    x_j}\right\rangle+\left\langle\frac{\partial X}{\partial
    x_j}\,\right\vert\left.\frac{\partial\psi}{\partial
    x_i}\right\rangle\\
\,=&\,\frac{\partial\,}{\partial x_i}\left\langle
  X\,\left\vert\,\frac{\partial\psi}{\partial
    x_j}\right\rangle\right.
+\frac{\partial\,}{\partial x_j}\left\langle
  X\,\left\vert\,\frac{\partial\psi}{\partial
    x_i}\right\rangle\right.
-2\left\langle
  X\,\left\vert\,\frac{\partial^2\psi}{{\partial
    x_i}{\partial x_j}}\right\rangle\right.\\
\,=&\,\frac{\partial\,}{\partial x_i}\left\langle
  X_\tau\,\left\vert\,\frac{\partial\psi}{\partial
    x_j}\right\rangle\right.
+\frac{\partial\,}{\partial x_j}\left\langle
  X_\tau\,\left\vert\,\frac{\partial\psi}{\partial
    x_i}\right\rangle\right.
-2\Gamma_{ij}^k\left\langle
  X_\tau\,\left\vert\,\frac{\partial\psi}{\partial
    x_k}\right\rangle\right. +2h_{ij}\langle X\,\vert\,\nu_E\rangle\,,
\end{align*}
where we used the Gauss--Weingarten relations~\eqref{GW} in the last step and we denoted with $X_\tau= X-\langle X \vert \nu_E \rangle\nu_E$ the ``tangential part'' of the vector field $X$ along the hypersurface $\partial E$ (seeing $T_x\pa E$ as a hyperplane of $\R^n\approx T_x\T^n$).\\
Letting $\omega$ be the $1$--form defined by $\omega(Y)=g(X_\tau,Y)$, 
this formula can be rewritten as
\begin{equation}
\frac{\partial}{\partial t}g_{ij}\Bigr \vert_{t=0}=
\frac{\partial\omega_j}{\partial x_i}
+\frac{\partial\omega_i}{\partial x_j}
- 2\Gamma_{ij}^k\omega_k + 2h_{ij} \langle X| \nu_E  \rangle = \nabla _i\omega_j+\nabla _j\omega_i+ 2h_{ij} \langle X| \nu_E \rangle \,.\label{derg2}
\end{equation}
Hence, by the formula
\begin{equation}\label{detform}
\frac{d}{dt}\det A(t)=\det A(t)\,{\mathrm{tr}}\,[A^{-1}(t)\circ A'(t)]\,,
\end{equation}
holding for any $n\times n$ squared matrix $A(t)$ dependent on $t$, we get
\begin{align}
\frac{\partial\,}{\partial t}\sqrt{\det g_{ij}}\,\Bigr\vert_{t=0}\,
=&\,\frac{\sqrt{\det g_{ij}}\,
g^{ij}\left.\!\!\frac{\partial\,}{\partial t}g_{ij}\right\vert_{t=0}}{2}\\
=&\,\frac{\sqrt{\det g_{ij}}\,
g^{ij}\bigl(\nabla _i\omega_j+ 
\nabla _j\omega_i+2h_{ij}\langle X\,\vert\,\nu_E\rangle\bigr)}{2}\\
=&\,\sqrt{\det g_{ij}}\bigl(\Div\!X_\tau +\HHH\langle X\,\vert\,\nu_E\rangle\bigr)\,,\label{dermu2}
\end{align}
where the divergence is the (Riemannian) one relative to the hypersurface $\pa E$.
Then, we conclude (recalling the discussion after formula~\eqref{areachart})
\begin{align}
\frac{\partial\,}{\partial t}{\A}(\pa E_t) \,\Bigr\vert_{t=0}=\frac{\partial\,}{\partial t}{\A}(\psi_t(\pa E)) \,\Bigr\vert_{t=0}\,
=&\,\frac{\partial\,}{\partial t}\,\int_{\pa E}\,d\mu_t\,\Bigr\vert_{t=0}\nonumber\\
=&\,\frac{\partial\,}{\partial t}\,\int_{\pa E}\sqrt{\det g_{ij}}\,dx\,\Bigr\vert_{t=0}\nonumber\\
=&\,\int_{\pa E}\frac{\partial\,}{\partial t}\sqrt{\det g_{ij}}\,\Bigr\vert_{t=0}\,dx\nonumber\\
=&\,\int_{\pa E}\!\bigl(\Div\!X_\tau\!+\!\HHH\langle X\,\vert\,\nu_E\rangle\bigr)\sqrt{\det g_{ij}}\,dx\nonumber\\
=&\,\int_{\pa E}\!\bigl(\Div\!X_\tau\!+\!\HHH\langle X\,\vert\,\nu_E\rangle\bigr)\,d\mu\nonumber\\
=&\,\int_{\pa E}\HHH\langle X\,\vert\,\nu_E\rangle\,d\mu\label{local}
\end{align}
where in the last step we applied the divergence theorem, that is, formula~\eqref{divteo}, on $\pa E$.

In order to compute the derivative of the nonlocal term, we set
\begin{equation}\label{eqc0}
v(t,x)= v_{E_t}(x)= \int_{\T^n} G(x,y) u_{E_t}(x) \, dy = \int_{E_t} G(x,y) \, dy - \int_{E_t^c} G(x,y) \, dy,
\end{equation}
where $E_t^c = \T^n \setminus E_t$.
Then,
\begin{align}
\frac{d}{dt} \Bigl( \int_{\T^n} |\nabla  v_{E_t}(x)|^2 \, dx \Bigr) \Bigl|_{t=0} &= \frac{d}{dt} \Bigl( \int_{\T^n} |\nabla  v(t,x)|^2 \, dx\Bigl)\Bigl|_{t=0}\\
&= 2 \int_{\T^n} \nabla  v_E(x) \frac{\partial}{\partial t} \nabla  v(t,x)\Bigl |_{t=0} \, dx\\
&= 2 \int_{\T^n} (u_E(x) - m) \frac{\partial}{\partial t} v(t,x) \Bigl |_{t=0} \, dx,
\end{align}
where in the last equality we used the fact that $- \Delta v_E = u_E - m$ and we integrated by parts. Now, we note that \begin{equation}\label{eqc1}
\frac{\partial}{\partial t} v(t,x) = \frac{\partial}{\partial t} \Bigr (\int_{E_t} G(x,y) \, dy \Bigl) -  \frac{\partial}{\partial t} \Bigl (\int_{E_t^c} G(x,y) \, dy \Bigr),
\end{equation}
and, by a change of variable,
\begin{equation}
\label{eqqq50}
\frac{\partial}{\partial t} \Bigl (\int_{E_t} G(x,y) \, dy \Bigr) \Bigl |_{t=0}= \frac{\partial}{\partial t} \Bigl (\int_{E} G(x,\Phi(t,z)) J\Phi(t,z) \, dz \Bigr) \Bigl |_{t=0},
\end{equation}
where $J\Phi(t,\cdot)$ is the Jacobian of $\Phi(t, \cdot)$. Then, as $J\Phi(t,z)=\det [d\Phi(t,z)]$, using again formula~\eqref{detform}, we have
\begin{align}
\frac{\pa }{\pa t} J\Phi(t,z)\,\Bigr\vert_{t=0}= &\,J\Phi(t,z)\,{\mathrm{tr}}\,\Bigl[d\Phi(t,z)^{-1}\circ \frac{\pa }{\pa t} d\Phi(t,z)\Bigl]\,\Bigr\vert_{t=0}\\
= &\,J\Phi(t,z)\,{\mathrm{tr}}\,\Bigl[d\Phi(t,z)^{-1}\circ d\frac{\pa }{\pa t} \Phi(t,z)\Bigl]\,\Bigr\vert_{t=0}\\
= &\,{\mathrm{tr}}\,dX(z)\\
=&\,\Div X(z)\,,
\end{align}
by the definition of $X$ and being $\Phi(0,z)=z$. Thus, carrying the time derivative inside the integral in equation~\eqref{eqqq50}, we obtain
\begin{align}
\frac{\partial}{\partial t} \Bigl (\int_{E_t} G(x,y) \, dy \Bigr) \Bigl |_{t=0}
=&\,\int_E \bigl(\langle\nabla _y G(x,y) \vert X(y)\rangle + G(x,y) \Div\!X(y)\bigr)\,dy \\
=&\,\int_E \Div_y \bigl(G(x,y) X(y)\bigr) \, dy\\ 
=&\,\int_{\partial E} G(x,y) \langle X(y)\vert \nu_E(y)\rangle \, d\mu(y)\,.\label{eqc2}
\end{align}
By a very analogous computation we get
\begin{equation}\label{eqc3}
-\frac{\partial}{\partial t} \Bigl (\int_{E_t^c} G(x,y) \, dy \Bigr) \Bigl |_{t=0}= \int_{\partial E} G(x,y)\langle X(y) \vert \nu_E(y) \rangle \, d\mu(y)\,,
\end{equation}
then, using equalities~\eqref{potential1} and~\eqref{G2}, we conclude
\begin{align}
\frac{d}{dt}\int_{\T^n} |\nabla  v_{E_t}(x) |^2 \, dx\,\biggr|_{t=0}
=&\, 4 \int_{\T^n} (u_E(x) - m) \Bigl ( \int_{\pa E}G(x,y)\langle X(y) \vert \nu_E(y)\rangle \, d\mu(y)\Bigr)\,dx\nonumber\\
=&\,4 \int_{\pa E} \Bigl (\int_{\T^n}G(x,y)(u_E(x)-m) \, dx \Bigr) \langle X(y) \vert \nu_E(y)\rangle \, d\mu(y)\nonumber\\
=&\,4 \int_{\pa E} v_E(y)  \langle X(y) \vert \nu_E(y)\rangle \, d\mu(y)\, .\label{nonlocal}
\end{align}
Combining formulas~\eqref{local} and~\eqref{nonlocal}, we finally obtain formula~\eqref{eqcar2222}.
\end{proof}

Given a smooth set $E$ and any vector field $X\in C^\infty(\T^n; \R^n)$, considering the associated smooth flow $\Phi: (-\eps,\eps)\times \T^n \to \T^n$, defined by the system
\begin{equation}\label{varflow}
\begin{cases}
\frac{\pa \Phi}{\pa t}(t,x)=X(\Phi(t,x)), \\ \Phi(0, x)=x
\end{cases}
\end{equation}
for every $x \in \T^n$ and $t \in (-\eps, \eps)$, for some $\eps>0$, we have a variation $E_t=\Phi_t(E)$ with infinitesimal generator $X$. We call this variation the {\em special variation} associated to $X$. Moreover, given any smooth vector field $\overline{X}\in C^\infty(\pa E; \R^n)$, it can be extended easily to a smooth vector field $X\in C^\infty(\T^n; \R^n)$ with $X\vert_{\pa E}=\overline{X}$.

Hence, if $E$ is a critical set for $J$ there holds 
\begin{equation*}
\int_{\partial E} (\HHH+ 4 \gamma v_E) \langle X \vert \nu_E\rangle \, d\mu=0\,,
\end{equation*}
for every $X\in C^\infty(\T^n; \R^n)$. Choosing a smooth vector field $X\in C^\infty(\T^n; \R^n)$ with $X\vert_{\pa E}=(\HHH+ 4 \gamma v_E)\nu_E$, we then obtain the following corollary.

\begin{cor}\label{critcor}
A smooth set $E \subseteq \T^n$ is a critical set for $J$ if and only if the function $\HHH + 4 \gamma v_E$ is zero on $\pa E$.
When $\gamma =0$, we recover the classical condition $\HHH=0$ for a {\em minimal surface} in $\R^n$.
\end{cor}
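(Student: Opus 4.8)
The plan is to read off both implications directly from the first variation formula \eqref{eqcar2222} of Theorem \ref{first var}, the only substantive input being that the normal component $\langle X\vert\nu_E\rangle$ along $\pa E$ of the infinitesimal generator of a variation may be prescribed arbitrarily among smooth functions on $\pa E$.

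For the \emph{sufficiency} direction I would simply observe that, if $\HHH+4\gamma v_E\equiv 0$ on $\pa E$, then for every variation $E_t=\Phi_t(E)$ with infinitesimal generator $X\in C^\infty(\T^n;\R^n)$ the integrand in \eqref{eqcar2222} vanishes identically, so the first variation of $J$ at $E$ along $E_t$ is zero; hence $E$ is critical. This requires nothing beyond Theorem \ref{first var}.

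For the \emph{necessity} direction I would argue as follows. Assume $E$ is critical and fix an arbitrary $f\in C^\infty(\pa E)$. Since $\pa E$ is a smooth compact hypersurface with global unit normal $\nu_E$, the field $f\,\nu_E$ extends to a vector field $X\in C^\infty(\T^n;\R^n)$ with $X\vert_{\pa E}=f\,\nu_E$ (extend along normal coordinates in a tubular neighbourhood of $\pa E$ and cut off); the special variation \eqref{varflow} associated with $X$ then has infinitesimal generator $X$ and satisfies $\langle X\vert\nu_E\rangle=f$ on $\pa E$. Criticality together with \eqref{eqcar2222} gives
$$
\int_{\pa E}(\HHH+4\gamma v_E)\,f\,d\mu=0 .
$$
Since this holds for every $f\in C^\infty(\pa E)$ and the function $\HHH+4\gamma v_E$ is continuous on $\pa E$ — indeed $\HHH$ is smooth and $v_E\in W^{2,p}(\T^n)\hookrightarrow C^{1,\alpha}(\T^n)$ for all $p$ large — the fundamental lemma of the calculus of variations forces $\HHH+4\gamma v_E=0$ identically on $\pa E$. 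Taking $\gamma=0$ recovers the classical minimal surface condition $\HHH=0$.

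I do not expect any genuine obstacle here: the real work was already done in computing the first variation in Theorem \ref{first var}. The only two points deserving a word of care are the realization of a prescribed smooth normal function as $\langle X\vert\nu_E\rangle$ for the generator of an honest variation — precisely the ``special variation'' construction recalled just before the corollary — and the elementary regularity remark on $v_E$ that makes the testing argument legitimate.
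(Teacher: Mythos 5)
Your argument is correct and follows essentially the same route as the paper: both rely on the first variation formula \eqref{eqcar2222} together with the freedom to realize any prescribed smooth normal component $\langle X\vert\nu_E\rangle$ on $\pa E$ via an extended vector field and the associated special variation. The only cosmetic difference is that the paper tests against the single choice $X\vert_{\pa E}=(\HHH+4\gamma v_E)\nu_E$ to conclude from $\int_{\pa E}(\HHH+4\gamma v_E)^2\,d\mu=0$, whereas you test against all $f\in C^\infty(\pa E)$ and invoke the fundamental lemma of the calculus of variations — equivalent arguments.
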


It is less easy to characterize the infinitesimal generators of the volume--preserving variations of $E$, in order to find an analogous criticality condition on a set $E$, for the functional $J$ under a volume constraint.\\
Given $\Phi:(-\eps,\eps)\times\T^n\to\T^n$ such that $\vol(\Phi_t(E))=\vol(E_t)=\vol(E)$ for all $t \in(-\eps,\eps)$, we let $X_t\in C^\infty(\T^n; \R^n)$ be the family of the vector fields (well) defined by the formula
$$
X_t(\Phi(t,z))=\frac{\pa \Phi}{\pa t}(t,z),
$$
for every $t\in(-\eps,\eps)$ and $z\in\T^n$, hence, if $t=0$, the vector field $X=X_0$ is the infinitesimal generator of the volume--preserving variation $E_t$. Then, by changing variables, we have
\begin{equation}\label{eqc999}
0= \frac{d}{dt} \vol(E_t)=\frac{d}{dt}\int_{E_t}\,dx=\frac{d}{dt}\int_E J\Phi(t,z)\,dz= \int_E \frac{\partial}{\partial t} J\Phi(t,z)\,dz\,.
\end{equation}
As $J\Phi(t,z)=\det [d\Phi(t,z)]$, by means of formula~\eqref{detform}, we obtain
\begin{equation}
\frac{\pa }{\pa t} J\Phi(t,z)= J\Phi(t,z)\,{\mathrm{tr}}\,[d\Phi(t,z)^{-1}\circ dX_t(\Phi(t,z))\circ d\Phi(t,z)],
\end{equation}
since, by the definition of $X_t$ above, 
$$
\frac{\pa }{\pa t} d\Phi(t,z)=d\,\frac{\pa \Phi}{\pa t}(t,z)=d[X_t(\Phi(t,z))]=dX_t(\Phi(t,z))\circ d\Phi(t,z).
$$
Being the trace of a matrix invariant by conjugation, we conclude
\begin{equation}\label{tJac}
\frac{\pa }{\pa t} J\Phi(t,z)= J\Phi(t,z)\,{\mathrm{tr}}\,[dX_t(\Phi(t,z))]= J\Phi(t,z)\Div\!X_t(\Phi(t,z)),
\end{equation}
hence, by equality~\eqref{eqc999} and the divergence theorem (in $\T^n$), it follows
\begin{equation}\label{eqc1000}
0=\int_E \Div\!X_t(\Phi(t,z))J\Phi(t,z)\,dz=\int_{E_t} \Div\!X_t(x)\,dx=\int_{\pa E} \langle X_t\circ\Phi_t \vert \nu_{E_t}\rangle \, d\mu_t\,,
\end{equation}
where $\nu_{E_t}$ is the outer unit normal vector and $\mu_t$ the canonical Riemannian measure of the smooth hypersurface $\partial E_t$, given by the embedding $\psi_t=\Phi_t:\pa E\to\T^n$. Thus, letting $t=0$,
\begin{equation}\label{eqc1000bis}
\frac{d}{dt} \vol(E_t)\Bigr\vert_{t=0}=\int_{\pa E} \langle X\vert \nu_E\rangle \, d\mu=0
\end{equation}
and we conclude that if $X\in C^\infty(\T^n; \R^n)$ is the infinitesimal generator of a volume--preserving variation for $E$, its normal component $\varphi=\langle X \vert \nu_E\rangle$ on $\pa E$ has zero integral (with respect to the measure $\mu$).\\
Conversely, we have the following lemma whose proof is postponed after Lemma~\ref{lemma1}, since the arguments in the two proofs are very similar.

\begin{lem}\label{vector field}
Let $\varphi:\partial E\to\R$ a smooth function with zero integral with respect to the measure $\mu$ on $\pa E$. Then, there exists a smooth vector field $X\in C^\infty(\T^n; \R^n)$ such that $\varphi=\langle X \vert \nu_E\rangle$, $\Div\! X=0$ in a neighborhood of $\pa E$ and the flow $\Phi$ defined by system~\eqref{varflow} having $X$ as infinitesimal generator, gives a volume--preserving variation $E_t=\Phi_t(E)$ of $E$.
\end{lem}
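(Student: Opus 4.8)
The plan is to construct $X$ by hand in a tubular neighbourhood of $\pa E$, much as in the proof of Lemma~\ref{lemma1}, and then to extend it to all of $\T^n$ by a cut--off. For $\eps>0$ small the signed distance $d$ to $\pa E$ (positive, say, outside $E$) is smooth on the tubular neighbourhood $N_\eps=\{\,\abs{d}<\eps\,\}$, the map $(y,s)\mapsto y+s\,\nu_E(y)$ is a diffeomorphism of $\pa E\times(-\eps,\eps)$ onto $N_\eps$, and $\nu:=\grad^{\T^n} d$ is the unit vector field on $N_\eps$ restricting to $\nu_E$ on $\pa E$ (and tangent to the normal geodesics $s\mapsto y+s\,\nu_E(y)$). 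In these Fermi coordinates $(y,s)$ the volume form of $\T^n$ reads $J(y,s)\,ds\,dy$ with $J$ smooth and positive and $J(\cdot,0)$ the density of $\mu$ on $\pa E$. I would look for $X$ of the purely normal form $X=a\,\nu$ on $N_\eps$, with $a:N_\eps\to\R$ chosen so that $\Div X=0$ and $a(\cdot,0)=\varphi$.

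Since $\Div(a\,\nu)=\pa_s a+a\,\Div\nu$, requiring $\Div(a\,\nu)=0$ is, along each normal geodesic, a first order linear ODE in $s$; with datum $a(\cdot,0)=\varphi$ it has the unique smooth solution $a(y,s)=\varphi(y)\,J(y,0)/J(y,s)$, that is, $a$ equals $\varphi$ divided by the Jacobian of the ``normal exponential map'' $y\mapsto y+s\,\nu_E(y)$. Thus $X_0:=a\,\nu$ is smooth and divergence--free on $N_\eps$ and satisfies $\langle X_0\,\vert\,\nu_E\rangle=\varphi$ on $\pa E$. I would then fix radii $0<r_1<r_2<\eps$ and a cut--off $\theta\in C^\infty(\T^n)$ with $0\le\theta\le1$, $\theta\equiv1$ on $\overline{N_{r_1}}$ and $\theta$ vanishing in a neighbourhood of $\T^n\setminus N_{r_2}$, and set $X:=\theta\,X_0$ on $N_\eps$ and $X:=0$ on $\T^n\setminus N_\eps$. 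Then $X\in C^\infty(\T^n;\R^n)$, it coincides with $X_0$ on $N_{r_1}$, so $\Div X=0$ on the neighbourhood $N_{r_1}$ of $\pa E$ and $\langle X\,\vert\,\nu_E\rangle=\varphi$ on $\pa E$.

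It then remains to see that the flow $\Phi$ of $X$ given by~\eqref{varflow} --- defined, by compactness of $\T^n$, on $(-\eps',\eps')\times\T^n$ for some $\eps'>0$ --- yields a volume--preserving variation $E_t=\Phi_t(E)$. Arguing as in~\eqref{eqc999}--\eqref{eqc1000} (the flow being autonomous, the vector fields $X_t$ there all equal $X$) one gets $\frac{d}{dt}\vol(E_t)=\int_{E_t}\Div X\,dx$. Now $\Div X$ is supported in the ``shell'' $N_{r_2}\setminus\overline{N_{r_1}}$, and for $\abs t$ small $\pa E_t$ is contained in an arbitrarily thin neighbourhood of $\pa E$, so $E_t$ coincides with $E$ on that shell; hence $\frac{d}{dt}\vol(E_t)$ equals the integral of $\Div X$ over the collar $\{-r_2<s<-r_1\}$ (the part of $E$ lying in the shell), whose boundary is the parallel hypersurface $\{s=-r_1\}$ together with $\{s=-r_2\}$, on which $X=0$. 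By the divergence theorem this integral is the flux of $X$ across $\{s=-r_1\}$, and since $X=X_0$ is divergence--free on the slab $-r_1\le s\le 0$, it equals the flux of $X_0$ across $\pa E=\{s=0\}$, namely $\int_{\pa E}\langle X_0\,\vert\,\nu_E\rangle\,d\mu=\int_{\pa E}\varphi\,d\mu=0$ by hypothesis. Therefore $\vol(E_t)\equiv\vol(E)$, and $E_t$ is the required variation.

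The tubular--neighbourhood set--up and the ODE for $a$ are routine; the one delicate point is the very last step, the \emph{exact} conservation of volume. After the cut--off, $X$ is no longer divergence--free on the shell, and one must organise the divergence theorem so that this spurious divergence is traded for the flux of the genuinely divergence--free field $X_0$ through a hypersurface parallel to $\pa E$ --- and it is precisely there that the hypothesis $\int_{\pa E}\varphi\,d\mu=0$ is used, and needed, in accordance with~\eqref{eqc1000bis}. (Alternatively, one could add to $\theta X_0$ a tangential field supported in the shell and solving $\Div Y=-\Div(\theta X_0)$ there --- solvable on each connected component of the shell exactly because $\int_{\pa E}\varphi\,d\mu=0$ --- so as to make $X$ divergence--free on all of $\T^n$; then $\Phi_t$ is measure--preserving and the volume constraint is immediate.)
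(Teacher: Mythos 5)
Your construction of the vector field is essentially the paper's: on a tubular neighbourhood you solve $\pa_s a + a\,\Delta d_E = 0$ along the normal geodesics with datum $a(\cdot,0)=\varphi$, obtaining $a = \varphi\,J(\cdot,0)/J$; since $\pa_s\log J = \Delta d_E$, this is precisely the field $\varphi(\pi_E(\cdot))\,\xi\,\nabla d_E$ of the paper's proof (which reuses the function $\xi$ built in the proof of Lemma~\ref{lemma1}), so $\langle X\vert\nu_E\rangle=\varphi$ on $\pa E$ and $\Div X=0$ on the inner part of the collar. Where you differ is in how you conclude volume preservation. The paper invokes Step~4 of the proof of Lemma~\ref{lemma1}: since $\Div^{\!\T^n}X=0$ on a neighbourhood of $\pa E$ containing each $\pa E_t$, the second time--derivative of $\vol(E_t)$ vanishes (Remark~\ref{rem999}), so $t\mapsto\vol(E_t)$ is affine, and it is constant because its derivative at $t=0$ equals $\int_{\pa E}\varphi\,d\mu=0$ by~\eqref{eqc1000bis}. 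You instead compute $\frac{d}{dt}\vol(E_t)=\int_{E_t}\Div X\,dx$ directly for each small $t$, localize the integrand to the cutoff shell where $E_t$ still agrees with $E$, and convert it by the divergence theorem across the divergence--free slab into the flux of $X_0$ through $\pa E$, i.e.\ $\int_{\pa E}\varphi\,d\mu=0$. Both routes are correct and use the zero--mean hypothesis in exactly the same place; yours dispenses with the second--variation--of--volume identity at the price of somewhat more bookkeeping with the cutoff (one must also choose $\eps'$ so that $\pa E_t$ stays inside the region where $\theta\equiv 1$). Your parenthetical alternative — correcting $\theta X_0$ on the shell by a tangential field with prescribed divergence so that $X$ is divergence--free on all of $\T^n$ — also works, again precisely because the total flux of $X_0$ through $\pa E$ vanishes.
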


Hence, with this characterization of the infinitesimal generators of the volume--preserving variations for $E$, by Theorem~\ref{first var} we have that $E$ is a critical set for the functional $J$ under a volume constraint if and only if
\begin{equation*}
\int_{\partial E} (\HHH+ 4 \gamma v_E) \langle X \vert \nu_E\rangle \, d\mu=0\,,
\end{equation*}
for every $X\in C^\infty(\T^n; \R^n)$ such that $\langle X \vert \nu_E\rangle$ has zero integral on $\pa E$. By Lemma~\ref{vector field}, this is similarly to say that
$$
\int_{\pa E}(\HHH + 4 \gamma v_E) \varphi \, d\mu =0 \,,
$$
for all $\varphi \in C^{\infty}(\pa E)$ such that $\int_{\pa E} \varphi \, d\mu =0$, which is equivalent to the existence of a constant $\lambda \in \R$ such that
\begin{equation}\label{EL}
\HHH + 4 \gamma v_E= \lambda \qquad \text{on $\pa E$.}
\end{equation}

\begin{remark} The parameter $\lambda$ may be clearly interpreted as a \emph{Lagrange multiplier} associated with the volume constraint for $J$.
\end{remark}

\begin{proposition}\label{critprop}
A smooth set $E \subseteq \T^n$ is a critical set for $J$ under a volume constraint if and only if the function $\HHH + 4 \gamma v_E$ is constant on $\pa E$.
When $\gamma =0$, we recover the classical \emph{constant mean curvature} condition for hypersurfaces in $\R^n$.
\end{proposition}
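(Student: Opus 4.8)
The plan is to read off both implications directly from the first variation formula~\eqref{eqcar2222} of Theorem~\ref{first var}, combined with the two facts about volume--preserving variations that have just been assembled: that the infinitesimal generator $X$ of any such variation satisfies $\int_{\pa E}\langle X\vert\nu_E\rangle\,d\mu=0$ by~\eqref{eqc1000bis}, and the converse supplied by Lemma~\ref{vector field}, namely that every smooth $\varphi:\pa E\to\R$ with $\int_{\pa E}\varphi\,d\mu=0$ is realized as $\langle X\vert\nu_E\rangle$ for the infinitesimal generator $X$ of some volume--preserving variation of $E$.

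For the ``if'' direction I would assume $\HHH+4\gamma v_E\equiv\lambda$ for a constant $\lambda\in\R$; then for any volume--preserving variation $E_t=\Phi_t(E)$ with generator $X$, formula~\eqref{eqcar2222} gives
\[
\frac{d}{dt}J(E_t)\Bigl|_{t=0}=\int_{\pa E}(\HHH+4\gamma v_E)\langle X\vert\nu_E\rangle\,d\mu=\lambda\int_{\pa E}\langle X\vert\nu_E\rangle\,d\mu=0,
\]
the last equality being exactly~\eqref{eqc1000bis}. Hence all first variations along volume--preserving variations vanish, that is, $E$ is critical for $J$ under the volume constraint.

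For the ``only if'' direction I would assume $E$ is critical under the volume constraint and set $g:=\HHH+4\gamma v_E$. For any smooth $\varphi$ on $\pa E$ with $\int_{\pa E}\varphi\,d\mu=0$, Lemma~\ref{vector field} produces a volume--preserving variation of $E$ whose generator $X$ has $\langle X\vert\nu_E\rangle=\varphi$, so~\eqref{eqcar2222} together with criticality forces $\int_{\pa E}g\,\varphi\,d\mu=0$. Applying this to the admissible choice $\varphi=g-\fint_{\pa E}g\,d\mu$ (which does have zero integral), and noting that $\int_{\pa E}\bigl(\fint_{\pa E}g\,d\mu\bigr)\bigl(g-\fint_{\pa E}g\,d\mu\bigr)\,d\mu=0$ as well, one obtains $\int_{\pa E}\bigl(g-\fint_{\pa E}g\,d\mu\bigr)^2\,d\mu=0$, whence $g$ is constant on $\pa E$. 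The case $\gamma=0$ then recovers the constant mean curvature condition.

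I do not expect a genuine obstacle: the argument is essentially bookkeeping once Theorem~\ref{first var} and Lemma~\ref{vector field} are available, and the deferred Lemma~\ref{vector field} is the only substantive input. The one minor point worth attention is the final elementary step, which identifies functions $L^2(\pa E,\mu)$--orthogonal to all zero--mean functions with the constants; since $\pa E$ is compact this is unproblematic and stays valid even when $\pa E$ is disconnected, but it is worth recording explicitly so as not to tacitly assume connectedness.
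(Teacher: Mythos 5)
Your argument is correct and follows essentially the same route as the paper: Theorem~\ref{first var} plus the zero--mean characterization of normal generators (via~\eqref{eqc1000bis} and Lemma~\ref{vector field}) reduce criticality to $\int_{\pa E}(\HHH+4\gamma v_E)\varphi\,d\mu=0$ for all zero--mean $\varphi$, which forces $\HHH+4\gamma v_E$ to be constant. Your explicit final step with $\varphi=g-\fint_{\pa E}g\,d\mu$, and the remark that this works whether or not $\pa E$ is connected, are small worthwhile elaborations on what the paper leaves implicit.
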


Now we deal with the {\em second variation} of the functional $J$.

\begin{definition}
Given a variation $E_t$ of $E$, coming from the one--parameter family of diffeomorphism $\Phi_t$, the \emph{second variation of $J$ at $E$ with respect to $\Phi_t$} is given by
\begin{equation}\label{variazioneseconda}
\frac{d^2}{dt^2} J(E_t)\Bigl|_{t=0}\,.
\end{equation}
\end{definition}

In the following proposition we compute the second variation of the Area functional.
Then, we do the same for the nonlocal term of $J$ and we conclude with the second variation of the functional $J$.

\begin{proposition}[Second variation of $\A$]\label{secondvarA}
Let $E\subseteq \T^n$ a smooth set and $\Phi:(-\eps,\eps)\times \T^n\to\T^n$ a smooth map giving a variation $E_t=\Phi_t(E)$ with infinitesimal generator $X \in C^\infty (\T^n; \R^n)$. Then, 
\begin{align}
\frac{d^2}{dt^2} \A(\pa E_t)\Bigr|_{t=0}=&\, \int_{\pa E}\bigl(|\nabla  \langle X\vert \nu_E\rangle|^2- \langle X\vert \nu_E\rangle^2|B|^2\bigr)\, d\mu\\
&+\int_{\pa E}\HHH\bigl(\HHH \langle X\vert \nu_E\rangle^2+\langle Z\vert\nu_E\rangle-2\langle X_\tau |\nabla \langle X\vert \nu_E\rangle\rangle+ B(X_\tau, X_\tau)\bigr)\, d\mu\,,
\end{align}
where $X_\tau=X-\langle X|\nu_E\rangle\nu_E$ is the {\em tangential part} of $X$ on $\partial E$, $B$ and $\HHH$ are respectively the second fundamental form and the mean curvature of $\pa E$, and
\begin{equation}\label{Zdef}
Z= \frac{\pa^2\Phi}{\pa t^2}(0, \cdot )=\frac{\pa}{\pa t}[X_t(\Phi (t, \cdot))]\,\Bigr\vert_{t=0}=\frac{\pa X_t}{\pa t}\,\Bigr\vert_{t=0}+ d X(X)\,,
\end{equation}
where, for every $t\in(-\eps,\eps)$, the vector field $X_t\in C^\infty(\T^n; \R^n)$ is defined by the formula
$$
X_t(\Phi(t,z))=\frac{\pa \Phi}{\pa t}(t,z),
$$
for every $z\in\T^n$, hence, $X_0=X$.
\end{proposition}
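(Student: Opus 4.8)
The plan is to differentiate the first variation of $\A$ a second time, but written along the flow at an \emph{arbitrary} time $t$ rather than only at $t=0$. For fixed $t$, the family $s\mapsto E_{t+s}$ is the variation of $E_t$ associated to $\Psi_s=\Phi_{t+s}\circ\Phi_t^{-1}$, which satisfies $\Psi_0=\mathrm{Id}$ and has infinitesimal generator equal, along $\pa E_t$, to the field $X_t$ of the statement (recall $X_t(\Phi(t,z))=\tfrac{\pa\Phi}{\pa t}(t,z)$, so that $X_t\circ\psi_t=\partial_t\psi_t$ with $\psi_t=\Phi_t\vert_{\pa E}$). Hence Theorem~\ref{first var} with $\gamma=0$, applied at $s=0$, gives for every $t$, after pulling back the integral to $\pa E$ via $\psi_t$,
\[
\frac{d}{dt}\A(\pa E_t)=\int_{\pa E_t}\HHH_t\,\langle X_t\vert \nu_{E_t}\rangle\,d\mu_t=\int_{\pa E}\HHH_t\,\langle \partial_t\psi_t\vert \nu_t\rangle\,\sqrt{\det g_{ij}(t)}\,dx,
\]
where $g_{ij}(t)$ is the metric induced by $\psi_t$ and, throughout, $\nu_t$ and $\HHH_t$ denote the outer unit normal (as a map) and the mean curvature (as a function) of $\pa E_t$ read on $\pa E$ via $\psi_t$. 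Then I would differentiate once more under the integral sign at $t=0$ (everything is smooth in $t$ and $\pa E$ is compact); since $\nu_0=\nu_E$, $\HHH_0=\HHH$, $\partial_t\psi_t\vert_{t=0}=X$ and $\partial_t^2\psi_t\vert_{t=0}=Z$ by~\eqref{Zdef}, the product rule produces four contributions, involving respectively $\partial_t\HHH_t\vert_{t=0}$, $\langle Z\vert \nu_E\rangle$, $\langle X\vert \partial_t\nu_t\vert_{t=0}\rangle$, and the factor $\partial_t\sqrt{\det g_{ij}}\vert_{t=0}/\sqrt{\det g_{ij}}=\Div X_\tau+\HHH\langle X\vert\nu_E\rangle$ coming from~\eqref{dermu2}.

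The core of the argument is to establish two geometric evolution equations, both via the Gauss--Weingarten relations~\eqref{GW} (exactly as in the derivation of~\eqref{derg2}), writing $\varphi=\langle X\vert\nu_E\rangle$. First, differentiating $\langle\nu_t\vert\tfrac{\pa\psi_t}{\pa x_j}\rangle=0$ and $|\nu_t|^2=1$ at $t=0$ shows that $\partial_t\nu_t\vert_{t=0}$ is tangent to $\pa E$ and equals $-\nabla\varphi+g^{ij}h_{jk}(X_\tau)^k\tfrac{\pa\psi}{\pa x_i}$, whence $\langle X\vert\partial_t\nu_t\vert_{t=0}\rangle=-\langle X_\tau\vert\nabla\varphi\rangle+B(X_\tau,X_\tau)$. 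Second, writing $\HHH_t=g^{ij}(t)h_{ij}(t)$ with $h_{ij}(t)=-\langle\tfrac{\pa^2\psi_t}{\pa x_i\pa x_j}\vert\nu_t\rangle$ and differentiating at $t=0$, using in addition the Codazzi--Mainardi equations~\eqref{codazzi} (to rewrite $g^{ij}\nabla_i h_{jk}=\nabla_k\HHH$) and $\partial_t g^{ij}\vert_{t=0}=-g^{ik}g^{jl}\partial_t g_{kl}\vert_{t=0}$, one obtains the evolution of the mean curvature under a general (not necessarily normal) variation, namely $\partial_t\HHH_t\vert_{t=0}=-\Delta\varphi-|B|^2\varphi+\langle\nabla\HHH\vert X_\tau\rangle$.

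Finally I would substitute these into the four contributions and integrate by parts on the closed hypersurface $\pa E$ via the divergence theorem~\eqref{divteo}--\eqref{corollariodivteo}: using $\int_{\pa E}(-\Delta\varphi)\varphi\,d\mu=\int_{\pa E}|\nabla\varphi|^2\,d\mu$ and $\int_{\pa E}\HHH\varphi\,\Div X_\tau\,d\mu=-\int_{\pa E}\langle\nabla(\HHH\varphi)\vert X_\tau\rangle\,d\mu=-\int_{\pa E}\varphi\langle\nabla\HHH\vert X_\tau\rangle\,d\mu-\int_{\pa E}\HHH\langle\nabla\varphi\vert X_\tau\rangle\,d\mu$, the two occurrences of $\int_{\pa E}\varphi\langle\nabla\HHH\vert X_\tau\rangle\,d\mu$ cancel, and the surviving terms assemble precisely into the claimed expression (the first bracket being the classical $|\nabla\varphi|^2-\varphi^2|B|^2$ coming from $\partial_t\HHH_t$, the $\HHH\langle Z\vert\nu_E\rangle$ and $\HHH B(X_\tau,X_\tau)$ from $\langle Z\vert\nu_E\rangle$ and $\langle X\vert\partial_t\nu_t\vert_{t=0}\rangle$, the $-2\HHH\langle X_\tau\vert\nabla\varphi\rangle$ from combining $\langle X\vert\partial_t\nu_t\vert_{t=0}\rangle$ with the integration by parts of the $d\mu$ term, and the $\HHH^2\varphi^2$ from the $d\mu$ term).

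The hard part will be the careful derivation of the mean-curvature evolution equation $\partial_t\HHH_t\vert_{t=0}=-\Delta\varphi-|B|^2\varphi+\langle\nabla\HHH\vert X_\tau\rangle$: because $X$ is not purely normal, one must track all the tangential contributions, which in the end only reparametrize the geometry and wash out after the integration by parts. An alternative, entirely ``hands--on'' route, in the spirit of the proof of Theorem~\ref{first var}, is to expand $\partial_t^2\sqrt{\det g_{ij}}\vert_{t=0}$ directly from~\eqref{detform}, \eqref{derg2} and $\partial_t^2 g_{ij}\vert_{t=0}=\langle\tfrac{\pa Z}{\pa x_i}\vert\tfrac{\pa\psi}{\pa x_j}\rangle+\langle\tfrac{\pa\psi}{\pa x_i}\vert\tfrac{\pa Z}{\pa x_j}\rangle+2\langle\tfrac{\pa X}{\pa x_i}\vert\tfrac{\pa X}{\pa x_j}\rangle$; this works as well but additionally requires the Bochner identity for $1$--forms together with the Gauss equation (to convert the resulting intrinsic Ricci tensor of $\pa E$ into $B$, using that $\T^n$ is flat), plus several integrations by parts, so it is less economical.
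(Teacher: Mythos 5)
Your plan is correct and follows essentially the same route as the paper: both differentiate the first variation formula $\frac{d}{dt}\A(\pa E_t)=\int_{\pa E}\HHH_t\langle \partial_t\psi_t\vert\nu_t\rangle\,d\mu_t$ once more at $t=0$, compute $\partial_t\nu_t\vert_{t=0}$ and $\partial_t\HHH_t\vert_{t=0}$ via the Gauss--Weingarten and Codazzi--Mainardi relations, and then assemble the four product-rule contributions using the divergence theorem on $\pa E$ (the paper integrates $\int\varphi\langle X_\tau\vert\nabla\HHH\rangle$ by parts while you integrate $\int\HHH\varphi\,\Div X_\tau$ by parts, which is equivalent and yields the same cancellation). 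The intermediate formulas you state for $\partial_t\nu_t\vert_{t=0}$ and $\partial_t\HHH_t\vert_{t=0}$ agree with equations~\eqref{eqcar3333} and~\eqref{derH} of the paper.
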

\begin{proof}
We let $\psi_t=\Phi(t,\cdot)\vert_{\partial E}$. By arguing as in the first part of the proof of Theorem~\ref{first var} (without taking $t=0$), we have
$$
\frac{d}{dt}\A(\pa E_t) = \int_{\pa E} \HHH_t \langle X_t\circ\Phi_t\vert \nu_{E_t}\rangle \, d\mu_t,
$$
where $\HHH_t$ is the mean curvature of $\pa E_t$.
Consequently, we have
\begin{equation}
\frac{d^2}{dt^2} \A(\pa E_t)\,\Bigr|_{t=0}= \frac{d}{dt} \int_{\pa E} \HHH_t \langle X_t\circ\Phi_t \vert \nu_{E_t}\rangle \sqrt{\det g_{ij}} \, dx\, \Bigr |_{t=0}
\end{equation}
where $g_{ij}=g_{ij}(t)$.\\
In order to simplify the notation in the following computations, we drop the subscripts, that is, we let $\HHH(t,\cdot) = \HHH_t$, $\nu(t,\cdot)=\nu_{E_t}$, $\varphi(t,\cdot)=\langle X_t\circ\Phi_t\vert \nu_{E_t}\rangle$, $\psi(t,\cdot)= \psi_t$ and $X(t,\cdot)=X_t\circ\Phi_t$ (by a little abuse of notation, since $X$ is already the infinitesimal generator of the variation).\\
We then need to compute the derivatives
\begin{equation}\label{1}
\frac{\pa \HHH}{\pa t}\, \Bigr |_{t=0}\qquad\qquad\text{ and }\qquad\qquad
\frac{\pa}{\pa t}\langle X\vert \nu\rangle\, \Bigr |_{t=0}
\end{equation}
since we already know, by formula~\eqref{dermu2}, that
\begin{equation}
\label{3}
\frac{\pa }{\pa t} \sqrt{\det g_{ij}}\, \Bigr |_{t=0}=\bigl(\Div\!X_{\tau} +\HHH \varphi\bigr)\sqrt{\det g_{ij}}\, \Bigr |_{t=0}\,,
\end{equation}
hence, this derivative gives the following contribution to the second variation,
\begin{equation}
\label{contr 3}
\int_{\pa E}( \varphi \HHH \Div\!X_{\tau} + \varphi^2 \HHH^2) \, d\mu\,.
\end{equation}
Then, we compute (recalling formula~\eqref{Zdef})
$$
\frac{\pa \langle X \vert \nu\rangle}{\pa t}\, \biggr |_{t=0}= \left \langle \frac{\pa X}{\pa t} \biggl\vert \nu\right \rangle\biggr |_{t=0} +\left \langle X \biggl\vert \frac{\pa \nu}{\pa t}\right \rangle\biggr |_{t=0}= \left \langle  Z\vert \nu\right \rangle+\left \langle X \biggl\vert \frac{\pa \nu}{\pa t}\right \rangle\biggr |_{t=0}
$$
and using the fact that $\frac{\pa \nu}{\pa t}\bigr |_{t=0}$ is tangent to $\pa E$, in a local coordinate chart we obtain 
$$
\left \langle X \biggl\vert \frac{\pa \nu}{\pa t}\right\rangle \biggr |_{t=0}=X_{\tau}^p \left\langle \frac{\pa \psi}{\pa x_p}\biggl \vert \frac{\pa \nu}{\pa t}\right\rangle \biggr |_{t=0},
$$
where in the last inequality we used the notation $X_{\tau}=X_\tau ^p \frac{\pa \psi}{\pa x_p}$. Notice that, $\bigl \langle \frac{\pa \psi}{\pa x_p}\bigl \vert \nu \bigr\rangle=0$ for every $p\in\{1,\dots,n-1\}$ and $t\in(-\eps,\eps)$, hence, using the Gauss--Weingarten relations~\eqref{GW},
\begin{align}
0&= \frac{\pa}{\pa t} \left \langle \frac{\pa \psi}{\pa x_p}\biggl\vert \nu\right\rangle\biggr |_{t=0}=\left\langle\frac{\pa X}{\pa x_p}\biggl\vert\nu\right\rangle+\left\langle \frac{\pa \psi}{\pa x_p}\bigvert \frac{\pa \nu}{\pa t}\right\rangle \biggr |_{t=0}\\
&=\frac{\pa}{\pa x_p}\langle X\vert \nu\rangle - \left\langle X \bigvert \frac{\pa \nu}{\pa x_p}\right\rangle+\left\langle\frac{\pa \psi}{\pa x_p}\bigvert \frac{\pa \nu}{\pa t}\right\rangle \biggr |_{t=0}\\
&=\frac{\pa \varphi}{\pa x_p}-\left\langle X_\tau \bigvert \frac{\pa \nu}{\pa x_p}\right\rangle + \left\langle\frac{\pa \psi}{\pa x_p}\bigvert \frac{\pa \nu}{\pa t}\right\rangle \biggr |_{t=0}\\
&=\frac{\pa \varphi}{\pa x_p}- X_{\tau}^q\left\langle\frac{\pa \psi}{\pa x_q}\bigvert \frac{\pa \nu}{\pa x_p}\right\rangle+ \left\langle\frac{\pa \psi}{\pa x_p}\bigvert \frac{\pa \nu}{\pa t}\right\rangle \biggr |_{t=0}\\
&=\frac{\pa \varphi}{\pa x_p}- X_{\tau}^q \left\langle\frac{\pa \psi}{\pa x_q}\bigvert h_{pl}g^{li} \frac{\pa \psi}{\pa x_i}\right\rangle+ \left\langle\frac{\pa \psi}{\pa x_p}\bigvert \frac{\pa \nu}{\pa t}\right\rangle \biggr |_{t=0}\\
&=\frac{\pa \varphi}{\pa x_p} - X_{\tau} ^q h_{pl}g^{li}g_{qi}+\left\langle\frac{\pa \psi}{\pa x_p}\bigvert \frac{\pa \nu}{\pa t}\right\rangle \biggr |_{t=0}
\end{align}
and we can conclude that
\begin{equation}\label{eqcar3333}
\left\langle \frac{\pa \psi}{\pa x_p} \bigvert \frac{\pa \nu}{\pa t}\right\rangle \biggr |_{t=0} = -\frac{\pa \varphi}{\pa x_p}+ X_{\tau}^q h_{pq}\,,
\end{equation}
where $h_{pq}$ are the components of the second fundamental form $B$ of $\pa E$ in the local chart.
Thus, we obtain the following identity 
\begin{align}
\frac{\pa}{\pa t}\langle X\vert \nu\rangle\, \Bigr |_{t=0}&=\langle Z\vert \nu\rangle + X_{\tau}^p \left\langle\frac{\pa \psi}{\pa x_p} \bigvert \frac{\pa \nu}{\pa t}\right\rangle \biggr |_{t=0}\\
&=\langle Z \vert \nu\rangle - \frac{\pa \varphi}{\pa x_p}X_{\tau}^p +X_{\tau}^p X_{\tau}^q h_{pq}\\
&=\langle Z \vert \nu\rangle -\langle X_{\tau}| \nabla  \langle X\vert \nu\rangle\rangle +B(X_\tau,X_\tau)\label{ZZZ}
\end{align}
and the relative contribution to the second variation is given by 
\begin{equation}
\int_ {\pa E}\HHH \bigr(\langle Z \vert \nu \rangle -\langle X_{\tau}| \nabla  \langle X\vert \nu\rangle\rangle+B(X_\tau,X_\tau)\bigl)\,d\mu\,.
\end{equation}
Now we conclude by computing the first derivative in~\eqref{1}. To this aim, we note that 
$$
\HHH=-\left\langle \frac{\pa^2 \psi}{\pa x_i \pa x_j}\bigvert \nu\right\rangle g^{ij}
$$
hence, we need the following terms
\begin{equation}
\label{1A}
\frac{\pa g^{ij}}{\pa t}\, \Bigr |_{t=0}
\end{equation}
\begin{equation}
\label{1B}
\left\langle \frac{\pa^2 \psi}{\pa x_i \pa x_j} \bigvert \frac{\pa \nu}{\pa t}\right\rangle \biggr |_{t=0}
\end{equation}
\begin{equation}
\label{1C}
\left\langle \frac{\pa}{\pa t} \frac{\pa^2 \psi}{\pa x_i \pa x_j} \bigvert \nu\right\rangle\biggr |_{t=0} \, .
\end{equation}
We start with the term~\eqref{1A}, recalling that
\begin{equation}
\frac{\pa g_{ij}}{\pa t}\, \Bigr |_{t=0}=\nabla _i \omega_j + \nabla _j \omega_i +2 h_{ij} \langle X\vert \nu\rangle
\end{equation}
by equation~\eqref{derg2}, where $\omega$ is the $1$--form defined by $\omega(Y)=g(X_\tau,Y)$.\\
Using the fact that $g_{ij}g^{jk}=0$, we obtain
\begin{equation}
0= \frac{\pa g_{ij}}{\pa t}\, \Bigr |_{t=0} g^{jk}+g_{ij}\frac{\pa g^{jk}}{\pa t}\, \Bigr |_{t=0}= g^{jk}\bigr(\nabla _i \omega_j + \nabla _j \omega_i +2h_{ij} \langle X\vert \nu \rangle \bigl)+ g_{ij}\frac{\pa g^{jk}}{\pa t}\, \Bigr |_{t=0}
\end{equation}
then,
\begin{equation}\label{1Acalc}
\frac{\pa g^{pk}}{\pa t}\, \Bigr |_{t=0}=- g^{jp} g^{ik}\Bigr(\nabla _i \omega_j + \nabla _j \omega_i +2 h_{ij} \langle X\vert \nu\rangle \Bigl)=-\nabla  ^{p}X_\tau ^k -\nabla ^k X_\tau ^p - 2 h^{pk}\varphi\,.
\end{equation}
We then proceed with the computation of the term~\eqref{1B}, by means of equation~\eqref{eqcar3333},
\begin{equation}
\left\langle\frac{\pa^2 \psi}{\pa x_i \pa x_j}\bigvert \frac{\pa \nu}{\pa t}\right\rangle \biggr |_{t=0}= \Gamma_{ij}^k \left\langle\frac{\pa \psi}{\pa x_k} \bigvert \frac{\pa \nu}{\pa t}\right\rangle \biggr |_{t=0}=\Gamma_{ij}^k \bigr(-\frac{\pa \varphi}{\pa x_k} + X_{\tau}^q h_{qk} \bigl)
\end{equation}
and finally we compute the term~\eqref{1C},
\begin{equation}
\left\langle\frac{\pa}{\pa t}\frac{\pa^2 \psi}{\pa x_i \pa x_j}  \bigvert \nu\right\rangle= \left\langle\frac{\pa^2 X}{\pa x_i \pa x_j} \bigvert \nu\right\rangle\biggr |_{t=0}=\left\langle\frac{\pa^2(\varphi\nu)}{\pa x_i \pa x_j}\bigvert \nu\right\rangle + \left\langle\frac{\pa^2X_\tau }{\pa x_i \pa x_j} \bigvert \nu\right\rangle.
\end{equation}
We have
\begin{align}
\left\langle \frac{\pa^2(\varphi\nu)}{\pa x_i \pa x_j} \bigvert \nu\right\rangle&= \frac{\pa^2 \varphi}{\pa x_i \pa x_j} +\left\langle\frac{\pa^2 \nu}{\pa x_i \pa x_j} \bigvert \nu \right\rangle \varphi \\
&= \frac{\pa^2 \varphi}{\pa x_i \pa x_j} +\left\langle\frac{\pa}{\pa x_i}\bigl(h_{jl}g^{lp}\frac{\pa \psi}{\pa x_p}\bigr)\bigvert \nu\right\rangle \varphi\\
&=\frac{\pa^2 \varphi}{\pa x_i \pa x_j} + h_{jl}g^{lp}\left\langle\frac{\pa^2 \psi}{\pa x_i \pa x_j} \bigvert \nu \right\rangle \varphi\\
&=\frac{\pa^2 \varphi}{\pa x_i \pa x_j} + \varphi h_{jl}{g^{lp}}h_{ip}
\end{align}
and
\begin{align}
\left\langle \frac{\pa^2 X_\tau}{\pa x_i \pa x_j} \bigvert \nu\right\rangle&= \frac{\pa}{\pa x_i}\left\langle \frac{\pa X_\tau}{\pa x_j}\bigvert \nu\right\rangle - \left\langle\frac{\pa X_\tau}{\pa x_j} \bigvert \frac{\pa \nu}{\pa x_i}\right\rangle\\
&= \frac{\pa}{\pa x_i}\left\langle\frac{\pa}{\pa x_j}\Bigr(X_\tau^p \frac{\pa \psi}{\pa x_p}\Bigl)\bigvert \nu \right\rangle- \left\langle \frac{\pa X_\tau}{\pa x_j} \bigvert \frac{\pa \nu}{\pa x_i}\right\rangle\\
&=\frac{\pa}{\pa x_i}\biggr[X_\tau^p \left\langle\frac{\pa^2 \psi}{\pa x_j \pa x_p}\bigvert\nu \right\rangle\biggl] - \left\langle\frac{\pa X_\tau}{\pa x_j}\bigvert \frac{\pa \nu}{\pa x_i}\right\rangle\\
&=-\frac{\pa}{\pa x_i}\bigr(X_\tau^p h_{pj}\bigl) - \left\langle\frac{\pa X_\tau}{\pa x_j} \bigvert \frac{\pa \nu}{\pa x_i}\right\rangle\\
&=-\frac{\pa}{\pa x_i} \bigr(X_\tau^p h_{pj}\bigl)- \left\langle\frac{\pa}{\pa x_j}\Bigr(X_\tau^p \frac{\pa \psi}{\pa x_p}\Bigl)\bigvert \frac{\pa \nu}{\pa x_i}\right\rangle\\
&=-\frac{\pa}{\pa x_i} \bigr(X_\tau^p h_{pj}\bigl) - X_{\tau}^p \left\langle\frac{\pa^2 \psi}{\pa x_j \pa x_p}\bigvert \frac{\pa \nu}{\pa x_i}\right\rangle-\frac{\pa X_\tau^p}{\pa x_j}\left\langle\frac{\pa \psi}{\pa x_p} \bigvert \frac{\pa \nu}{\pa x_i}\right\rangle\\
&=-\frac{\pa}{\pa x_i} \bigr(X_\tau^p h_{pj}\bigl)-X_\tau^p \Gamma_{jp}^k \left\langle\frac{\pa \psi}{\pa x_k}\bigvert \frac{\pa \nu}{\pa x_i}\right\rangle-\frac{\pa X_\tau^p}{\pa x_j} \left\langle\frac{\pa \psi}{\pa x_p}\bigvert \frac{\pa \nu}{\pa x_i}\right\rangle\\
&=-\frac{\pa}{\pa x_i} \bigr(X_\tau^p h_{pj}\bigl) - X_\tau^p \Gamma_{jp}^kh_{il} g^{lq} g_{kq}-\frac{\pa X^p}{\pa x_j}h_{il}g^{lq}g_{pq}\\
&=-\frac{\pa}{\pa x_i}\bigr(X_\tau^p h_{pj}\bigl)- X_\tau ^p \Gamma_{jp}^k h_{ik} - \frac{\pa X^k}{\pa x_j} h_{ik}.
\end{align}
Hence, we finally get
\begin{align}
\frac{\pa \HHH}{\pa t}\, \Bigr |_{t=0}=&\,-2 h_{ij} \nabla ^i X_\tau ^j - 2  \langle X\vert \nu\rangle|B|^2- g^{ij}\frac{\pa^2 \varphi}{\pa x_i \pa x_j} + g^{ij}\Gamma_{ij}^k \frac{\pa \varphi}{\pa x_k}\\
&\,+ |B|^2 \langle X\vert \nu\rangle - g^{ij} \Gamma_{ij}^k h_{kq} X_\tau^q
+ g^{ij} \frac{\pa}{\pa x_i} (X_\tau ^p h_{pj}) + h_{ij}\nabla ^i X^j_j\\
=&\,-|B|^2 \langle X\vert \nu\rangle - h_{ij} \nabla ^i X_\tau^j - \Delta \varphi\\
&\,+ g^{ij}\Bigr[\frac{\pa}{\pa x_i}\Bigr(X_\tau^p h_{pj}\Bigl)-\Gamma_{ij}^k \Bigr(X_\tau^p h_{pk}\Bigl)\Bigl]\\
=&\,-\varphi |B|^2 - \Delta \varphi - h_{ij}\nabla ^i X_\tau^j + g^{ij}\nabla _{i}(X_\tau^p h_{pj})\\
=&\,-\varphi|B|^2 -\Delta \varphi - h_{ij}\nabla ^i X_\tau^j + \Div(X_\tau^p h_{pj})\\
=&\,-\varphi|B|^2 - \Delta \varphi + \langle X_\tau \vert \Div\!B\rangle\\
=&\,-\varphi|B|^2 -\Delta \varphi + \langle X_\tau \vert \nabla  \HHH\rangle\, ,\label{derH}
\end{align}
where in the last equality we used the Codazzi--Mainardi equations (see~\cite{Man}). We conclude that the contribution of the first term in~\eqref{1} is then
\begin{equation}
\int_{\pa E} \varphi \bigr(-\varphi |B|^2 - \Delta \varphi + \langle X_\tau \vert \nabla  \HHH\rangle \bigl) \, d\mu.
\end{equation}
Putting all these contributions together, we obtain the second variation of the Area functional,
\begin{align}
\frac{d^2}{d t^2}\A(\partial E_t)\,\Bigr\vert_{t=0}&= \int_{\pa E} \Bigl[-\varphi \Delta \varphi - \varphi^2 |B|^2 +\varphi \langle X_\tau \vert \nabla  \HHH\rangle +\varphi \HHH \Div\!X_\tau + \varphi^2\HHH^2\\
&\qquad\qquad + \HHH\bigl(\langle Z \vert \nu \rangle- \langle X_\tau \vert \nabla  \varphi\rangle + B(X_\tau,X_\tau)\bigr)\Bigr] \, d\mu\,.
\end{align}
Integrating by parts, we have
\begin{equation}
\int_{\pa E} \varphi \langle X_\tau \vert \nabla  \HHH\rangle\,d\mu= -\int_{\pa E}\bigl[\HHH \langle X_\tau \vert \nabla  \varphi\rangle + \HHH \varphi \Div\!X_\tau\bigr]\,d\mu
\end{equation}
and we can conclude
\begin{equation}
\frac{d^2}{dt^2}\A(\partial E_t)\,\Bigr\vert_{t=0}= \int_{\pa E}\Bigl[ |\nabla  \varphi|^2 - \varphi^2 |B|^2 + \varphi^2 \HHH^2+ \HHH(\langle Z \vert \nu \rangle - 2 \langle X_\tau \vert \nabla  \varphi\rangle + B(X_\tau, X_\tau))\Bigr]\, d\mu\,,
\end{equation}
which is the formula we wanted.
\end{proof}

\begin{proposition}[Second variation of the nonlocal term]\label{second var F}
Let $E\subseteq \T^n$, $\Phi$, $E_t$, $X$, $X_\tau$, $X_t$, $\HHH$, $B$ and $Z$ as in the previous proposition. Then, setting
\begin{equation}
N(t)=\int_{\T^n}|\nabla  v_{E_t}(x)|^2\, dx\,,
\end{equation}
where $v_{E_t}:\T^n\to\R$ is the function defined by formulas~\eqref{potential1}--\eqref{potential} and $\partial_{\nu_E} v_E=\langle\nabla^{\T^n}\!v_{E}|\nu_E\rangle$, the following formula holds
\begin{align}
\frac{d^2}{dt^2}N(t)\Bigr|_{t=0}=&\,
8 \int_{\pa E} \int_{\pa E} G(x,y) \langle X(x) \vert \nu_E (x) \rangle \langle X(y) \vert \nu_{E} (y)\rangle \, d\mu(x) d\mu(y)\\
&\,+4\int_{\pa E} \Bigl[v_E\bigl(\HHH\langle X\,\vert\,\nu_E\rangle^2+\langle Z \vert \nu_E\rangle-2\langle X_{\tau}| \nabla  \langle X\vert \nu_E\rangle\rangle+B(X_\tau,X_\tau)\bigr)\\
&\,\,\,\qquad\qquad +\partial_{\nu_E} v_E\langle X|\nu_E\rangle^2 \,\Bigr]\,d\mu\,,\label{N''}
\end{align}
giving the second variation of the nonlocal term of $J$.
\end{proposition}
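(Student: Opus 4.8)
The plan is to go one order higher than the first--variation computation for the nonlocal term in the proof of Theorem~\ref{first var}, reusing the derivatives of the metric and of the normal already obtained in the proof of Proposition~\ref{secondvarA}. Concretely, I would first produce an expression for $\frac{d}{dt}N(t)$ valid at every $t\in(-\eps,\eps)$, and then differentiate it once more, after pulling everything back to the fixed hypersurface $\pa E$ via $\psi_t=\Phi_t\vert_{\pa E}$.

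For the first step, I would repeat the argument of the proof of Theorem~\ref{first var} without specializing to $t=0$. Differentiating $\int_{E_t}G(x,y)\,dy=\int_E G(x,\Phi(t,z))J\Phi(t,z)\,dz$ and using $\frac{\pa\Phi}{\pa t}(t,z)=X_t(\Phi(t,z))$ together with $\frac{\pa}{\pa t}J\Phi(t,z)=J\Phi(t,z)\Div X_t(\Phi(t,z))$ from~\eqref{tJac}, one gets $\frac{\pa}{\pa t}\int_{E_t}G(x,y)\,dy=\int_{\pa E_t}G(x,z)\langle X_t(z)\vert\nu_{E_t}(z)\rangle\,d\mu_t(z)$, and likewise for $E_t^c$, so that $\frac{\pa v}{\pa t}(t,x)=2\int_{\pa E_t}G(x,z)\langle X_t(z)\vert\nu_{E_t}(z)\rangle\,d\mu_t(z)$. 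Then, exactly as in~\eqref{G1}, integrating by parts with $-\Delta v_{E_t}=u_{E_t}-m_t$ (here $m_t=2\vol(E_t)-1$) gives $\frac{d}{dt}N(t)=2\int_{\T^n}(u_{E_t}-m_t)\frac{\pa v}{\pa t}(t,\cdot)\,dx$, and inserting the formula for $\frac{\pa v}{\pa t}$, Fubini's theorem together with $\int_{\T^n}G(x,z)\,dx=0$ from~\eqref{G2} (which absorbs the constant $m_t$ and recovers $v_{E_t}$) yields
\[
\frac{d}{dt}N(t)=4\int_{\pa E_t}v_{E_t}(z)\,\langle X_t(z)\vert\nu_{E_t}(z)\rangle\,d\mu_t(z).
\]

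Pulling this back to $\pa E$ and writing $\varphi_t=\langle X_t\circ\Phi_t\vert\nu_{E_t}\rangle$, it becomes $\frac{d}{dt}N(t)=4\int_{\pa E}(v_{E_t}\circ\psi_t)\,\varphi_t\,\sqrt{\det g_{ij}(t)}\,dx$, and I would differentiate at $t=0$ by the Leibniz rule. The factor $\sqrt{\det g_{ij}(t)}$ contributes $\Div X_\tau+\HHH\varphi$ by~\eqref{dermu2}; the factor $\varphi_t$ contributes $\langle Z\vert\nu_E\rangle-\langle X_\tau\vert\nabla\varphi\rangle+B(X_\tau,X_\tau)$ by formula~\eqref{ZZZ} of the proof of Proposition~\ref{secondvarA} (recall $\varphi=\langle X\vert\nu_E\rangle$); and the factor $v_{E_t}\circ\psi_t$ I would differentiate by the chain rule as $\frac{\pa v}{\pa t}(0,\cdot)+\langle\nabla^{\T^n}v_E\vert X\rangle$, where $\frac{\pa v}{\pa t}(0,x)=2\int_{\pa E}G(x,z)\varphi(z)\,d\mu(z)$ by the first step, and where, decomposing $X=X_\tau+\varphi\nu_E$ and noting that the tangential component of $\nabla^{\T^n}v_E$ along $\pa E$ is the intrinsic gradient of the restriction $v_E\vert_{\pa E}$, the transport term equals $\langle\nabla v_E\vert X_\tau\rangle+\varphi\,\partial_{\nu_E}v_E$. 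Collecting the pieces, the term $2\int_{\pa E}G(x,z)\varphi(z)\,d\mu(z)$ immediately yields the double boundary integral $8\int_{\pa E}\int_{\pa E}G(x,z)\varphi(x)\varphi(z)\,d\mu(x)d\mu(z)$, the term $\varphi\,\partial_{\nu_E}v_E$ yields $4\int_{\pa E}\partial_{\nu_E}v_E\,\varphi^2\,d\mu$, and the $v_E$--weighted $\HHH\varphi^2$, $\langle Z\vert\nu_E\rangle$ and $B(X_\tau,X_\tau)$ terms appear directly.

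The one point requiring a little thought is the disposal of the leftover tangential terms $4\int_{\pa E}\big(\varphi\langle\nabla v_E\vert X_\tau\rangle-v_E\langle X_\tau\vert\nabla\varphi\rangle+v_E\varphi\Div X_\tau\big)\,d\mu$. Here I would observe that $\varphi\langle\nabla v_E\vert X_\tau\rangle+v_E\langle X_\tau\vert\nabla\varphi\rangle+v_E\varphi\Div X_\tau=\Div(v_E\varphi X_\tau)$, so by the divergence theorem~\eqref{divteo} on $\pa E$ this sum reduces to $-8\int_{\pa E}v_E\langle X_\tau\vert\nabla\varphi\rangle\,d\mu$, producing exactly the $-2v_E\langle X_\tau\vert\nabla\varphi\rangle$ contribution of~\eqref{N''}. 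Apart from this cancellation, the only other mildly delicate matter is the routine justification of differentiating under the integral sign and of the formula for $\frac{\pa v}{\pa t}(t,x)$ at a generic time (the same Eulerian/Lagrangian bookkeeping already used in Theorem~\ref{first var}, resting on~\eqref{tJac}); there is no conceptual obstacle, since every derivative needed is either elementary or already recorded above.
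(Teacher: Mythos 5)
Your proof is correct and follows essentially the same route as the paper's: differentiate the first--variation identity $\tfrac{d}{dt}N(t)=4\int_{\pa E}v_{E_t}\langle X_t\circ\Phi_t\vert\nu_{E_t}\rangle\,d\mu_t$ by Leibniz, feed in formulas~\eqref{dermu2} and~\eqref{ZZZ}, and clean up the tangential terms by the divergence theorem on $\pa E$. The only difference is cosmetic bookkeeping: the paper integrates $v\varphi\Div X_\tau$ by parts first and then merges the resulting $-\varphi\langle\nabla v\vert X_\tau\rangle$ with the transport term $\langle\nabla^{\T^n}v_E\vert X\rangle\varphi$ to produce $\partial_{\nu_E}v_E\,\varphi^2$, whereas you split the transport term into its normal and tangential parts first and then recognize the leftover tangential pieces as $\Div(v_E\varphi X_\tau)$; both orders land on the same formula.
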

\begin{proof}
By arguing as in the second part of the proof of Theorem~\ref{first var} (equations~\eqref{eqc1}--\eqref{nonlocal}), we have
\begin{equation}
\frac{d}{dt} N(t)=4 \int_{\pa E} v_{E_t} \langle X_t\circ\Phi_t\vert \nu_{E_t}\rangle \, d\mu_t=4 \int_{\pa E} v_{E_t} \langle X_t\circ\Phi_t\vert \nu_{E_t}\rangle \sqrt{\det g_{ij}} \, dx\,.
\end{equation}
Setting $v(t,x)=v_{E_t}(x)$, $v_t=\frac{\pa v}{\pa t}(0,\cdot)$, $v_i=\frac{\pa v}{\pa x_i}(0,\cdot)$ and adopting the same notation of the proof of the previous proposition, that is, we let $\HHH(t,\cdot) = \HHH_t$, $\nu(t,\cdot)=\nu_{E_t}$ and $X(t,\cdot)=X_t\circ\Phi_t$, we have
\begin{align}
\frac{d^2}{dt^2} N(t)\Bigr|_{t=0}=&\,4 \frac{d}{dt}\int_{\pa E} v\langle X\vert \nu\rangle \sqrt{\det g_{ij}} \, dx\,\Bigr|_{t=0}\\
=&\,4 \int_{\pa E} \Bigl[v_t\langle X\vert \nu\rangle+v_iX^i\langle X\vert \nu\rangle+v\langle X\vert \nu\rangle \Div\!X_{\tau}\\
&\,\,\,\qquad\quad+v\HHH\langle X\vert \nu\rangle^2+v\frac{\pa}{\pa t}\langle X\vert \nu\rangle\,\Bigr|_{t=0}\, \Bigr] \, d\mu\\
=&\,4 \int_{\pa E} \Bigl[v_t\langle X\vert \nu\rangle+v_iX^i\langle X\vert \nu\rangle+ v\langle X\vert \nu\rangle \Div\!X_{\tau}\\
&\,\,\,\qquad\quad+v\bigl(\HHH\langle X\vert \nu\rangle^2+\langle Z \vert \nu\rangle -\langle X_{\tau}| \nabla  \langle X\vert \nu\rangle\rangle +B(X_\tau,X_\tau)\bigr)\,\Bigr]\, d\mu\,,
\end{align}
by formulas~\eqref{dermu2} and~\eqref{ZZZ}. Then, integrating by parts the divergence, we obtain
\begin{align}
\frac{d^2}{dt^2} N(t)\Bigr|_{t=0}=&\,4 \int_{\pa E} \Bigl[v_t\langle X\vert \nu\rangle+v_iX^i\langle X\vert \nu\rangle-\langle\nabla v\vert X_\tau\rangle\langle X\vert \nu\rangle\\
&\,\,\,\qquad\quad+v\bigl(\HHH\langle X\vert \nu\rangle^2+\langle Z \vert \nu\rangle -2\langle X_{\tau}| \nabla  \langle X\vert \nu\rangle\rangle +B(X_\tau,X_\tau)\bigr)\,\Bigr]\, d\mu\\
=&\,4 \int_{\pa E} \Bigl[v_t\langle X\vert \nu\rangle+\partial_\nu v\langle X|\nu\rangle^2\\
&\,\,\,\qquad\quad+v\bigl(\HHH\langle X\vert \nu\rangle^2+\langle Z \vert \nu\rangle -2\langle X_{\tau}| \nabla  \langle X\vert \nu\rangle\rangle +B(X_\tau,X_\tau)\bigr)\,\Bigr]\, d\mu
\end{align}
where $\partial_\nu v=\langle\nabla^{\T^n}\!\!v\vert\nu\rangle$.\\
Now, by equations~\eqref{eqc1}--\eqref{eqc3}, there holds
\begin{equation}\label{v'}
v_t(0,x)=2 \int_{\pa E} G(x,y)\left\langle X(y) \vert \nu (y)\right\rangle \, d\mu(y)\,,
\end{equation}
hence, substituting this expression for $v_t$ in the equation above we have formula~\eqref{N''}.
\end{proof}

Putting together Propositions~\ref{secondvarA} and~\ref{second var F}, we then obtain the second variation of the nonlocal Area functional $J$.

\begin{thm}[Second variation of the functional $J$] \label{secondvar}
Let $E\subseteq \T^n$ a smooth set and $\Phi:(-\eps,\eps)\times \T^n\to\T^n$ a smooth map giving a variation $E_t$ with infinitesimal generator $X \in C^\infty (\T^n; \R^n)$.  Then,
\begin{align}
\frac{d^2 }{dt^2}J(E_t)\Bigl|_{t=0} =&\,\int_{\pa E}\bigl(|\nabla  \langle X\vert  \nu_E\rangle|^2-\langle X\vert \nu_E\rangle^2|B|^2\bigr)\, d\mu\nonumber\\
& \,+8\gamma \int_{\pa E}  \int_{\pa E}G(x,y)\langle X\vert  \nu_E(x)\rangle \langle X\vert \nu_E(y)\rangle \,d\mu(x)\,d\mu(y)\nonumber\\
&\,+4\gamma\int_{\pa E}\pa_{\nu_E} v_E \langle X\vert  \nu_E\rangle ^2\,d\mu+R\,, \label{IIfinale}
\end{align}
with the ``remainder term'' $R$ given by
\begin{align}
R=&\,\int_{\pa E}(\HHH+4\gamma v_E)\bigl(\HHH\langle X\vert \nu\rangle^2+\langle Z \vert \nu\rangle -2\langle X_{\tau}| \nabla  \langle X\vert \nu\rangle\rangle +B(X_\tau,X_\tau)\bigr)\, d\mu\\
=&\,\int_{\pa E}(\HHH+4\gamma v_E)\Bigl[\langle X\vert \nu_E\rangle\Div^{\!\T^n}\!\!X
-\Div\bigl(\langle X\vert \nu_E\rangle X_\tau\bigr)+\Bigl\langle\frac{\partial X_t}{\partial t}\Bigr\vert_{t=0}\Bigr\vert\nu_E\Bigr\rangle\,\Bigr]\, d\mu
\end{align}
where $\nu_E$ is the outer unit normal vector to $\pa E$, $X_\tau=X-\langle X|\nu_E\rangle\nu_E$ is the {\em tangential part} of $X$ on $\partial E$, $v_{E}:\T^n\to\R$ is the function defined by formulas~\eqref{potential1}--\eqref{potential}, $\partial_{\nu_E} v_E=\langle\nabla^{\T^n}\!v_{E}|\nu_E\rangle$, $B$ and $\HHH$ are respectively the second fundamental form and the mean curvature of $\pa E$, the vector field $X_t\in C^\infty(\T^n; \R^n)$ is defined by the formula
$X_t(\Phi(t,z))=\frac{\pa \Phi}{\pa t}(t,z)$ for every $t\in(-\eps,\eps)$ and $z\in\T^n$, and
\begin{equation}
Z= \frac{\pa^2\Phi}{\pa t^2}(0, \cdot )=\frac{\pa}{\pa t}[X_t(\Phi (t, \cdot))]\,\Bigr\vert_{t=0}=\frac{\pa X_t}{\pa t}\,\Bigr\vert_{t=0}+ d X(X)\,.
\end{equation}
\end{thm}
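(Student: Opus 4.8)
The plan is to use the additive structure $J(E_t)=\A(\pa E_t)+\gamma\,N(t)$, where $N(t)=\int_{\T^n}|\nabla v_{E_t}(x)|^2\,dx$, so that $\frac{d^2}{dt^2}J(E_t)\big|_{t=0}$ is the sum of $\frac{d^2}{dt^2}\A(\pa E_t)\big|_{t=0}$ and $\gamma\,\frac{d^2}{dt^2}N(t)\big|_{t=0}$, and then to insert the two formulas already established in Proposition~\ref{secondvarA} and Proposition~\ref{second var F} and collect terms. Writing $\varphi:=\langle X\vert\nu_E\rangle$ and
\[
Q:=\HHH\langle X\vert\nu_E\rangle^2+\langle Z\vert\nu_E\rangle-2\langle X_\tau\vert\nabla\langle X\vert\nu_E\rangle\rangle+B(X_\tau,X_\tau),
\]
Proposition~\ref{secondvarA} gives $\frac{d^2}{dt^2}\A(\pa E_t)\big|_{t=0}=\int_{\pa E}(|\nabla\varphi|^2-\varphi^2|B|^2)\,d\mu+\int_{\pa E}\HHH\,Q\,d\mu$, while Proposition~\ref{second var F} gives $\frac{d^2}{dt^2}N(t)\big|_{t=0}=8\int_{\pa E}\int_{\pa E}G(x,y)\varphi(x)\varphi(y)\,d\mu(x)\,d\mu(y)+4\int_{\pa E}\bigl(v_E\,Q+\pa_{\nu_E}v_E\,\varphi^2\bigr)\,d\mu$. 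Adding the first to $\gamma$ times the second, the two ``$Q$--integrals'' merge into $\int_{\pa E}(\HHH+4\gamma v_E)\,Q\,d\mu$, which is exactly the remainder $R$ in its first form, and the surviving pieces are precisely the three integrals written in~\eqref{IIfinale}. This already proves the statement with $R$ in the first form.

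It then remains to rewrite $R$ in the ``divergence'' form, which is the only real computation. Since the common factor $\HHH+4\gamma v_E$ is irrelevant here, it suffices to check the pointwise identity on $\pa E$
\[
Q=\langle X\vert\nu_E\rangle\,\Div^{\!\T^n}\!X-\Div\bigl(\langle X\vert\nu_E\rangle X_\tau\bigr)+\bigl\langle\tfrac{\pa X_t}{\pa t}\big|_{t=0}\,\big|\,\nu_E\bigr\rangle.
\]
First I would use~\eqref{Zdef} in the form $\langle Z\vert\nu_E\rangle=\bigl\langle\tfrac{\pa X_t}{\pa t}\big|_{t=0}\,\big|\,\nu_E\bigr\rangle+\langle dX(X)\vert\nu_E\rangle$, so that the $\tfrac{\pa X_t}{\pa t}$ term matches on both sides and one is reduced to a purely geometric identity not involving the variation. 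Then, decomposing $X=X_\tau+\varphi\nu_E$ and evaluating the ambient divergence in an orthonormal frame of $\R^n\approx T\T^n$ whose first $n-1$ vectors are tangent to $\pa E$ and whose last one is $\nu_E$, one gets along $\pa E$
\[
\Div^{\!\T^n}\!X=\Div X_\tau+\varphi\HHH+\langle dX(\nu_E)\vert\nu_E\rangle,
\]
where $\Div X_\tau$ is the intrinsic divergence of the tangent field $X_\tau$ on $\pa E$ and $\varphi\HHH$ arises from the trace of the Weingarten map, via the conventions~\eqref{secform}--\eqref{GW}. Together with the intrinsic Leibniz rule $\Div(\varphi X_\tau)=\varphi\Div X_\tau+\langle X_\tau\vert\nabla\varphi\rangle$, this yields $\varphi\,\Div^{\!\T^n}\!X-\Div(\varphi X_\tau)=\varphi^2\HHH+\varphi\langle dX(\nu_E)\vert\nu_E\rangle-\langle X_\tau\vert\nabla\varphi\rangle$.

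To conclude, I would write $dX(X)=dX(X_\tau)+\varphi\,dX(\nu_E)$ and differentiate the defining relation $\varphi=\langle X\vert\nu_E\rangle$ along the tangent direction $X_\tau$, obtaining $\langle X_\tau\vert\nabla\varphi\rangle=\langle dX(X_\tau)\vert\nu_E\rangle+\langle X_\tau\vert d\nu_E(X_\tau)\rangle$; by the second Gauss--Weingarten relation in~\eqref{GW}, $d\nu_E(X_\tau)$ is tangent to $\pa E$ and $\langle X_\tau\vert d\nu_E(X_\tau)\rangle=B(X_\tau,X_\tau)$. Substituting $\langle dX(X)\vert\nu_E\rangle=\langle X_\tau\vert\nabla\varphi\rangle-B(X_\tau,X_\tau)+\varphi\langle dX(\nu_E)\vert\nu_E\rangle$ into $Q$, the two occurrences of $B(X_\tau,X_\tau)$ cancel and the $\langle X_\tau\vert\nabla\varphi\rangle$ terms collapse to a single one, so that $Q$ reduces to $\varphi^2\HHH+\varphi\langle dX(\nu_E)\vert\nu_E\rangle-\langle X_\tau\vert\nabla\varphi\rangle+\bigl\langle\tfrac{\pa X_t}{\pa t}\big|_{t=0}\,\big|\,\nu_E\bigr\rangle$, which by the previous step is exactly the claimed right-hand side. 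The main (and essentially only) obstacle is this last piece of bookkeeping: one must carefully distinguish the ambient covariant derivatives on $\T^n$ from the intrinsic ones on $\pa E$ and apply the Weingarten relation with the right sign — there is no analytic difficulty, since both ingredient propositions are already available.
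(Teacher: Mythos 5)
Your proposal is correct and follows essentially the same route as the paper: adding the formulas from Propositions~\ref{secondvarA} and~\ref{second var F} gives~\eqref{IIfinale} with $R$ in its first form, and the passage to the divergence form of $R$ reduces, as in the paper, to the pointwise identity~\eqref{claim}, proved by splitting $\langle Z\vert\nu_E\rangle$ via $Z=\tfrac{\pa X_t}{\pa t}\big|_{t=0}+dX(X)$, using the Gauss--Weingarten relation to turn $\langle X_\tau\vert d\nu_E(X_\tau)\rangle$ into $B(X_\tau,X_\tau)$, and evaluating $\Div^{\!\T^n}\!X$ in an adapted orthonormal frame so that $\Div^{\!\T^n}\!X=\Div X_\tau+\langle X\vert\nu_E\rangle\HHH+\langle\nu_E\vert dX(\nu_E)\rangle$. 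Your bookkeeping is organized slightly differently from the paper (you substitute the expression for $\langle dX(X)\vert\nu_E\rangle$ into $Q$ directly, while the paper first isolates the combination $Q-\bigl\langle\tfrac{\pa X_t}{\pa t}\big|_{t=0}\big\vert\nu_E\bigr\rangle$ and simplifies), but the ingredients and the final cancellations are identical.
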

\begin{proof}
Formula~\eqref{IIfinale} and the first equality for $R$ follows simply adding (after multiplying the nonlinear term by $\gamma$) the expressions for 
$\frac{d^2}{dt^2}\A (\pa E_t)\,\bigr|_{t=0}$
and $\frac{d^2}{dt^2}\int_{\T^n}|\nabla \, v_{E_t}|^2\, dx\,\bigr|_{t=0}$ we found in Propositions~\ref{secondvarA} and~\ref{second var F}.\\
If now we show that
\begin{align}
\HHH \langle X| &\, \nu_E  \rangle ^2+\langle Z | \nu_E  \rangle -2 \langle X_\tau | \nabla \langle X| \nu_E  \rangle\rangle +
B(X_\tau, X_\tau)\\
&=\langle X| \nu_E  \rangle\Div^{\!\T^n}\!\!X-\Div (\langle X| \nu_E  \rangle X_\tau)+\Bigl\langle\frac{\partial X_t}{\partial t}\Bigr\vert_{t=0}\Bigr\vert\nu_E\Bigr\rangle\,,\label{claim}
\end{align}
we clearly obtain the second expression for $R$.\\
We note that, being every derivative of $\nu_E$ a tangent vector field,
\begin{align}
\langle X_\tau | \nabla  \langle X| \nu_E  \rangle\rangle
&=\langle \nu_E| dX(X_\tau)\rangle +\langle X| \langle X_\tau|\nabla \nu_E\rangle\rangle \\
&=\langle \nu_E| dX(X_\tau)\rangle +\langle X_\tau| \langle X_\tau|\nabla \nu_E\rangle\rangle \\
&=\langle \nu_E | dX(X_\tau)\rangle +B (X_\tau,X_\tau )\,,
\end{align}
by the Gauss--Weingarten relations~\eqref{GW}.\\
Therefore, since $Z-\frac{\partial X_t}{\partial t}\bigr\vert_{t=0}=dX(X)$, we have
\begin{align}
\HHH \langle X  &\vert \nu_E  \rangle ^2+\langle Z | \nu_E  \rangle -2 \langle X_\tau | \nabla \langle X| \nu_E  \rangle\rangle +
B(X_\tau, X_\tau) -\Bigl\langle\frac{\partial X_t}{\partial t}\Bigr\vert_{t=0}\Bigr\vert\nu\Bigr\rangle\nonumber\\
& = \HHH \langle X|  \nu_E  \rangle ^2+ \langle \nu_E | dX(X) \rangle - \langle X_\tau |\nabla \langle X| \nu_E  \rangle\rangle -
\langle \nu_E |  dX(X_\tau)\rangle \nonumber\\
& = \HHH \langle X|  \nu_E  \rangle ^2+\langle \nu_E | dX(\langle X| \nu_E  \rangle \nu_E)\rangle -\langle X_\tau | \nabla \langle X| \nu_E  \rangle\rangle \nonumber\\
& = \HHH \langle X|  \nu_E  \rangle ^2+\langle X| \nu_E  \rangle\langle \nu_E | dX(\nu_E)\rangle +\langle X| \nu_E  \rangle\Div\!X_\tau  -\Div (\langle X| \nu_E  \rangle X_\tau)\,.\label{eqcar5555}
\end{align}
Now we notice that, choosing an orthonormal basis $e_1,\dots,e_{n-1},e_n=\nu_E$ of $\R^n$ at a point $p\in\partial E$ and letting $X=X^ie_i$, we have
$$
\langle e_i|\nabla^\top\!X^i\rangle=\bigl\langle e_i\bigr|\nabla^{\T^n}\!X^i-\langle \nabla^{\T^n}\!X^i|\nu_E\rangle\nu_E\bigr\rangle=\Div^{\!\T^n}\!\!X-\langle \nu_E | dX(\nu_E)\rangle\,,
$$
where the symbol $\nabla^{\top}\!f$ denotes the projection on the tangent space to $\partial E$ of the gradient $\nabla^{\T^n}\!\!f$ of a function, called {\em tangential gradient} of $f$ and coincident with the gradient operator of $\pa E$ applied to the restriction of $f$ to the hypersurface, while $\langle e_i|\nabla^\top\!X^i\rangle$ is called {\em tangential divergence} of $X$, usually denoted with $\Div^{\!\top}\!\!X$ and coincident with the (Riemannian) divergence of $\pa E$ if $X$ is a tangent vector field, as we will see below (see~\cite{Si}). Moreover, if we choose a local parametrization of $\partial E$ such that $\frac{\pa\psi}{\pa x_i}(p)=e_i$, for $i\in\{1,\dots,n-1\}$, we have $e_i^j=\frac{\pa\psi^j}{\pa x_i}=g^{ij}=\delta_{ij}$ at $p$ and 
\begin{align}
\langle e_i|\nabla ^\top\!X^i\rangle=\Div^{\!\top}\!\!X=&\,\langle e_i|\nabla ^\top\!X^i_\tau\rangle+\langle e_i|\nabla ^\top\!(\langle X|\nu_E\rangle\nu_E^i)\rangle\\
=&\,\langle e_i|\nabla  X^i_\tau\rangle+\langle X|\nu_E\rangle \langle e_i|\nabla^{\T^n}\!\!\nu_E^i\rangle\\
=&\,\langle e_i|\nabla  X^i_\tau\rangle+\langle X|\nu_E\rangle\frac{\pa\psi^j}{\pa x_i} h_{jl}g^{ls}\frac{\pa\psi^i}{\pa x_s}\\
=&\,\nabla _{e_i}X^i_\tau+\langle X|\nu_E\rangle h_{ii}\\
=&\,\Div\!X_\tau+\langle X|\nu_E\rangle\HHH\,,
\end{align}
where we used again the Gauss--Weingarten relations~\eqref{GW} and the fact that the covariant derivative of a tangent vector field along a hypersurface of $\R^n$ can be obtained by differentiating in $\R^n$ (a local extension of) the vector field and projecting the result on the tangent space to the hypersurface (see~\cite{gahula}, for instance). Hence, we get
$$
\langle \nu_E | dX(\nu_E)\rangle=\Div^{\!\T^n}\!\!X-\langle e_i|\nabla ^\top\!X^i\rangle=\Div^{\!\T^n}\!\!X-\Div\!X_\tau-\langle X|\nu_E\rangle\HHH
$$
and equation~\eqref{claim} follows by substituting this left term in formula~\eqref{eqcar5555}.
\end{proof}

\begin{remark} We are not aware of the presence in literature of this ``geometric'' line in deriving the (first and) second variation of $J$, moreover, in~\cite[Theorem~2.6, Step~3, equation~2.67]{ChSt}, this latter is obtained only at a critical set, while in~\cite[Theorem~3.6]{CaMoMo} the methods are strongly ``analytic'' and in our opinion less straightforward. These two papers are actually the ones on which is based the computation in~\cite[Theorem~3.1]{AcFuMo} of the second variation of $J$ at a general smooth set $E\subseteq\T^n$. Anyway, in this last paper, the variations of $E$ are all {\em special} variations, that is, they are given by the flows in system~\eqref{varflow}, indeed, the term with the time derivative of $X_t$ is missing (see formulas~3.1 and~7.2 in~\cite{AcFuMo}).

Notice that the second variation in general does not depend only on the normal component $\langle X|\nu_E\rangle$ of the restriction to $\pa E$ of the infinitesimal generator $X$ of a variation $\Phi$ (this will anyway be true at a critical set $E$, see below), due to the presence of the $Z$--term and of $B (X_\tau,X_\tau )$ depending also on the tangential component of $X$ and of its behavior around $\pa E$. Even if we restrict ourselves to the special variations coming from system~\eqref{varflow}, with a {\em normal} infinitesimal generator $X$, which imply that all the vector fields $X_t$ are the same and coinciding with $X$, hence $Z=dX(X)$ and $X_\tau=0$, the second variation still depends also on the behavior of $X$ in a neighborhood of $\pa E$ (as $Z$). However, there are very particular case in which it depend only on $\langle X|\nu_E\rangle$, for instance when the variation is special and $X$ is normal with zero divergence (of $\T^n$) on $\pa E$ (in particular, if $\Div^{\!\T^n}\!\!X=0$ in a neighborhood of $\pa E$ or in the whole $\T^n$), as it can be seen easily by the second form of the remainder term $R$ in the above theorem.
\end{remark}

We see now how the second variation behaves at a critical set of $J$. 

\begin{cor}\label{IIcritcor}
If $E \subseteq \T^n$ is a critical set for $J$, there holds
\begin{align}
\frac{d^2 }{dt^2}J(E_t)\Bigl|_{t=0} =&\,\int_{\pa E}\bigl(|\nabla  \langle X\vert  \nu_E\rangle|^2-\langle X\vert \nu_E\rangle^2|B|^2\bigr)\, d\mu\nonumber\\
&\,+8\gamma \int_{\pa E}  \int_{\pa E}G(x,y)\langle X\vert  \nu_E(x)\rangle \langle X\vert \nu_E(y)\rangle \,d\mu(x)\,d\mu(y)\nonumber\\
&\,+4\gamma\int_{\pa E}\pa_{\nu_E} v_E \langle X\vert  \nu_E\rangle ^2\,d\mu\,,
\end{align}
for every variation $E_t$ of $E$, hence, the second variation of $J$ at $E$ depends only on the normal component of the restriction of the infinitesimal generator $X$ to $\pa E$, that is, on $\langle X| \nu_E  \rangle$.\\
When $\gamma=0$ we get the well known second variation of the Area functional at a smooth set $E$ such that $\pa E$ is a minimal surface in $\R^n$,
\begin{equation}
\frac{d^2 }{dt^2}\A(\pa E_t)\Bigl|_{t=0} =\int_{\pa E}\bigl(|\nabla  \langle X\vert  \nu_E\rangle|^2-\langle X\vert \nu_E\rangle^2|B|^2\bigr)\, d\mu\,.
\end{equation}
\end{cor}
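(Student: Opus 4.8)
The plan is to derive this directly from the second--variation formula of Theorem~\ref{secondvar} together with the criticality characterization of Corollary~\ref{critcor}, so that the only real content is the observation that the whole ``remainder term'' $R$ drops out. By Theorem~\ref{secondvar}, for any variation $E_t$ of $E$ with infinitesimal generator $X$, the second variation $\frac{d^2}{dt^2}J(E_t)\bigl|_{t=0}$ equals the three explicit integral terms appearing in the statement plus the remainder
$$
R=\int_{\pa E}(\HHH+4\gamma v_E)\bigl(\HHH\langle X\vert \nu_E\rangle^2+\langle Z \vert \nu_E\rangle -2\langle X_{\tau}| \nabla  \langle X\vert \nu_E\rangle\rangle +B(X_\tau,X_\tau)\bigr)\, d\mu\,.
$$
The first step is simply to note, from this first form of $R$ in Theorem~\ref{secondvar}, that the factor $\HHH+4\gamma v_E$ multiplies the entire integrand of $R$.

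The second step is to invoke Corollary~\ref{critcor}: a smooth set $E$ is critical for $J$ if and only if $\HHH+4\gamma v_E\equiv 0$ on $\pa E$. Substituting this identity into the displayed expression makes the integrand vanish pointwise on $\pa E$, so $R=0$, \emph{regardless} of the tangential part $X_\tau$ of $X$ or of the auxiliary field $Z=\frac{\pa^2\Phi}{\pa t^2}(0,\cdot)$. Plugging $R=0$ into the formula of Theorem~\ref{secondvar} yields precisely the claimed expression for $\frac{d^2}{dt^2}J(E_t)\bigl|_{t=0}$; since this remaining expression only involves $\langle X\vert\nu_E\rangle$ (and the fixed geometry of $\pa E$ through $B$, $\pa_{\nu_E}v_E$ and the Green function $G$), the asserted dependence on the sole normal component of $X|_{\pa E}$ follows.

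Finally, the case $\gamma=0$ is obtained by specialization: then $J=\A$, criticality reduces to $\HHH\equiv 0$ on $\pa E$ (i.e.\ $\pa E$ is a minimal surface in $\R^n$), and setting $\gamma=0$ in the simplified formula leaves only $\int_{\pa E}\bigl(|\nabla\langle X\vert\nu_E\rangle|^2-\langle X\vert\nu_E\rangle^2|B|^2\bigr)\,d\mu$. There is no genuine obstacle in this corollary: it is an immediate consequence of the two preceding results, the only point to verify being that the common factor $\HHH+4\gamma v_E$ indeed multiplies every summand of $R$, which is transparent from the way $R$ is written in Theorem~\ref{secondvar}.
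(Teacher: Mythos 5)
Your proof is correct and follows essentially the same route as the paper: apply Theorem~\ref{secondvar}, invoke Corollary~\ref{critcor} to conclude $\HHH+4\gamma v_E\equiv 0$ on $\pa E$, observe that this common factor annihilates the remainder term $R$, and then specialize to $\gamma=0$ for the Area functional. Nothing to add.
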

\begin{proof} The thesis follows immediately, recalling that there holds $\HHH+4\gamma v_E=0$, by Corollary~\ref{critcor}, hence the remainder term $R$ in formula~\eqref{IIfinale} is zero.
\end{proof}

Finally, we see that the second variation has the same form (that is, $R=0$) also for $J$ under a volume constraint, at a critical set. 

\begin{proposition}\label{IIcritprop}
If $E \subseteq \T^n$ is a critical set for $J$ under a volume constraint, there holds
\begin{align}
\frac{d^2 }{dt^2}J(E_t)\Bigl|_{t=0} =&\,\int_{\pa E}\bigl(|\nabla  \langle X\vert  \nu_E\rangle|^2-\langle X\vert \nu_E\rangle^2|B|^2\bigr)\, d\mu\nonumber\\
& \,+8\gamma \int_{\pa E}  \int_{\pa E}G(x,y)\langle X\vert  \nu_E(x)\rangle \langle X\vert \nu_E(y)\rangle \,d\mu(x)\,d\mu(y)\nonumber\\
&\,+4\gamma\int_{\pa E}\pa_{\nu_E} v_E \langle X\vert  \nu_E\rangle ^2\,d\mu\,,
\end{align}
for every volume--preserving variation $E_t$ of $E$, hence, the second variation of $J$ at $E$ depends only on the normal component of the restriction of the infinitesimal generator $X$ to $\pa E$, that is, on $\langle X| \nu_E  \rangle$.\\
When $\gamma=0$ we get the second variation of the Area functional under a volume constraint, at a smooth set $E$ such that $\pa E$ has constant mean curvature,
\begin{equation}
\frac{d^2 }{dt^2}\A(\pa E_t)\Bigl|_{t=0} =\int_{\pa E}\bigl(|\nabla  \langle X\vert  \nu_E\rangle|^2-\langle X\vert \nu_E\rangle^2|B|^2\bigr)\, d\mu\,.
\end{equation}
\end{proposition}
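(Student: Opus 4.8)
The plan is to reduce the statement to showing that the ``remainder term'' $R$ in the second variation formula~\eqref{IIfinale} of Theorem~\ref{secondvar} vanishes along volume--preserving variations of $E$: once $R=0$ is established, plugging it into~\eqref{IIfinale} gives the asserted formula, and the case $\gamma=0$ follows by taking $\gamma=0$ throughout, exactly as in Corollary~\ref{IIcritcor}. Now, if $E$ is critical for $J$ under the volume constraint, Proposition~\ref{critprop} yields a constant $\lambda\in\R$ with $\HHH+4\gamma v_E\equiv\lambda$ on $\pa E$, so the first expression for $R$ in Theorem~\ref{secondvar} becomes
$$
R=\lambda\int_{\pa E}\bigl(\HHH\langle X\vert\nu_E\rangle^2+\langle Z\vert\nu_E\rangle-2\langle X_\tau\vert\nabla\langle X\vert\nu_E\rangle\rangle+B(X_\tau,X_\tau)\bigr)\,d\mu\,,
$$
and it suffices to prove that this integral is zero.

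The main observation is that the integrand above is precisely what integrates to the \emph{second derivative of the volume}. I would start from the identity
$$
\frac{d}{dt}\vol(E_t)=\int_{\pa E}\langle X_t\circ\Phi_t\vert\nu_{E_t}\rangle\sqrt{\det g_{ij}(t)}\,dx\,,
$$
which is formula~\eqref{eqc1000} read as an equality of functions of $t$ on $(-\eps,\eps)$ (before imposing the constraint), and differentiate it once more in $t$, evaluating at $t=0$ by means of formula~\eqref{dermu2} for $\frac{\pa}{\pa t}\sqrt{\det g_{ij}}\big|_{t=0}$ and formula~\eqref{ZZZ} for $\frac{\pa}{\pa t}\langle X\vert\nu\rangle\big|_{t=0}$. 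This is the same computation already carried out in the proof of Proposition~\ref{second var F}, but with the weight $v_{E_t}$ replaced by the constant function $1$, and it gives
$$
\frac{d^2}{dt^2}\vol(E_t)\Bigr|_{t=0}=\int_{\pa E}\bigl(\langle X\vert\nu_E\rangle\Div\!X_\tau+\HHH\langle X\vert\nu_E\rangle^2+\langle Z\vert\nu_E\rangle-\langle X_\tau\vert\nabla\langle X\vert\nu_E\rangle\rangle+B(X_\tau,X_\tau)\bigr)\,d\mu\,.
$$
Integrating by parts the divergence term on the closed hypersurface $\pa E$, i.e.\ using $\int_{\pa E}\langle X\vert\nu_E\rangle\Div\!X_\tau\,d\mu=-\int_{\pa E}\langle X_\tau\vert\nabla\langle X\vert\nu_E\rangle\rangle\,d\mu$ (the same step used for the analogous term in Proposition~\ref{secondvarA}), turns the right--hand side into exactly the integral appearing in $R$.

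Since $E_t$ is a volume--preserving variation, $\vol(E_t)=\vol(E)$ is constant in $t$, hence $\frac{d^2}{dt^2}\vol(E_t)\big|_{t=0}=0$; therefore that integral vanishes and $R=\lambda\cdot 0=0$, which completes the argument. I do not anticipate a genuine obstacle here: the computation of $\frac{d^2}{dt^2}\vol(E_t)$ is a routine specialization of material already developed above, and the only point demanding a little care is to record the first--variation--of--volume identity as an equality of functions of $t$ on a full interval (equivalently, to observe that the computation of Proposition~\ref{second var F} genuinely specializes to the constant weight), so that differentiating it a second time is legitimate.
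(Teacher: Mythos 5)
Your proposal is correct and takes essentially the same route as the paper: reduce to $R=\lambda\int_{\pa E}(\HHH\langle X\vert\nu_E\rangle^2+\langle Z\vert\nu_E\rangle-2\langle X_\tau\vert\nabla\langle X\vert\nu_E\rangle\rangle+B(X_\tau,X_\tau))\,d\mu$ via Proposition~\ref{critprop}, then identify this integral (after the integration by parts on $\Div X_\tau$) with $\frac{d^2}{dt^2}\vol(E_t)\big|_{t=0}$, which vanishes for a volume--preserving variation. The paper performs the second differentiation of the volume starting from equations~\eqref{eqc999}--\eqref{eqc1000} and invoking~\eqref{dermu2} and~\eqref{ZZZ}, which is precisely the "constant--weight specialization" you describe.
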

\begin{proof} By Proposition~\ref{critprop}, the function $\HHH+4\gamma v_E$ is equal to a constant $\lambda\in\R$ on $\pa E$, then the remainder term $R$ in formula~\eqref{IIfinale} becomes
$$
R=\lambda\int_{\pa E}\bigl(\HHH\langle X\vert \nu\rangle^2+\langle Z \vert \nu\rangle -2\langle X_{\tau}| \nabla  \langle X\vert \nu\rangle\rangle +B(X_\tau,X_\tau)\bigr)\, d\mu\,.
$$
Computing, in the same hypotheses and notations of Proposition~\ref{second var F}, the second derivative of the (constant) volume of $E_t$, by equations~\eqref{eqc999}--\eqref{eqc1000} we have (recalling formulas~\eqref{dermu2},~\eqref{ZZZ} and using the divergence theorem)
\begin{align}
0=&\, \frac{d^2}{dt^2}\vol(E_t)\,\Bigr\vert_{t=0}=\frac{d}{dt}\int_{E_t} \Div\!X_t(x) \,dx\,\Bigr\vert_{t=0} =\frac{d}{dt}\int_{\pa E} \langle X\vert \nu_{E_t}\rangle \, d\mu_t\,\Bigr\vert_{t=0}\\
=&\,\int_{\pa E}\Bigl[\Div\!X_\tau\langle X\,\vert\,\nu_E\rangle+\HHH\langle X\,\vert\,\nu_E\rangle^2+\langle Z \vert \nu_E\rangle -\langle X_{\tau}| \nabla  \langle X\vert \nu_E\rangle\rangle+B(X_\tau,X_\tau)\,\Bigr]\,d\mu\\
=&\,\int_{\pa E}\Bigl[\HHH\langle X\,\vert\,\nu_E\rangle^2+\langle Z \vert \nu_E\rangle -2\langle X_{\tau}| \nabla  \langle X\vert \nu_E\rangle\rangle+B(X_\tau,X_\tau)\,\Bigr]\,d\mu\,,\label{sec der vol}
\end{align}
hence $R=0$ and we are done.
\end{proof}

\begin{remark}\label{rem999}
Notice that by the previous computation and relation~\eqref{claim}, it follows 
\begin{equation}\label{eqcar6666}
\frac{d^2}{dt^2}\vol(E_t)\,\Bigr\vert_{t=0}=\int_{\pa E}\Bigl[\langle X\vert \nu_E\rangle\Div^{\!\T^n}\!\!X+\Bigl\langle\frac{\partial X_t}{\partial t}\Bigr\vert_{t=0}\Bigr\vert\nu\Bigr\rangle\,\Bigr]\, d\mu=0\,,
\end{equation}
for every volume--preserving variation $E_t$ of $E$. Hence, if we restrict ourselves to the special (volume--preserving) variations coming from system~\eqref{varflow}, as in~\cite{AcFuMo}, we have
$$
\frac{d^2}{dt^2}\vol(E_t)\,\Bigr\vert_{t=0}=\int_{\pa E}\langle X\vert \nu_E\rangle\Div^{\!\T^n}\!\!X\, d\mu=0\,,
$$
indeed, for such variations we have $X_t=X$, for every $t\in(-\eps,\eps)$. One can clearly use equality~\eqref{eqcar6666} to show the above proposition, as the term $R$ reduces (using the second form in Theorem~\ref{secondvar}) to
$$
R=\lambda\int_{\pa E}\Bigl[\langle X\vert \nu_E\rangle\Div^{\!\T^n}\!\!X\,+\Bigl\langle\frac{\partial X_t}{\partial t}\Bigr\vert_{t=0}\Bigr\vert\nu\Bigr\rangle\,\Bigr]\, d\mu\,,
$$
by the divergence theorem.\\
Moreover, we see that if we have a special variation generated by a vector field $X$ such that $\Div^{\!\T^n}\!\!X=0$ on $\pa E$, then 
$\frac{d^2}{dt^2}\vol(E_t)\,\bigr\vert_{t=0}=0$ and if $E$ is a critical set, $R=0$. This is then true for the special volume--preserving variations coming from Lemma~\ref{vector field} and when $X$ is a constant vector field, hence the associated special variation $E_t$ is simply a translation of $E$ (clearly, in this case $J(E_t)$ is constant and the first and second variations are zero).
\end{remark}

\subsection{Stability and $W^{2,p}$--local minimality}\label{stabsec}\ \vskip.3em

By Proposition~\ref{IIcritprop}, the second variation of the functional $J$ under a volume constraint at a smooth critical set $E$ is a quadratic form in the normal component on $\partial E$ of the infinitesimal generator $X\in C^\infty(\T^n; \R^n)$ of a volume--preserving variation, that is, on $\varphi=\langle X|\nu_E\rangle$. This and the fact that the infinitesimal generators of the volume--preserving variations are ``characterized'' by having zero integral of such normal component on $\pa E$, by Lemma~\ref{vector field} and the discussion immediately before, motivate the following definition.

\begin{definition}\label{Pi}
Given any smooth open set $E\subseteq \T^n$ we define the space of (Sobolev) functions (see~\cite{Aubin})
\begin{equation} 
\Htilde^1( \partial E) = \Bigl\{\varphi:\partial E\to\R\, :\, \varphi \in H^1(\partial E)\,\, \text{ and }\, \int_{\partial E} \varphi \,\dmu = 0 \Bigr\},
\end{equation}
and the quadratic form $\Pi _E : \Htilde^1(\pa E) \to \R$ as
\begin{align}
\Pi_E(\varphi)=&\, \int_{\pa E}\bigl(|\nabla  \varphi|^2- \varphi^2|B|^2\bigr)\, d\mu+8\gamma \int_{\pa E}  \int_{\pa E}G(x,y)\varphi(x)\varphi(y)\,d\mu(x)\,d\mu(y)\nonumber\\
&\,+4\gamma\int_{\pa E}\pa_{\nu_E} v_E \varphi ^2\,d\mu\,,\label{Pieq}
\end{align}
with the notations of Theorem~\ref{secondvar}.
\end{definition}

\begin{remark}\label{rm:potential}
Letting for $\varphi\in \Htilde^1(\pa E)$,
$$
v_\varphi(x)=\int_{\pa E} G(x, y)\varphi(y)\, d\mu(y)\,,
$$
it follows (from the properties of the Green's function) that $v_\varphi$ satisfies distributionally $-\Delta v_{\varphi}=\varphi\mu$ in 
$\T^n$, indeed,
\begin{align*}
\int_{\T^n}\langle\nabla v_\varphi(x)|\nabla \psi(x)\rangle\,dx
=&\,-\int_{\T^n}v_\varphi(x)\Delta\psi(x)\,dx\\
=&\,-\int_{\T^n}\int_{\pa E} G(x, y)\varphi(y)\Delta\psi(x)\,d\mu(y)dx\\
=&\,-\int_{\pa E} \varphi(y)\int_{\T^n}G(x, y)\Delta\psi(x)\,dx\,d\mu(y)\\
=&\,-\int_{\pa E} \varphi(y)\int_{\T^n}\Delta G(x, y)\psi(x)\,dx\,d\mu(y)\\
=&\,\int_{\pa E} \varphi(y)\Bigl[\psi(y)-\int_{\T^n}\psi(x)\,dx\Bigr]\,d\mu(y)\\
=&\,\int_{\pa E}\varphi(y)\psi(y)\,d\mu(y)\,,
\end{align*}
for all $\psi\in C^\infty(\T^n)$, since $\int_{\pa E}\varphi(y)\,d\mu(y)=0$. Therefore, taking $\psi=v_\varphi$, we have
$$
\int_{\T^n}|\nabla v_\varphi(x)|^2\,dx=\int_{\pa E}\varphi(y)v_\varphi(y)\,d\mu(y)\,,
$$
hence, the following identity holds
\begin{equation}\label{eqcar777}
\int_{\pa E}  \int_{\pa E}G(x,y)\varphi(x)\varphi(y)\,d\mu(x)d\mu(y)=
\int_{\pa E}\varphi(y)\, v_{\varphi}(y)\, d\mu(y)=\int_{\T^n}|\nabla v_{\varphi}(x)|^2\, dx\,,
\end{equation}
and we can write
\begin{equation}
\Pi_E(\varphi)=\int_{\pa E}\Bigl(|\nabla  \varphi|^2- \varphi^2|B|^2\Bigr)\, d\mu
+8\gamma \int_{\T^n}|\nabla v_{\varphi}|^2\, dx+4\gamma\int_{\pa E}\pa_{\nu_E} v_E \varphi ^2\,d\mu\,,\label{Pieq2}
\end{equation}
for every $\varphi\in\Htilde^1(\partial E)$.
\end{remark}

\begin{definition}\label{admvector}
Given any smooth open set $E\subseteq \T^n$, we say that a smooth vector field $X\in C^\infty(\T^n; \R^n)$ is {\em admissible for $E$} if the function $\varphi:\pa E\to\R$ given by $\varphi=\langle X \vert \nu_E\rangle$ belongs to $\Htilde^1( \partial E)$, that is, has zero integral on $\pa E$.
\end{definition}

\begin{remark}\label{remcarlo888}
Clearly, if $X\in C^\infty(\T^n; \R^n)$ is the infinitesimal generator of a volume--preserving variation for $E$, then $X$ is admissible, by the discussion after Corollary~\ref{critcor}. 
\end{remark}

\begin{remark}\label{remcarlo}
By what we said above, if $E$ is a smooth critical set for $J$ under a volume constraint, we can from now on consider only the special variations $E_t=\Phi_t(E)$ associated to admissible vector fields $X$, given by the flow $\Phi$ defined by system~\eqref{varflow}, hence
\begin{equation}
\frac{d}{dt} J(E_t) \Bigl|_{t=0} =\int_{\pa E} \langle X \vert \nu_E \rangle \, \dmu=0
\end{equation}
and
\begin{equation}\label{PI}
\frac{d^2}{dt^2} J(E_t)\Bigr \vert_{t=0}=\Pi_E(\langle X \vert \nu_E \rangle)
\end{equation}
where $\Pi_E$ is the quadratic form defined by formula~\eqref{Pieq}.
\end{remark}

We notice that every constant vector field $X=\eta\in \R^n$ is clearly admissible, as
$$
\int_{\pa E} \langle \eta\, \vert \nu_E \rangle \, \dmu=\int_{E} \Div\eta\,dx=0
$$
and the associated flow is given by $ \Phi(t,x) = x + t \eta$, then, by the translation invariance of the functional $J$, we have $J(E_t)=J(E)$ and
\begin{equation}
0=\frac{d^2}{dt^2} J(E_t) \Bigr \vert_{t=0}= \Pi_E (\langle \eta\, \vert \nu_E\rangle)\,,
\end{equation}
that is, the form $\Pi_E$ is zero on the vector subspace 
\begin{equation}\label{T}
T(\pa E) = \bigl\{\langle \eta\,  | \nu_E  \rangle\, : \, \eta\in \R^n \bigr\}\subseteq \Htilde^1(\pa E)
\end{equation}
of dimension clearly less than or equal to $n$.
We split 
\begin{equation}\label{decomp}
\Htilde^1(\pa E) = T(\pa E) \oplus \Tort(\pa E)\,,
\end{equation}
where $\Tort (\pa E)\subseteq \Htilde^1(\pa E)$ is the vector subspace $L^2$--orthogonal to $T(\pa E)$ (with respect to the measure $\mu$ on $\partial E$), that is,
\begin{align}
\Tort(\pa E)
=&\,\Bigl \{\varphi \in \Htilde^1(\pa E) \, : \, \int_{\pa E} \varphi \nu_E \, \dmu = 0\Bigr \}\\
=&\,\Bigl \{\varphi \in H^1(\pa E) \, : \,\int_{\pa E} \varphi \, \dmu = 0 \,\,\text{ and }\, \int_{\pa E} \varphi \nu_E \, \dmu = 0\Bigr \}\label{Tort} 
\end{align}
and we give the following ``stability'' conditions.

\begin{definition}[Stability]\label{str stab}
We say that a critical set $E \subseteq\T^n$ for $J$ under a volume constraint is {\em stable} if
\begin{equation}\label{stabile}
\Pi_E (\varphi) \geq 0 \qquad\text{for all $\varphi \in \Htilde^1(\pa E)$}
\end{equation}
and {\em strictly stable} if moreover
\begin{equation}\label{strettstabile}
\Pi_E (\varphi) > 0 \qquad\text{for all $\varphi \in \Tort(\pa E) \setminus \{0 \}$.}
\end{equation}
\end{definition}

\begin{remark}\label{rembase0}
Introducing the symmetric bilinear form associated (by polarization) to $\Pi_E$ on $\Htilde^1(\pa E)$, 
$$
b_E(\varphi,\psi)=\frac{\Pi_E (\varphi+\psi)-\Pi_E (\varphi-\psi)}{4}
$$
at a critical set $E \subseteq\T^n$, it can be seen that actually $T(\pa E)$ is a degenerate vector subspace of $\Htilde^1(\pa E)$ for $b_E$, that is, $b_E(\varphi,\psi)=0$ for every $\varphi\in\Htilde^1(\pa E)$ and $\psi\in T(\pa E)$. Indeed, we observe that by formula~\eqref{potential1} and the properties of the Green function, we get
\begin{align}
\nabla v_E(x)&=\int_{\T^n} \nabla_x G(x,y) u_E(x) \, dy\\
& = \int_E \nabla_x G(x,y) \, dy - \int_{E^c}\nabla_x G(x,y) \, dy\\
& = -\int_E \nabla_y G(x,y) \, dy +\int_{E^c}\nabla_y G(x,y) \, dy\\
&= -2 \int_{\partial E}  G(x,y) \nu_E(y) \, d \mu(y)\,,\label{eqcar501}
\end{align}
where in the last passage we applied the divergence theorem.\\
By means of formula~\eqref{Deltanu}
$$
\Delta \nu_E = \nabla \HHH -|B|^2\nu_E\,,
$$
since $E$ (being critical) satisfies $\HHH + 4 \gamma v_E= \lambda$ for some constant $\lambda\in\R$, we have
\begin{align}
-\Delta \nu_E- |\BBB|^2\nu_E&=\nabla(4 \gamma v_E - \lambda)\\
&=\nabla^{\T^n}\!(4 \gamma v_E - \lambda)-\partial_{\nu_E}(4 \gamma v_E - \lambda)\\
&= -4\gamma (\partial_{\nu_E} v_E) \nu_E -8\gamma \int_{\partial E}  G(x,y) \nu_E(y) \, d \mu(y)
\end{align}
on $\pa E$, by formula~\eqref{eqcar501}.\\
This equation can be written as $L(\nu_i)= 0$, for every $i\in\{1,\dots,n\}$, where $L$ is the self--adjoint, linear operator defined as
$$
L(\varphi) = -\Delta \varphi - |\BBB|^2\varphi  + 4\gamma \partial_{\nu_E} v_E \varphi + 8\gamma \int_{\partial E}  G(x,y) \varphi(y) \, d \mu(y)\,,
$$
which clearly satisfies 
$$
b_E(\varphi,\psi)=\int_{\pa E}\langle L(\varphi)\vert \psi\rangle\,\dmu\qquad\text{ and }\qquad\Pi_E(\varphi) =\int_{\pa E}\langle L(\varphi)\vert \varphi\rangle\,\dmu\,.
$$
Then, if we ``decompose'' a smooth function $\varphi \in \widetilde H^1(\pa E)$ as $\varphi = \psi + \langle \eta \vert  \nu_E\rangle$, for some $\eta\in \R^n$ and $\psi \in T^\perp(\partial E)$, we have (recalling formula~\eqref{Pieq})
\begin{align*}
\Pi_E(\varphi) =&\,\int_{\pa E}\langle L(\varphi)\vert \varphi\rangle\,\dmu\\
=&\,\int_{\pa E}\langle L(\psi)\vert \psi \rangle\,\dmu+ 2\int_{\pa E} \langle L(\langle\eta \vert  \nu_E\rangle) \vert \psi\rangle\,\dmu
+\int_{\pa E}\langle L(\langle\eta \vert  \nu_E\rangle)\vert \langle \eta \vert  \nu_E \rangle   \rangle\,\dmu\\
=&\,\Pi_E(\psi)\,. 
\end{align*}
By approximation with smooth functions, we conclude that this equality holds for every function in $\widetilde H^1(\pa E)$.\\
The initial claim about the form $b_E$ then easily follows by its definition. Moreover, if $E$ is a strictly stable critical set there holds
\begin{equation} \label{uusi stability}
\Pi_E(\varphi)>0 \qquad \text{for every}\, \varphi \in \widetilde H^1(\pa E) \setminus T(\pa E).
\end{equation}
\end{remark}

\begin{remark}\label{rembase}
We observe that there exists an orthonormal frame $\{e_1, \dots,  e_n \}$ of $\R^n$ such that
\begin{equation}\label{ort}
\int_{\pa E} \langle \nu_E |  e_i \rangle \langle \nu_E | e_j \rangle \, \dmu=0,
\end{equation}
for all $i \ne j$, indeed, considering the symmetric $n\times n$--matrix $A= (a_{ij})$ with components $a_{ij}= \int_{\pa E} \nu_E^i \nu_E^j \, \dmu$, where $\nu_E^i=\langle\nu_E|\eps_i\rangle$ for some basis $\{\eps_1,\dots,\eps_n\}$ of $\R^n$, we have
\begin{equation}
\int_{\pa E} (O\nu_E)_i (O \nu_E)_j \, \dmu = (OAO^{-1})_{ij},
\end{equation}
for every $O \in SO(n)$. Choosing $O$ such that $OAO^{-1}$ is diagonal and setting $e_i=O^{-1}\eps_i$, relations~\eqref{ort} are clearly satisfied. \\
Hence, the  functions $\langle \nu_E | e_i \rangle$ which are not identically zero are an orthogonal basis of $T(\pa E)$. We set 
\begin{equation}\label{IIeq}
\II_E=\bigl\{i\in\{1,\dots,n\}\,:\,\text{$\langle\nu_E|e_i\rangle$ is not identically zero}\bigr\}
\end{equation}
and
\begin{equation}\label{OOeq}
\OO_E=\mathrm{Span}\{e_i\,:\,i\in \II_E\},
\end{equation}
then, given any $\varphi \in \Htilde^1(\pa E)$, its projection on $\Tort (\pa E)$ is 
\begin{equation}\label{projection}
\pi(\varphi)= \varphi - \sum_{i\in\II_E} \frac{\int_{\pa E} \varphi \langle \nu_E |  e_i \rangle \, \dmu}{\norma{\langle \nu_E |  e_i\rangle}_{L^2(\partial E)}^2}\langle \nu_E | e_i \rangle\,.
\end{equation}
\end{remark}

\smallskip

{\em From now on we will extensively use Sobolev spaces on smooth hypersurfaces. Most of their properties hold as in $\R^n$, standard references are~\cite{AdamsFournier} in the Euclidean space and~\cite{Aubin} when the ambient is a manifold.}

\bigskip

Given a smooth set $E \subseteq \T^n$, for $\eps>0$ small enough, we let ($d$ is the ``Euclidean'' distance on $\T^n$) 
\begin{equation}
\label{tubdef}
N_\eps=\{x \in \T^n \, : \, d(x,\pa E)<\eps\}
\end{equation}
to be a {\em tubular neighborhood} of $\pa E$ such that the {\em orthogonal projection map} $\pi_E:N_\eps\to \pa E$ giving the (unique) closest point on $\pa E$ and the {\em signed distance function} $d_E:N_\eps\to\R$ from $\pa E$
\begin{equation}\label{sign dist}
d_E(x)=
\begin{cases}
d(x, \pa E) &\text{if $x \notin E$}\\
-d(x, \pa E) &\text{if $x \in E$}
\end{cases} 
\end{equation}
are well defined and smooth in $N_\eps$ (for a proof of the existence of such tubular neighborhood and of all the subsequent properties, see~\cite{ManMen} for instance). Moreover, for every $x\in N_\eps$, the projection map is given explicitly by 
\begin{equation}\label{eqcar2050}
\pi_E(x)=x-\nabla d^2_E(x)/2=x-d_E(x)\nabla d_E(x)
\end{equation}
and the unit vector $\nabla d_E(x)$ is orthogonal to $\pa E$ at the point $\pi_E(x)\in\partial E$, indeed actually 
\begin{equation}\label{eqcar410bis}
\nabla d_E(x)=\nabla d_E(\pi_E(x))=\nu_E(\pi_E(x))\,,
\end{equation}
which means that the integral curves of the vector field $\nabla d_E$ are straight segments orthogonal to $\pa E$.\\ 
This clearly implies that the map 
\begin{equation}\label{eqcar410}
\partial E\times (-\eps,\eps)\ni (y,t)\mapsto L(y,t)=y+t\nabla d_E(y)=y+t\nu_E(y)\in N_\eps
\end{equation}
is a smooth diffeomorphism with inverse
$$
N_\eps\ni x\mapsto L^{-1}(x)=(\pi_E(x),d_E(x))\in \partial E\times (-\eps,\eps)\,.
$$
Moreover, denoting with $JL$ its  Jacobian (relative to the hypersurface $\pa E$), there holds
\begin{equation}\label{eqcar411}
0<C_1\leq JL(y,t)\leq C_2
\end{equation}
on $\pa E\times(-\eps,\eps)$, for a couple of constants $C_1,C_2$, depending on $E$ and $\eps$.

By means of such tubular neighborhood of a smooth set $E\subseteq\T^n$ and the map $L$, we can speak of ``$W^{k,p}$--closedness'' (or ``$C^{k,\alpha}$--closedness'') to $E$ of another smooth set $F\subseteq\T^n$, asking that for some $\delta>0$ ``small enough'', we have  $\vol(E\triangle F) < \delta$ and that $\partial F$ is contained in a tubular neighborhood $N_\eps$ of $E$, as above, described by  
$$
\pa F=\{y+\psi(y)\nu_E(y) \, : \, y\in \pa E\},
$$
for a smooth function $\psi:\partial E\to\R$ with $\norma{\psi}_{W^{k,p}(\pa E)}< \delta$ (resp. $\norma{\psi}_{C^{k,\alpha}(\pa E)}< \delta$). That is, we are asking that the two sets $E$ and $F$ differ by a set of small measure and that their boundaries are ``close'' in $W^{k,p}$ (or $C^{k,\alpha}$) as graphs.

Notice that
$$
\psi(y)=\pi_2\circ L^{-1}\bigl(\pa E\cap \{y+\lambda \nu_E(y)\,:\,\lambda\in\R\}\bigr)\,,
$$
where $\pi_2:\pa E\times(-\eps,\eps)\to\R$ is the projection on the second factor.\\
Moreover, given a sequence of smooth sets $F_i\subseteq\T^n$, we will write $F_i\to E$ in $W^{k,p}$ (resp. $C^{k,\alpha}$) if for every $\delta>0$, there hold  $\vol(F_i\triangle E) < \delta$, the smooth boundary $\partial F_i$ is contained in some $N_\eps$, relative to $E$ and it is described by 
$$
\pa F_i=\{y+\psi_i(y)\nu_E(y) \, : \, y\in \pa E\},
$$
for a smooth function $\psi_i:\partial E\to\R$ with $\norma{\psi_i}_{W^{k,p}(\pa E)}< \delta$ (resp. $\norma{\psi_i}_{C^{k,\alpha}(\pa E)}< \delta$), for every $i\in\N$ large enough. 

\medskip

{\em From now on, in all the rest of the work, we will refer to the volume--constrained nonlocal Area functional $J$ (and Area functional $\A$), sometimes without underlining the presence of such constraint, by simplicity. Moreover, with $N_\eps$ we will always denote a suitable tubular neighborhood of a smooth set, with the above properties.}

\begin{definition}\label{min}
We say that a smooth set $E \subseteq \T^n$ is a {\em local minimizer} for the functional $J$ (for the Area functional $\A$) if there exists $\delta >0$ such that 
$$
J(F)\ge J(E) \qquad (\A(F) \ge \A(E))
$$
for all smooth sets $F\subseteq \T^n$ with $\vol(F)=\vol(E)$ and $\vol(E\triangle F)< \delta$.

We say that a smooth set $E \subseteq \T^n$ is a {\em $W^{2,p}$--local minimizer} if there exists $\delta >0$ and a tubular neighborhood $N_\eps$ of $E$, as above, such that 
$$
J(F) \geq J(E) \qquad (\A(F) \ge \A(E))
$$
for all smooth sets $F \subseteq \T^n$ with $\vol (F)= \vol(E)$, $\vol(E \triangle F)<\delta$ and $\partial F$ contained in $N_\eps$, described by  
$$
\pa F=\{y+\psi(y)\nu_E(y) \, : \, y\in \pa E\},
$$
for a smooth function $\psi:\partial E\to\R$ with $\norma{\psi}_{W^{2,p}(\pa E)}< \delta$.\\
Clearly, any local minimizer is a $W^{2,p}$--local minimizer.
\end{definition}
 
We immediately show a {\em necessary} condition for $W^{2,p}$--local minimizers.

\begin{proposition}
Let the smooth set $E \subseteq \T^n$ be a $W^{2,p}$--local minimizer of $J$, then $E$ is a critical set and 
$$
\Pi_E(\varphi)\ge 0 \qquad \qquad \text{for all $\varphi \in \Htilde^1(\pa E)$,}
$$
in particular, $E$ is stable.
\end{proposition}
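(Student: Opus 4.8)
The plan is to argue that a $W^{2,p}$--local minimizer cannot beat a nearby volume--preserving competitor, and then to extract criticality and the nonnegativity of $\Pi_E$ from the first and second variation formulas already computed in Section~\ref{sec1.2}.

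First I would fix an arbitrary volume--preserving variation $E_t=\Phi_t(E)$ of $E$, with infinitesimal generator $X\in C^\infty(\T^n;\R^n)$, and observe that for $|t|$ small the set $E_t$ is an admissible competitor in Definition~\ref{min}: it is smooth, it satisfies $\vol(E_t)=\vol(E)$, one has $\vol(E\triangle E_t)\to0$ as $t\to0$, and $\partial E_t$ lies in a fixed tubular neighbourhood $N_\eps$ of $\partial E$ as a normal graph $\{y+\psi_t(y)\nu_E(y):y\in\partial E\}$ with $\norma{\psi_t}_{W^{2,p}(\partial E)}\to0$. This last point is a routine consequence of the smoothness of $\Phi$ and $\Phi_0=\mathrm{Id}$, via the tubular--neighbourhood parametrisation $L$. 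Hence $J(E_t)\ge J(E)$ for $t$ near $0$, so $t=0$ is a local minimum of $t\mapsto J(E_t)$, which gives $\frac{d}{dt}J(E_t)|_{t=0}=0$ and $\frac{d^2}{dt^2}J(E_t)|_{t=0}\ge0$. Since the variation was arbitrary, $E$ is a critical set for $J$ under the volume constraint, so Proposition~\ref{critprop} yields that $\HHH+4\gamma v_E$ is constant on $\partial E$, and then Proposition~\ref{IIcritprop} identifies $\frac{d^2}{dt^2}J(E_t)|_{t=0}$ with $\Pi_E(\langle X|\nu_E\rangle)$. Thus $\Pi_E(\langle X|\nu_E\rangle)\ge0$ for the infinitesimal generator $X$ of every volume--preserving variation.

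Next I would upgrade this to all of $\Htilde^1(\partial E)$. By Lemma~\ref{vector field}, every smooth $\varphi$ with zero mean on $\partial E$ is realised as $\langle X|\nu_E\rangle$ for some infinitesimal generator $X$ of a volume--preserving variation, so $\Pi_E(\varphi)\ge0$ on smooth zero--mean functions; these are dense in $\Htilde^1(\partial E)$. It then suffices to note that $\Pi_E$ is continuous on $H^1(\partial E)$: the term $\int_{\partial E}|\nabla\varphi|^2\,d\mu$ is trivially continuous, $|B|^2$ and $\partial_{\nu_E}v_E$ are bounded on $\partial E$ (the latter since $v_E\in W^{2,p}(\T^n)$ for every finite $p$ by elliptic regularity, together with the trace theorem), and by Remark~\ref{rm:potential} the double--integral term equals $\int_{\T^n}|\nabla v_\varphi|^2\,dx$, which depends continuously on $\varphi\in L^2(\partial E)$ through the standard elliptic estimate for $-\Delta v_\varphi=\varphi\mu$. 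Passing to the limit gives $\Pi_E(\varphi)\ge0$ for every $\varphi\in\Htilde^1(\partial E)$, i.e.\ $E$ is stable.

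I do not expect a serious obstacle. The only two genuine points to be careful with are the claim that $E_t$ is an admissible competitor for small $t$ (handled by the tubular--neighbourhood diffeomorphism and the smallness of $\Phi_t-\mathrm{Id}$ in the smooth topology) and the density/continuity step in $\Htilde^1(\partial E)$, which is the mild analytic argument worth spelling out; everything else is a direct appeal to Propositions~\ref{critprop} and~\ref{IIcritprop} and to Lemma~\ref{vector field}.
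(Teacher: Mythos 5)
Your proposal is correct and follows essentially the same route as the paper's own proof: in both cases one takes volume--preserving variations as competitors, reads off criticality from the vanishing first variation, identifies the second variation with $\Pi_E(\langle X|\nu_E\rangle)$ via Proposition~\ref{IIcritprop}, uses Lemma~\ref{vector field} to realise every smooth zero--mean $\varphi$ as such a normal component, and closes with density of smooth functions in $\Htilde^1(\pa E)$. The one place you are more explicit than the paper is in justifying the $H^1$--continuity of $\Pi_E$ needed for the density step, which the paper leaves implicit.
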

\begin{proof}
If $E$ is a $W^{2,p}$--local minimizer of $J$, given any $\varphi \in C^\infty(\pa E)\cap\Htilde^1(\pa E)$, we consider the admissible vector field $X\in C^\infty(\T^n; \R^n)$ given by Lemma~\ref{vector field} and the associated flow $\Phi$. Then, the variation  $E_t=\Phi_t(E)$ of $E$ is volume--preserving, that is, $\vol(E_t)=\vol(E)$ and for every $\delta > 0$, there clearly exists a tubular neighborhood $N_\eps$ of $E$ and $\overline{\eps}>0$ such that for $t \in (-\overline{\eps}, \overline{\eps})$ we have
$$
\vol(E\triangle E_t) < \delta
$$
and
$$\pa E_t=\{y+\psi(y)\nu_E(y) \, : \, y\in \pa E\}\subseteq N_\eps$$
for a smooth function $\psi:\partial E\to\R$ with $\norma{\psi}_{W^{2,p}(\pa E)}< \delta$. 
Hence, the $W^{2,p}$--local minimality of $E$ implies
$$
J(E) \le J(E_t),
$$
for every $t \in (-\overline{\eps}, \overline{\eps})$.
It follows
$$
0=\frac{d}{dt}J(E_t)\Bigl|_{t=0}= \int_{\partial E} (\HHH+ 4 \gamma v_E) \varphi \, d\mu,
$$
by Theorem~\ref{first var}, which implies that $E$ is a critical set, by the subsequent discussion and
$$
0\leq\frac{d^2}{dt^2}J(E_t)\Bigl|_{t=0}=\Pi_E(\varphi),
$$
by Proposition~\ref{IIcritprop} and Remark~\ref{remcarlo}.\\
Then, the thesis easily follows by the density of $C^\infty(\pa E)\cap\Htilde^1(\pa E)$ in $\Htilde^1(\pa E)$ (see~\cite{Aubin}, for instance) and the definition of $\Pi_E$, formula~\eqref{Pieq}.
\end{proof}

The rest of this section will be devoted to show that the strict stability (see Definition~\ref{str stab}) is a {\em sufficient} condition for the $W^{2,p}$--local minimality. Precisely, we will prove the following theorem.

\begin{thm}\label{W2pMin}
Let $p>\max\{2, n-1\}$ and $E\subseteq\T^n$ a smooth strictly stable critical set for the nonlocal Area functional $J$ (under a volume constraint), with $N_\eps$ a tubular neighborhood of $\pa E$ as in formula~\eqref{tubdef}. Then, there exist constants $\delta,C>0$ such that
\begin{equation}
J(F)\ge J(E) + C[\alpha(E,F)]^2,
\end{equation}
for all smooth sets $F\subseteq \T^n$ such that $\vol(F)=\vol(E)$, $\vol(F\triangle E)<\delta$, $\pa F \subseteq N_{\varepsilon}$ and
\begin{equation}
\pa F= \{y+\psi(y)\nu_E(y)\, : \, y \in \pa E\},
\end{equation}
for a smooth function $\psi$ with $\norma{\psi}_{W^{2,p}(\pa E)} < \delta$, where the ``distance'' $\alpha(E,F)$ is defined as 
\beq\label{alpha}
\alpha(E,F)= \min_{\eta\in \R^n} \vol(E \triangle (F+\eta)).
\eeq
As a consequence, $E$ is a $W^{2,p}$--local minimizer of $J$. Moreover, if $F$ is $W^{2,p}$--close enough to $E$ and $J(F)=J(E)$, then $F$ is a translate of $E$, that is, $E$ is locally the unique $W^{2,p}$--local minimizer, up to translations.
\end{thm}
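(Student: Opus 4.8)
The plan is to argue by contradiction and compactness, reducing the quantitative estimate to the strict positivity of $\Pi_E$ on $\Htilde^1(\pa E)\setminus T(\pa E)$ recorded in~\eqref{uusi stability}. Suppose the asserted inequality fails; then for every $j\in\N$ there is a smooth set $F_j$ with $\vol(F_j)=\vol(E)$, $\vol(F_j\triangle E)<1/j$, boundary $\pa F_j=\{y+\psi_j(y)\nu_E(y):y\in\pa E\}$ with $\norma{\psi_j}_{W^{2,p}(\pa E)}<1/j$, and $J(F_j)<J(E)+\tfrac1j[\alpha(E,F_j)]^2$. We may assume $F_j$ is not a translate of $E$ (otherwise the inequality holds trivially for that $F_j$), so $\psi_j\not\equiv0$, and, since $J$ and $\alpha$ are translation invariant, we may replace each $F_j$ by a translate realizing the minimum in~\eqref{alpha}; as $W^{2,p}(\pa E)\hookrightarrow C^{1,\alpha}(\pa E)$ (because $p>n-1$) the needed translations tend to $0$, so after this reduction $\alpha(E,F_j)=\vol(E\triangle F_j)$ and still $\psi_j\to0$ in $W^{2,p}(\pa E)$. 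Set $\eps_j=\norma{\psi_j}_{L^2(\pa E)}>0$ and $\varphi_j=\psi_j/\eps_j$.

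The heart is a Taylor expansion of $J$ along the smooth sets $F^j_t$ with boundary $\{y+t\psi_j(y)\nu_E(y)\}$, $t\in[0,1]$: the first--order term is $\lambda\int_{\pa E}\psi_j\,d\mu$ by Theorem~\ref{first var} and the criticality $\HHH+4\gamma v_E=\lambda$, and expanding the Jacobian of the map~\eqref{eqcar410} in the constraint $\vol(F_j)=\vol(E)$ shows $\int_{\pa E}\psi_j\,d\mu=\tfrac12\int_{\pa E}\HHH\psi_j^2\,d\mu+o(\eps_j^2)$; correcting for the volume along the path and using Theorem~\ref{secondvar} — all $F^j_t$ being $C^{1,\alpha}$--close to $E$ uniformly in $t$, so that the curvature and non--criticality remainder terms are small perturbations of those at the critical set $E$ — one arrives at
\begin{equation*}
J(F_j)-J(E)=\tfrac12\,\Pi_E(\psi_j)+o\bigl(\norma{\psi_j}_{H^1(\pa E)}^2\bigr)\,.
\end{equation*}
Since $\Pi_E(\psi_j)\ge\norma{\nabla\psi_j}_{L^2(\pa E)}^2-C\norma{\psi_j}_{L^2(\pa E)}^2$ and $\alpha(E,F_j)\le C\norma{\psi_j}_{L^1(\pa E)}\le C\eps_j$, the failure hypothesis forces $\norma{\nabla\varphi_j}_{L^2(\pa E)}\le C$; thus $\varphi_j$ is bounded in $H^1(\pa E)$ and, up to a subsequence, $\varphi_j\wto\varphi_0$ in $H^1(\pa E)$ and $\varphi_j\to\varphi_0$ in $L^2(\pa E)$, so $\norma{\varphi_0}_{L^2(\pa E)}=1$ and $\int_{\pa E}\varphi_0\,d\mu=0$, i.e. $\varphi_0\in\Htilde^1(\pa E)$. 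Moreover $\varphi_0\notin T(\pa E)$: if it were $\langle\eta|\nu_E\rangle$ with $\eta\ne0$, then $\pa(F_j-\eps_j\eta)$ would be the normal graph of $\psi_j-\eps_j\langle\eta|\nu_E\rangle+o(\eps_j)=\eps_j(\varphi_j-\varphi_0)+o(\eps_j)$, whence $\vol(E\triangle(F_j-\eps_j\eta))=o(\eps_j)$, contradicting $\vol(E\triangle F_j)=\alpha(E,F_j)\le\vol(E\triangle(F_j-\eps_j\eta))$ together with $\vol(E\triangle F_j)\ge c\,\eps_j$ for large $j$ (as $\norma{\varphi_0}_{L^1(\pa E)}>0$).

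Dividing the expansion by $\eps_j^2$ and using the failure hypothesis gives $\liminf_j\Pi_E(\varphi_j)\le0$. On the other hand $\varphi\mapsto\int_{\pa E}|\nabla\varphi|^2\,d\mu$ is weakly lower semicontinuous on $H^1(\pa E)$, while the remaining terms of $\Pi_E$ are continuous with respect to the strong $L^2(\pa E)$ convergence — for the nonlocal term because $\varphi_j\to\varphi_0$ in $L^2(\pa E)$ forces $\nabla v_{\varphi_j}\to\nabla v_{\varphi_0}$ in $L^2(\T^n)$ by elliptic estimates (cf. Remark~\ref{rm:potential}). Hence $\Pi_E(\varphi_0)\le\liminf_j\Pi_E(\varphi_j)\le0$, contradicting the fact that $\varphi_0\in\Htilde^1(\pa E)\setminus T(\pa E)$ together with the strict stability of $E$ in the form~\eqref{uusi stability}. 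This proves the quantitative inequality, hence the $W^{2,p}$--local minimality; and it yields the last statement at once, since if $F$ is $W^{2,p}$--close to $E$ and $J(F)=J(E)$, the inequality gives $\alpha(E,F)=0$, so $F$ agrees with a translate of $E$ up to a negligible set and, being smooth, is exactly a translate of $E$.

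The main obstacle, present throughout, is the translation degeneracy of $\Pi_E$, i.e. the subspace $T(\pa E)$ on which $\Pi_E$ vanishes: one must (i) use the optimal--translation normalization to rule out $\varphi_0\in T(\pa E)$ (handled above) and, more seriously, (ii) control uniformly in $t\in[0,1]$ both the volume correction and the non--criticality remainder $R$ of Theorem~\ref{secondvar} along the interpolating family $F^j_t$, in order to obtain a genuine $o(\eps_j^2)$ error in the Taylor expansion — this is exactly where the hypothesis $p>\max\{2,n-1\}$, which gives $C^{1,\alpha}$ closeness and $L^p$ control of the mean curvatures of the $F^j_t$, is used in an essential way.
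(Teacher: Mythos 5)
Your argument is a genuinely different organization of the proof. The paper proceeds constructively: Step~1 and Step~2 establish (via a compactness argument of their own) that the form $\Pi_F$ is \emph{uniformly} positive on the appropriate subspace for all $F$ that are $W^{2,p}$--close to $E$, Lemma~\ref{lemma1} builds a divergence--free vector field whose flow connects $E$ to $F$ through volume--preserving deformations, Lemma~\ref{Lemma 3.8} provides a translation normalization compatible with these estimates, and the quantitative inequality is then read off directly from $J(F)-J(E)=\int_0^1(1-t)\,\frac{d^2}{dt^2}J(E_t)\,dt$ without any need to compare $\Pi_{E_t}$ to $\Pi_E$. You instead argue by contradiction, normalize by the $L^2$ norm of $\psi_j$, pass to a weak $H^1$ limit, and invoke~\eqref{uusi stability} on the limit; your use of the \emph{optimal} translation in~\eqref{alpha} to exclude $\varphi_0\in T(\pa E)$ is cleaner than the iterative construction in Lemma~\ref{Lemma 3.8}, and the reduction to the strict positivity of $\Pi_E$ on $\Htilde^1(\pa E)\setminus T(\pa E)$ is a nice way to package the stability hypothesis. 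What your route buys in transparency it pays for in the step you yourself flag: the claim $J(F_j)-J(E)=\tfrac12\Pi_E(\psi_j)+o(\norma{\psi_j}_{H^1}^2)$ along the linear family of normal graphs. This is correct (it is most cleanly seen by working with $J-\lambda\,\mathrm{Vol}$, for which $E$ is an \emph{unconstrained} critical point whose second variation is exactly $\Pi_E$, so that the $\lambda$--pieces of the remainder $R_t$ in Theorem~\ref{secondvar} cancel against $\lambda$ times the second derivative of the volume), but making the $o(\cdot)$ rigorous — showing $\Pi_{F^j_t}$ applied to the normal component of the velocity is within $o(\norma{\psi_j}_{H^1}^2)$ of $\Pi_E(\psi_j)$, uniformly in $t$, and that $\int_{\pa F^j_t}(\HHH_t+4\gamma v_t-\lambda)[\cdots]\,d\mu_t$ is likewise small — requires essentially the same $W^{2,p}$/$C^{1,\alpha}$ estimates that occupy Lemmas~\ref{lemma1} and~\ref{lemmastima} and Step~2 of the paper's proof. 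In other words, the two approaches rely on the same technical inputs; the paper's device of integrating a \emph{uniformly positive} $\Pi_{E_t}$ along a divergence--free flow avoids having to quantify a Taylor remainder at all, whereas your version front--loads that remainder estimate and then extracts the conclusion via compactness. Both are sound; your sketch leaves the remainder estimate as ``a standard computation'', which is fair in spirit but is precisely where the bulk of the paper's work lies.
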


\begin{remark}
We could have introduced the definitions of {\em strict} local minimizer or {\em strict} $W^{2,p}$--local minimizer for the nonlocal Area functional, by asking that the inequalities $J(F)\leq J(E)$ in Definition~\ref{min} are equalities if and only if $F$ is a translate of $E$. With such notion, the conclusion of this theorem is that $E$ is actually a strict $W^{2,p}$--local minimizer (with a ``quantitative'' estimate of its minimality).
\end{remark}

\begin{remark}\label{L1min}
With some extra effort, it can be proved that in the same hypotheses of Theorem~\ref{W2pMin}, the set $F$ is actually a local minimizer (see~\cite{AcFuMo}). Since in the analysis of the modified Mullins--Sekerka and surface diffusion flow in the next sections we do not need such a stronger result, we omitted to prove it.
\end{remark}

For the proof of this result we need some technical lemmas. We underline that most of the difficulties are due to the presence of the degenerate subspace $T(\partial E)$ of the form $\Pi_E$ (where it is zero), related to the translation invariance of the nonlocal Area functional (recall the discussion after Definition~\ref{Pi}).

In the next key lemma we are going to show how to construct volume--preserving variations (hence, admissible smooth vector fields) ``deforming'' a set $E$ to any other smooth set with the same volume, which is $W^{2,p}$--close enough. By the same technique we will also prove Lemma~\ref{vector field} immediately after, whose proof was postponed from Subsection~\ref{sec1.2}.

\begin{lem}\label{lemma1}
Let $E \subseteq \T^n$ be a smooth set and $N_\eps$ a tubular neighborhood of $\pa E$ as above, in formula~\eqref{tubdef}. For all $p >n-1$, there exist constants $\delta, C>0$ such that if $\psi \in C^\infty (\pa E)$ and $\norma{\psi}_{\Wduep(\pa E)} \leq \delta$, then there exists a vector field $X\in C^\infty(\T^n; \R^n)$ with $\Div\!X=0$ in $N_\eps$ and the associated flow $\Phi$, defined by system~\eqref{varflow}, satisfies 
\begin{equation}\label{flowin}
\Phi(1,y)= y+ \psi(y)\nu_E(y)\,, \qquad\text{for all $y \in \pa E$.}
\end{equation}
Moreover, for every $t \in [0,1]$
\begin{equation}\label{normflow}
\norma{\Phi(t, \cdot) - \Id}_{\Wduep (\pa E)} \leq C \norma{\psi }_{\Wduep(\pa E)} \, .
\end{equation}
Finally, if $\vol(E_1) = \vol(E)$, then the variation $E_t=\Phi_t(E)$ is volume--preserving, that is, $\vol(E_t) = \vol (E)$ for all $t \in [-1,1]$ and the vector field $X$ is admissible.
\end{lem}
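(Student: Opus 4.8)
The plan is to construct $X$ by hand inside the tubular neighborhood, using the ``normal coordinates'' provided by the diffeomorphism $L(y,s)=y+s\nu_E(y)$ of \eqref{eqcar410}, and to look for a field whose expression in these coordinates is purely ``normal''. Write the Euclidean volume element of $\T^n$, restricted to $N_\eps$, in the coordinates $(y,s)\in\pa E\times(-\eps,\eps)$ as $JL(y,s)\,d\mu(y)\,ds$, where $JL$ is the smooth, strictly positive Jacobian of $L$ (see \eqref{eqcar411}), with $JL(\cdot,0)\equiv1$, and $d\mu(y)$ the ($s$--independent) area element of $\pa E$. For a function $c\in C^\infty(\pa E)$ still to be chosen I would set
$$
X\bigl(L(y,s)\bigr)=\frac{c(y)}{JL(y,s)}\,\nu_E(y).
$$
The advantage of this ansatz is that such a field, having only the ``$s$--component'' $X^s=c(y)/JL$ in these coordinates, has divergence $\Div^{\T^n}\!X=\tfrac{1}{JL}\,\partial_s\bigl(JL\,X^s\bigr)=\tfrac{1}{JL}\,\partial_s c(y)=0$; hence \emph{any} such $X$ is divergence free in $N_\eps$, and after extending it by a standard cut-off (carried out on a slightly larger tube, so that $\Div X$ still vanishes on all of $N_\eps$) one obtains $X\in C^\infty(\T^n;\R^n)$ with $\Div X=0$ in $N_\eps$, supported in a small neighborhood of $\pa E$.

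Next I would pin down $c$. Since the coordinate expression of $X$ has no tangential part, the integral curve of $X$ issuing from a point $y\in\pa E$ stays on the normal segment $\{L(y,s):s\in(-\eps,\eps)\}$, so the flow \eqref{varflow} is $\Phi(t,y)=L\bigl(y,s(y,t)\bigr)$ with $\dot s=c(y)/JL(y,s)$ and $s(y,0)=0$; integrating, $\int_0^{s(y,t)}JL(y,\sigma)\,d\sigma=c(y)\,t$. Imposing the target condition \eqref{flowin}, i.e.\ $\Phi(1,y)=L(y,\psi(y))$, forces
$$
c(y)=\int_0^{\psi(y)}JL(y,\sigma)\,d\sigma ,
$$
a smooth function of $(y,\psi(y))$ vanishing at $\psi(y)=0$ and with derivative $JL(y,0)=1$ there. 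For $\norma{\psi}_{\Wduep(\pa E)}\le\delta$ small this $c$ is well defined, and the superposition (composition) estimates in $\Wduep(\pa E)$ give $\norma{c}_{\Wduep(\pa E)}\le C\norma{\psi}_{\Wduep(\pa E)}$. Plugging this into the scalar ODE and using the implicit function theorem together with the same superposition estimates controls $s(y,t)$ and its tangential derivatives, yielding $|s(y,t)|\le C|c(y)|$ and the bound \eqref{normflow} uniformly for $t\in[0,1]$; shrinking $\delta$ if necessary keeps $|s(y,t)|<\eps$, hence $\pa E_t\subseteq N_\eps$, for all $t\in[-1,1]$.

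For the last assertion I would argue that $t\mapsto\vol(E_t)$ is affine on $[-1,1]$. Since $X$ is autonomous, $\tfrac{d}{dt}\vol(E_t)=\int_{E_t}\Div X\,dx$; here $\Div X=0$ on $N_\eps$, $X$ vanishes outside the (slightly larger) tube on which it was built, and on the intermediate annular region $\mathbf{1}_{E_t}$ is independent of $t$ (the boundary $\pa E_t$ never meets it, by the previous paragraph, so the conclusion follows from continuity of the flow). Hence $\tfrac{d}{dt}\vol(E_t)$ does not depend on $t$, so $\vol(E_t)=\vol(E)+t\cdot\mathrm{const}$ on $[-1,1]$; if $\vol(E_1)=\vol(E)$ the constant is zero and $\vol(E_t)\equiv\vol(E)$. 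Finally $X$ is admissible, because $0=\tfrac{d}{dt}\vol(E_t)\big|_{t=0}=\int_{\pa E}\langle X\vert\nu_E\rangle\,\dmu$ shows $\langle X\vert\nu_E\rangle\in\Htilde^1(\pa E)$ (cf.\ Remark~\ref{remcarlo888}).

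The main obstacle --- and the only place the hypothesis $p>n-1$ genuinely enters --- is the passage from the pointwise formulas for $c$ and for $s(y,t)$ to the quantitative estimates in $\Wduep(\pa E)$: differentiating the composition of $\psi$ with a fixed smooth function twice produces a factor $(\nabla\psi)^2$, so one needs $\nabla\psi\in L^\infty(\pa E)$, i.e.\ the Sobolev embedding $\Wduep(\pa E)\hookrightarrow C^1(\pa E)$, which on the $(n-1)$--dimensional manifold $\pa E$ holds exactly for $p>n-1$; everything else is elementary ODE theory and a bookkeeping of cut-offs. I would close by noting that Lemma~\ref{vector field} is obtained by the identical construction, taking instead $X(L(y,s))=\varphi(y)/JL(y,s)\,\nu_E(y)$ near $\pa E$ --- so that $\langle X\vert\nu_E\rangle=\varphi$ on $\pa E$, as $JL(\cdot,0)\equiv1$ --- with the hypothesis $\int_{\pa E}\varphi\,\dmu=0$ taking over the role played here by $\vol(E_1)=\vol(E)$.
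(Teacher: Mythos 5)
Your construction produces \emph{exactly} the paper's vector field: the paper's auxiliary function $\xi$ solves $\partial_t\xi(L(y,t))+\xi(L(y,t))\Delta d_E(L(y,t))=0$ with $\xi=1$ on $\pa E$, while the Jacobian $JL(y,t)$ solves $\partial_t JL=JL\,\Delta d_E(L(y,t))$ with $JL(y,0)=1$, so $\xi\equiv 1/JL$ and your $X(L(y,s))=\frac{c(y)}{JL(y,s)}\nu_E(y)$ coincides with the paper's $X=\theta\,\xi\,\nabla d_E$. That said, your reformulation in normal coordinates is genuinely cleaner in two places. First, the divergence-free property becomes the one-line observation $\Div X=\tfrac{1}{JL}\partial_s\bigl(JL\,X^s\bigr)=\tfrac{1}{JL}\partial_s c(y)=0$, whereas the paper verifies $\Div\widetilde X=0$ by hand from the ODE for $\xi$ and then checks $\theta$ is constant along normal rays. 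Second, you solve the flow ODE explicitly, $\int_0^{s(y,t)}JL(y,\sigma)\,d\sigma=c(y)\,t$, and read off the $\Wduep$ bound from superposition estimates for the composition $s(\cdot,t)=G^{-1}(c(\cdot)t,\cdot)$ with $c=F(\psi(\cdot),\cdot)$; the paper instead estimates $\norma{X}_{\Wduep(N_\eps)}$ and runs a Gronwall argument on the linearized flow system~\eqref{flowdiff}--\eqref{flowdiffdiff}. Your route is shorter and morally simpler, but note you should spell out the superposition estimate carefully (Hadamard factorization $F(\tau,y)=\tau\widetilde F(\tau,y)$, then $\nabla^2[c\,k_t(c,\cdot)]$ controlled via $\Wduep\hookrightarrow C^1$, exactly where $p>n-1$ enters) if you want the same level of rigor as the paper's Step~3. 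Finally, your volume argument shows $\frac{d}{dt}\vol(E_t)=\int_{E\cap K}\Div X\,dx$ is $t$--independent, hence $\vol(E_t)$ is affine, whereas the paper shows $\frac{d^2}{dt^2}\vol(E_t)=0$; both deliver linearity. One minor point on the cut-off: to have $\Div X=0$ on all of the \emph{given} $N_\eps$, either build $X$ on a slightly larger tube $N_{\eps'}$ (if available) as you propose, or note, as the paper implicitly does, that only $\Div X=0$ on a region containing all the $\pa E_t$ is actually used, so a cut-off at $N_{\eps/2}$ would also do after shrinking $\delta$.
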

\begin{proof}
We start considering the vector field $\Xtilde\in C^\infty (N_\eps;\R^n)$ defined as 
\begin{equation}\label{eqcar400}
\Xtilde (x) = \xi(x) \nabla  d_E (x)
\end{equation}
for every $x \in N_\eps$, where $ d_E:N_\eps\to\R$ is the signed distance function from $E$ and $\xi:N_\eps\to\R$ is the function defined as follows: for all $y \in \pa E$, we let $f_y : (-\eps , \eps) \to \R$ to be the unique solution of the ODE
$$
\begin{cases}
f'_y (t) + f_y(t) \Delta  d_E (y+t \nu_E (y))=0 \\
f_y(0)= 1
\end{cases}
$$
and we set 
$$ 
\xi(x)=\xi (y+ t \nu_E (y)) = f_y(t) =\exp \Bigl ( - \int _0 ^t {\Delta d_E (y+ s \nu_E(y)) \, ds } \Bigr ), 
$$
recalling that the map $(y,t)\mapsto x=y+t\nu_E(y)$ is a smooth diffeomorphism between $\partial E\times (-\eps,\eps)$ and $N_\eps$ (with inverse $x\mapsto (\pi_E(x),d_E(x))$, where $\pi_E$ is the orthogonal projection map on $E$, defined by formula~\eqref{eqcar2050}). Notice that the function $f$ is always positive, thus the same holds for $\xi$ and $\xi=1$, $\nabla d_E=\nu_E$, hence $\widetilde{X}=\nu_E$ on $\partial E$.

Our aim is then to prove that the smooth vector field $X$ defined by 
\begin{equation}\label{field}
X(x)=  \int _0 ^{\psi(\pi_E (x))} {\frac{ds}{\xi(\pi_E(x)+ s \nu_E(\pi_E(x)))} } \, \widetilde{X}(x)
\end{equation}
for every $x \in N_\eps$ and extended smoothly to all $\T^n$, satisfies all the properties of the statement of the lemma. 

\smallskip

\noindent \textbf{Step ${\mathbf 1}$.} We saw that $\Xtilde\vert_{\pa E} = \nu_E$, now we show that $\Div\!\Xtilde =0$ and analogously $\Div\!X=0$ in $N_\eps$.\\
Given any $x=y+t \nu_E(y)\in N_\eps$, with $y\in\pa E$, we have
\begin{align} 
\Div\!\Xtilde (x) &= \Div [\xi (x) \nabla  d_E(x)]\\
&= \langle\nabla\xi (x)| \nabla  d_E(x)\rangle+\xi (x) \Delta d_E(x)\\
&= \frac{\partial}{\partial t}[\xi (y+t \nu_E (y))]+ \xi (y+t \nu_E (y)) \Delta d_E (y+ t \nu_E(y)) \\
& = f'_y (t) + f_y (t) \Delta d_E ( y+ t \nu_E (y))\\
&= 0,
\end{align}
where we used the fact that $f_y'(t)=\langle \nabla \xi(y+t\nu_E(y))\vert \nu_E(y)\rangle$ and $\nabla d_E(y+t \nu_E (y))=\nu_E(y)$, by formula~\eqref{eqcar410bis}.\\
Since the function
$$
x \mapsto \theta(x)=\int_0 ^{\psi(\pi_E(x))} \frac{ds}{\xi (\pi_E(x) + s \nu (\pi_E (x))}
$$
is clearly constant along the segments $t\mapsto x+t\nabla d_E(x)$, for every $x\in N_\eps$, it follows that 
$$
0=\frac{\partial}{\partial t}\bigl[\theta(x+t\nabla d_E(x))\bigr]\,\Bigr\vert_{t=0}=\langle \nabla\theta(x)|\nabla d_E(x)\rangle,
$$
hence,
$$
\Div\!X= \langle \nabla\theta|\nabla d_E\rangle\xi + \theta \Div\!\Xtilde = 0.
$$

\smallskip

\noindent \textbf{Step ${\mathbf 2}$.} Recalling that $\psi \in C^\infty (\pa E)$ and $p>n-1$, we have
$$ 
\norma{\psi}_{L^\infty (\pa E)} \leq\norma{\psi}_{C^1(\pa E)} \leq C_E \norma{\psi}_{\Wduep (\pa E)},
$$
by Sobolev embeddings (see~\cite{Aubin}). Then, we can choose $\delta < {\eps}/{C_E}$ such that for all $x \in \pa E$ we have that $x\pm\psi(x) \nu_E (x) \in N_\eps$.\\
To check that equation~\eqref{flowin} holds, we observe that
\begin{equation}
\theta(x)=\int_0 ^{\psi (\pi_E(x))} {\frac{ds}{\xi (\pi_E(x)+ s \nu_E (\pi_E(x)))} }
\end{equation}
represents the time needed to go from $\pi_E(x)$ to $\pi_E(x)+ \psi (\pi_E(x)) \nu_E (\pi_E (x))$ along the trajectory of the vector field $\Xtilde$, which is the segment connecting $\pi_E(x)$ and $\pi_E(x)+ \psi (\pi_E(x)) \nu_E (\pi_E (x))$, of length $\psi (\pi_E(x))$, parametrized as 
$$
s\mapsto \pi_E(x)+ s\psi (\pi_E(x)) \nu_E (\pi_E (x)),
$$
for $s\in[0,1]$ and  which is traveled with velocity $\xi (\pi_E(x)+ s \nu_E (\pi_E(x)))=f_{\pi_E(x)}(s)$. Therefore, by the above definition of $X=\theta\Xtilde$ and the fact that the function $\theta$ is constant along such segments, we conclude that
$$
\Phi(1, y) - \Phi (0,y) = \psi(y) \nu_E (y)\,,
$$
that is, $\Phi(1,y)= y+ \psi (y) \nu_E (y)$, for all $y\in \pa E$.

\smallskip

\noindent \textbf{Step ${\mathbf 3}$.} To establish inequality~\eqref{normflow}, we first show that 
\begin{equation}\label{normfield}
\norma{X}_{\Wduep(N_\eps)} \leq C \norma{\psi}_{\Wduep(\pa E)}
\end{equation}
for a constant $C>0$ depending only on $E$ and $\eps$.
This estimate will follow from the definition of $X$ in equation~\eqref{field} and the definition of $\Wduep$--norm, that is,
$$
\norma{X}_{\Wduep(N_\eps)} = \norma{X}_{L^p (N_\eps)} + \norma{ \nabla  X}_{L^p (N_\eps)} + \norma{ \nabla  ^2 X }_{L^p (N_\eps)} \, .
$$ 
As $|\nabla d_E|=1$ everywhere and the positive function $ \xi $ satisfies $0 < C_1 \leq \xi \leq C_2$ in $N_\eps$, for a pair of constants $C_1$ and $C_2$, we have
\begin{align}
\norma{X} _{L^p (N_\eps)} ^ p & =  \int_{N_\eps} \biggl \vert {\int_0 ^{\psi(\pi_E(x))} \frac{ds}{\xi ( \pi_E(x) + s \nu_E (\pi_E(x)))} \, \xi(x) \nabla  d_E (x) } \biggr \vert^p \, dx  \nonumber \\
& \leq \norma{\xi}^p_{L^\infty (N_\eps)} \int_{N_\eps} \biggl \vert \int_0 ^{\psi(\pi_E(x))} \frac{ds}{\xi ( \pi_E(x) + s \nu_E (\pi_E(x)))} \biggr \vert^p \, dx \nonumber \\
& \leq \frac{C_2^p}{C_1^p}\int_{N_\eps} \abs{\psi(\pi_E(x))}^p \, dx \nonumber \\
& =\frac{C_2^p}{C_1^p}\int_{\pa E}\int_{-\eps}^\eps \abs{\psi(\pi_E(y+t\nu_E(y)))}^p JL(y,t) \, dt\,d\mu(y)\\
& =\frac{C_2^p}{C_1^p}\int_{\pa E} \abs{\psi(y)}^p\int_{-\eps}^\eps JL(y,t) \, dt\,d\mu(y)\\
& \leq C\int_{\pa E} \abs{\psi(y)}^p \, d\mu(y)\\
& = C\norma{\psi}_{L^p(\pa E)} ^p \, ,\label{normX}
\end{align}
where $L:\partial E\times (-\eps,\eps)\to N_\eps$ the smooth diffeomorphism defined in formula~\eqref{eqcar410} and $JL$ its Jacobian. Notice that the constant $C$ depends only on $E$ and $\eps$.
 
Now we estimate the $L^p$--norm of $\nabla  X$. We compute 
\begin{align}
\nabla X =&\,\frac{\nabla  \psi (\pi_E(x)) d\pi_E(x)}{ \xi(\pi_E(x) + \psi(\pi_E(x)) \nu_E (\pi_E(x)))} \, \xi (x) \nabla  d_E(x) \\
&\,-\biggl [\int_0 ^{\psi (\pi_E (x))}\frac{\nabla  \xi (\pi_E(x) + s \nu_E (\pi_E(x)))}{\xi ^ 2 (\pi_E(x) + s \nu_E (\pi_E(x)))}
d\pi_E(x)\,\mathrm{Id}\, ds\biggr]\, \xi (x) \nabla  d_E (x) \\
&\,-\biggl [\int_0 ^{\psi (\pi_E (x))}\frac{\nabla  \xi (\pi_E(x) + s \nu_E (\pi_E(x)))}{\xi ^ 2 (\pi_E(x) + s \nu_E (\pi_E(x)))}
d\pi_E(x)s\,d\nu_E (\pi_E (x))\, ds\biggr]\, \xi (x) \nabla  d_E (x)\\
&\,+ \int_0 ^{\psi (\pi_E (x))} \frac{ds}{\xi  (\pi_E(x) + s \nu_E (\pi_E(x)))} \bigl ( \nabla  \xi (x) \nabla  d_E (x) + \xi (x)\nabla^2 d_E (x) \bigr)
\end{align}
and we deal with the integrals in the three terms as before, changing variable by means of the function $L$. That is, since all the functions $d\pi_E$, $d\nu_E$, $\nabla^2 d_E$, $\xi$, $1/\xi$, $\nabla\xi$ are bounded by some constants depending only on $E$ and $\eps$, we easily get (the constant $C$ could vary from line to line)
\begin{align*}
\norma{\nabla  X}_{L^p (N_\eps)}^p \leq 
&\,C\int_{N_\eps} \vert \nabla  \psi (\pi_E(x))\vert^p \, dx+C\int_{N_\eps}  \vert\psi(\pi_E (x))\vert^p \, dx\\
=&\,C\int_{\pa E}\int_{-\eps}^\eps \abs{\nabla\psi(\pi_E(y+t\nu_E(y)))}^p\,JL(y,t) \, dt\,d\mu(y)\\
&\,+C\int_{\pa E}\int_{-\eps}^\eps \abs{\psi(\pi_E(y+t\nu_E(y)))}^p\,JL(y,t) \, dt\,d\mu(y)\\
=&\, C\int_{\pa E} \bigl(\abs{\psi(y)}^p+\abs{\nabla\psi(y)}^p\bigr)\,\int_{-\eps}^\eps JL(y,t) \, dt\,d\mu(y)\\
& \leq C\norma{\psi}_{L^p(\pa E)} ^p +C\norma{\nabla\psi}_{L^p(\pa E)} ^p\\
& \leq C\norma{\psi}_{W^{1,p}(\pa E)} ^p\,.
\end{align*}
A very analogous estimate works for $\norma{ \nabla ^2 X} _{L^p (N_\eps)}^p$ and we obtain also
\begin{equation}\label{normgrad2X}
\norma{\nabla   ^2 X}_{L^p (N_\eps)}^p \leq C\norma{\psi}_{\Wduep (\pa E)}^p\,,
\end{equation}
hence, inequality~\eqref{normfield} follows with $C=C(E,\eps)$.

Applying now Lagrange theorem to every component of $\Phi (\cdot,y)$ for any $y\in\pa E$ and $t\in[0,1]$, we have
$$
\Phi_i(t,y)-y_i=\Phi_i(t,y) - \Phi_i(0, y)= t X^i(\Phi(s,y))\,,
$$
for every $i\in\{1,\dots,n\}$, where $s=s(y,t)$ is a suitable value in $(0,1)$. Then, it clearly follows
\begin{equation}\label{normfi}
\norma{\Phi(t,\cdot)- \Id}_{L^\infty (\pa E)}\leq C\norma{X} _{L^\infty (N_\eps)}\leq C\norma{X}_{\Wduep(N_\eps)}
\leq C \norma{\psi}_{\Wduep(\pa E)}
\end{equation}
by estimate~\eqref{normfield}, with $C=C(E,\eps)$ (notice that we used Sobolev embeddings, being $p>n-1$, the dimension of $\pa E$).\\
Differentiating the equations in system~\eqref{varflow}, we have (recall that we use the convention of summing over the repeated indices)
\begin{equation}\label{flowdiff}
\begin{cases}
\frac{\pa}{\pa t} \nabla^i\Phi_j(t,y) = \nabla^k  X^j(\Phi (t,y)) \nabla^i\Phi_k(t,y) \\
\nabla  ^i \Phi_j (0,y)=\delta_{ij}
\end{cases}
\end{equation}
for every $i,j\in\{1,\dots,n\}$. It follows,
\begin{align}
\frac{\pa}{\pa t} \bigl\vert \nabla^i\Phi_j(t,y)-\delta_{ij}\bigl\vert^2\leq&\,2\bigl|(\nabla^i\Phi_j(t,y)-\delta_{ij})\nabla^k  X^j(\Phi (t,y)) \nabla^i\Phi_k(t,y)\bigr|\\
\leq&\,2\Vert\nabla X\Vert_{L^\infty(N_\eps)}\bigl\vert \nabla^i\Phi_j(t,y)-\delta_{ij}\bigl\vert^2+2\Vert\nabla X\Vert_{L^\infty(N_\eps)}\bigl|\nabla^i\Phi_j(t,y)-\delta_{ij}\bigr|
\end{align}
hence, for almost every $t\in[0,1]$, where the following derivative exists,
$$
\frac{\pa}{\pa t} \bigl\vert \nabla^i\Phi_j(t,y)-\delta_{ij}\bigl\vert\leq C\Vert\nabla X\Vert_{L^\infty(N_\eps)}\bigl(\bigl\vert \nabla^i\Phi_j(t,y)-\delta_{ij}\bigl\vert+1\bigr)\,.
$$
Integrating this differential inequality, we get
$$
\bigl\vert \nabla^i\Phi_j(t,y)-\delta_{ij}\bigl\vert\leq e^{tC\Vert\nabla X\Vert_{L^\infty(N_\eps)}}-1\leq e^{C\Vert X\Vert_{W^{2,p}(N_\eps)}}-1,
$$
as $t\in[0,1]$, where we used Sobolev embeddings again.
Then, by inequality~\eqref{normfield}, we estimate
\begin{equation}
\sum_{1\leq i,j\leq n}\norma{\nabla^i\Phi_j(t,\cdot) - \delta_{ij}}_{L^\infty (\pa E)}
\leq C\bigl(e^{C\norma{\psi}_{\Wduep (\pa E)}}-1\bigr)\leq C\norma{\psi}_{\Wduep (\pa E)},\label{normgradfi}
\end{equation}
as $\norma{\psi}_{\Wduep(\pa E)} \leq \delta$, for any $t \in [0,1]$ and $y\in\pa E$, with $C=C(E,\eps,\delta)$.\\
Differentiating equations~\eqref{flowdiff}, we obtain
\begin{equation}\label{flowdiffdiff}
\begin{cases}
\frac{\pa}{\pa t} \nabla^\ell\nabla^i\Phi_j (t,y) = \nabla^s\nabla^kX^j(\Phi(t,y))\nabla^i\Phi_k(t,y)\nabla^\ell\Phi_s(t,y)\\
\phantom{\frac{\pa}{\pa t} \nabla^\ell\nabla^i\Phi_j (t,y) =} +\nabla^kX^j (\Phi (t,y)) \nabla^\ell\nabla^i\Phi_k(t,y) \\
\nabla^\ell\nabla^i\Phi (0,y)= 0
\end{cases}
\end{equation}
(where we sum over $s$ and $k$), for every $t \in [0,1]$, $y\in\pa E$ and $i,j,\ell\in\{1,\dots,n\}$.\\
This is a linear {\em non--homogeneous} system of ODEs such that, if we control $C\norma{\psi}_{\Wduep (\pa E)}$, the smooth coefficients in the right side multiplying the solutions $\nabla^\ell\nabla^i\Phi_j (\cdot,y)$ are uniformly bounded  (as in estimate~\eqref{normgradfi}, Sobolev embeddings then imply that $\nabla X$ is bounded in $L^\infty$ by $C\norma{\psi}_{\Wduep (\pa E)}$). Hence, arguing as before, for almost every $t\in[0,1]$ where the following derivative exists, there holds
\begin{align}
\frac{\pa}{\pa t} \bigl\vert \nabla^2\Phi(t,y)\bigl\vert\leq&\,C\Vert\nabla X\Vert_{L^\infty(N_\eps)}\bigl\vert \nabla^2\Phi(t,y)\bigl\vert+C\vert\nabla^2 X(\Phi(t,y))\vert\\
\leq&\,C\delta\bigl\vert \nabla^2\Phi(t,y)\bigl\vert+C\vert\nabla^2 X(\Phi(t,y))\vert\,,
\end{align}
by inequality~\eqref{normfield} (notice that inequality~\eqref{normgradfi} gives an $L^\infty$--bound on $\nabla\Phi$, {\em not only} in $L^p$, which is crucial).
Thus, by means of Gronwall's lemma (see~\cite{stampvidpic}, for instance), we obtain the estimate 
$$
\bigl\vert \nabla^2\Phi(t,y)\bigl\vert\leq C\int_0^t \vert\nabla^2 X(\Phi(s,y))\vert e^{C\delta(t-s)}\,ds\leq C\int_0^t \vert\nabla^2 X(\Phi(s,y))\vert\,ds\,,
$$
hence,
\begin{align}
\norma{\nabla  ^2 \Phi ( t, \cdot) }_{L^p( \pa E)}^p 
\leq&\, C\int_{\pa E}\Bigl(\int_0^t \vert\nabla^2 X(\Phi(s,y))\vert\,ds\Bigr)^p\,d\mu(y)\\
\leq&\, C\int_0^t\int_{\pa E} \vert\nabla^2 X(\Phi(s,y))\vert^p\,d\mu(y)ds\\
=&\, C\int_{N_\eps} \vert\nabla^2 X(x)\vert^pJL^{-1}(x)\,dx\\
\leq&\, C \norma{\nabla  ^2 X}_{L^p(N_\eps)}^p\\
\leq&\, C\norma{X}_{W^{2,p}(N_\eps)}^p\\
\leq&\, C\norma{\psi}_{\Wduep (\pa E)}^p\,,\label{normgrad2fi}
\end{align}
by estimate~\eqref{normfield}, for every $t\in[0,1]$, with $C=C(E,\eps,\delta)$.\\
Clearly, putting together inequalities~\eqref{normfi},~\eqref{normgradfi} and~\eqref{normgrad2fi}, we get the 
estimate~\eqref{normflow} in the statement of the lemma.

\smallskip

\noindent \textbf{Step $\mathbf{4}$.} Finally, computing as in formula~\eqref{sec der vol} and Remark~\ref{rem999}, we have
$$
\frac{d^2}{dt^2} \vol(E_t) = \int_{\pa E} \langle X| \nu_{E_t} \rangle \Div^{\T^n}\!\!X \, \dmu_t ,
$$ 
for every $t\in[-1,1]$, hence, since by Step~$1$ we know that $ \Div^{\T^n}\!\!X=0$ in $N_\eps$ (which contains each $\pa E_t$), we conclude that $ \frac{d^2}{dt^2} \vol(E_t) =0$ for all $t \in [-1,1]$, that is, the function $ t \mapsto \vol(E_t)$ is linear.\\
If then $\vol(E_1)=\vol(E)= \vol(E_0)$, it follows that $\vol (E_t)= \vol(E) $, for all $t \in [-1,1]$ which implies that $X$ is admissible, by Remark~\ref{remcarlo888}. 
\end{proof}

With an argument similar to the one of this proof, we now prove Lemma~\ref{vector field}.

\begin{proof}[Proof of Lemma~\ref{vector field}]
Let $\varphi:\partial E\to\R$ a $C^\infty$ function with zero integral, then we define the following smooth vector field in $N_\eps$,
\begin{equation}
X(x)=  \varphi(\pi_E(x))\Xtilde(x),
\end{equation}
where $\Xtilde$ is the smooth vector field defined by formula~\eqref{eqcar400} and we extend it to a smooth vector field $X\in C^\infty(\T^n; \R^n)$ on the whole $\T^n$. Clearly, by the properties of $\Xtilde$ seen above, 
$$
\langle X(y) \vert \nu_E(y)\rangle=
\varphi(y)\langle\Xtilde(y) \vert \nu_E(y)\rangle=\varphi(y)
$$
for every $y\in\pa E$.\\
As the function $x \mapsto \varphi(\pi_E(x))$ is constant along the segments $t\mapsto x+t\nabla d_E(x)$, for every $x\in N_\eps$, it follows, as in Step~$1$ of the previous proof, that $\Div\! X=0$ in $N_\eps$. Then, arguing as in Step~$4$, the flow $\Phi$ defined by system~\eqref{varflow} having $X$ as infinitesimal generator, gives a variation $E_t=\Phi_t(E)$ of $E$ such that the function $ t \mapsto \vol(E_t)$ is linear, for $t$ in some interval $(-\delta,\delta)$. Since, by equation~\eqref{eqc1000bis}, there holds
$$
\frac{d}{dt} \vol(E_t)\,\Bigr\vert_{t=0}=\int_{\pa E} \langle X \vert \nu_{E}\rangle \, d\mu=\int_{\pa E} \varphi\, d\mu=0,
$$
such function $ t \mapsto \vol(E_t)$ must actually be constant.\\
Hence, $\vol (E_t)= \vol(E) $, for all $t\in(-\delta,\delta)$ and the variation $E_t$ is volume--preserving.
\end{proof}

The next lemma gives a technical estimate needed in the proof of Theorem~\ref{W2pMin}.

\begin{lem}\label{lemmastima}
Let $p>\max\{2, n-1\}$ and $E \subseteq \T^n$ a strictly stable critical set for the (volume--constrained) functional $J$. Then, in the hypotheses and notation of Lemma~\ref{lemma1}, there exist constants $\delta, C>0$ such that if $\norma{\psi}_{\Wduep (\pa E)} \leq \delta$ then $|X| \leq C |\langle X \vert \nu_{E_t}\rangle |$ on $\pa E_t$ and
\begin{equation}\label{gradtangX}
\norma{\nabla  X}_{L^2(\pa E_t)} \leq C \norma{ \langle X | \nu_{E_t} \rangle}_{H^1(\pa E_t)}
\end{equation}
(here $\nabla$ is the covariant derivative along $E_t$), for all $t \in [0,1]$, where $X \in C^{\infty}(\T^n ; \R^n)$ is the smooth vector field defined in formula~\eqref{field}.
\end{lem}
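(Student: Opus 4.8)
\medskip

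The plan is to exploit the very explicit form of the vector field $X$ produced in Lemma~\ref{lemma1}. Writing $V=\nabla d_E$, which is a \emph{fixed} smooth unit vector field on $N_\eps$ (it does not depend on $\psi$), and $g=\theta\,\xi$ where $\theta$ is the scalar function appearing in formula~\eqref{field}, we have the pointwise identity $X=gV$ on $N_\eps$; since $|V|\equiv1$, in particular $|X|=|g|$ at every point. Only the smoothness of $E$ and the conclusions of Lemma~\ref{lemma1} enter (strict stability is not actually used in this lemma). First I would record the geometric closeness: by~\eqref{normflow} and the Sobolev embedding $W^{2,p}(\pa E)\hookrightarrow C^{1,\alpha}(\pa E)$ (available since $p>n-1=\dim\pa E$), after shrinking $\delta$ one gets $\norma{\Phi(t,\cdot)-\Id}_{C^{1,\alpha}(\pa E)}\le C\delta$ for all $t\in[0,1]$, so the hypersurfaces $\pa E_t$ are uniformly $C^{1,\alpha}$--close to $\pa E$ and $\norma{\nu_{E_t}\circ\Phi_t-\nu_E}_{C^0(\pa E)}\le C\delta$. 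Since $V$ is smooth, constant along the normal segments of $N_\eps$, and equals $\nu_E$ on $\pa E$, while $\pi_E$ is smooth with $\pi_E|_{\pa E}=\Id$, also $\norma{V\circ\Phi_t-\nu_E}_{C^0(\pa E)}\le C\delta$. Hence the function $b_t:=\langle V\,\vert\,\nu_{E_t}\rangle$ on $\pa E_t$ satisfies $|b_t-1|\le C\delta$, so $\tfrac12\le b_t\le 2$ once $\delta$ is small. The first assertion is then immediate: on $\pa E_t$ one has $\langle X\,\vert\,\nu_{E_t}\rangle=g\,b_t$, hence $|X|=|g|=|\langle X\,\vert\,\nu_{E_t}\rangle|/b_t\le 2\,|\langle X\,\vert\,\nu_{E_t}\rangle|$, which is the desired pointwise inequality with $C=2$.

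For the estimate~\eqref{gradtangX}, put $\varphi_t=\langle X\,\vert\,\nu_{E_t}\rangle$ on $\pa E_t$. From $X=gV$ we obtain, for the covariant derivative along $\pa E_t$, the pointwise identity $\nabla X=(\nabla g)\otimes V+g\,\nabla V$, and since $V$ is a fixed smooth vector field on $N_\eps$ its tangential derivative is bounded, $|\nabla V|\le\norma{DV}_{C^0(N_\eps)}=:C_0$, uniformly in $t$ and $\psi$; therefore $\norma{\nabla X}_{L^2(\pa E_t)}\le\norma{\nabla g}_{L^2(\pa E_t)}+C_0\norma{g}_{L^2(\pa E_t)}$, and it is enough to prove $\norma{g}_{H^1(\pa E_t)}\le C\norma{\varphi_t}_{H^1(\pa E_t)}$.

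To obtain this I would write $g=\varphi_t\,b_t^{-1}$. We have $\norma{b_t^{-1}}_{L^\infty(\pa E_t)}\le 2$, while $\nabla b_t^{-1}=-b_t^{-2}\,\nabla b_t$ and $\nabla b_t=\nabla\langle V\,\vert\,\nu_{E_t}\rangle$ equals a bounded term (coming from $DV$) plus $\langle V\,\vert\,B_{E_t}\rangle$, where $B_{E_t}$ is the second fundamental form of $\pa E_t$. Since $\pa E_t$ is a $W^{2,p}$--small perturbation of the smooth $\pa E$ (again by~\eqref{normflow}, with estimates as in~\eqref{normgrad2fi}), one has $\norma{B_{E_t}}_{L^p(\pa E_t)}\le C$ uniformly in $t$ and $\psi$, hence $\norma{b_t^{-1}}_{W^{1,p}(\pa E_t)}\le C$. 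Finally, the hypothesis $p>\max\{2,n-1\}$ makes the multiplication map $W^{1,p}(\pa E_t)\times H^1(\pa E_t)\to H^1(\pa E_t)$ bounded, with constant stable under $C^{1,\alpha}$--small deformations: the only nonelementary point is that $h\,\nabla f\in L^2$, which follows from $H^1(\pa E_t)\hookrightarrow L^{2p/(p-2)}(\pa E_t)$, an embedding available precisely because $2p/(p-2)<2(n-1)/(n-3)$ when $n\ge4$ (and unconditionally when $n\le3$). Thus $\norma{g}_{H^1(\pa E_t)}=\norma{\varphi_t\,b_t^{-1}}_{H^1(\pa E_t)}\le C\,\norma{b_t^{-1}}_{W^{1,p}(\pa E_t)}\,\norma{\varphi_t}_{H^1(\pa E_t)}\le C\,\norma{\varphi_t}_{H^1(\pa E_t)}$, which combined with the previous paragraph gives~\eqref{gradtangX}.

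I expect the genuine work to be the \emph{uniformity} of all these estimates over $t\in[0,1]$ and over $\psi$ with $\norma{\psi}_{W^{2,p}(\pa E)}\le\delta$: one must check that the constants in the Sobolev embeddings and in the multiplication inequality on $\pa E_t$, as well as the bound $\norma{B_{E_t}}_{L^p(\pa E_t)}\le C$, depend only on the uniform $C^{1,\alpha}$ and $W^{2,p}$ control of $\pa E_t$ furnished by Lemma~\ref{lemma1}. It is exactly at that borderline embedding step that the assumption $p>\max\{2,n-1\}$ — rather than some weaker integrability — is forced.
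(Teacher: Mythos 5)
Your proof is correct, and it reaches the same conclusion by a slightly different and arguably tighter route than the paper. The paper works with the orthogonal splitting $X=X_{\tau_t}+\langle X\,\vert\,\nu_{E_t}\rangle\nu_{E_t}$ along each $\pa E_t$: it first derives the pointwise bound $|X_{\tau_t}|\le C\,|\langle X\,\vert\,\nu_{E_t}\rangle|$ by playing $\nabla d_E$ against $\nu_{E_t}$ (using the uniform $L^\infty$--closeness of the two unit vectors), and then estimates $|\nabla X_{\tau_t}|$ pointwise by $C\,|\nabla\langle X\,\vert\,\nu_{E_t}\rangle|+C\,|\langle X\,\vert\,\nu_{E_t}\rangle|\,(|\nabla^2 d_E|+|\nabla\nu_{E_t}|)$ before integrating and invoking $H^1\hookrightarrow L^{2p/(p-2)}$ together with the uniform $L^p$ control of $\nabla\nu_{E_t}$ (and $\nabla^2 d_E$) over the family $\{\pa E_t\}$. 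You instead exploit from the outset that $X$ is a scalar multiple of the fixed unit field $V=\nabla d_E$, so $|X|=|g|$ and $\langle X\,\vert\,\nu_{E_t}\rangle=g\,b_t$ with $b_t$ pinned near $1$; the pointwise bound becomes a one--line division, and the gradient bound reduces to $\|g\|_{H^1(\pa E_t)}\le C\,\|\varphi_t\|_{H^1(\pa E_t)}$ via the product rule $g=\varphi_t\,b_t^{-1}$ and the Sobolev multiplication $W^{1,p}\times H^1\to H^1$. The underlying analytic content is identical — both proofs hinge on the $W^{2,p}$--closeness from~\eqref{normflow} giving a uniform $L^p$ bound on $B_{E_t}$ and on the borderline embedding $H^1(\pa E_t)\hookrightarrow L^{2p/(p-2)}(\pa E_t)$, forced by $p>\max\{2,n-1\}$ — but your decomposition localizes the role of the second fundamental form into the single quantity $\|b_t^{-1}\|_{W^{1,p}(\pa E_t)}$, which makes the bookkeeping cleaner. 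Two small remarks: you are right that strict stability of $E$ is never used here (the hypothesis is inherited from the ambient context), and the paper's stated $L^\infty$ control in~\eqref{gradientnu} on $\nabla\nu_{E_t}$ is really only available and used in $L^p$, which is exactly what your argument (correctly) relies on.
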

\begin{proof}
Fixed $\eps >0$, from inequality~\eqref{normflow} it follows that there exist $\delta >0$ such that if $\norma{\psi}_{\Wduep (\pa E)} \leq \delta$ there holds
$$
\abs{\nu_{E_t}(\Phi(t,y))-\nu_E(y)} \leq \eps
$$
for every $y\in\pa E$, hence, as $\nabla d_E=\nu_E$ on $\pa E$, we have
$$
\abs{\nabla d_E(\Phi^{-1}(t,x))  - \nu_{E_t}(x)}= \abs{\nu_E(\Phi^{-1}(t,x))  - \nu_{E_t}(x)} \leq \eps
$$
for every $x\in \pa E_t$. Then, if $\norma{\psi}_{\Wduep (\pa E)}$ is small enough, $\Phi^{-1}(t,\cdot)$ is close to the identity, thus
$$
\abs{\nabla d_E(\Phi^{-1}(t,x))  - \nabla d_E(x)} \leq \eps
$$
on $\pa E_t$ and we conclude
\begin{equation}\label{normanorm2}
\norma{\nabla d_E  - \nu_{E_t}}_{L^\infty (\pa E_t)} \leq 2 \eps \, .
\end{equation}
Moreover, using again the inequality~\eqref{normflow} and following the same argument above, we also obtain 
\begin{equation}\label{gradientnu}
\norma{\nabla^2 d_E  - \nabla \nu_{E_t}}_{L^\infty (\pa E_t)} \leq 2 \eps \, .
\end{equation}
We estimate $X_{\tau _t}=X-\langle X \vert \nu _{E_t} \rangle \nu_{E_t}$ (recall that $X=\langle X \vert \nabla d_E \rangle \nabla d_E$),
\begin{align}
\abs{X_{\tau _t}} & = \abs{X- \langle X \vert \nu_{E_t} \rangle \nu_{E_t}}\\
&= \abs{\langle X \vert \nabla d_E \rangle \nabla d_E -\langle X \vert \nu_{E_t} \rangle \nu_{E_t}} \\
& =  \abs{\langle X \vert \nabla d_E \rangle \nabla d_E -\langle X \vert \nu_{E_t} \rangle \nabla d_E + \langle X \vert \nu_{E_t} \rangle \nabla d_E -\langle X \vert \nu_{E_t} \rangle \nu_{E_t}} \\
&\leq \abs{ \langle X \vert (\nabla d_E - \nu_{E_t} ) \rangle \nabla d_E  }+ \abs{\langle  X \vert \nu_{E_t} \rangle  ( \nabla d_E -\nu_{E_t} )}\\
&\leq2 \abs{X}\,\abs{\nabla d_E - \nu_{E_t}}\\
& \leq 4 \eps \abs{X}\,,
\end{align}
then
$$
\abs{X_{\tau _t}} \leq 4 \eps \abs{X_{\tau _t} + \langle X \vert \nu _{E_t} \rangle \nu_{E_t}} \leq 4 \eps \abs{X_{\tau _t}} + \abs{\langle X \vert \nu _{E_t} \rangle}\,,
$$
hence,
\begin{equation}\label{Xtau}
\abs{X_{\tau _t}} \leq C \abs{ \langle X \vert \nu _{E_t} \rangle} \,.
\end{equation}
We now estimate the covariant derivative of $X_{\tau _t}$ along $\pa E_t$, that is,
\begin{align}
\abs{\nabla X_{\tau _t}} 
=&\, \abs{\nabla X- \nabla ( \langle X \vert \nu_{E_t} \rangle \nu_{E_t})} \\
=&\,  \abs{ \nabla  (\langle X \vert \nabla d_E \rangle \nabla d_E ) - \nabla  (\langle X \vert \nu_{E_t} \rangle \nu_{E_t}) } \\
=&\, |\nabla  (\langle X \vert \nabla d_E \rangle \nabla d_E) - \nabla  ( \langle X \vert \nu_{E_t} \rangle \nabla d_E )+\nabla  (\langle X \vert \nu_{E_t} \rangle \nabla d_E )- \nabla  (\langle X \vert \nu_{E_t} \rangle \nu_{E_t})|\\
\leq&\,  \abs{ \nabla  (\langle X \vert (\nabla d_E - \nu_{E_t} ) \rangle \nabla d_E ) }+\abs{\nabla  (\langle  X \vert \nu_{E_t} \rangle  ( \nabla d_E -\nu_{E_t} )) }\\
\leq&\,  C \eps \bigl [ \abs{ \nabla  X } + \abs{ \nabla  \langle X \vert \nu_{E_t} \rangle} \bigr] + C \abs{X} \bigl [ \abs{\nabla(\nabla d_E) } + \abs{ \nabla  \nu_{E_t} } \bigr] \\
\leq&\,  C \eps \bigl [ \abs{ \nabla  ( \langle X \vert \nu_{E_t} \rangle \nu_{E_t} + X_{\tau _t}) } + \abs{ \nabla  \langle X \vert \nu_{E_t} \rangle} \bigr ] + C \bigl(\abs{\langle X \vert \nu_{E_t} \rangle} + \abs{X_{\tau _t} }\bigr)\, \bigl [ \abs{ \nabla^2 d_E } + \abs{ \nabla  \nu_{E_t} } \bigr ] 
\end{align}
hence, using inequality~\eqref{Xtau} and arguing as above, there holds
\begin{equation}\label{gradtau}
\abs{\nabla X_{\tau _t}} \leq C \abs{ \nabla  \langle X \vert \nu_{E_t} \rangle } + C \abs{\langle X\vert \nu_{E_t} \rangle} \bigl [\abs{  \nabla^2 d_E } + \abs{ \nabla  \nu_{E_t} } \bigr ]  \, .
\end{equation}
Then, we get
\begin{align}
\norma{\nabla X_{\tau _t}}_{L^2 (\pa E_t)}^2\leq 
&\, C \norma{ \nabla  \langle X \vert \nu_{E_t} \rangle }_{L^2 (\pa E_t)}^2 + C \int_{\pa E_t} \abs{\langle X \vert \nu_{E_t}\rangle}^2 \bigl [\abs{  \nabla^2 d_E }  + \abs{ \nabla  \nu_{E_t} } \bigr ]^2 \, \dmu 
\\
\leq &\,   C \norma{\langle X \vert \nu_{E_t} \rangle }_{H^1 (\pa E_t)}^2+ C \norma{\langle X \vert \nu_{E_t}\rangle}^2_{L^{\frac{2p}{p-2}} (\pa E_t)} \bigl \Vert \abs{  \nabla^2 d_E }  + \abs{ \nabla  \nu_{E_t} } \bigr\Vert^2 _{L^p(\pa E_t)} 
\\
\leq &\, C \ \norma{\langle X \vert \nu_{E_t} \rangle}^2 _{H^1 (\pa E_t)}
\end{align}
where in the last inequality we used as usual Sobolev embeddings, as $p> \max\{2, n-1\}$ and the fact that $\norma{\nabla \nu_{E_t}}_{L^p(\pa E_t)}$ is bounded by the inequality~\eqref{gradientnu} (as $\norma{\nabla^2 d_E}_{L^p(\pa E_t)}$).\\ 
Considering the covariant derivative of $X=X_{\tau_t}+\langle X \vert \nu _{E_t} \rangle \nu_{E_t}$, by means of this estimate, the trivial one 
$$
\norma{ \nabla  \langle X \vert \nu_{E_t} \rangle } _{L^2 (\pa E_t)} \leq  \norma{ \langle X \vert \nu_{E_t} \rangle}  _{H^1(\pa E_t)}
$$
and inequality~\eqref{Xtau}, we obtain estimate~\eqref{gradtangX}.
\end{proof}

We now show that any smooth set $E$ sufficiently $W^{2,p}$--close to another smooth set $F$, can be ``translated'' by a vector $\eta\in\R^n$ such that $\pa E-\eta=\{y+\varphi(y)\nu_F(y) \, :\, y\in\partial F\}$, for a function $\varphi\in C^\infty(\pa F)$ having a suitable small ``projection'' on $T(\pa F)$ (see the definitions and the discussion after Remark~\ref{remcarlo}).

\begin{lem}\label{Lemma 3.8}
Let $p>n-1$ and $F\subseteq\T^n$ a smooth set with a tubular neighborhood $N_\eps$ as above, in formula~\eqref{tubdef}. For any $\tau>0$ there exist constants $\delta,C>0$  such that if another smooth set $E\subseteq \T^n$ satisfies $\vol(E\triangle F)<\delta$ and $\partial E=\{y+\psi(y)\nu_F(y) \,:\,y\in\partial F\}\subseteq N_\eps$ for a function $\psi\in C^\infty(\R)$ with $\norma{\psi}_{W^{2,p}(\partial F)}<\delta$, then there exist $\eta\in\R^n$ and $\varphi\in C^\infty(\pa F)$ with the following properties:
$$
\partial E-\eta=\{y+\varphi(y)\nu_F(y) \,:\,y\in\partial F\}\subseteq N_\eps\,,
$$
$$
\abs{\eta}\leq C\norma{\psi}_{W^{2,p}(\partial F)},\qquad \norma{\varphi}_{W^{2,p}(\partial F)}\leq C\norma{\psi}_{W^{2,p}(\partial F)}
$$
and 
$$
\Bigl \vert \int_{\partial F}\varphi\nu_F \, \dmu\, \Bigr \vert \leq\tau\norma{\varphi}_{L^2(\partial F)}\,.
$$ 
\end{lem}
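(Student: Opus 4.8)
The plan is to translate $\pa E$ by a suitable small vector $\eta\in\R^n$ so as to kill the component of its normal--graph function over $\pa F$ along the degenerate directions $\langle e_i|\nu_F\rangle$, $i\in\II_F$, of Remark~\ref{rembase}; I would do this by an inverse function theorem (or Banach fixed point) argument carried out in the finite--dimensional subspace $\OO_F$. This will in fact yield the stronger conclusion $\int_{\pa F}\varphi\,\nu_F\,\dmu=0$, so that the stated estimate holds a fortiori for every $\tau$ (and the constants $\delta,C$ may even be taken independent of $\tau$).

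First I would fix, for $\abs{\eta}$ and $\norma{\psi}_{\Wduep(\pa F)}$ small, the graph function of the translated hypersurface: since $p>n-1$ gives $W^{2,p}(\pa F)\hookrightarrow C^1(\pa F)$, the set $\pa E-\eta$ stays inside $N_\eps$ and is a $C^1$--small normal graph over $\pa F$, so there is a unique $\varphi_\eta\in C^\infty(\pa F)$ with $\pa E-\eta=\{y+\varphi_\eta(y)\nu_F(y):y\in\pa F\}\subseteq N_\eps$ (one solves $\pi_F(z+\psi(z)\nu_F(z)-\eta)=y$ for $z\in\pa F$ in terms of $y$ via the diffeomorphism $L$ of~\eqref{eqcar410} and the implicit function theorem, uniformly). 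Changing variables by $L$ as in the proof of Lemma~\ref{lemma1} gives $\norma{\varphi_\eta}_{\Wduep(\pa F)}\le C(\norma{\psi}_{\Wduep(\pa F)}+\abs{\eta})$ with $C=C(F,\eps)$, and $\varphi_0=\psi$. The key point is then to linearize $\varphi_\eta$ in $\eta$: using $d_F=0$, $\nabla d_F=\nu_F$ on $\pa F$ one gets $\varphi_\eta(y)=\psi(y)-\langle\eta\,|\,\nu_F(y)\rangle+\rho_\eta(y)$ with a remainder satisfying $\norma{\rho_\eta}_{\Wduep(\pa F)}\le C\abs{\eta}(\abs{\eta}+\norma{\psi}_{C^1(\pa F)})$ (again by the change of variables $L$, the $C^1$--smallness making the coefficients small).

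Next I would set $T(\eta):=\int_{\pa F}\varphi_\eta\,\nu_F\,\dmu$. If $v\perp\OO_F$ then $\langle\nu_F|v\rangle\equiv0$ on $\pa F$ by the definitions~\eqref{IIeq}--\eqref{OOeq}, hence $\langle T(\eta)|v\rangle=0$; so $T$ takes values in $\OO_F$ and it suffices to solve $T(\eta)=0$ for $\eta\in\OO_F$. By the linearization, $T(\eta)=\int_{\pa F}\psi\,\nu_F\,\dmu-A\eta+\int_{\pa F}\rho_\eta\,\nu_F\,\dmu$, where $A\eta:=\int_{\pa F}\langle\eta|\nu_F\rangle\nu_F\,\dmu$ maps $\OO_F$ into itself and is symmetric positive definite --- because for $\eta\in\OO_F\setminus\{0\}$ the function $\langle\eta|\nu_F\rangle$ is a non--trivial combination of the $L^2$--orthogonal, not identically zero functions $\langle e_i|\nu_F\rangle$, $i\in\II_F$ --- hence invertible with $\norma{A^{-1}}$ depending only on $F$. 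Thus $T(\eta)=0$ becomes the fixed point equation $\eta=S(\eta):=A^{-1}(\int_{\pa F}\psi\,\nu_F\,\dmu+\int_{\pa F}\rho_\eta\,\nu_F\,\dmu)$ in $\OO_F$; using $\abs{\int_{\pa F}\psi\,\nu_F\,\dmu}\le C\norma{\psi}_{L^2(\pa F)}$ and the remainder bound, for $\delta$ small enough $S$ maps the ball $\{\abs{\eta}\le 2\norma{A^{-1}}C\norma{\psi}_{L^2(\pa F)}\}\cap\OO_F$ into itself and is a contraction there, so Banach's theorem gives $\eta^*\in\OO_F$ with $T(\eta^*)=0$ and $\abs{\eta^*}\le C\norma{\psi}_{\Wduep(\pa F)}$. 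Taking $\eta:=\eta^*$, $\varphi:=\varphi_{\eta^*}$ yields all the asserted properties: $\pa E-\eta\subseteq N_\eps$, $\abs{\eta}\le C\norma{\psi}_{\Wduep(\pa F)}$, $\norma{\varphi}_{\Wduep(\pa F)}\le C(\norma{\psi}_{\Wduep(\pa F)}+\abs{\eta})\le C\norma{\psi}_{\Wduep(\pa F)}$, and $\int_{\pa F}\varphi\,\nu_F\,\dmu=0$.

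I expect the main obstacle to be the linearization step --- obtaining the quantitative, at least quadratic, $W^{2,p}$--control of the remainder $\rho_\eta$ and then checking the smallness thresholds that make $S$ a contraction --- but this is routine once one works in the tubular--neighborhood coordinates of~\eqref{eqcar410}, the only real subtlety being to keep all estimates uniform in $\eta$ over the relevant small ball. One could alternatively argue by contradiction and compactness (supposing the statement fails, one gets sets $E_k$ with $\norma{\psi_k}_{\Wduep(\pa F)}\to0$ violating the claim, and extracts a limit which is a translate of $F$), but since the fixed point above is already quantitative, that detour is unnecessary.
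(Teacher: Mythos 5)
Your approach is genuinely different from the paper's. The paper defines $\eta$ directly by an explicit formula (the $L^2(\pa F)$--projection of $\psi$ onto the degenerate directions) and then shows, via an interpolation estimate involving $\norma{\psi}_{W^{2,p}}^{1-\vartheta}\norma{\psi}_{L^2}^{\vartheta}$, that $\bigl|\int_{\pa F}\varphi\,\nu_F\,d\mu\bigr|$ is small compared to $\norma{\psi}_{L^2}$; because this is not automatically small compared to $\norma{\varphi}_{L^2}$, they are forced into an explicit iteration with a two--case stopping rule. You instead solve the equation $\int_{\pa F}\varphi_\eta\,\nu_F\,d\mu=0$ exactly via a Banach fixed point (or implicit function theorem) in the finite--dimensional space $\OO_F$, which --- if the estimates close --- gives the sharper conclusion $\int_{\pa F}\varphi\,\nu_F\,d\mu=0$ with $\delta,C$ independent of $\tau$, rendering the parameter $\tau$ superfluous. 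Your structural observations are correct: $T(\eta)=\int\varphi_\eta\nu_F\,d\mu$ indeed takes values in $\OO_F$ (since $\langle\nu_F|v\rangle\equiv 0$ for $v\perp\OO_F$), and $A$ restricted to $\OO_F$ is diagonal in the basis $\{e_i\}_{i\in\II_F}$ of Remark~\ref{rembase} with positive diagonal entries $\norma{\langle e_i|\nu_F\rangle}_{L^2}^2$, hence invertible.

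The one technical flaw is the claimed remainder bound $\norma{\rho_\eta}_{W^{2,p}(\pa F)}\le C\abs{\eta}(\abs{\eta}+\norma{\psi}_{C^1(\pa F)})$. This cannot hold in the $W^{2,p}$--norm with a constant independent of higher derivatives of $\psi$: already in the flat model $\pa F=\R^{n-1}\times\{0\}$ one finds $\rho_\eta(y')=\psi(y'+\eta_\tau)-\psi(y')$, and $\norma{\nabla^2\rho_\eta}_{L^p}=\norma{\nabla^2\psi(\cdot+\eta_\tau)-\nabla^2\psi}_{L^p}$ has no modulus $O(\abs{\eta})$ when $\nabla^2\psi$ is merely in $L^p$ --- translations are continuous but not Lipschitz on $L^p$. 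Fortunately your fixed--point argument only uses $\rho_\eta$ through the vector $\int_{\pa F}\rho_\eta\nu_F\,d\mu$, so what you actually need is the $L^2$ (or even $L^1$) estimate $\norma{\rho_\eta}_{L^2(\pa F)}\le C\abs{\eta}\norma{\psi}_{C^1(\pa F)}$ together with its Lipschitz analogue $\norma{\rho_{\eta_1}-\rho_{\eta_2}}_{L^2(\pa F)}\le C\abs{\eta_1-\eta_2}\norma{\psi}_{C^1(\pa F)}$; both follow from the tubular--neighborhood change of variables without any issue, and they suffice for the self--map and contraction properties of $S$. So the proof closes once you downgrade the remainder estimate to $L^2$; the final bound $\norma{\varphi}_{W^{2,p}(\pa F)}\le C\norma{\psi}_{W^{2,p}(\pa F)}$ is obtained, as you do, from the crude estimate $\norma{\varphi_\eta}_{W^{2,p}}\le C(\norma{\psi}_{W^{2,p}}+\abs{\eta})$ rather than from the remainder.
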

\begin{proof}
We let $d_{F}$ to be the signed distance function from $\pa F$. We underline that, throughout the proof, the various constants will be all independent of $\psi:\pa F\to\R$.\\
We recall that in Remark~\ref{rembase} we saw that there exists an orthonormal basis $\{e_1, \dots, e_n\}$ of $\R^n$ such that the functions $\langle \nu_F \vert e_i \rangle$ are orthogonal in $L^2(\pa F)$, that is,
\begin{equation}\label{ort2}
\int_{\pa F} \langle \nu_{F} \vert  e_i \rangle \langle \nu_{F} \vert e_j \rangle \, \dmu=0,
\end{equation}
for all $i \ne j$ and we let $\II_F$ to be the set of the indices $i\in \{1,\dots, n\}$ such that 
$\norma{\langle \nu_F \vert e_i \rangle }_{L^2(\pa F)}>0$. Given a smooth function $\psi:\pa F\to \R$, we set $\eta=\sum_{i=1}^n\eta_ie_i$, where 
\begin{equation}\label{uno}
\eta_i  =
\begin{cases}
\frac{1}{\norma{ \langle \nu_F \vert e_i \rangle}^2_{L^2(\pa F)}}\int_{\pa F}\psi(x) \langle \nu_F(x) \vert e _i \rangle \,\dmu \quad & \text{if $i\in\II_F$,}\\
\eta_i=0\quad & \text{otherwise.}
\end{cases}
\end{equation}
Note that, from H\"older inequality, it follows
\begin{equation}\label{due}
\abs{\eta}\leq C_1\norma{\psi}_{L^2(\pa F)}\,.
\end{equation}

\smallskip

\noindent\textbf{Step $\mathbf 1$.} Let $T_\psi:\pa F\to\pa F$ be the map
$$
T_\psi(y)=\pi_{F}(y+\psi(y)\nu_F(y)-\eta)\,.
$$ 
It is easily checked that there exists $\eps_0>0$ such that if 
\begin{equation}\label{trans1}
\norma{\psi}_{W^{2,p}(\pa F)}+\abs{\eta}\leq \eps_0\leq 1\,,
\end{equation}
then $T_{\psi}$ is a smooth diffeomorphism, moreover, 
\begin{equation}\label{trans2}
\Vert JT_\psi-1\Vert_{L^\infty(\pa F)}\leq C\norma{\psi}_{C^1(\pa F)}
\end{equation}
(here $JT_{\psi}$ is the Jacobian relative to $\pa F$) and
\begin{equation}\label{trans3}
\norma{T_\psi-\Id}_{W^{2,p}(\pa F)}+\norma{T_\psi^{-1}-\Id}_{W^{2,p}(\pa F)}\leq C(\norma{\psi}_{W^{2,p}(\pa F)}+\abs{\eta})\,.
\end{equation}
Therefore, setting $\widehat E=E-\eta$, we have 
$$
\pa\widehat E=\{z+\varphi(z)\nu_F(z) \, :\,z\in\partial F\}\,,
$$
for some function $\varphi$ which is linked to $\psi$ by the following relation: for all $y\in\pa F$, we let $z=z(y)\in\pa F$ such that  
$$
y+\psi(y)\nu_F(y)-\eta=z+\varphi(z)\nu_F(z)\,,
$$
then,
$$
T_\psi(y)=\pi_F(y+\psi(y)\nu_F(y)-\eta)=\pi_F(z+\varphi(z)\nu_F(z))=z,
$$
that is, $y=T_\psi^{-1}(z)$ and 
\begin{align*}
\varphi(z)=&\,\varphi(T_\psi(y))\\
=&\,d_F(z+\varphi(z)\nu_F(z))\\
=&\,d_F(y+\psi(y)\nu_F(y)-\eta)\\
=&\,d_F(T_\psi^{-1}(z)+\psi(T_\psi^{-1}(z))\nu_F(T_\psi(y))-\eta).
\end{align*}
Thus, using inequality~\eqref{trans3}, we have
\begin{equation}\label{quattro}
\norma{\varphi}_{W^{2,p}(\pa F)}\leq C_2\bigl(\norma{\psi}_{W^{2,p}(\pa F)}+\abs{\eta}\bigr),
\end{equation}
for some constant $C_2>1$. We now estimate
\begin{align}
\int_{\pa F}\varphi(z)\nu_F(z)\,\dmu (z) &=\int_{\pa F}\varphi(T_{\psi}(y))\nu_F(T_{\psi}(y))JT_\psi(y)\,\dmu  (y) \\
&= \int_{\pa F}\varphi(T_{\psi}(y))\nu_F(T_{\psi}(y))\, \dmu (y) +R_1\,,\label{cinque}
\end{align}
where
\begin{equation}
\abs{R_1}=\biggl \vert \int_{\pa F}\varphi(T_{\psi}(y))\nu_F(T_{\psi}(y))\,[JT_\psi(y)-1]\, \dmu(y) \biggr \vert\leq C_3\norma{ \psi}_{C^1(\pa F)}\norma{\varphi}_{L^2(\pa F)}\,,\label{sei}
\end{equation}
by inequality~\eqref{trans2}.\\
On the other hand,
\begin{align}
\int_{\pa F}&\varphi(T_{\psi}(y))\nu_F(T_{\psi}(y))\, \dmu(y)\\
&=\int_{\pa F}\bigl[y+\psi(y)\nu_F(y)-\eta-T_{\psi}(y)\bigr]\,\dmu(y) \\
&=\int_{\pa F}\bigl[y+\psi(y)\nu_F(y)-\eta-\pi_{F}(y+\psi(y)\nu_F(y)-\eta)\bigr]\, \dmu(y)\\
&=\int_{\pa F}\bigl\{\psi(y)\nu_F(y)-\eta+\bigl[\pi_{F}(y)-\pi_{F}(y+\psi(y)\nu_F(y)-\eta)\bigr]\bigr\}\, \dmu(y)\\
&=\int_{\pa F}(\psi(y)\nu_F(y)-\eta)\, \dmu(y) +R_2\,, \label{sette}
\end{align}
where
\begin{align}
R_2&=\int_{\pa F}\bigl[\pi_{F}(y)-\pi_{F}(y+\psi(y)\nu_F(y)-\eta)\bigr]\, \dmu(y)\\
&=-\int_{\pa F} \dmu(y) \int_0^1\nabla \pi_{F}(y+t(\psi(y)\nu(y)-\eta))(\psi(y)\nu_F(y)-\eta)\,dt \\
&=-\int_{\pa F}\nabla \pi_{F}(y)(\psi(y)\nu_F(y)-\eta)\, \dmu(y)+R_3\,. \label{otto}
\end{align}
In turn, recalling inequality~\eqref{due}, we get
\begin{equation}
\abs{R_3}\leq\int_{\pa F} \dmu(y) \int_0^1\vert \nabla \pi_{F}(y+t(\psi(y)\nu_F(y)-\eta))-\nabla \pi_{F}(y)\vert\, \vert \psi(y)\nu_F(y)-\eta \vert \,dt\leq C_4\norma{\psi}^2_{L^2(\pa F)}\,.\label{nove}
\end{equation}
Since in $N_\eps$, by equation~\eqref{eqcar2050}, we have $\pi_{F}(x)=x-d_{F}(x)\nabla  d_{F}(x)$, it follows
$$
\frac{\pa {\pi_{F}^i}}{\pa x_j}(x)=\delta_{ij}-\frac{\pa d_{F}}{\pa x_i}(x)\frac{\pa d_{F}}{\pa x_j}(x)-d_{F}(x)\frac{\pa^2 d_{F}}{\pa x_i\pa x_j}(x),
$$
thus, for all  $y\in\pa F$, there holds
$$
\frac{\pa{\pi_{F}^i}}{\pa x_j}(y)=\delta_{ij}-\frac{\pa d_{F}}{\pa x_i}(y)\frac{\pa d_{F}}{\pa x_j}(y)\,.
$$
From this identity and equalities~\eqref{cinque},~\eqref{sette} and~\eqref{otto}, we conclude
$$
\int_{\pa F}\varphi(z)\nu_F(z)\,\dmu (z)=\int_{\pa F}\bigl[\psi(x)\nu_F(x)- \langle \eta \, \vert\, \nu_F(x)\rangle \nu_F(x)\bigr]\, \dmu (x)+R_1+R_3\,.
$$
As the integral at the right--hand side vanishes by relations~\eqref{ort2} and~\eqref{uno}, estimates~\eqref{sei} and~\eqref{nove} imply
\begin{align}
\Bigl \vert \int_{\pa F}\varphi(y)\nu_F(y)\, \dmu (y)\Bigr \vert &\leq C_3\norma{\psi}_{C^1(\pa F)}\norma{\varphi}_{L^2(\pa F)}+C_4\norma{\psi}^2_{L^2(\pa F)} \\ &\leq C\norma{\psi}_{C^1(\pa F)}\bigl(\norma{\varphi}_{L^2(\pa F)}+\norma{\psi}_{L^2(\pa F)}\bigr) \\
&\leq C_5 \norma{\psi}_{W^{2,p}(\pa F)}^{1-\vartheta}\norma{\psi}_{L^2(\pa F)}^{\vartheta}\bigl(\norma{\varphi}_{L^2(\pa F)}+\norma{\psi}_{L^2(\pa F)}\bigr) \,,\label{dieci}
\end{align}
where in the last passage we used a well--known interpolation inequality, with $\vartheta\in(0,1)$  depending only on $p>n-1$ (see~\cite[Theorem~3.70]{Aubin}).

\smallskip

\noindent \textbf{Step $\mathbf 2$.} The  previous estimate does not allow to conclude directly, but we have to rely on the following iteration procedure.
Fix any number $K>1$ and assume that  $\delta \in (0,1)$ is such that (possibly considering a smaller $\tau$) 
\begin{equation}\label{eta}
\tau+\delta<\eps_0/2,\qquad C_2\delta (1+2C_1)\leq\tau,\qquad2C_5\delta^\vartheta K\leq\tau\,.
\end{equation}
Given $\psi$, we set $\varphi_0=\psi$  and we denote by  $\eta^1$ the vector defined as in~\eqref{uno}.  We set $E_1=E-\eta^1$  and denote by $\varphi_1$  the function   such that $\pa E_1=\{x+\varphi_1(x)\nu_F(x) \,: \, x \in \pa F\}$.  As before, $\varphi_1$  satisfies 
$$
y+\varphi_0(y)\nu_F(y)-\eta^1=z+\varphi_1(z)\nu_F(z)\,.
$$
Since $\norma{\psi}_{W^{2,p}(\pa F)}\leq\delta$ and  $|\eta|\leq C_1\norma{\psi}_{L^2(\pa F)}$, by inequalities~\eqref{due},~\eqref{quattro} and~\eqref{eta} we have
\begin{equation}\label{undici}
\norma{\varphi_1}_{W^{2,p}(\pa F)}\leq C_2\delta(1+C_1)\leq\tau\,.
\end{equation}
Using again that  $\norma{\psi}_{W^{2,p}(\pa F)}<\delta<1$,  by estimate~\eqref{dieci} we obtain
$$
\Bigl \vert \int_{\pa F}\varphi_1(y)\nu_F(y)\, \dmu(y) )\Bigr \vert \leq C_5\norma{\varphi_0}_{L^2(\pa F)}^{\vartheta}\bigl(\norma{\varphi_1}_{L^2(\pa F)}+\norma{\varphi_0}_{L^2(\pa F)}\bigr)\,,
$$
where we have  $\norma{\varphi_0}_{L^2(\pa F)}\leq\delta$.\\
We now distinguish two cases.\\
If  $\norma{\varphi_0}_{L^2(\pa F)}\leq K\norma{\varphi_1}_{L^2(\pa F)}$,  from the previous inequality and~\eqref{eta}, we get 
\begin{align}
\Bigl \vert \int_{\pa F}\varphi_1(y)\nu_F(y)\, \dmu(y)\Bigr \vert  & \leq C_5\delta^{\vartheta}\bigl(\norma{\varphi_1}_{L^2(\pa F)}+\norma{\varphi_0}_{L^2(\pa F)}\bigr) \nonumber \\ & \, \leq 2C_5\delta^{\vartheta}K\norma{\varphi_1}_{L^2(\pa F)} \nonumber \\& \, \leq\delta\norma{\varphi_1}_{L^2(\pa F)}\,,
\end{align}
thus, the conclusion follows with $\eta=\eta^1$.\\
In the other case,
\begin{equation}\label{dodici}
\norma{\varphi_1}_{L^2(\pa F)}\leq\frac{\norma{\varphi_0}_{L^2(\pa F)}}{K}\leq\frac{\delta}{K}\leq \delta\,.
\end{equation}
We then repeat the whole procedure: we denote by $\eta^2$  the vector defined as in formula~\eqref{uno} with $\psi$ replaced by $\varphi_1$, we set $E_2=E_1-\eta^2=E-\eta^1-\eta^2$ and we consider the corresponding $\varphi_2$ which satisfies
$$
w+\varphi_2(w)\nu_F(w)=z+\varphi_1(z)\nu_F(z)-\eta^2=y+\varphi_0(y)\nu_F(y)-\eta^1-\eta^2\,.
$$
Since 
$$
\norma{\varphi_0}_{W^{2,p}(\pa F)}+\abs{\eta^1+\eta^2}\leq \delta +C_1\delta+C_1\norma{\varphi_1}_{L^2(\pa F)}\leq \delta+C_1\delta\Bigl(1+\frac1K\Bigr)\leq C_2\delta(1+2C_1)\leq\tau\,,
$$
the map $T_{\varphi_0}(y)=\pi_{F}(y+\varphi_0(y)\nu_F(y)-(\eta^1+\eta^2))$ is a diffeomorphism, thanks to formula~\eqref{trans1} (having chosen $\tau$ and $\delta$ small enough).\\
Thus, by applying inequalities~\eqref{quattro} (with $\eta=\eta^1+\eta^2$),~\eqref{due},~\eqref{eta} and~\eqref{dodici}, we get
$$
\norma{\varphi_2}_{W^{2,p}(\pa F)}\leq C_2\bigl(\norma{\varphi_0}_{W^{2,p}(\pa F)}+\abs{ \eta^1+\eta^2} \bigr)\leq C_2\delta\Bigl(1+C_1+\frac{C_1}{K}\Bigr)\leq\tau\,,
$$
as $K>1$, analogously to conclusion~\eqref{undici}. On the other hand, by estimates~\eqref{due},~\eqref{undici} and~\eqref{dodici}, 
$$
\norma{\varphi_1}_{W^{2,p}(\pa F)}+\eta^2\leq C_2\delta(1+C_1)+C_1\frac{\delta}{K}\leq 
C_2\delta(1+2C_1)\leq \tau\,,
$$
hence, also the map $T_{\varphi_1}(x)=\pi_{F}(x+\varphi_1(x)\nu_F(x)-\eta^2)$ is a diffeomorphism
satisfying inequalities~\eqref{trans1} and~\eqref{trans2}. Therefore, arguing as before, we obtain
$$
\Bigl \vert \int_{\pa F}\varphi_2(y)\nu_F(y)\,\dmu(y)\Bigr \vert \leq C_5\norma{\varphi_1}_{L^2(\pa F)}^{\vartheta}\bigl(\norma{\varphi_2}_{L^2(\pa F)}+\norma{\varphi_1}_{L^2(\pa F)}\bigr)\,.
$$
Since $\norma{\varphi_1}_{L^2(\pa F)}\leq\delta$ by inequality~\eqref{dodici}, if $\norma{\varphi_1}_{L^2(\pa F)}\leq K\norma{\varphi_2}_{L^2(\pa F)}$  the conclusion follows  with $\eta=\eta^1+\eta^2$. Otherwise, we iterate the procedure observing that 
$$
\norma{\varphi_2}_{L^2(\pa F)}\leq\frac{\norma{\varphi_1}_{L^2(\pa F)}}{K}\leq\frac{\norma{\varphi_0}_{L^2(\pa F)}}{K^2}\leq\frac{\delta}{K^2}\,.
$$
This construction leads to three (possibly finite) sequences $\eta^n$, $E_n$ and $\varphi_n$ such that
$$
\begin{cases}
E_n=E-\eta^1-\dots-\eta^n, \qquad \abs{\eta^n} \leq\frac{C_1\delta}{K^{n-1}}\\
\norma{\varphi_n}_{W^{2,p}(\pa F)}\leq C_2\bigl(\norma{\varphi_0}_{W^{2,p}(\pa F)}+\abs{\eta^1+\dots+\eta^n}\bigr)\leq C_2\delta(1+2C_1)\\
\norma{\varphi_n}_{L^2(\pa F)} \leq\frac{\delta}{K^n}\\
\partial E_n=\{x+\varphi_n(x)\nu_F(x) \, :\,x\in\pa F \}\
\end{cases}
$$
If for some $n\in\N$ we have $\norma{\varphi_{n-1}}_{L^2(\pa F)}\leq K\norma{\varphi_n}_{L^2(\pa F)}$,
the construction stops, since, arguing as before,  
$$
\Bigl \vert \int_{\pa F}\varphi_n(y)\nu_F(y)\,\dmu(y)\Bigr \vert \leq\delta\norma{\varphi_n}_{L^2(\pa F)}
$$
 and the conclusion follows with  $\eta=\eta^1+\dots+\eta^n$ and $\varphi=\varphi_n$. Otherwise, the  iteration continues indefinitely and we get the thesis with 
$$
\eta=\sum_{n=1}^\infty\eta^n, \qquad\varphi=0\,, 
$$
(notice that the series is converging), which actually means that  $E=\eta+F$.
\end{proof}

We are now ready to show the main theorem of this first part of the work.

\begin{proof}[Proof of Theorem~\ref{W2pMin}]\ \\
\noindent \textbf{Step $\mathbf{1}$.} We first want to see that 
\beq\label{m0}
m_0= \inf \left\{\Pi_E(\varphi) \, : \, \varphi \in T^\perp(\pa E), \norma{\varphi}_{H^1(\pa E)}=1 \right\} >0.
\eeq
To this aim, we consider a minimizing sequence $\varphi_i$ for the above infimum and we assume that $\varphi_i \rightharpoonup \varphi_0$ weakly in $H^1 (\pa E)$, then $\varphi_0\in T^\perp(\pa E)$ (since it is a closed subspace of $H^1 (\pa E)$) and if $\varphi_0 \ne 0$, there holds
$$m_0= \lim_{i \to +\infty} \Pi_E(\varphi_i) \ge \Pi_E(\varphi_0)>0$$ 
due to the strict stability of $E$ and the lower semicontinuity of $\Pi_E$ (recall formula~\eqref{Pieq} and the fact that the weak convergence in $H^1(\pa E)$ implies strong convergence in $L^2(\pa E)$ by Sobolev embeddings).
On the other hand, if instead $\varphi_0=0$, again by the strong convergence of $\varphi_i \to \varphi_0$ in $L^2(\pa E)$, by looking at formula~\eqref{Pieq}, we have 
$$
m_0= \lim_{i \to\infty} \Pi_E(\varphi_i)=\lim_{i\to\infty} \int_{\pa E} |\nabla \varphi_i|^2 \, \dmu = \lim_{i\to\infty}\Vert\varphi_i\Vert_{H^1(\pa E)}^2=1
$$
since $\Vert\varphi_i \Vert_{L^2(\pa E)}\to0$.

\smallskip

\noindent \textbf{Step ${\mathbf{2}}$.} Now we show that there exists a constant $\delta_1 >0$ such that if $E$ is like in the statement and $\pa F= \{y+ \psi(y)\nu_E(y) \, : \, y \in \pa E \}$, with $\norma{\psi}_{W^{2,p}(\pa E)} \le \delta_1$, and $\vol(F)=\vol(E)$, then
\begin{equation}\label{3.38}
\inf\left\{\Pi_F(\varphi) \, : \, \varphi \in \Htilde^1(\pa F), \norma{\varphi}_{H^1(\pa F)}=1, \Bigl | \int_{\pa F} \varphi \nu_F \, \dmu \Bigr| \le \delta_1\right\}\ge \frac{m_0}{2}.
\end{equation}
We argue by contradiction assuming that there exists a sequence of sets $F_i$ with $\pa F_i=\{y+ \psi_i(y) \nu_E(y) \, : \, y \in \pa E\}$ with $\norma{\psi_i}_{W^{2,p}(\pa E)} \to 0$ and $\vol(F_i)=\vol(E)$, and a sequence of functions $\varphi_i \in \Htilde^1(\pa F_i)$ with $\norma{\varphi_i}_{H^1(\pa F_i)}=1$ and $\int_{\pa F_i} \varphi_i \nu_{F_i} \, \dmu_i \to 0$, such that
\begin{equation}
\Pi_{F_i}(\varphi_i)<\frac{m_0}{2}.
\end{equation}
We then define the following sequence of smooth functions
 \beq
 \label{eqcar1030}\widetilde{\varphi}_i(y)= \varphi_i(y+ \psi_i(y)\nu_E(y)) - \fint_{\pa E} \varphi_i (y + \psi_i(y) \nu_E(y)) \, \dmu(y)
 \eeq
which clearly belong to  $\Htilde^1(\pa E)$. Setting $\theta_i(y)=y+ \psi_i(y)\nu_E(y)$, as $p>\max \{2, n-1\}$, by the Sobolev embeddings, $\theta_i\to\mathrm{Id}$ in $C^{1,\alpha}$ and $\nu_{F_i} \circ \theta_i \to \nu_E$ in $C^{0,\alpha}(\pa E)$, hence, the sequence $\widetilde{\varphi}_i$ is bounded in $H^1(\pa E)$ and if $\{e_k\}$ is the special orthonormal basis found in Remark~\ref{rembase}, we have $\langle \nu_{F_i}\circ\theta_i \vert  e_k\rangle \to \langle \nu_E \vert e_k\rangle$ uniformly for all $k\in\{1, \dots, n\}$. Thus, 
\beq
\int_{\pa E} \widetilde{\varphi}_i \langle \nu_E \vert \eps_i \rangle \, \dmu \to 0,
\end{equation}
as $i\to\infty$, indeed, 
$$
\int_{\pa E} \widetilde{\varphi}_i \langle \nu_E \vert  e_k \rangle \, \dmu-\int_{\pa E} \widetilde{\varphi}_i \langle \nu_{F_i}\circ\theta_i \vert  e_k \rangle \, \dmu\to 0
$$
and
$$
\int_{\pa E} \widetilde{\varphi}_i \langle \nu_{F_i}\circ\theta_i \vert  e_k \rangle \, \dmu=\int_{\pa F_i}\varphi_i \langle \nu_{F_i} \vert  e_k \rangle \,J\theta_i^{-1} \dmu_i \to 0,
$$
as the Jacobians (notice that $J\theta_i$ are Jacobians ``relative'' to the hypersurface $\pa E$) $J\theta_i^{-1}\to 1$ uniformly and we assumed $\int_{\pa F_i} \varphi_i \nu_{F_i} \, \dmu_i \to 0$.\\
Hence, using expression~\eqref{projection},  for the projection map $\pi$ on $T^\perp(\pa E)$, it follows 
\begin{equation}
\norma{\pi (\widetilde{\varphi}_i)-\widetilde{\varphi}_i}_{H^1(\pa E)} \to 0
\end{equation}
as $i\to\infty$ and 
\begin{equation}\label{norm pi}
\lim_{i\to\infty}\norma{\pi(\widetilde{\varphi}_i)}_{H^1(\pa E)}=\lim_{i\to\infty}\norma{\widetilde{\varphi}_i}_{H^1(\pa E)}=\lim_{i\to\infty}\norma{\varphi_i}_{H^1(\pa F_i)}=1,
\end{equation}
since $\norma{\varphi_i}_{W^{2,p}(\pa E)} \to 0$, thus $\norma{\varphi_i}_{C^{1,\alpha}(\pa E)} \to 0$, by looking at the definition of the functions $\widetilde{\varphi}_i$ in formula~\eqref{eqcar1030}.\\ 
Note now that the $W^{2,p}$-- convergence of $F_i$ to $E$ (the second fundamental form $B_{\pa F_i}$ of $\pa F_i$ is ``morally'' the Hessian of $\varphi_i$) implies
\begin{equation}\label{conv B}
\BBB_{\pa F_i} \circ \theta_i \to \BBB_{\pa E} \qquad \text{in $L^p(\pa E)$}\,,
\end{equation}
as $i\to\infty$, then, by Sobolev embeddings again (in particular $H^1(\pa E)\hookrightarrow L^q(\pa E)$ for any $q\in[1,2^*)$, with $2^*=2(n-1)/(n-3)$ which is larger than $2$) and the $W^{2,p}$--convergence of $F_i$ to $E$, we get
\begin{equation}\label{conv B2}
\int_{\pa F_i}|B_{\pa F_i}|^2 \varphi_i^2 \, \dmu_i - \int_{\pa E} |B_{\pa E}|^2 \widetilde{\varphi}_i ^2 \, \dmu \to 0\,.
\end{equation}
Standard elliptic estimates for the problem~\eqref{potential} (see~\cite{Ev}, for instance) imply the convergence of the potentials
\begin{equation}\label{proj2bis}
v_{F_i}\to v_E\ \ \text{in $C^{1,\beta}(\T^n)$ for all $\beta\in(0,1)$,}
\end{equation}
for $i\to\infty$, hence arguing as before, 
$$
\int_{\pa F_i}\pa_{\nu_{F_i}} v_{F_i}\varphi_i^2\,d\mu_i-\int_{\pa E}\pa_{\nu_E} v_E\widetilde{\varphi}_i^2\,d\mu\to 0\,.
$$
Setting, as in Remark~\ref{rm:potential},
$$
v_{E,\widetilde{\varphi}_i}(x)=\int_{\pa E} G(x, y)\widetilde{\varphi}_i(y)\, d\mu(y)=\int_{\pa E} G(x, y)\varphi_i(\theta_i(y))\, d\mu(y)-m_i\int_{\pa E} G(x, y)\, d\mu(y)\,,
$$
where $m_i=\fint_{\pa E} \varphi_i(y + \psi_i(y) \nu_E(y)) \, \dmu(y)\to 0$, as $i\to\infty$, and
$$
v_{F_i,\varphi_i}(x)=\int_{\pa F_i} G(x,z)\varphi_i(z)\, d\mu_i(z)=\int_{\pa E} G(x,\theta_i(y))\varphi_i(\theta_i(y))J\theta_i(y)\,d\mu(y)\,,
$$
it is easy to check (see~\cite[pages~537--538]{AcFuMo}, for details) that
$$
\int_{\T^n}|\nabla v_{F_i,\varphi_i}|^2\, dx-\int_{\T^n}|\nabla v_{E,\widetilde{\varphi}_i}|^2\, dx\to0\,.
$$
Finally, recalling expression~\eqref{Pieq2}, we conclude
\begin{equation}
\Pi_{F_i}(\varphi_i)-\Pi_E(\widetilde{\varphi}_i) \to 0\,,
\end{equation}
since we have
$$
\norma{\varphi_i}_{L^2(\pa F_i)}-\norma{\widetilde{\varphi}_i}_{L^2(\pa E)}\to 0\,,
$$
which easily follows again by looking at the definition of the functions $\widetilde{\varphi}_i$ in formula~\eqref{eqcar1030} and taking into account that $\norma{\varphi_i}_{C^{1,\alpha}(\pa E)} \to 0$, hence limits~\eqref{norm pi} imply
$$
\norma{\nabla \varphi_i}_{L^2(\pa F_i)}-\norma{\nabla\widetilde{\varphi}_i}_{L^2(\pa E)}\to 0\,.
$$
By the previous conclusion $\norma{\pi(\widetilde{\varphi}_i)-\widetilde{\varphi}_i}_{H^1(\pa E)}\to 0$ and Sobolev embeddings, it this then straightforward, arguing as above, to get also
$$
\Pi_E(\widetilde{\varphi}_i)-\Pi_E(\pi(\widetilde{\varphi}_i))\to 0,
$$
hence, 
\begin{equation}
\Pi_{F_i}(\varphi_i)-\Pi_E(\pi(\widetilde{\varphi}_i)) \to 0.
\end{equation}
Since we assumed that $\Pi_{F_i}(\varphi_i)<{m_0}/{2}$, we conclude that for $i\in\N$, large enough there holds
\begin{equation}
\Pi_E(\pi(\widetilde{\varphi}_i))\leq \frac{m_0}{2}<m_0,
\end{equation}
which is a contradiction to Step~$1$, as $\pi(\widetilde{\varphi}_i)\in T^\perp(\pa E)$.

\smallskip

\noindent \textbf{Step ${\mathbf{3}}$.} Let us now consider $F$ such that $\vol(F)= \vol(E)$, $\vol(F\triangle E)<\delta$ and
\begin{equation}
\pa F= \{y + \psi(y) \nu_E (y)\, : \, y \in \pa E \}\subseteq N_\eps,
\end{equation}
with $\norma{\psi}_{W^{2,p}(\pa E)} \le \delta $ where $\delta>0$ is smaller than $\delta_1$ given by Step~$2$.\\ 
Taking a possibly smaller $\delta>0$, we consider the field $X$ and the associated flow $\Phi$ found in Lemma~\ref{lemma1}. Hence, $\Div X=0$ in $N_\eps$ and $\Phi(1,y)= y+ \psi(y)\nu_E(y)$, for all $y \in \pa E$, that is, $\Phi(1,\pa E)=\pa F\subseteq N_\eps$ which implies $E_1=\Phi_1(E)=F$ and $\vol(E_1)=\vol(F)=\vol(E)$. Then the special variation $E_t=\Phi_t(E)$ is volume--preserving, for $t \in [-1,1]$ and the vector field $X$ is admissible, by the last part of such lemma.\\
By Lemma~\ref{Lemma 3.8}, choosing an even smaller $\delta>0$ if necessary, possibly replacing $F$ with a translate $F-\sigma$ for some $\eta\in \R^n$ if needed, we can assume that
\begin{equation}\label{assumption}
\left| \int_{\pa E} \psi \, \nu_E \, \dmu \right| \le \frac{\delta_1}{2} \norma{\psi}_{L^2(\pa E)}.
\end{equation}
We now claim that
\begin{equation}\label{claim1}
\left|\int_{\pa E} \langle X \vert \nu_{E_t} \rangle \nu_{E_t} \, \dmu_t \right| \le \delta_1 \norma{\langle X \vert \nu_{E_t} \rangle}_{L^2(\pa E_t)} \qquad \forall t \in [0,1].
\end{equation}
To this aim, we write
\begin{align}
\int_{\pa E} \langle X \vert \nu_{E_t} \rangle \nu_{E_t} \, \dmu_t&= \int_{\pa E} \langle X\circ \Phi_t\vert \nu_{E_t}\circ\Phi_t\rangle(\nu_{E_t}\circ\Phi_t)\, J\Phi_t \, \dmu\\
&= \int_{\pa E} \langle X\circ \Phi_t \vert \nu_{E} \rangle \nu_E \, \dmu + R_1\\
&= \int_{\pa E}\langle X(x) \vert \nu_E \rangle \nu_E \, \dmu + R_1+R_2\\
&= \int_{\pa E} \psi \nu_E \, \dmu + R_1+ R_2+ R_3
\end{align}
with appropriate $R_1, R_2$ and $R_3$ (see below). \\
By the definition of $X$ in formula~\eqref{field} (in the proof of Lemma~\ref{lemma1}),  the bounds $0<C_1\leq \xi\leq C_2$ and $\norma{J(\pi_E\circ\Phi_t)^{-1}}_{L^\infty(\pa E)}\leq C_3$ (by inequality~\eqref{normflow} and Sobolev embeddings, as $p>\max\{2,n-1\}$, we have $\norma{\Phi(t, \cdot) - \Id}_{C^{1,\alpha}(\pa E)} \leq C \norma{\psi }_{\Wduep(\pa E)}\leq C\delta$), the following inequality holds
\begin{align}
\int_{\pa E} |X(\Phi(t,x))| \, \dmu&=\, \int_{\pa E} \biggl| \int_0^{\psi(\pi_E (\Phi(t,x)))} \frac{\xi(\Phi(t,x)) \nabla  d_E(\Phi(t,x))}{\xi(\Phi(t,x)+ s \nu(\pi_E(\Phi(t,x))))}\, ds \biggr| \, \dmu \nonumber \\
&\,\le C \int_{\pa E} \left|\psi(\pi_E(\Phi(t,x))) \right|\, \dmu \nonumber \\
&\,=\int_{\pa E} |\psi(z)| J(\pi_E \circ \Phi_t)^{-1}(z)\, \dmu(z) \nonumber\\
&\le\, C \norma{\psi}_{L^2(\pa E)}.\label{inequality1}
\end{align}
for every $t\in[0,1]$.\\
We want now to prove that for every $\overline{\varepsilon}>0$, choosing a suitably small $\delta>0$ we have the estimate
\begin{equation}\label{eqcar1011}
|R_1|+|R_2|+|R_3| \le \eps \norma{\psi}_{L^2(\pa E)}.
\end{equation}
First,
\begin{align}
R_1&= \int_{\pa E} \langle X\circ\Phi_t\vert \nu_{E_t}\circ\Phi\rangle \nu_{E_t}\circ\Phi_t [J\Phi_t-1]\, \dmu \\
&\quad+\int_{\pa E} \langle X\circ\Phi_t \vert \nu_{E_t}\circ\Phi_t \rangle \nu_{E_t}\circ\Phi_t \, \dmu-\int_{\pa E} \langle X\circ\Phi_t,\nu_E \rangle \nu_E \, \dmu\\
&=\,\int_{\pa E} \langle X\circ\Phi_t\vert\nu_{E_t}\circ\Phi_t\rangle \nu_{E_t}\circ\Phi_t\, [J\Phi_t-1] \, \dmu+ \int_{\pa E} \langle X\circ\Phi_t\vert \nu_{E_t}\circ\Phi_t- \nu_E\rangle \nu_E\,\dmu \\
&\quad+ \int_{\pa E} \langle X\circ\Phi_t\vert\nu_{E_t}\circ\Phi_t\rangle(\nu_{E_t}\circ\Phi_t-\nu_E) \, \dmu\\
&\leq\,\int_{\pa E} |X\circ\Phi_t|\, \Vert J\Phi_t-1\Vert_{L^\infty(\pa E)} \, \dmu+ \int_{\pa E} |X\circ\Phi_t|\,\norma{\nu_E-\nu_{E_t}\circ\Phi_t}_{L^\infty (\pa E)}\,\dmu\,,
\end{align}
then, since by equality~\eqref{flowin}, it follow that for every $t\in[0,1]$ the two terms
\begin{equation}
\norma{\nu_E-\nu_{E_t}\circ\Phi(t,x)}_{L^\infty (\pa E)}\qquad \text{and}\qquad \norma{J\Phi_t-1}_{L^\infty(\pa E)}
\end{equation}
can be made (uniformly in $t\in[0,1]$) small as we want, if $\delta>0$ is small enough, by using inequality~\eqref{inequality1}, we obtain
\begin{equation}\label{R1}
|R_1|\le \overline{\eps} \norma{\psi}_{L^2(\pa E)}/3.
\end{equation}
Then we estimate, by means of inequality~\eqref{flowin} and where $s=s(t,y)\in[t,1]$,
\begin{align}
|R_2| &\le \int_{\pa E} |X(\Phi(t,x))-X(\Phi(1,x))| + |X(\Phi(1,x))-X(x)| \, \dmu \\ \nonumber
&\le \int_{\pa E} |X(\Phi(t,x))-X(\Phi(1,x))| + \norma{\nabla  X}_{L^2(N_\eps)} \norma{\psi}_{L^2(\pa E)}\\ \nonumber
&=\int_{\pa E} (1-t)|\nabla X(\Phi_s(y))| \left|\frac{\pa \Phi_s}{\pa t}(y)\right| \, \dmu(y) + \norma{\nabla X}_{L^2(N_\varepsilon)}\norma{\psi}_{L^2(\pa E)}\\
& \le \int_{\pa E} |\nabla  X(\Phi(s,x))| |\Phi(t,x)-\Phi(1,x)| + \norma{\nabla  X}_{L^2(N_\eps)} \norma{\psi}_{L^2(\pa E)}\\ \nonumber
& \le C \norma{\nabla  X}_{L^\infty (N_\eps)} C \norma{\psi}_{L^2(\pa E)} + \norma{\nabla  X}_{L^2(N_\eps)}\norma{\psi}_{L^2(\pa E)},
\end{align}
where in the last inequality we use equation~\eqref{inequality1}. Hence, using equality~\eqref{normfield} and Sobolev embeddings, as $p>\max\{2,n-1\}$, we get
$$
|R_2|  \le C \norma{\psi}_{\Wduep(\pa E)}\norma{\psi}_{L^2(\pa E)},
$$
then, since $\norma{\psi}_{W^{2,p}(\pa E)} < \delta$, we obtain
\begin{equation}
|R_2|< \overline{\varepsilon} \norma{\psi}_{L^2(\pa E)}/3,
\end{equation}
if $\delta_2$ is small enough.\\
Arguing similarly, recalling the definition of $X$ given by formula~\eqref{field}, we also obtain $|R_3| \le \overline{\varepsilon} \norma{\psi}_{L^2(\pa E)}$, hence estimate~\eqref{eqcar1011} follows. We can then conclude that, for $\delta>0$ small enough, we have
\begin{equation}
\left| \int_{\pa E} \langle X \vert \nu_{E_t} \rangle \nu_{E_t} \, \dmu_t \right|\le \left|\int_{\pa E} \psi \nu_E \, \dmu \right| + \overline{\eps} \norma{\psi}_{L^2(\pa E)}\le \Bigl(\frac{\delta_1}{2} + \overline{\eps}\Bigr) \norma{\psi}_{L^2(\pa E)}
\end{equation}
for any $t\in[0,1]$, where in the last inequality we used the assumption~\eqref{assumption}, thus choosing $\overline{\varepsilon}=\delta_1/4$ we get
$$
\biggl| \int_{\pa E} \langle X \vert \nu_{E_t} \rangle \nu_{E_t} \, \dmu_t \biggr| \le \frac{3\delta_1}{4}\norma{\psi}_{L^2(\pa E)}.
$$
Along the same line, it is then easy to prove that
\begin{equation}\label{3.46}
\norma{\langle X \vert \nu_{E_t}\rangle}_{L^2(\pa E_t)} \ge (1-\eps)\norma{\psi}_{L^2(\pa E)},
\end{equation}
for any $t\in[0,1]$, hence claim~\eqref{claim1} follows.\\
As a consequence, since $\langle X \vert \nu_{E_t}\rangle\in\Htilde^1(\pa E_t)$, being $X$ admissible for $E_t$ (recalling computation~\ref{eqc1000}) and $\pa E_t$ can be described as a graph over $\pa E$ with a function with small norm in $W^{2,p}(\pa E)$ (by estimate~\eqref{normflow} of Lemma~\ref{lemma1}), we can apply Step~$2$ with $F=E_t$ to the function $\langle X \vert \nu_{E_t}\rangle/\Vert\langle X \vert \nu_{E_t}\rangle\Vert_{H^1(\pa E_t)}$, concluding 
\begin{equation}\label{eqcar1020}
\Pi_{E_t}(\langle X \vert \nu_{E_t}\rangle)\geq\frac{m_0}{2}\Vert\langle X \vert \nu_{E_t}\rangle\Vert_{H^1(\pa E_t)}.
\end{equation}

By means of Lemma~\ref{lemmastima}, for $\delta>0$ small enough, we now show the following inequality on $\pa E_t$ (here $\Div$ is the divergence operator and $X_{\tau _t}=X-\langle X \vert \nu _{E_t} \rangle \nu_{E_t}$ is a tangent vector field on $\pa E_t$), for any $t\in[0,1]$,
\begin{align}
\norma{\Div (X_{\tau_t} \langle X \vert \nu_{E_t} \rangle )}_{L^{\frac{p}{p-1}}(\pa E_t)} 
=&\, \norma{\Div X_{\tau_t} \langle X \vert \nu_{E_t}\rangle + \langle X_{\tau_t}\vert \nabla\langle X  \vert  \nu_{E_t}\rangle\rangle }_{L^{\frac{p}{p-1}}(\pa E_t)} \\
\le&\, C\norma{\nabla  X_{\tau_t}}_{L^2(\pa E_t)} \norma{\langle X \vert \nu_{E_t} \rangle}_{L^{\frac{2p}{p-2}}(\pa E_t)}\\
&\,+ C\norma{X_{\tau_t}}_{L^{\frac{2p}{p-2}}(\pa E_t)}\norma{\nabla  \langle X \vert \nu_{E_t}\rangle}_{L^2(\pa E_t)}\\
\le&\, C\norma{X}_{H^1(\pa E_t)}\norma{X}_{L^{\frac{2p}{p-2}}(\pa E_t)}\\
\le&\, C\norma{X}^2_{H^1(\pa E_t)}\\
\le&\, C\norma{\langle X \vert \nu_{E_t} \rangle}^2_{H^1(\pa E_t)},\label{claim2}
\end{align}
where we used the Sobolev embedding $H^1(\pa E_t)\hookrightarrow L^{\frac{2p}{p-2}}(\pa E_t)$, as $p>\max\{2,n-1\}$.\\

Then, we compute (here $X_{\tau_t}$ is the tangent component of $X$, $\HHH_t$ is the mean curvature and $v_{E_t}$ the potential relative to $E_t$ defined by formula~\eqref{potential1})
\begin{align}
J(F)-J(E)=&\,J(E_1)-J(E)\\
=&\,\int_0^1 (1-t)\frac{d^2}{dt^2}J(E_t)\, dt \\
=&\,\int_0^1(1-t)\bigl(\Pi_{E_t}(\langle X \vert \nu_{E_t}\rangle)+R_t\bigr)\,dt \\
=&\,\int_0^1(1-t)\Pi_{E_t}(\langle X \vert \nu_{E_t}\rangle)\,dt \\
&\,-\int_0^1(1-t)\int_{\pa E}(4\gamma v_{E_t} + \HHH_t) \Div(X_{\tau_t}\langle X \vert \nu_{E_t} \rangle)\, \dmu_t\, dt.
\end{align}
by Theorem~\ref{secondvar} and the definition of $\Pi_{E_t}$ in formula~\eqref{Pieq}, considering the second form of the remainder term $R_t$, relative to $E_t$ and taking into account that $\Div X=0$ in $N_\eps$ and that $X_t=X$, as the variation is special.\\
Hence, by estimate~\eqref{eqcar1020}, we have (recall that $4\gamma v_E+\HHH=4\gamma v_{E_0} +\HHH_0=\lambda$ constant, as $E$ is a critical set)
\begin{align}
J(F)-J(E) \ge&\, \frac{m_0}{2} \int_0^1 (1-t) \norma{\langle X \vert \nu_{E_t} \rangle}^2_{H^1(\pa E_t)} \, dt\\
&\,- \int_0^1 (1-t) \int_{\pa E_t}(\HHH_t+4\gamma v_{E_t})\, \Div(X_{\tau_t}\langle X \vert \nu_{E_t} \rangle)\, \dmu_t \, dt\\
=&\,\frac{m_0}{2} \int_0^1 (1-t) \norma{\langle X \vert \nu_{E_t} \rangle}^2_{H^1(\pa E_t)} \, dt\\
&\,- \int_0^1 (1-t) \int_{\pa E_t}(\HHH_t + 4 \gamma v_{E_t} -\lambda) \Div(X_{\tau_t}\langle X \vert \nu_{E_t} \rangle)\, \dmu_t\, dt\\
\geq&\,\frac{m_0}{2} \int_0^1 (1-t) \norma{\langle X \vert \nu_{E_t} \rangle}^2_{H^1(\pa E_t)} \, dt\\
&\,- \int_0^1 (1-t) \norma{\HHH_t + 4 \gamma v_{E_t} - \lambda}_{L^p(\pa E_t)} \norma{\Div(X_{\tau_t}\langle X \vert \nu_{E_t} \rangle)}_{L^\frac{p}{p-1}(\pa E_t)}\,dt\\
\geq&\,\frac{m_0}{2} \int_0^1 (1-t) \norma{\langle X \vert \nu_{E_t} \rangle}^2_{H^1(\pa E_t)} \, dt\\
&\,- C\int_0^1 (1-t) \norma{\HHH_t+ 4 \gamma v_{E_t} - \lambda}_{L^p(\pa E_t)}\norma{\langle X \vert \nu_{E_t} \rangle}^2_{H^1(\pa E_t)}\,dt, 
\end{align}
by estimate~\eqref{claim2}. 
If $\delta>0$ is sufficiently small, as $E_t$ is $W^{2,p}$--close to $E$ (recall the definition of $v_{E_t}$ in formula~\eqref{potential1}), we have 
$$
\norma{\HHH_t + 4 \gamma v_{E_t} - \lambda}_{L^p(\pa E_t)}<m_0/4C\,,
$$
hence,
$$
J (F)-J(E) \ge \frac{m_0}{4} \int_0^1 (1-t) \norma{\langle X \vert \nu_{E_t} \rangle}^2_{H^1(\pa E_t)} \, dt.
$$
Then, we can conclude the proof of the theorem with the following series of inequalities, holding for a suitably small $\delta>0$ as in the statement,
\begin{align}
J(F)&\ge J(E) + \frac{m_0}{2} \int_0^1 (1-t) \norma{\langle X \vert \nu_{E_t} \rangle}^2_{H^1(\pa E_t)} \, dt\\
& \ge J(E) + C \norma{\langle X \vert \nu_E\rangle}^2_{L^2(\pa E)}\\
& \ge J(E) + C \norma{\psi}^2_{L^2(\pa E)}\\
& \ge J(E) + C[\vol(E \triangle F)]^2\\
& \ge J(E)+ C[\alpha(E,F)]^2,
\end{align}
where the first inequality is due to the $W^{2,p}$--closedness of $E_t$ to $E$, the second one by the very expression~\eqref{field} of the vector field $X$ on $\pa E$,
\begin{equation}
|\langle X(y)\vert\nu_E(y)\rangle|= \Bigl|\int _0 ^{\psi(y)}\frac{ds}{\xi(y+ s \nu_E(y))}\,\Bigr|\leq C|\psi(y)|,
\end{equation}
the third follows by a straightforward computation (involving the map $L$ defined by formula~\eqref{eqcar410} and its Jacobian), as $\pa E$ is a ``normal graph'' over $\pa F$ with $\psi$ as ``height function'', finally the last one simply by the definition of the ``distance'' $\alpha$, recalling that we possibly translated the ``original'' set $F$ by a vector $\eta\in\R^n$, at the beginning of this step.
\end{proof}

We conclude this section by proving two propositions that will be used later. The first one says that when a set is sufficiently $W^{2,p}$--close to a strictly stable critical set of the functional $J$, then the quadratic form~\eqref{Pieq} remains uniformly positive definite (on the orthogonal complement of its degenerate subspace, see the discussion at the end of the previous subsection). 

\begin{proposition}\label{2.6}
Let $p>\max \{2,n-1\}$ and $E\subseteq\T^n$ be a smooth strictly stable critical set with $N_\eps$ a tubular neighborhood of $\pa E$, as in formula~\eqref{tubdef}. Then, for every $\theta\in (0,1]$ there exist $\sigma_\theta,\delta>0$ such that if a smooth set $F \subseteq \T^n$ is $W^{2,p}$--close to $E$, that is, $\vol(F\triangle E)<\delta$ and $\pa F\subseteq N_\eps$ with
\begin{equation}
\pa F= \{y + \psi (y) \nu_E(y) \, : \, y \in \pa E\}
\end{equation}
for a smooth $\psi$ with $\norma{\psi}_{W^{2,p}(\pa E)}<\delta$, there holds
\begin{equation}\label{2.13}
\Pi_F(\varphi) \geq \sigma_\theta \norma{\varphi}^2_{H^1(\pa F)},
\end{equation}
for all $\varphi \in \Htilde^1(\pa F)$ satisfying
\begin{equation}
\min_{\eta\in\OO_E} \norma{\varphi - \langle \eta \vert \nu_F \rangle }_{L^2(\pa F)} \geq \theta \norma{\varphi}_{L^2(\pa F)},
\end{equation}
where $\OO_E$ is defined by formula~\eqref{OOeq}.
\end{proposition}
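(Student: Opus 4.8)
The plan is to reproduce, in a ``robust'' form, the two-step scheme of the proof of Theorem~\ref{W2pMin}: first upgrade the strict stability of $E$ to a \emph{quantitative coercivity estimate for $\Pi_E$ away from the degenerate subspace $T(\pa E)$}, and then transfer this estimate to nearby sets $F$ by a contradiction/compactness argument, using the transplantation map already employed in formula~\eqref{eqcar1030}. A useful preliminary remark is that, since $\langle\nu_E\vert e_i\rangle\equiv0$ for $i\notin\II_E$, one has $T(\pa E)=\{\langle\eta\vert\nu_E\rangle:\eta\in\OO_E\}$, so the condition $\min_{\eta\in\OO_E}\norma{\varphi-\langle\eta\vert\nu_E\rangle}_{L^2(\pa E)}\ge\theta\norma{\varphi}_{L^2(\pa E)}$ is exactly $\norma{\pi(\varphi)}_{L^2(\pa E)}\ge\theta\norma{\varphi}_{L^2(\pa E)}$, with $\pi$ the $L^2$--orthogonal projection onto $T^\perp(\pa E)$ of formula~\eqref{projection}.

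\textbf{Step 1 (quantitative stability on $E$).} First I would prove the statement for $F=E$: for every $\theta\in(0,1]$ there is $\sigma'_\theta>0$ with $\Pi_E(\varphi)\ge\sigma'_\theta\norma{\varphi}^2_{H^1(\pa E)}$ for all $\varphi\in\Htilde^1(\pa E)$ with $\norma{\pi(\varphi)}_{L^2(\pa E)}\ge\theta\norma{\varphi}_{L^2(\pa E)}$. By Remark~\ref{rembase0} one has $\Pi_E(\varphi)=\Pi_E(\pi(\varphi))$, so the positivity of $m_0$ from Step~$1$ of the proof of Theorem~\ref{W2pMin} and homogeneity give $\Pi_E(\varphi)=\Pi_E(\pi(\varphi))\ge m_0\norma{\pi(\varphi)}^2_{H^1(\pa E)}\ge m_0\theta^2\norma{\varphi}^2_{L^2(\pa E)}$. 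On the other hand, dropping the nonnegative term $\int_{\T^n}|\nabla v_\varphi|^2$ (see Remark~\ref{rm:potential}) and controlling $\int_{\pa E}\varphi^2|B|^2$ and $\int_{\pa E}\pa_{\nu_E}v_E\,\varphi^2$ by Sobolev embedding and interpolation yields the elementary bound $\Pi_E(\varphi)\ge\tfrac12\norma{\nabla\varphi}^2_{L^2(\pa E)}-C_0\norma{\varphi}^2_{L^2(\pa E)}$. Taking a convex combination of the two inequalities with a weight depending only on $m_0,C_0,\theta$ gives the claimed coercivity with a positive $\sigma'_\theta$.

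\textbf{Step 2 (contradiction, reduction and transplant).} Assume the proposition fails for some $\theta$. Then there are smooth sets $F_i\to E$ in $W^{2,p}$ and functions $\varphi_i\in\Htilde^1(\pa F_i)$ with $\norma{\varphi_i}_{H^1(\pa F_i)}=1$, $\min_{\eta\in\OO_E}\norma{\varphi_i-\langle\eta\vert\nu_{F_i}\rangle}_{L^2(\pa F_i)}\ge\theta\norma{\varphi_i}_{L^2(\pa F_i)}$, and $\Pi_{F_i}(\varphi_i)\to0$. Since $F_i\to E$ in $W^{2,p}$ keeps $B_{\pa F_i}$ bounded in $L^p$ and $v_{F_i}\to v_E$ in $C^{1,\beta}$ (as in Step~$2$ of the proof of Theorem~\ref{W2pMin}), the bound of Step~$1$ holds uniformly, i.e. $\Pi_{F_i}(\varphi_i)\ge\tfrac12\norma{\nabla\varphi_i}^2_{L^2(\pa F_i)}-C\norma{\varphi_i}^2_{L^2(\pa F_i)}=\tfrac12-(\tfrac12+C)\norma{\varphi_i}^2_{L^2(\pa F_i)}$, so $\Pi_{F_i}(\varphi_i)\to0$ forces $\norma{\varphi_i}_{L^2(\pa F_i)}\ge c>0$ for $i$ large. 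I would then transplant exactly as in formula~\eqref{eqcar1030}: with $\theta_i(y)=y+\psi_i(y)\nu_E(y)$, set $\widetilde\varphi_i=\varphi_i\circ\theta_i-\fint_{\pa E}\varphi_i\circ\theta_i\,\dmu\in\Htilde^1(\pa E)$. The convergences $\theta_i\to\Id$ in $C^{1,\alpha}$, $\nu_{F_i}\circ\theta_i\to\nu_E$ in $C^{0,\alpha}$, $J\theta_i\to1$ uniformly, $B_{\pa F_i}\circ\theta_i\to B_{\pa E}$ in $L^p$ and $v_{F_i}\to v_E$ in $C^{1,\beta}$ give, as in that Step~$2$, $\norma{\widetilde\varphi_i}_{H^1(\pa E)}\to1$, $\norma{\widetilde\varphi_i}_{L^2(\pa E)}\to\ell\ge c>0$ and $\Pi_{F_i}(\varphi_i)-\Pi_E(\widetilde\varphi_i)\to0$, whence $\Pi_E(\widetilde\varphi_i)\to0$.

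\textbf{Step 3 (transplanting the non-degeneracy and conclusion).} It remains to check that $\widetilde\varphi_i$ stays quantitatively away from $T(\pa E)$. For $\eta\in\OO_E$ one may restrict to $|\eta|\le C\norma{\widetilde\varphi_i}_{L^2(\pa E)}$, since for larger $\eta$ the inequality $\norma{\widetilde\varphi_i-\langle\eta\vert\nu_E\rangle}_{L^2(\pa E)}\ge\theta\norma{\widetilde\varphi_i}_{L^2(\pa E)}$ is automatic (because $\norma{\langle\eta\vert\nu_E\rangle}_{L^2(\pa E)}\ge c_E|\eta|$ with $c_E>0$). For such $\eta$, the change of variables $z=\theta_i(y)$ together with $J\theta_i^{-1}\to1$, $\nu_{F_i}\circ\theta_i\to\nu_E$ uniformly, $\fint_{\pa E}\varphi_i\circ\theta_i\to0$ and $\norma{\varphi_i}_{L^2(\pa F_i)}=(1+o(1))\norma{\widetilde\varphi_i}_{L^2(\pa E)}$ give $\norma{\widetilde\varphi_i-\langle\eta\vert\nu_E\rangle}_{L^2(\pa E)}\ge(1-o(1))\norma{\varphi_i-\langle\eta\vert\nu_{F_i}\rangle}_{L^2(\pa F_i)}-o(1)\norma{\widetilde\varphi_i}_{L^2(\pa E)}\ge(\theta-o(1))\norma{\widetilde\varphi_i}_{L^2(\pa E)}$, the additive errors being absorbed thanks to $\ell>0$. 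Hence $\min_{\eta\in\OO_E}\norma{\widetilde\varphi_i-\langle\eta\vert\nu_E\rangle}_{L^2(\pa E)}\ge\tfrac\theta2\norma{\widetilde\varphi_i}_{L^2(\pa E)}$ for $i$ large, and Step~$1$ applied with parameter $\theta/2$ yields $\Pi_E(\widetilde\varphi_i)\ge\sigma'_{\theta/2}\norma{\widetilde\varphi_i}^2_{H^1(\pa E)}\to\sigma'_{\theta/2}>0$, contradicting $\Pi_E(\widetilde\varphi_i)\to0$. This proves the proposition, with $\sigma_\theta$ any positive number below $\sigma'_{\theta/2}$ and $\delta$ chosen small enough along the way. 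The hard part is precisely the bookkeeping in Step~$3$: one must ensure the near-minimal translation $\eta$ can be kept bounded by $\norma{\widetilde\varphi_i}_{L^2(\pa E)}$, and that the change of variables, the perturbation of the unit normal, and the mean-subtraction each degrade the constant $\theta$ only by $o(1)$ — which is exactly why the preliminary reduction to $\norma{\varphi_i}_{L^2(\pa F_i)}\ge c>0$ is indispensable; the convergence $\Pi_{F_i}(\varphi_i)-\Pi_E(\widetilde\varphi_i)\to0$ is not new and may simply be quoted from Step~$2$ of the proof of Theorem~\ref{W2pMin}.
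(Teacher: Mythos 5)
Your proposal is correct and follows essentially the same scheme as the paper's proof: a quantitative coercivity statement on $E$, followed by a contradiction/compactness argument that transplants a hypothetical degenerating sequence $\varphi_i$ on $\pa F_i$ back to $\pa E$ via $\theta_i(y)=y+\psi_i(y)\nu_E(y)$ and shows that the near-degeneracy constraint survives the transplant up to a factor $1-o(1)$. The only genuine variation is in Step 1: the paper establishes the threshold $m_\theta>0$ for each $\theta$ by a direct compactness argument, whereas you derive it from $m_0>0$ (Step 1 of Theorem~\ref{W2pMin}), the identity $\Pi_E(\varphi)=\Pi_E(\pi(\varphi))$ from Remark~\ref{rembase0}, and the elementary G\aa rding-type bound $\Pi_E(\varphi)\ge\tfrac12\norma{\nabla\varphi}^2_{L^2}-C_0\norma{\varphi}^2_{L^2}$, combined by a convex combination; this avoids re-running the weak compactness argument for each $\theta$ and gives a more explicit $\sigma'_\theta$, but it is an equivalent route to the same sub-result.
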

\begin{proof}\ \\
\noindent \textbf{Step ${\mathbf{1}}$.} We first show that for every  $\theta\in (0, 1]$ there holds
\begin{equation}
m_\theta=\inf\Bigl\{\Pi_E(\varphi)\,:\, \varphi\in \Htilde^1(\pa E)\,, \|\varphi\|_{H^1(\pa E)}=1\,\,\text{ and }\min_{\eta\in \OO_E}\|\varphi-\langle\eta\vert\nu_E\rangle\|_{L^2(\pa E)}\geq \theta\|\varphi\|_{L^2(\pa E)}\Bigr\}>0\,.\label{c0}
\end{equation}
Indeed, let $\varphi_i$ be a minimizing sequence for this infimum and assume that $\varphi_i\rightharpoonup \varphi_0\in \Htilde^1(\pa E)$ weakly in $H^1(\pa E)$.\\
If $\varphi_0\neq 0$, as the weak convergence in $H^1(\pa E)$ implies strong convergence in $L^2(\pa E)$ by Sobolev embeddings, for every $\eta\in\OO_E$ we have
$$
\|\varphi_0-\langle\eta\vert\nu_E\rangle\|_{L^2(\pa E)}
=\lim_{i\to\infty}\|\varphi_i-\langle\eta\vert\nu_E\rangle\|_{L^2(\pa E)}\geq \lim_{i\to\infty}\theta\|\varphi_i\|_{L^2(\pa E)}
=\theta\|\varphi_0\|_{L^2(\pa E)},
$$
hence,
$$
\min_{\eta\in\OO_E}\|\varphi_0-\langle\eta\vert\nu_E\rangle\|_{L^2(\pa E)}\geq \theta\|\varphi_0\|_{L^2(\pa E)}>0,
$$
thus, we conclude  $\varphi_0\in \Htilde^1(\pa E)\setminus T(\pa E)$ and
$$
m_\theta=\lim_{i\to\infty} \Pi_E(\varphi_i)\geq \Pi_E(\varphi_0)>0\,,
$$
where the last inequality follows from estimate~\eqref{uusi stability} in Remark~\ref{rembase0}.\\
If $\varphi_0= 0$, then again by the strong convergence of $\varphi_i\to\varphi_0$ in $L^2(\pa E)$, by looking at formula~\eqref{Pieq}, we have
$$
m_\theta=\lim_{i\to\infty}\Pi_E(\varphi_i)=\lim_{i\to\infty}\int_{\pa E}|\nabla \varphi_i|^2 \, d\mu=\lim_{i\to\infty}\Vert\varphi_i\Vert_{H^1(\pa E)}^2=1
$$
since $\Vert\varphi_i\Vert_{L^2(\pa E)}\to0$.
 
\smallskip

\noindent \textbf{Step ${\mathbf{2}}$.}  In order to finish the proof it is enough to show the existence of some $\delta>0$ 
such that if $\vol(F\triangle E)<\delta$ and $\pa F=\bigl\{y+\psi(y) \nu_E(y):\, y\in \pa E\bigr\}$ with  $\|\psi\|_{W^{2,p}(\pa E)}<\delta$, then
\begin{align}
\inf\Bigl\{&\,\Pi_F(\varphi)\,:\, \varphi\in \Htilde^1(\pa F)\,, \|\varphi\|_{H^1(\pa F)}=1\,\,\,\text{ and }\min_{\eta\in \OO_E}\|\varphi-\langle\eta\vert\nu_F\rangle\|_{L^2(\pa F)}\geq\theta\|\varphi\|_{L^2(\pa F)}\Bigr\}\\
&\,\geq\sigma_\theta=\frac{1}{2}\min \{m_{\theta/2},1\}\,,\label{c2unif}
\end{align}
where $m_{\theta/2}$ is defined by formula~\eqref{c0}, with $\theta/2$ in place of $\theta$.\\
Assume by contradiction that there exist a sequence of smooth sets $F_i\subseteq\T^n$, with $\pa F_i=\{y+\psi_i(y) \nu_E
(y):\, y\in \pa E\}$ and $\|\psi_i\|_{W^{2,p}(\pa E)}\to 0$, and a sequence  $\varphi_i\in \Htilde^1(\pa F_i)$, with $\|\varphi_i\|_{H^1(\pa F_i)}=1$ and $\min_{\eta\in\OO_E}\|\varphi_i-\langle\eta\vert\nu_{F_i}\rangle\|_{L^2(\pa F_i)}\geq \theta\|\varphi_i\|_{L^2(\pa F_i)}$, such that 
\begin{equation}\label{liminf}
\Pi_{F_i}(\varphi_i)<\sigma_\theta\leq  m_{\theta/2}/2\,.
\end{equation}
Let us suppose first that $\lim_{i\to\infty}\|\varphi_i\|_{L^2(\pa F_i)}=0$ and observe that by  Sobolev embeddings  $\|\varphi_i\|_{L^q(\pa F_i)}\to0$ for some $q>2$, thus, since the functions $\psi_i$ are uniformly bounded in $W^{2,p}(\pa E)$ for $p>\max\{2,n-1\}$, recalling formula~\eqref{Pieq}, it is easy to see that
$$
\lim_{i\to\infty}\Pi_{F_i}(\varphi_i)=\lim_{i\to\infty}\int_{\pa F_i}|\nabla \varphi_i|^2 \, d\mu_i=\lim_{i\to\infty}\Vert\varphi_i\Vert_{H^1(\pa F_i)}^2=1\,,
$$
which is a contradiction with assumption~\eqref{liminf}.\\
Hence, we may assume that 
\begin{equation}\label{lim positive}
\lim_{i\to\infty}\|\varphi_i\|_{L^2(\pa F_i)} >0. 
\end{equation}
The idea now is to write every $\varphi_i$ as a function on $\pa E$. We define the functions $\widetilde{\varphi}_i(\pa E)\to\R$, given by 
$$
\widetilde\varphi_i(y)=\varphi_i\bigl(y+\psi_i(y) \nu_E(y)\bigr)- \fint_{\pa E}\varphi_i(y+\psi_i(y) \nu_E(y))\, d\mu(y)\,,
$$
for every $y\in \pa E$.\\
As $\psi_i\to 0$ in $W^{2,p}(\pa E)$, we have in particular that 
\begin{equation}\label{sothat}
\widetilde\varphi_i\in \Htilde^1(\pa E)\,, \qquad \|\widetilde\varphi_i\|_{H^1(\pa E)}\to 1\, \qquad\text{and}
\qquad \frac{\|\widetilde\varphi_i\|_{L^2(\pa E)}}{\| \varphi_i\|_{L^2(\pa F_i)}}\to 1\,,
\end{equation}
moreover, note also that $\nu_{F_i}(\cdot +\psi_i(\cdot)\nu_E(\cdot))\to \nu_E$ in $W^{1,p}(\pa E)$ and thus in $C^{0,\alpha}(\pa E)$ for a suitable $\alpha\in (0,1)$, depending on $p$, by Sobolev embeddings. Using this fact and taking into account the third limit above and inequality~\eqref{lim positive}, one can easily show that 
$$
\liminf_{i\to\infty}\frac{\min_{\eta\in\OO_E}\|\widetilde\varphi_i-\langle\eta\vert\nu_{E}\rangle\|_{L^2(\pa E)}}{\|\widetilde\varphi_i\|_{L^2(\pa E)}}\geq 
\liminf_{i\to\infty}\frac{\min_{\eta\in\OO_E}\|\varphi_i-\langle\eta\vert\nu_{F_i}\rangle\|_{L^2(\pa F_i)}}{\|\varphi_i\|_{L^2(\pa E_i)}}\geq \theta\,.
$$
Hence, for $i\in\N$ large enough, we have 
$$
\|\widetilde\varphi_i\|_{H^1(\pa E)}\geq 3/4\qquad\text{and}\qquad \min_{\eta\in\OO_E}\|\widetilde\varphi_i-\langle\eta\vert\nu_{E}\rangle\|_{L^2(\pa E)}\geq \frac{\theta}2\|\widetilde\varphi_i\|_{L^2(\pa E)}\,,
$$
then, in turn, by Step~$1$, we infer
\begin{equation}\label{bystep1}
\Pi_E(\widetilde\varphi_i)\geq \frac{9}{16}m_{\theta/2}\,.
\end{equation}
Arguing now exactly like in the final part of Step~$2$ in the proof of Theorem~\ref{W2pMin}, we have that all the terms of $\Pi_{F_i}(\varphi_i)$ are asymptotically close to the corresponding terms of $\Pi_{E}(\widetilde\varphi_i)$, thus
$$
\Pi_{F_i}(\varphi_i)-\Pi_{E}(\widetilde\varphi_i)\to 0\,,
$$
which is a contradiction, by inequalities~\eqref{liminf} and~\eqref{bystep1}. This establishes inequality~\eqref{c2unif} and concludes the proof.
\end{proof}

The following final result of this section states the fact that close to a strictly stable critical set there are no other smooth critical sets (up to translations).

\begin{proposition}\label{prop:nocrit}
Let $p$ and $E\subseteq\T^n$ be as in Proposition~\ref{2.6}. Then, there exists $\delta>0$ such that if $E'\subseteq\T^n$ is a smooth critical set with $\vol(E')=\vol(E)$, $\vol(E\triangle E')<\delta$, $\pa E'\subseteq N_\eps$ and
\begin{equation}
\pa E'= \{y + \psi (y) \nu_E(y) \, : \, y \in \pa E\}
\end{equation}
for a smooth $\psi$ with $\norma{\psi}_{W^{2,p}(\pa E)}<\delta$, then $E'$ is a translate of $E$.
\end{proposition}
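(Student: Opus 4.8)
The plan is to deduce the statement from Theorem~\ref{W2pMin} and Proposition~\ref{2.6}, exploiting the fact that a strictly stable critical set is a $W^{2,p}$--local minimizer which is locally unique up to translations. The key preliminary observation is that a critical set $E'$ sufficiently $W^{2,p}$--close to $E$ is itself strictly stable. Indeed, since $E'$ is critical, by Proposition~\ref{IIcritprop} its second variation is the quadratic form $\Pi_{E'}$; and for any $\varphi\in\Tort(\pa E')\setminus\{0\}$ we have $\int_{\pa E'}\varphi\,\nu_{E'}\,d\mu_{E'}=0$ by the definition~\eqref{Tort}, so that for every $\eta\in\OO_E$
$$
\norma{\varphi-\langle\eta\vert\nu_{E'}\rangle}^2_{L^2(\pa E')}=\norma{\varphi}^2_{L^2(\pa E')}-2\int_{\pa E'}\varphi\,\langle\eta\vert\nu_{E'}\rangle\,d\mu_{E'}+\norma{\langle\eta\vert\nu_{E'}\rangle}^2_{L^2(\pa E')}\geq\norma{\varphi}^2_{L^2(\pa E')}\,,
$$
the middle term vanishing because $\int_{\pa E'}\varphi\,\nu_{E'}\,d\mu_{E'}=0$. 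Hence every such $\varphi$ satisfies the hypothesis of Proposition~\ref{2.6} with $\theta=1$, and Proposition~\ref{2.6} applied to $F=E'$ gives $\Pi_{E'}(\varphi)\geq\sigma_1\norma{\varphi}^2_{H^1(\pa E')}>0$; thus $E'$ is strictly stable, with a lower bound $\sigma_1$ depending only on $E$.

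Next I would apply Theorem~\ref{W2pMin} twice. Applied to $E$, it gives in particular $J(E')\geq J(E)$ whenever $E'$ is a $W^{2,p}$--small normal graph over $\pa E$ with $\vol(E')=\vol(E)$ and $\vol(E'\triangle E)$ small. Applied to $E'$ — which by the previous step is a smooth strictly stable critical set — it says that $E'$ is a $W^{2,p}$--local minimizer; choosing $F=E$ in that statement is legitimate because, for $p>n-1$, the boundary $\pa E$ can be represented as a $W^{2,p}$--small normal graph over $\pa E'$ (its height being controlled by that of $\pa E'$ over $\pa E$), with $\vol(E)=\vol(E')$ and $\vol(E\triangle E')$ small, and so we obtain $J(E)\geq J(E')$. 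Combining the two inequalities gives $J(E')=J(E)$, and then the last assertion of Theorem~\ref{W2pMin} (local uniqueness of the $W^{2,p}$--local minimizer up to translations, applied to $E$ with $F=E'$) forces $E'$ to be a translate of $E$.

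The delicate point is the uniformity of the constants of Theorem~\ref{W2pMin} when it is invoked for the ``moving'' set $E'$: we need a single $\delta>0$ in the statement of Proposition~\ref{prop:nocrit} which simultaneously lets $E'$ enter the hypotheses of Theorem~\ref{W2pMin} relative to $E$ and of Proposition~\ref{2.6}, and places $E$ inside the neighbourhood of $E'$ on which the local minimality of $E'$ holds. This is exactly where the uniform bound $\sigma_1$ above is used: inspecting the proof of Theorem~\ref{W2pMin}, all of its constants depend on the set only through the number $m_0(E')=\inf\{\Pi_{E'}(\varphi):\varphi\in\Tort(\pa E'),\ \norma{\varphi}_{H^1(\pa E')}=1\}$ — bounded below by $\sigma_1>0$ thanks to Proposition~\ref{2.6} with $\theta=1$ — and through geometric and Sobolev constants of $\pa E'$, which are uniformly controlled for $E'$ in a fixed $W^{2,p}$--neighbourhood of $E$. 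Hence Theorem~\ref{W2pMin} holds for all such $E'$ with constants depending only on $E$, and a single small $\delta$ works, which completes the argument.

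As an alternative one could argue directly by contradiction and compactness: given a sequence $E'_i\to E$ in $W^{2,p}$ of smooth critical sets with $\vol(E'_i)=\vol(E)$, none of them a translate of $E$, one uses Lemma~\ref{Lemma 3.8} to translate each $E'_i$ so that it is the normal graph of a function $\varphi_i\not\equiv0$ over $\pa E$ with small $T(\pa E)$--component, rescales by $w_i=\varphi_i/\norma{\varphi_i}_{W^{2,p}(\pa E)}$, linearizes the Euler--Lagrange equation $\HHH+4\gamma v=\mathrm{const}$ at $E$ to get $Lw_i=\mu_i+o(1)$ in $L^p(\pa E)$ with $L$ as in Remark~\ref{rembase0}, shows by testing against the $\Tort(\pa E)$--projection of $w_i$ (and using the degeneracy of $T(\pa E)$ for $b_E$) that this projection tends to $0$ in $H^1$ while the translation part vanishes by the choice in Lemma~\ref{Lemma 3.8}, so $w_i\to0$ in $L^p$; then $\mu_i\to0$ by integrating the equation, and elliptic $L^p$--estimates give $\norma{w_i}_{W^{2,p}(\pa E)}\to0$, contradicting $\norma{w_i}_{W^{2,p}(\pa E)}=1$. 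Either way, the heart of the matter is to mod out the translation directions so that the uniform positivity of $\Pi_E$ on $\Tort(\pa E)$ can be brought to bear.
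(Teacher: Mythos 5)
Your proposal is correct but takes a genuinely different route from the paper. The paper's proof simply reuses the variation constructed in Step~3 of the proof of Theorem~\ref{W2pMin}: for $\delta$ small one finds a small $\eta\in\R^n$ (via Lemma~\ref{Lemma 3.8}) and a volume--preserving variation $E_t$ with $E_0=E$, $E_1=E'-\eta$, along which $\frac{d^2}{dt^2}J(E_t)\geq C[\vol(E\triangle(E'-\eta))]^2$ for every $t\in[0,1]$; criticality of $E$ gives $\frac{d}{dt}J(E_t)\big\vert_{t=0}=0$, so if $E'$ were not a translate of $E$ the strictly positive second derivative would force $\frac{d}{dt}J(E_t)\big\vert_{t=1}>0$, which is impossible because the reversed variation $s\mapsto E_{1-s}$ would then have nonzero first variation at the critical set $E'-\eta$. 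This is a one--sided argument: it never needs to invoke Theorem~\ref{W2pMin}, or strict stability, for any set other than $E$. You instead first establish strict stability of $E'$ from Proposition~\ref{2.6} with $\theta=1$ --- a neat and correct observation, since $\int_{\pa E'}\varphi\,\nu_{E'}\,d\mu=0$ for $\varphi\in T^\perp(\pa E')$ makes the $\theta=1$ hypothesis automatic --- and then run Theorem~\ref{W2pMin} in both directions to squeeze $J(E)=J(E')$, concluding via its uniqueness clause. This works conceptually, but carries a cost you rightly flag as the delicate point: one must check that the $\delta$ and the implied constants in Theorem~\ref{W2pMin} are uniform over all the competing sets $E'$ near $E$, and that $E$ sits as a $W^{2,p}$--small normal graph in a uniformly sized tubular neighbourhood of $\pa E'$; your tracing of the constants back to $m_0$, $\sigma_1$ and geometric/Sobolev quantities is plausible but would require a genuine re--inspection of every step of that long proof to make rigorous. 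The paper's argument, being internal to the single variation built from $E$, sidesteps this uniformization work entirely, at the price of being wedded to the machinery of Step~3; yours is more modular but more expensive. Your alternative compactness/linearization sketch is a third valid strategy, closer in flavour to standard ``isolatedness modulo symmetries'' arguments, but again it is not the route the paper takes.
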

\begin{proof}
In Step~$3$ of the proof of Theorem~\ref{W2pMin}, it is shown that under these hypotheses on $E$ and $E'$, if $\delta>0$ is small enough, we may find a small vector $\eta\in \R^n$  and a volume--preserving variation $E_t$ such that$E_0=E$, $E_1=E'-\eta$ and 
$$
\frac{d^2 }{dt^2}J(E_t)\geq C[\vol(E \triangle (E'-\eta))]^2\,,
$$
for all $t\in [0,1]$, where $C$ is a positive constant independent of $E'$.\\
Assume that $E'$ is a smooth critical set as in the statement, which is not a translate of $E$, then $\frac{d }{dt}J(E_t)\bigl|_{t=0}=0$, but from the above formula it follows $\frac{d}{dt}J(E_t)\bigl|_{t=1}>0$, which implies that $E'-\eta$ cannot be critical, hence neither $E'$, which is a contradiction. Indeed, $s\mapsto E_{1-s}$ is a volume--preserving variation for $E'-\eta$ and
$$
\frac{d}{ds}J(E_{1-s})\Bigl|_{s=0}=-\frac{d}{dt}J(E_t)\Bigl|_{t=1}<0\,,
$$
showing that $E'-\eta$ is not critical.
\end{proof}

\section{The modified Mullins--Sekerka and the surface diffusion flow}\label{msfsdf}

We start with the notion of smooth flow of sets. 

\begin{definition}\label{def:smoothflow}
Let $E_t\subseteq \T^n$ for $t\in [0,T)$ be a one-parameter family of sets, then we say that it is a {\em smooth flow}
if there exists a smooth {\em reference set} $F\subseteq\T^n$ and a map $\Psi\in C^{\infty}([0,T)\times\T^n;\T^n)$ such that $\Psi_t=\Psi(t,\cdot)$ is a smooth diffeomorphism from $\T^n$ to $\T^n$ and $E_t=\Psi_t(F)$, for all $t\in [0, T)$.
\end{definition}

The \emph{velocity} of the motion of any point $x =\Psi_t(y)$ of the set $E_t$, with $y \in F$, is then given by
\beq
X_t(x)=X_t(\Psi_t(y))=\frac{\pa \Psi_t}{\pa t} (y).
\eeq
(notice that, in general, the smooth vector field $X_t$, defined in the whole $\T^n$ by $X_t(\Psi_t(z))=\frac{\pa \Psi_t}{\pa t} (z)$ for every $z\in\T^n$, is not independent of $t$).\\
When $x \in \pa E_t$, we define the \emph{outer normal velocity} of the flow of the boundaries $\partial E_t$, which are smooth hypersurfaces of $\T^n$, as
\beq
V_t(x)=\langle X_t(x)\vert \nu_{E_t}(x)\rangle,
\eeq
for every $t \in [0,T)$, where $\nu_{E_t}$ is the outer normal vector to $E_t$.

\medskip

{\em For more clarity and to simplify formulas and computations, from now on we will denote with 
$$
\int_{\pa E_t} f\,d\mu_t\qquad\qquad\text{ the integral }\qquad\qquad\int_{\pa F} f\circ \Phi_t\,d\mu_t\,,
$$
for every $f:\pa E_t\to\R$, where in the second integral $\mu_t$ is the canonical Riemannian measure induced on the hypersurface $\pa E_t$, parametrized by $\Phi_t\vert_{\pa F}$, by the flat metric of $\T^n$ (coinciding with the Hausdorff $(n-1)$--dimensional measure). Moreover, in the same spirit we set $\nu_t=\nu_{E_t}$.}

\medskip

Before giving the definition of the {\em modified Mullins--Sekerka flow} (first appeared in~\cite{MS} -- see also~\cite{Crank,Gurtin1} and~\cite{EscherSi4} for a very clear and nice introduction to such flow), we need some notation. Given a smooth set $E\subseteq\T^n$ and $\gamma\geq0$, we denote by $w_E$ the unique solution in $H^1(\T^n)$ of the following problem
\begin{equation}\label{WE}
\begin{cases}
\Delta w_E=0 & \text{in }\T^n\setminus \pa E\\
w_E= \HHH + 4\gamma v_{E} & \text{on } \, \partial E,
\end{cases}
\end{equation}
where $v_E$ is the potential introduced in~\eqref{potential} and $\HHH$ is the mean curvature of $\pa E$. Moreover, we denote by $w^+_E$ and $w^-_E$ the restrictions $w_E|_{E^c}$ and ${w_E}{|_{E}}$, respectively. Finally, denoting as usual by $\nu_E$ the outer unit normal to $E$, we set
$$
[\pa_{\nu_E} w_E]=\pa_{\nu_E}w^+_E-\pa_{\nu_E}w^-_E=-(\pa_{\nu_{E^c}}w^+_E+\pa_{\nu_E}w^-_E)\,.
$$
that is the ``jump'' of the normal derivative of $w_E$ on $\pa E$.

\begin{definition}\label{MSF def}
Let $E\subseteq \T^n$ be a smooth set. We say that a smooth flow $E_t$ such that $E_0=E$, is a {\em modified Mullins--Sekerka flow with parameter $\gamma\geq0$}, on the time interval $[0, T)$ and with initial datum $E$, if the outer normal velocity $V_t$ of the moving boundaries $\pa E_t$ is given by
\begin{equation}\label{MSnl}
V_t= [\partial_{\nu_t} w_{t}] \quad\text{ on $\partial E_t$ for all $t\in [0, T)$,}
\end{equation}
where $w_t=w_{E_t}$ (with the above definitions) and we used the simplified notation 
$\partial_{\nu_t} w_{t}$ in place of $\partial_{\nu_{E_t}} w_{E_t}$. 
\end{definition}

\begin{remark}
The adjective ``modified'' comes from the introduction of the parameter $\gamma \ge 0$ in the problem, while considering $\gamma=0$ we have the original flow proposed by Mullins and Sekerka in~\cite{MS} (see also~\cite{Crank,Gurtin1}), which has been also called {\em Hele--Shaw model}~\cite{XChen}, or {\em Hele--Shaw model with surface tension}~\cite{EscherSi1,EscherSi2,EscherSi3}, which arises as a singular limit of a nonlocal version of the Cahn--Hilliard equation~\cite{alikakos,pego,Le}, to describe phase separation in diblock copolymer melts (see also~\cite{OK}). 
\end{remark}
Parametrizing the smooth hypersurfaces $M_t=\partial E_t$ of $\T^n$ by some smooth embeddings $\psi_t:M\to\T^n$ such that $\psi_t(M)=\partial E_t$ (here $M$ is a fixed smooth differentiable $(n-1)$--dimensional manifold and the map $(t,p)\mapsto\psi(t,p)=\psi_t(p)$ is smooth), the geometric evolution law~\eqref{MSnl} can be expressed equivalently as
\begin{equation}\label{msf2perp}
\Bigl\langle\frac{\partial\psi_t}{\partial t}\,\Bigl\vert\,\nu_t\Bigr\rangle=[\pa_{\nu_t} w_t],
\end{equation}
where we denoted by $\nu_t$ the outer unit normal to $M_t=\pa E_t$.\\
Moreover, as the moving hypersurfaces $M_t=\pa E_t$ are compact, it is always possible to smoothly reparametrize them with maps (that we still call) $\psi_t$ such that 
\begin{equation}\label{msf2}
\frac{\partial\psi_t}{\partial t}=[\pa_{\nu_t} w_t]\nu_t\,,
\end{equation}
in describing such flow. This follows by the {\em invariance by tangential perturbations of the velocity}, shared by the flow due to its geometric nature and can be proved following the line in Section~1.3 of~\cite{Man}, where the analogous property is shown in full detail for the (more famous) mean curvature flow. Roughly speaking, the tangential component of the velocity of the points of the moving hypersurfaces, does not affect the global ``shape'' during the motion.

Like the nonlocal Area functional $J$ (see Definition~\ref{NAFdef}), the flow is obviously invariant by translations, or more generally under any isometry of $\T^n$ (or $\R^n$). Moreover, if $\psi:[0,T)\times M\to\T^n$ is a modified Mullins--Sekerka flow of hypersurfaces, in the sense of equation~\eqref{msf2perp} and $\Phi:[0,T)\times M\to M$ is a time--dependent family of smooth diffeomorphisms of $M$, then it is easy to check that the reparametrization $\widetilde{\psi}:[0,T)\times M\to\T^n$ defined as $\widetilde{\psi}(t,p)=\psi(t,\Phi(t,p))$ is still a modified Mullins--Sekerka flow (again in the sense of equation~\eqref{msf2perp}). This property can be reread as ``the flow is invariant under reparametrization'', suggesting that the really relevant objects are actually the subsets $M_t=\psi_t(M)$ of $\T^n$.

We show now that the volume of the sets $E_t$ is preserved during the evolution. We remark that instead, other geometric properties shared for instance by the mean curvature flow (see~\cite[Chapter~2]{Man}), like convexity are not necessarily maintained (see~\cite{Conv}), neither there holds the so--called ``comparison property'' asserting that if two initial sets are one contained in the other, they stay so during the two respective flows.

This volume--preserving property can be easily proved, arguing as in the
computation leading to equation~\eqref{eqc1000}. Indeed, if $E_t=\Psi_t(F)$ is a modified Mullins--Sekerka flow, described by $\Psi\in C^{\infty}([0,T) \times \T^n; \T^n)$, with an associated smooth vector field $X_t$ as above, we have 
\begin{align}\label{volumepreserving}
\frac{d}{dt} \vol( E_t)=&\,\int_F\frac{\partial}{\partial t} J\Psi_t(y)\,dy=\int_F\Div X_t(\Psi(t,y))J\Psi(t,y)\,dy\\
=&\,\int_{ E_t} \Div X_t(x)\,dx=\int_{\partial E_t} \langle X_t\vert\nu_t\rangle \, d\mu_t=\int_{\pa E_t}V_t\, \dmu_t\\
=&\, \int_{\pa E_t} [ \pa_{\nu_t} w_t ] \, \dmu_t = \int_{\pa E_t} \bigl(\pa_{\nu_t} w_t^+ - \pa_{\nu_t} w_t^- \bigr)\, \dmu_t= 0\,,
\end{align}
where the last equality follows from the divergence theorem and the fact that $w_t$ is harmonic in $\T^n \setminus \pa E_t$.

Another important property of the modified Mullins--Sekerka flow is that it can be regarded as the $H^{-1/2}$--gradient flow of the functional $J$ under the constraint that the volume is fixed, that is, the outer normal velocity $V_t$ is minus such $H^{-1/2}$--gradient of the functional $J$ (see~\cite{Le}).\\
For any smooth set $E \subseteq \T^n$, we let the space $\widetilde{H}^{-1/2}(\pa E)\subseteq L^{2}(\pa E)$ to be the dual of $\widetilde{H}^{1/2}(\pa E)$ (the functions in $H^{1/2}(\pa E)$ with zero integral) with the Gagliardo $H^{1/2}$--seminorm  (see~\cite{AdamsFournier,Dem,NePaVa,RuSi}, for instance)
$$
\Vert u\Vert_{\widetilde{H}^{1/2}(\pa E)}^2=[u]_{H^{1/2}(\pa E)}^2=\int_{\pa E}\int_{\pa E}\frac{\vert u(x)-u(y)\vert^2}{\vert x-y\vert^{n+1}}\,d\mu(x) d\mu(y)
$$
(it is a norm for $\widetilde{H}^{1/2}(\pa E)$ since the functions in it have zero integral) and the pairing between $\widetilde{H}^{1/2}(\pa E)$ and $\widetilde{H}^{-1/2}(\pa E)$ simply being the integral of the product of the functions on $\pa E$.\\
We define the linear operator $\Delta_{\pa E}$ on the smooth functions $u$ with zero integral on $\pa E$ as follows: we consider the unique smooth solution $w$ of the problem
\begin{equation}
\begin{cases}
\Delta w=0 & \text{in } \T^n\setminus\pa E\\
w=u & \text{on } \, \partial E
\end{cases}
\end{equation}
and we denote by $w^+$ and $w^-$ the restrictions $w|_{E^c}$ and $w|_E$, respectively, then we 
set 
$$
\Delta_{\pa E}u=\pa_{\nu}w^+-\pa_{\nu}w^-= [\pa_{\nu} w]\,,
$$
which is another smooth function on $\pa E$ with zero integral. Then, we have
$$
\int_{\T^n}\vert\nabla w\vert^2\,dx=\int_{E\cup E^c}\Div(w\nabla w)\,dx=-\int_{\pa E}u\Delta_{\pa E}u\,d\mu
$$
and such quantity turns out to be a norm equivalent to the one given by the Gagliardo seminorm on $\widetilde{H}^{1/2}(\pa E)$ above (this is related to the theory of trace spaces for which we refer to~\cite{AdamsFournier, Gagliardo}), see~\cite{Le}. Hence, it induces the dual norm 
\begin{equation}
\Vert v \Vert^2_{\widetilde{H}^{-1/2}(\pa E)}=\int_{\pa E} v (- \Delta_{\pa E} )^{-1} v \, \dmu
\end{equation}
for every smooth function $v\in\widetilde{H}^{-1/2}(\pa E)$. By polarization, we have the $\widetilde{H}^{-1/2}(\pa E)$--scalar product between a pair of smooth functions $u,v:\pa E\to\R$ with zero integral, 
\begin{equation}
\langle u|v \rangle_{\widetilde{H}^{-1/2}(\pa E)}=\int_{\pa E} u (- \Delta_{\pa E} )^{-1} v \, \dmu\,.
\end{equation}
This scalar product, extended to the whole space $\widetilde{H}^{-1/2}(\pa E)$, makes it a Hilbert space (see~\cite{Gar}), hence, by {\em Riesz representation theorem}, there exists a function $\nabla_{\pa E}^{\widetilde{H}^{-1/2}}\!\!J\in\widetilde{H}^{-1/2}(\pa E)$ such that, for every smooth function $v\in\Htilde^{-1/2}(\pa E)$, there holds
\begin{equation}\label{gradient}
\int_{\pa E} v (\HHH + 4 \gamma v_E) \, \dmu =\delta J_{\pa E}(v)= \langle v|\nabla_{\pa E}^{\widetilde{H}^{-1/2}}\!\!J\rangle_{\widetilde{H}^{-1/2} (\pa E)} = \int_{\pa E} v(- \Delta_{\pa E})^{-1} \nabla_{\pa E}^{\widetilde{H}^{-1/2}}\!\!J\, \dmu \,,
\end{equation}
by Theorem~\ref{first var}, where $v_E$ is the potential introduced in~\eqref{potential} and $\HHH$ is the mean curvature of $\pa E$.\\
Then, by the {\em fundamental lemma of calculus of variations}, we conclude 
$$
(-\Delta_{\pa E})^{-1}\nabla_{\pa E}^{\widetilde{H}^{-1/2}}\!\!J=\HHH+ 4 \gamma v_E+c\,,
$$
for a constant $c\in\R$, that is, recalling the definition of $w_E$ in problem~\eqref{WE} and of the operator $\Delta_{\pa E}$ above,
$$
\nabla_{\pa E}^{\widetilde{H}^{-1/2}}\!\!J=-\Delta_{\pa E}(\HHH+ 4 \gamma v_E)= -[\pa_{\nu_E} w_E]\,.
$$
It clearly follows that the outer normal velocity of the moving boundaries $V_t=[\pa_{\nu_t} w_t]$ is minus the $\widetilde{H}^{-1/2}$--gradient of the volume--constrained functional $J$.

\medskip

We deal now with the {\em surface diffusion flow}.

\begin{definition}\label{def:SDsol}
Let $E\subseteq \T^n$ be a smooth set. We say that a smooth flow $E_t=\Phi_t(E)$, for $t \in[0, T)$, with $E_0=E$, is a {\em surface diffusion flow} starting from $E$ if the outer normal velocity $V_t$ of the moving boundaries $\pa E_t$ is given by
\begin{equation}\label{SD}
V_t= \Delta_t\HHH_t \quad\text{ for all $t\in[0, T)$}
\end{equation}
where $\Delta_t$ is the (rough) Laplacian associated to the hypersurface $\pa E_t$, with the Riemannian metric induced by $\T^n$ (that is, by $\R^n$).
\end{definition}

Such flow was first proposed by Mullins in~\cite{Mullins} to study thermal grooving in material sciences and first analyzed mathematically more in detail in~\cite{escmaysim}. In particular, in the physically relevant case of three--dimensional space, it describes the evolution of interfaces between solid phases of a system, driven by surface diffusion of atoms under the action of a chemical potential (see for instance~\cite{GurJab}).

With the same argument used for the modified Mullins--Sekerka flow, representing the smooth hypersurfaces $\pa E_t$ in $\T^n$ with a family of smooth embeddings $\psi_t:M\to \T^n$, we can describe the flow as 
\begin{equation}\label{sdf2perp}
\Bigl\langle\frac{\partial\psi_t}{\partial t} \Bigl\vert  \nu_t\Bigr\rangle=\Delta_t\HHH_t\,
\end{equation}
and also simply as
\begin{equation}\label{sdf2}
\frac{\partial\psi_t}{\partial t}=(\Delta_t\HHH_t)\nu_t\,.
\end{equation}

\begin{remark}
This is actually the more standard way to define the surface diffusion flow, in the more general situation of smooth and possibly {\em immersed--only} hypersurfaces (usually in $\R^n$), without being the boundary of any set.
\end{remark}

By means of equation~\eqref{lap}, the system~\eqref{sdf2} can be rewritten as
\begin{equation}\label{paraeq}
\frac{\partial\psi_t}{\partial t}=-\Delta_t\Delta_t\psi_t+\text{ lower order terms}
\end{equation}
and it can be seen that it is a fourth order, {\em quasilinear} and {\em degenerate}, parabolic system of PDEs. Indeed, it is quasilinear, as the coefficients (as second order partial differential operator) of the Laplacian associated to the induced metrics $g_t$ on the evolving hypersurfaces, that is, 
$$
\Delta_t\psi_t(p)=\Delta_{g_t(p)}\psi_t(p)=g^{ij}_t(p)\nabla^{g_t(p)}_i\nabla^{g_t(p)}_j\psi_t(p)
$$
depend on the first order  derivatives of $\psi_t$, as $g_t$ (and the coefficient of $\Delta_t\Delta_t$ on the third order derivatives). Moreover, the operator at the right hand side of system~\eqref{sdf2} is degenerate, as its symbol (the symbol of the linearized operator) admits zero eigenvalues due to the invariance of the Laplacian by diffeomorphisms.

Arguing as in computation~\eqref{volumepreserving}, using the equation~\eqref{SD} in place of~\eqref{MSnl}, it can be seen that also the surface diffusion flow of boundaries of sets is volume--preserving. Moreover, analogously to the modified Mullins--Sekerka flow (see the discussion above), it does not preserve convexity (see~\cite{Ito}), nor the embeddedness (in the ``stand--alone'' formulation of motion of hypersurfaces, as in formula~\eqref{sdf2}, see~\cite{gigaito1}), indeed it also does not have a ``comparison principle'', while it is invariant by isometries of $\T^n$, reparametrizations and tangential perturbations of the velocity of the motion. In addition, it can be regarded as the $\widetilde{H}^{-1}$--gradient flow of the volume--constrained Area functional, in the following sense (see~\cite{Gar}, for instance). For a smooth set $E \subseteq \T^n$, we let the space $\widetilde{H}^{-1}(\pa E)\subseteq L^{2}(\pa E)$ to be the dual of $\widetilde{H}^1(\pa E)$  with the norm $\Vert u\Vert_{\widetilde{H}^1(\pa E)}=\int_{\pa E}\vert\nabla u\vert^2\,d\mu$ and the pairing between $\widetilde{H}^1(\pa E)$ and  $\widetilde{H}^{-1}(\pa E)$ simply being the integral of the product of the functions on $\pa E$.\\
Then, it follows easily that the norm of a smooth function $v\in\widetilde{H}^{-1}(\pa E)$ is given by 
\begin{equation}
\Vert v \Vert^2_{\widetilde{H}^{-1}(\pa E)}= \int_{\pa E} v (- \Delta )^{-1} v \, \dmu 
= \int_{\pa E} \langle \nabla(- \Delta )^{-1} v \vert   \nabla(- \Delta )^{-1} v \rangle \, \dmu
\end{equation}
and, by polarization, we have the $\widetilde{H}^{-1}(\pa E)$--scalar product between a pair of smooth functions $u,v:\pa E\to\R$ with zero integral, 
\begin{equation}
\langle u\vert  v \rangle_{\widetilde{H}^{-1}(\pa E)}= \int_{\pa E} \langle \nabla(- \Delta )^{-1} u \vert   \nabla(- \Delta )^{-1} v \rangle \, \dmu=
\int_{\pa E} u (- \Delta )^{-1} v \, \dmu\,,
\end{equation}
integrating by parts.\\
This scalar product, extended to the whole space $\widetilde{H}^{-1}(\pa E)$, make it a Hilbert space, hence, by {\em Riesz representation theorem}, there exists a function $\nabla_{\pa E}^{\widetilde{H}^{-1}}\!\!\A\in\widetilde{H}^{-1}(\pa E)$ such that, for every smooth function $v\in\widetilde{H}^{-1}(\pa E)$, there holds
$$
\int_{\pa E} v\HHH \, \dmu =\delta\A_{\pa E}(v)=  \langle v \vert  \nabla_{\pa E}^{\widetilde{H}^{-1}}\!\!\A\rangle_{\widetilde{H}^{-1} (\pa E)} =  \int_{\pa E} v(- \Delta)^{-1}  \nabla_{\pa E}^{\widetilde{H}^{-1}}\!\!\A\, \dmu \,,
$$
by Theorem~\ref{first var} (with $\gamma = 0$).\\
Then, by the {\em fundamental lemma of calculus of variations}, we conclude 
$$
(-\Delta)^{-1}\nabla_{\pa E}^{\widetilde{H}^{-1}}\!\!\A=\HHH+c\,,
$$
for a constant $c\in\R$, that is,
$$
\nabla_{\pa E}^{\widetilde{H}^{-1}}\!\!\A= - \Delta\HHH\,.
$$
It clearly follows that the outer normal velocity of the moving boundaries of a surface diffusion flow $V_t=\Delta_t\HHH_t$ is minus the $\widetilde{H}^{-1}$--gradient of the volume--constrained functional $\A$.

\begin{remark}\label{localcarlo}
It is interesting to notice that the ({\em unmodified}, that is, with $\gamma=0$) Mullins--Sekerka flow is the $H^{-1/2}$--gradient flow and the surface diffusion flow the $H^{-1}$--gradient flow of the Area functional on the boundary of the sets, {\em under a volume constraint}, while considering the {\em unconstrained} Area functional, its $L^2$--gradient flow is the mean curvature flow.

It follows that, in a way, the unmodified Mullins--Sekerka flow, representing the moving hypersurfaces as of smooth embeddings $\psi_t:M\to \T^n$, can be described as
\begin{equation}\label{sdf3}
\frac{\partial\psi_t}{\partial t}=(\Delta_t^{1/2}\HHH_t)\nu_t=-\Delta_t^{3/2}\psi_t+\text{ lower order terms,}
\end{equation}
showing its parabolic nature (differently by the surface diffusion flow, in this case the equation is {\em nonlocal}, due to the fractional Laplacian involved, even if the functional is still simply the Area, hence implying that the flow depends only on the hypersurface) -- again quasilinear and degenerate -- and suggesting the problem of analyzing (and eventually generalizing the existing results) the nonlocal evolutions of hypersurfaces given by the laws
\begin{equation}\label{sdf3*}
\frac{\partial\psi_t}{\partial t}=(\Delta_t^s\HHH_t)\nu_t=-\Delta_t^{s+1}\psi_t+\text{ lower order terms,}
\end{equation}
when $s>0$, arising from considering, as above, the $H^{-s}$--gradient of the Area functional on the boundary of the sets (under a volume constraint).\\
Up to our knowledge, these flows are not present in literature and it would be also interesting to compare them to the {\em fractional mean curvature flows} arising considering the gradient flows associated to the {\em fractional Area functionals} on the boundary of a set (in this case such functionals are ``strongly'' nonlocal), see~\cite{Imbert,JLamanna} and references therein, for instance.
\end{remark}

\subsection{Short time existence}\ \vskip.3em

To state the short time existence and uniqueness results for the two flows, we give the following definition which is actually fundamental for the discussion of the global existence in the next section.

\begin{definition}
Given a smooth set $E\subseteq\T^n$ and a tubular neighborhood $N_\eps$ of $\pa E$, as in formula~\eqref{tubdef}, for any $M\in(0,\eps/2)$ (recall the discussion in Subsection~\ref{stabsec} about the notion of ``closedness'' of sets), we denote by $\mathfrak{C}^1_M(E)$, the class of all smooth sets $F\subseteq E\cup N_\eps$ such that $\vol(F\triangle E)\leq M$ and  
\begin{equation}\label{front}
\pa F=\{x+\psi_F(x)\nu_{E}(x):\, x\in \pa E \}\,,
\end{equation}
for some $\psi_F\in C^\infty(\pa E)$, with $\norma{\psi_F}_{C^1(\pa E)}\leq M$ (hence, $\pa F\subseteq N_\eps$). For every $k\in\N$ and $\alpha\in (0,1)$, we also denote by $\mathfrak{C}^{k,\alpha}_M(E)$ the collection of sets $F\in \mathfrak{C}^1_M(E)$ such that $\norma{\psi_F}_{C^{k,\alpha}(\pa E)}\leq M$.
\end{definition}

The following existence/uniqueness theorem of classical solutions for the modified Mullins--Sekerka flow was proved by Escher and Simonett~\cite{EscherSi1,EscherSi2,EscherSi3} and independently by Chen, Hong and Yi~\cite{chenhong} (see also~\cite{EsNi}). The original version deals with the flow in domains of $\R^n$, but it can be easily adapted to hold also when the ambient is the flat torus $\T^n$.

\begin{thm}\label{th:EscNis}
Let $E\subseteq\T^n$ be a smooth set and $N_\varepsilon$ a tubular neighborhood of $\pa E$, as in formula~\eqref{tubdef}. Then, for every $\alpha\in (0,1)$ and $M\in(0,\eps/2)$ small enough, there exists $T=T(E,M,\alpha)>0$ such that if $E_0\in \mathfrak{C}^{2,\alpha}_M(E)$ there exists a unique smooth modified Mullins--Sekerka flow with parameter $\gamma\geq0$, starting from $E_0$, in the time interval $[0, T)$.
\end{thm}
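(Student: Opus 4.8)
The plan is to reduce the geometric evolution to a scalar third--order quasilinear parabolic equation for a height function over $\pa E$ and then to invoke the abstract theory of quasilinear parabolic problems in function spaces enjoying maximal regularity. First I would fix a small neighborhood of $0$ in the little H\"older space $h^{2,\alpha}(\pa E)$ and, for $\rho$ in it, represent the moving hypersurface as $\pa E_\rho=\{x+\rho(x)\nu_E(x):x\in\pa E\}\subseteq N_\eps$; using the invariance of the flow under tangential reparametrizations (the discussion around formula~\eqref{msf2}) I would look for the solution in the gauge in which the boundaries travel along $\nu_E$ only, so that the flow is entirely encoded by $t\mapsto\rho(t,\cdot)$ with $\pa_t\rho=V_t/\langle\nu_{E_\rho}|\nu_E\rangle$ on $\pa E$. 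For each admissible $\rho$ the velocity $V=[\pa_\nu w]$ is produced by solving the transmission problem $\Delta w=0$ in $\T^n\setminus\pa E_\rho$ with $w=\HHH_\rho+4\gamma v_{E_\rho}$ on $\pa E_\rho$ and taking the jump of the normal derivative; pulling everything back to $\pa E$ one writes the equation as $\pa_t\rho=-\mathcal A(\rho)\rho+\mathcal F(\rho)$, where $\mathcal F$ gathers lower order terms (including the contribution of the potential $v_{E_\rho}$, which solves~\eqref{potential} and depends on $\rho$ only through a smoothing, hence compact, operator) and $\mathcal A(\rho)$ is, modulo lower order, the composition of the Dirichlet--to--Neumann operator of $\pa E_\rho$ with $-\Delta_{\pa E}$ (the linearization of the mean curvature); thus $\mathcal A(\rho)$ is an elliptic, quasilinear, \emph{nonlocal} pseudodifferential operator of order three whose principal symbol is comparable to $|\xi|^3$.

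The analytic heart of the proof is to establish that $-\mathcal A(0)$ generates a strongly continuous analytic semigroup on $h^{\alpha}(\pa E)$ with domain $h^{3+\alpha}(\pa E)$ and, more to the point, that the linearized problem possesses continuous maximal regularity on this scale. I would deduce this from the pseudodifferential analysis of Dirichlet--to--Neumann operators (smooth dependence of the symbol on the coefficients and positivity of the principal part) together with elliptic Schauder estimates for the transmission problem with $h^{2,\alpha}$ interface --- this is exactly the step carried out by Escher--Simonett and by Chen--Hong--Yi in~\cite{EscherSi1,EscherSi2,EscherSi3,chenhong}, which I would cite. Granting it, the flow becomes an abstract quasilinear evolution equation $\dot\rho=-\mathcal A(\rho)\rho+\mathcal F(\rho)$, $\rho(0)=\psi_{E_0}$, which I would solve by a contraction mapping argument in the maximal--regularity space of paths on a short interval $[0,T]$ (following Da Prato--Grisvard, Angenent and Cl\'ement--Simonett). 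Since $\psi_{E_0}$ stays in a bounded subset of $h^{2,\alpha}(\pa E)$ as $E_0$ ranges over $\mathfrak{C}^{2,\alpha}_M(E)$ and all the structural constants depend continuously on $E$ and $M$, the existence time $T=T(E,M,\alpha)>0$ can be chosen uniformly over the class.

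Finally I would upgrade the solution to a smooth one and prove uniqueness. For smoothness up to $t=0$ I would use the analyticity of the semigroup together with Angenent's parameter trick: differentiating the equation in time and exploiting the smooth dependence of $\mathcal A$ and $\mathcal F$ on $\rho$ (and on a scaling parameter) gives regularity in $t$, and elliptic regularity for the transmission problem then promotes it to joint smoothness in $(t,x)$; since the initial datum is smooth, no loss occurs at $t=0$. Uniqueness in the class of normal graphs over $\pa E$ is built into the contraction. For a general smooth modified Mullins--Sekerka flow emanating from $E_0$, I would observe that for short times its boundaries remain inside $N_\eps$ and are graphs $\pa E_t=\{x+\rho(t,x)\nu_E(x)\}$; using reparametrization invariance once more, $\rho$ solves the same scalar equation, so the two flows agree as families of sets. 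The same scheme works verbatim for $\gamma=0$: the term $4\gamma v_{E_\rho}$ is of lower order and does not affect the principal part, which is why the ``modified'' flow has the same short time theory as the original one.

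The step I expect to be the main obstacle is the one in the second paragraph. Because the Dirichlet--to--Neumann operator is nonlocal of order one and it is composed with a second order operator, $\mathcal A(\rho)$ is a genuine nonlocal third order pseudodifferential operator; establishing its generation and continuous maximal regularity properties in little H\"older spaces --- with the dependence on $\rho$ controlled well enough for the quasilinear fixed point to close --- is substantially more delicate than for a local parabolic equation, and is precisely the technical content of the cited works.
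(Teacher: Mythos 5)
Your proposal accurately sketches the strategy of Escher--Simonett and Chen--Hong--Yi, which is precisely what the paper invokes for Theorem~\ref{th:EscNis}: the paper does not give its own proof but explicitly cites~\cite{EscherSi1,EscherSi2,EscherSi3,chenhong} (and~\cite{EsNi}), noting only that the argument adapts from domains of $\R^n$ to $\T^n$. Your reduction to a third--order quasilinear nonlocal equation for the height function, the identification of the principal part as (Dirichlet--to--Neumann)\,$\circ$\,(linearized mean curvature), the use of continuous maximal regularity in little H\"older spaces \`a la Da Prato--Grisvard/Cl\'ement--Simonett, the observation that the $4\gamma v_E$ term is of lower order, and the Angenent parameter trick for smoothness are all faithful to that cited route, so this is the same approach, not a new one.
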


We now state the analogous result (and also of dependence on the initial data) for the surface diffusion flow starting from a smooth hypersurface, proved by Escher, Mayer and Simonett in~\cite{escmaysim}, which should be expected by the explicit parabolic nature of the system~\eqref{sdf2}, as shown by the formula~\eqref{paraeq}. As before, it deals with the evolution in the whole space $\R^n$ of a generic hypersurface, even only immersed, hence possibly with self--intersections. It is then straightforward to adapt the same arguments to our case, when the ambient is the flat torus $\T^n$ and the hypersurfaces are the boundaries of the sets $E_t$, as in Definition~\ref{def:SDsol}, getting a (unique) surface diffusion flow in a positive time interval $[0,T)$, for every initial smooth set $E_0\subseteq\T^n$.

\begin{thm}\label{th:EMS0}
Let $\psi_0:M\to\R^n$ be a smooth and compact, immersed hypersurface. Then, there exists a unique smooth surface diffusion flow $\psi:[0, T)\times M\to\R^n$, starting from $M_0=\psi_0(M)$ and solving system~\eqref{sdf2}, for some maximal time of existence $T>0$.\\
Moreover, such flow and the maximal time of existence depend continuously on the $C^{2,\alpha}$ norm of the initial hypersurface.
\end{thm}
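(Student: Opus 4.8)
The plan is to follow the classical route for parabolic geometric flows of hypersurfaces: reduce the degenerate fourth order system~\eqref{sdf2} to a genuinely parabolic scalar equation by a graph parametrization, solve the latter, and then transfer existence, uniqueness and continuous dependence back to the geometric flow. First I would use the compactness of $M$ to write every immersed hypersurface $C^{2,\alpha}$--close to $M_0=\psi_0(M)$ in the form $\psi^u=\psi_0+u\,\nu_0$ for a function $u\in C^{2,\alpha}(M)$ of small norm, where $\nu_0$ is a local unit normal along $\psi_0$ (working on the normal bundle of $\psi_0$, so that no ambient embeddedness is required and the merely immersed case is covered). By the invariance of the evolution under tangential reparametrizations of the velocity (the discussion after~\eqref{sdf2}), it suffices to produce a one--parameter family of hypersurfaces $M_t$ moving with normal velocity $\Delta_t\HHH_t$, and I would look for them in the form $M_t=\psi^{u(t,\cdot)}(M)$; imposing that the component of $\partial_t\psi^u=(\partial_t u)\nu_0$ along $\nu_t$ equals $\Delta_t\HHH_t$ yields the scalar Cauchy problem
\[
\partial_t u=\langle\nu_0|\nu_t\rangle^{-1}\,\Delta_{g(u)}\HHH(u)=:F(u),\qquad u(0,\cdot)=0,
\]
where $g(u)$, $\HHH(u)$ and $\nu_t$ denote the induced metric, mean curvature and unit normal of $\psi^u$; the factor $\langle\nu_0|\nu_t\rangle$ is close to $1$, hence harmless, and this normal graph reduction is precisely what removes the diffeomorphism degeneracy of the full system noted after~\eqref{paraeq}. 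A solution of~\eqref{sdf2} in the strict sense is then recovered from $u$ by a suitable time--dependent reparametrization of $M$.

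Next I would check that $F$ is a quasilinear fourth order operator whose linearization at any small $u$ has principal part $-\Delta_{g(u)}^2$, so that $\partial_t u=F(u)$ is uniformly parabolic of fourth order. Then the linearization $A_0$ of $F$ at $u=0$ generates an analytic semigroup and enjoys (continuous, or $L^p$--) maximal regularity on a suitable scale of function spaces on $M$ — e.g. little H\"older spaces interpolating between $C^{0,\alpha}(M)$ and $C^{4,\alpha}(M)$, or Sobolev spaces $W^{s,p}(M)$ — and I would solve the quasilinear problem by a contraction mapping argument in the corresponding maximal regularity class on a short time interval $[0,T)$, obtaining a unique $u$; since the coefficients of $F$ are smooth functions of $u$ and its derivatives, parabolic bootstrapping gives $u\in C^\infty((0,T)\times M)$, hence a smooth surface diffusion flow. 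This is essentially the argument of Escher, Mayer and Simonett~\cite{escmaysim}, relying on maximal regularity theory for fully nonlinear parabolic equations.

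For the uniqueness of the geometric flow, given another smooth surface diffusion flow $\widetilde\psi$ starting from $M_0$, for short time its images remain $C^{2,\alpha}$--close to $M_0$; reparametrizing $\widetilde\psi$ by a time--dependent family of diffeomorphisms of $M$ so that its velocity becomes purely normal (always possible for compact hypersurfaces, along the lines of Section~1.3 of~\cite{Man}) one writes it as $\psi_0+\widetilde u\,\nu_0$ with $\widetilde u$ solving the same scalar Cauchy problem, so $\widetilde u=u$ by the uniqueness above and the two flows parametrize the same hypersurfaces $M_t$. Finally, continuous dependence follows because $\nu_0$, the coefficients of $F$ and the (zero) initial datum all depend continuously on $\psi_0$ in the $C^{2,\alpha}$ topology, and the contraction constants in the fixed point argument can be taken locally uniform: the solution map $\psi_0\mapsto(\psi,T)$ is then continuous with respect to the $C^{2,\alpha}$ norm of the initial hypersurface, the maximal existence time $T$ depending (lower semi)continuously on it. The torus formulation of Definition~\ref{def:SDsol}, with $\R^n$ replaced by $\T^n$ and $M_t$ by $\partial E_t$, is obtained verbatim, since the whole construction is local near $M_0$.

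The hard part is precisely the passage from the geometric to the analytic problem: explicitly computing the linearization of $u\mapsto\Delta_{g(u)}\HHH(u)$ and verifying that its symbol equals $-|\xi|^4$ up to a positive factor — i.e. genuine fourth order uniform parabolicity — and then selecting and checking the correct maximal regularity framework for a fourth order quasilinear Cauchy problem; the diffeomorphism degeneracy of~\eqref{sdf2}, which we circumvent through the normal graph reduction, is the conceptual obstruction behind this step.
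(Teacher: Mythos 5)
The paper does not prove Theorem~\ref{th:EMS0}; it is stated as a known result of Escher, Mayer and Simonett~\cite{escmaysim} and cited without proof, with only the heuristic remark that the parabolic form~\eqref{paraeq} makes it ``expected''. Your proposal is therefore not competing with a proof in the paper, but it is a correct reconstruction of the strategy actually used in~\cite{escmaysim}: normal--graph parametrization $\psi^u=\psi_0+u\,\nu_0$ over the initial immersion (which removes the diffeomorphism degeneracy of~\eqref{sdf2}), identification of the resulting scalar Cauchy problem $\partial_t u=\langle\nu_0|\nu_t\rangle^{-1}\Delta_{g(u)}\HHH(u)$ as a quasilinear uniformly parabolic fourth--order equation with principal symbol $-|\xi|^4$, short--time existence and uniqueness via analytic semigroup/maximal regularity theory on interpolation spaces (little H\"older, or $h^{s}$/Besov in the original), bootstrap to smoothness, and recovery of geometric uniqueness and continuous dependence through reparametrization to normal form. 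Two small points to tighten: the equation is quasilinear rather than ``fully nonlinear'' (the fourth--order derivatives of $u$ appear linearly with coefficients depending on $u$, $\nabla u$, $\nabla^2 u$), and the global normal--graph ansatz over an \emph{immersed} $M_0$ implicitly requires the normal bundle to be trivial, i.e.\ $M$ two--sided; this is what~\cite{escmaysim} assume and what holds automatically in the paper's setting where $M_t=\partial E_t$. Your parenthetical that the maximal time is only lower semicontinuous in the $C^{2,\alpha}$ data is, if anything, more careful than the loose phrasing ``depend continuously'' in the statement.
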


As an easy consequence, we have the following proposition (analogous to Theorem~\ref{th:EscNis}), better suited for our setting.

\begin{proposition}\label{th:EMS1}
Let $E\subseteq\T^n$ be a smooth set and $N_\varepsilon$ a tubular neighborhood of $\pa E$, as in formula~\eqref{tubdef}. Then, for every $\alpha\in (0,1)$ and $M\in(0,\eps/2)$ small enough, there exists $T=T(E,M,\alpha)>0$ such that if $E_0\in \mathfrak{C}^{2,\alpha}_M(E)$ there exists a unique smooth surface diffusion flow, starting from $E_0$, in the time interval $[0, T)$.
\end{proposition}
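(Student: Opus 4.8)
The plan is to obtain Proposition~\ref{th:EMS1} as an essentially verbatim transcription of Theorem~\ref{th:EMS0}, the only genuine point being that there the ambient is $\R^n$ whereas here it is the flat torus $\T^n$. First I would fix a gauge. Given $E_0\in\mathfrak{C}^{2,\alpha}_M(E)$, write $\partial E_0$ as a normal graph $y\mapsto y+\psi_{E_0}(y)\nu_E(y)$ over the fixed smooth compact reference hypersurface $\partial E$, and look for the flow in the same form, $\partial E_t=\{y+u(t,y)\nu_E(y):y\in\partial E\}$. Exploiting the invariance of the surface diffusion flow under tangential reparametrizations of the velocity (the geometric property recalled after~\eqref{sdf2}, analogous to the one discussed for the modified Mullins--Sekerka flow after~\eqref{msf2}), the geometric law~\eqref{sdf2perp} is equivalent to a scalar Cauchy problem for the height function $u:\partial E\to\R$ which, by~\eqref{paraeq}, has the quasilinear fourth--order parabolic form
\begin{equation}
\partial_t u=-a(u,\nabla u,\nabla^2 u)\,\Delta_{\partial E}^2 u+b(u,\nabla u,\nabla^2 u,\nabla^3 u),\qquad u(0,\cdot)=\psi_{E_0},
\end{equation}
where $a$ is smooth and strictly positive for $\|u\|_{C^1(\partial E)}$ small, $b$ is smooth and depends at most on third derivatives of $u$, and $\Delta_{\partial E}$ is the Laplacian of the reference hypersurface. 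All coefficients depend on the ambient metric only through its restriction to a fixed tubular neighbourhood $N_\eps$ of $\partial E$, and for $\eps$ small the flat metric of $\T^n$ on $N_\eps$ is the same explicit metric one writes on a tubular neighbourhood of a hypersurface of $\R^n$; in particular the ambient curvature, the only place where $\R^n$ versus $\T^n$ could possibly enter, vanishes in both cases (equivalently, one may cover $N_\eps$ by finitely many flat coordinate charts and patch local solutions).

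Next I would invoke the theory of quasilinear parabolic equations on compact manifolds in the little--Hölder scale (continuous maximal regularity, as in the framework of~\cite{escmaysim}): the linearization of the right--hand side at a small $u$ is $-a\,\Delta_{\partial E}^2+(\text{lower order})$, a uniformly elliptic fourth--order operator on the compact manifold $\partial E$ generating an analytic semigroup, so for initial data in a small ball of $h^{2,\alpha}(\partial E)$ one obtains a unique maximal classical solution $u$ on some $[0,T)$, together with continuous dependence of the solution and of $T$ on the initial datum in the $C^{2,\alpha}$--norm. Parabolic bootstrapping then upgrades $u$ to a smooth function on $(0,T)\times\partial E$, and smoothness up to $t=0$ follows since $\psi_{E_0}$ is smooth; hence $E_t$ is a smooth flow in the sense of Definition~\ref{def:smoothflow} (extend $y\mapsto y+u(t,y)\nu_E(y)$ to a diffeomorphism of $\T^n$, with reference set $E_0$). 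The continuous dependence yields the uniform existence time: every $E_0\in\mathfrak{C}^{2,\alpha}_M(E)$ satisfies $\|\psi_{E_0}\|_{C^{2,\alpha}(\partial E)}\le M$, so the admissible initial data range in a bounded subset of $C^{2,\alpha}(\partial E)$ and $T$ may be taken to depend only on $E$, $M$ and $\alpha$.

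Finally, for uniqueness among \emph{all} smooth surface diffusion flows starting from $E_0$ (not only those presented in the normal--graph gauge), I would argue that any such flow stays, for a short time, inside $N_\eps$, hence can be reparametrized — again by tangential invariance — as a normal graph over $\partial E$ whose height function solves the same scalar problem above; uniqueness there forces the two height functions to coincide, so the flows agree as families of sets. The step I expect to be the real obstacle is the functional--analytic one: checking that the linearized surface--diffusion operator in the fixed gauge indeed has the maximal--regularity (analytic--semigroup) property on the chosen scale — which is exactly the technical core of~\cite{escmaysim} — and verifying scrupulously that none of those estimates uses the Euclidean ambient in an essential way rather than merely a flat tubular neighbourhood of $\partial E$; everything else (bootstrapping, the uniform time, the reduction of a general smooth flow to the gauge) is routine.
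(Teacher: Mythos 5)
Your proposal is correct and takes essentially the same approach the paper implicitly intends: the paper itself gives no detailed proof of Proposition~\ref{th:EMS1}, stating only that it is a straightforward adaptation of Theorem~\ref{th:EMS0} to the flat torus, and your elaboration — reduction via tangential reparametrization invariance to a quasilinear fourth--order scalar parabolic Cauchy problem on the fixed compact reference hypersurface $\pa E$, the observation that the flat torus is locally indistinguishable from $\R^n$ on the tubular neighbourhood $N_\eps$, the appeal to the maximal--regularity / analytic--semigroup theory of~\cite{escmaysim}, parabolic bootstrapping, and the derivation of the uniform $T$ from the continuous dependence of the maximal existence time on the $C^{2,\alpha}$--norm asserted in Theorem~\ref{th:EMS0} — is precisely the content of that adaptation. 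The only minor imprecision is notational: the principal coefficient of the equation for the height function is a rank--four tensor depending on $(u,\nabla u)$ rather than a scalar multiplying $\Delta_{\pa E}^2 u$, but this does not affect the parabolicity argument and is handled correctly in the framework you cite.
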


In the same paper~\cite{escmaysim}, Escher, Mayer and Simonett also showed that if the initial set $E_0$ is in $\mathfrak{C}^{2,\alpha}_M(B)$, where $B\subseteq\R^n$ is a ball with the same volume and $M$ is small enough (that is, $E_0$ is $C^{2,\alpha}$--close to the ball $B$), then the smooth flow $E_t$ exists for every time and smoothly converges to a translate of the ball $B$.\\
The analogous result for the ({\em unmodified}, that is, with $\gamma=0$) Mullins--Sekerka flow, was proved by Escher and Simonett in~\cite{EscherSi4} (moving by their previous work~\cite{EscherSi2}), generalizing to any dimension the two dimensional case shown by Chen in~\cite{XChen}.

The next section will be devoted to present the generalization by Acerbi, Fusco, Julin and Morini in~\cite{AcFuMoJu} (in dimensions two and three) of this stability result for the surface diffusion and modified Mullins--Sekerka flow, to every strictly stable critical set (as it is every ball for the Area functional under a volume constraint, by direct check -- see the last section).

We conclude mentioning another interesting result by Elliott and Garcke~\cite{EllGar} (which is not present in literature for the modified Mullins--Sekerka flow, up to our knowledge) is that if the initial curve $E_0$ in $\R^2$ of the surface diffusion flow is closed to a circle, then the flow $E_t$ exists for all times and converges, up to translations, to a circle in the plane with the same volume. This is clearly related to the fact that the unique bounded strictly stable critical sets for the Area functional under a volume constraint in the plane $\R^2$ are the disks (see the last section).

\section{Global existence and asymptotic behavior around a strictly stable critical set}\label{globalex}

In this section we show the proof by Acerbi, Fusco, Julin and Morini in~\cite{AcFuMoJu}, in dimensions two and three of the toric ambient, that if the ``initial'' sets is ``close enough'' to a strictly stable critical set of the respectively relative functional, then the surface diffusion and the modified Mullins--Sekerka flow exist for all times and smoothly converge to a translate of $E$. Heuristically, this shows that a strictly stable critical set is in a way like the equilibrium configuration of a system at the bottom of a potential well ``attracting'' the close enough smooth sets.

We will deal here with the (more difficult) case of dimension three. When the dimension is two, the ``exponents'' in the functional spaces involved in the estimates (in particular the ones in the interpolation inequalities, which are very dimension--dependent) change but the same proof still works (roughly speaking, we have the necessary ``compactness'' of the sequences of hypersurfaces -- see Lemma~\ref{w52conv} and~\ref{w32conv}), modifying suitably the statements. If the dimension of the toric ambient is larger than three, the analogous (mostly, interpolation) estimates are too weak to conclude and this proof does not work. It is indeed a challenging open problem to extend these results to such higher dimensions.
    
For both flows, we will have a subsection with the necessary technical lemmas and then one with the proof of the main theorem. Moreover, for the modified Mullins--Sekerka flow, we also briefly discuss the ``Neumann case'', in Subsection~\ref{Neucase}.

\subsection{The modified Mullins--Sekerka flow -- Preliminary lemmas}\ \vskip.3em

In order to simplify the notation, for a smooth set $E_t\subseteq\T^n$ we will write $\nu_t$ and $\partial_{\nu_t}$ in place of  $\nu_{E_t}$ and $\partial_{\nu_{E_t}}$, $w_t$ for the function $w_{E_t}\in H^1(\T^n)$ uniquely defined by problem~\eqref{WE}. Moreover, we will also denote with $v_t$ the smooth potential function $v_{E_t}$ associated to $E_t$ by formula~\eqref{potential}.

\smallskip

We start with the following lemma holding in all dimensions.

\begin{lem}[Energy identities] \label{calculations}
Let $E_t\subseteq\T^n$ be a modified Mullins--Sekerka flow as in Definition~\eqref{MSF def}. Then, the following 
identities hold:
\begin{equation}
\label{der of J}
\frac{d}{dt} J(E_t) = - \int_{\T^n} |\nabla w_t|^2\, dx\,,
\end{equation}
and
\begin{equation}
\label{der of dw}
\frac{d}{dt}\,\frac{1}{2} \int_{\T^n} |\nabla w_t|^2\, dx= -\Pi_{E_t}\bigl([\pa_{\nu_t}w_t\vphantom{^{^4}}]\bigr)
+ \frac{1}{2}\int_{\partial E_t} \bigl(\partial_{\nu_t} w^+_t+ \partial_{\nu_t} w_t^-\bigr) [\partial_{\nu_t} w_t]^2 \, d \mu_t \,,
\end{equation}
where $\Pi_{E_t}$ is the quadratic form defined in formula~\eqref{Pieq}.
\end{lem}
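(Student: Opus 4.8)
The two identities are both obtained by differentiating an energy quantity along the flow and then substituting the geometric evolution law $V_t=[\partial_{\nu_t}w_t]$ together with the harmonicity of $w_t$. For the first identity, I would start from the first variation formula of Theorem~\ref{first var}: since $E_t=\Psi_t(F)$ is a smooth flow with normal velocity $V_t=\langle X_t\vert\nu_t\rangle$, the chain rule gives $\frac{d}{dt}J(E_t)=\int_{\partial E_t}(\HHH_t+4\gamma v_t)\,V_t\,d\mu_t$ (the tangential part of $X_t$ contributes nothing, exactly as in the proof of Theorem~\ref{first var}). Now use $w_t=\HHH_t+4\gamma v_t$ on $\partial E_t$ and $V_t=[\partial_{\nu_t}w_t]$ to rewrite this as $\int_{\partial E_t}w_t\,[\partial_{\nu_t}w_t]\,d\mu_t$. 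Splitting $[\partial_{\nu_t}w_t]=\partial_{\nu_t}w_t^+-\partial_{\nu_t}w_t^-$ and applying the divergence theorem separately on $E_t$ and on $E_t^c=\T^n\setminus\overline{E_t}$ — where $w_t$ is harmonic, so $\int_{\T^n}|\nabla w_t|^2\,dx=\int_{E_t\cup E_t^c}\Div(w_t\nabla w_t)\,dx$ reduces to a boundary integral with the right sign conventions for the outer normals — yields $\frac{d}{dt}J(E_t)=-\int_{\T^n}|\nabla w_t|^2\,dx$.

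For the second identity, I would differentiate $\frac12\int_{\T^n}|\nabla w_t|^2\,dx$. By the computation just performed this equals $-\frac12\frac{d}{dt}\int_{\partial E_t}w_t\,V_t\,d\mu_t$ only up to being careful, so instead I would work directly: write $\frac12\int_{\T^n}|\nabla w_t|^2\,dx=-\frac12\int_{\partial E_t}w_t\,[\partial_{\nu_t}w_t]\,d\mu_t=-\frac12\int_{\partial E_t}w_t\,V_t\,d\mu_t$ and differentiate in $t$. This produces three groups of terms: the derivative hitting $w_t$, the derivative hitting $V_t=[\partial_{\nu_t}w_t]$, and the derivative of the measure $d\mu_t$ plus the motion of the domain of integration. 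The terms where $w_t$ and the measure are differentiated should reassemble, via the second variation computation of Theorem~\ref{secondvar} (equivalently Proposition~\ref{secondvarA} and Proposition~\ref{second var F}), into $-\Pi_{E_t}([\partial_{\nu_t}w_t])$ — here one uses that $V_t=[\partial_{\nu_t}w_t]$ plays the role of $\varphi=\langle X\vert\nu_E\rangle$, that the remainder term $R$ vanishes because $\HHH_t+4\gamma v_t=w_t$ and one can choose the generating vector field to be normal with divergence-free extension near $\partial E_t$ (as in Lemma~\ref{vector field} / Remark~\ref{rem999}, so that the $Z$-- and $X_\tau$--terms drop), and the identity $\int_{\partial E_t}\int_{\partial E_t}G(x,y)V_t(x)V_t(y)=\int_{\T^n}|\nabla v_{V_t}|^2$ of Remark~\ref{rm:potential}. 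The remaining term, coming from $\frac{d}{dt}[\partial_{\nu_t}w_t]$ together with the transport of the level set, is the genuinely new ``correction'' term; tracking how the jump of the normal derivative of the harmonic function $w_t$ changes as the interface moves with normal speed $[\partial_{\nu_t}w_t]$ should produce precisely $\frac12\int_{\partial E_t}\bigl(\partial_{\nu_t}w_t^++\partial_{\nu_t}w_t^-\bigr)[\partial_{\nu_t}w_t]^2\,d\mu_t$.

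The main obstacle, as usual with these boundary-transport computations, is the careful bookkeeping in the second identity: one must differentiate a boundary integral of a function ($w_t$) which itself solves a transmission problem whose domain is moving, keeping the two sides $w_t^\pm$ and the two outer normals straight, and correctly identify which pieces belong to $-\Pi_{E_t}$ and which to the quadratic correction term. A clean way to organize this is to fix a reference hypersurface and pull everything back via $\Psi_t$, so that all time derivatives act on functions over a fixed domain; then use the Hadamard-type formula $\frac{d}{dt}\int_{\partial E_t}f_t\,d\mu_t=\int_{\partial E_t}(\partial_t^\bullet f_t + f_t\,\HHH_t V_t)\,d\mu_t$ (with $\partial_t^\bullet$ the normal time derivative) and the known formula for the variation of the normal derivative of a harmonic function under a normal perturbation of the boundary. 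An alternative, which the authors may well prefer, is to invoke the abstract gradient-flow structure: since $V_t$ is minus the $\widetilde H^{-1/2}$--gradient of $J$, $\frac{d}{dt}J(E_t)=-\|V_t\|_{\widetilde H^{-1/2}}^2=-\int_{\T^n}|\nabla w_t|^2$, and then identity~\eqref{der of dw} is essentially the statement that the second derivative of $J$ along its own gradient flow is controlled by the second variation plus a lower-order term; but making this rigorous still requires the same transport computation, so I would carry it out directly as above.
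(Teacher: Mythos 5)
Your proof of the first identity~\eqref{der of J} is correct and matches the paper's: differentiate $J$ along the flow via Theorem~\ref{first var}, substitute $w_t=\HHH_t+4\gamma v_t$ and $V_t=[\partial_{\nu_t}w_t]$, then integrate by parts on $E_t$ and $E_t^c$ using harmonicity.

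For the second identity~\eqref{der of dw}, however, you have only a plan, and the route you choose is both different from the paper's and harder. You propose to rewrite $\tfrac12\int_{\T^n}|\nabla w_t|^2\,dx=-\tfrac12\int_{\partial E_t}w_t[\partial_{\nu_t}w_t]\,d\mu_t$ and differentiate that boundary integral. This unavoidably requires the material time derivative of the jump $[\partial_{\nu_t}w_t]$, i.e.\ the variation of the Dirichlet--to--Neumann operator as the interface moves --- a genuine extra computation that you explicitly flag (``the main obstacle \dots is the careful bookkeeping'') but never carry out. You also suggest that Theorem~\ref{secondvar} will reassemble the remaining terms into $-\Pi_{E_t}$; this is misleading, because the velocity $V_t=[\partial_{\nu_t}w_t]$ is a time-dependent nonlocal function of the evolving set, so the family $E_t$ is not a variation with a fixed generating vector field to which that theorem applies, and isolating the correction term with the right coefficient $\tfrac12$ is exactly the step that cannot be read off from a remainder identity. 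The paper avoids all of this: it differentiates the bulk Dirichlet energies $\tfrac12\int_{E_t}|\nabla w_t^-|^2\,dx$ and $\tfrac12\int_{E_t^c}|\nabla w_t^+|^2\,dx$ separately via a Reynolds transport formula, uses harmonicity of $w_t^\pm$ to reduce the interior term to $\int_{\partial E_t}\partial_t w_t^\pm\,\partial_{\nu_t}w_t^\pm\,d\mu_t$, and then gets $\partial_t w_t^\pm$ on $\partial E_t$ by differentiating the Dirichlet condition $w_t^\pm=\HHH_t+4\gamma v_t$ along the flow and invoking formula~\eqref{derH} for $\partial_t\HHH_t$ --- so the time derivative of $[\partial_{\nu_t}w_t]$ is never needed. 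The quadratic form $\Pi_{E_t}$ is then \emph{recognized} in the resulting expression, not invoked in advance. Your outline names several of the right ingredients, but the decisive computation that actually produces $-\Pi_{E_t}$ together with the correction term $\tfrac12\int_{\partial E_t}(\partial_{\nu_t}w_t^++\partial_{\nu_t}w_t^-)[\partial_{\nu_t}w_t]^2\,d\mu_t$ is missing, and the boundary-integral route you pick makes it substantially more delicate than the paper's bulk-energy route.
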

\begin{proof}
Let $\psi_t$ the smooth family of maps describing the flow as in 
formula~\eqref{msf2}. By formula~\eqref{dermu2}, where $X$ is the smooth (velocity) vector field $X_t=\frac{\partial\psi_t}{\partial t}=[\pa_{\nu_t}w_t]\nu_t$ along $\pa E_t$, hence $X_\tau=X_t-\langle X_t \vert \nu_t\rangle\nu_t=0$ (as usual $\nu_t$ is the outer normal unit vector of $\partial E_t$), following the computation in the proof of Theorem~\eqref{first var}, we have 
\begin{equation}
\frac{d}{dt} J(E_t)= \int_{\partial E_t} (\HHH_t+ 4 \gamma v_t) \langle X_t |\nu_t \rangle \, d\mu_t= \int_{\pa E_t} w_t[\pa_{\nu_t}w_t]\, \dmu_t=- \int_{\T^n} \abs{\nabla w_t}^2 \, dx\,,
\end{equation}
where the last equality follows integrating by parts, as $w_t$ is harmonic in $\T^n\setminus \pa E_t$. This establishes relation~\eqref{der of J}.

In order to get identity~\eqref{der of dw}, we compute
\begin{align}
\frac{d}{dt}\,\frac{1}{2} \int_{E_t} |\nabla w_t^-|^2\, dx
&= \frac{1}{2} \int_{\partial E_{t}} |\nabla^{\T^n}\! w^-_t |^2 \left\langle X_t \vert \nu_t\right\rangle\, d \mu_t+\frac{1}{2} \int_{E_t} \frac{d}{dt}|\nabla w_t^-|^2\, dx\\
&= \frac{1}{2} \int_{\partial E_{t}} |\nabla^{\T^n}\! w^-_t |^2 [ \partial_{\nu_t} w_t ] \, \dmu + \int_{E_{t}} \nabla \partial_t{w}^-_t \nabla w^-_t \, \dmu_t\\
&= \frac{1}{2} \int_{\partial E_{t}} |\nabla^{\T^n}\! w^-_t |^2 [ \partial_{\nu_t} w_t ] \, \dmu + \int_{\partial E_{t}} \partial_t{w}^-_t \partial_{\nu_t} w^-_t \, \dmu_t, \,\label{calculation}
\end{align}
where we interchanged time and space derivatives and we applied the divergence theorem, taking into account that $w_t^-$ is harmonic in $E_t$.\\
Then, we need to compute $\partial_t{w}^-_t$ on $\pa E_t$. We know that
$$
w_t^-=\HHH_t+4\gamma v_t
$$
on $\pa E_t$, hence, (totally) differentiating in time this equality, we get
$$
\partial_t{w}^-_t + \bigl\langle \nabla^{\T^n}\! w^-_t\bigl\vert X_t\bigr\rangle = \partial_t\HHH_t+ 4\gamma \partial_t{v}_t + 4\gamma \bigl\langle\nabla^{\T^n}\! v_t\bigl\vert X_t\bigr\rangle\,,
$$
that is,
\begin{align*}
\partial_t{w}^-_t + [\partial_{\nu_t} w_t]\partial_{\nu_t}w^-_t=&\,\partial_t\HHH_t+ 4\gamma \partial_t{v}_t + 4\gamma [\partial_{\nu_t} w_t]\partial_{\nu_t}v_t\\
=&\,-|B_t|^2 [\partial_{\nu_t} w_t]-\Delta_t[\partial_{\nu_t} w_t]+ 4\gamma \partial_t{v}_t + 4\gamma [\partial_{\nu_t} w_t]\partial_{\nu_t}v_t\,,
\end{align*}
where we used computation~\eqref{derH}.\\
Therefore from equations~\eqref{calculation} and~\eqref{v'} we get
\begin{align}
\frac{d}{dt} \frac{1}{2} \int_{E_t} |\nabla w_t^-|^2\, dx
=&\,-\int_{\partial E_t} \partial_{\nu_t} w^-_t\, \Delta_t [\partial_{\nu_t} w_t]\, d \mu_t - \int_{\partial E_t}\partial_{\nu_t} w_t^-\, |B_t|^2 \, [\partial_{\nu_t} w_t] \, d \mu_t \\
&\,+8\gamma \int_{\partial E_t}\int_{\partial E_t} G(x,y) \,\partial_{\nu_t} w_t^-(x)\, [\partial_{\nu_t} w_t](y) \, d \mu_t(x) d \mu_t(y) \\
&\,+4\gamma \int_{\partial E_t} \partial_{\nu_t} v_t\,\partial_{\nu_t} w_t^-\, [\partial_{\nu_t} w_t] \, d \mu_t \\
&\,+\frac{1}{2} \int_{\partial E_{t}} |\nabla^{\T^n}\! w_t^- |^2 [\partial_{\nu_t} w_t] \, d \mu_t - \int_{\partial E_t} (\partial_{\nu_t} w_t^-)^2 [\partial_{\nu_t} w_t] \, d \mu_t\,.\label{interior}
\end{align}
Computing analogously for $w^+_t$ in $E^c$ and adding the two results, we get
\begin{align}
\frac{d}{dt} \frac{1}{2} \int_{\T^n} |\nabla w_t|^2\, dx
=&\,\int_{\partial E_t} [\partial_{\nu_t} w_t]\, \Delta_t [\partial_{\nu_t} w_t]\, d \mu_t + \int_{\partial E_t}|B_t|^2 \, [\partial_{\nu_t} w_t]^2 \, d \mu_t \\
&\,-8\gamma \int_{\partial E_t}\int_{\partial E_t} G(x,y) \,[\partial_{\nu_t} w_t](x)\, [\partial_{\nu_t} w_t](y) \, d \mu_t(x) d \mu_t(y) \\
&\,-4\gamma \int_{\partial E_t} \partial_{\nu_t} v_t\, [\partial_{\nu_t} w_t]^2 \, d \mu_t \\
&\,+\int_{\partial E_t} \bigl((\partial_{\nu_t} w^+_t)^2 - (\partial_{\nu_t} w_t^-)^2 \bigr)[\partial_{\nu_t} w_t] \, d \mu_t \\
&\,-\frac{1}{2} \int_{\partial E_{t}}\bigl (|\nabla^{\T^n}\! w^+_t |^2 - |\nabla^{\T^n}\! w^-_t |^2\bigr) [\partial_{\nu_t} w_t] \, d \mu_t\\
=&\,- \Pi_{E_t}\bigl([\pa_{\nu_t}w_t]\bigr)+ \frac{1}{2}\int_{\partial E_t} \bigl(\partial_{\nu_t} w^+_t+ \partial_{\nu_t} w_t^-\bigr) [\partial_{\nu_t} w_t]^2 \, d \mu_t \,,
\end{align}
where we integrated by parts the very first term of the right hand side, recalled Definition~\eqref{Pieq} and in the last step we used the identity
$$
|\nabla^{\T^n}\! w^+_t |^2 - |\nabla^{\T^n}\! w^-_t |^2 = (\partial_{\nu_t} w^+_t)^2 - (\partial_{\nu_t} w_t^-)^2 = (\partial_{\nu_t} w_t^+ + \partial_{\nu_t} w_t^- ) [\partial_{\nu_t} w_t]\,.
$$
Hence, also equation~\eqref{der of dw} is proved.
\end{proof}

\smallskip

{\em From now on, we restrict ourselves to the three--dimensional case, that is, we will consider smooth subsets of $\T^3$ with boundaries which then are smooth embedded ($2$--dimensional) surfaces. As  we said at the beginning of the section, this is due to the dependence on the dimension of several of the estimates that follow.}

\medskip

{\em In the estimates in the following series of lemmas, we will be interested in having uniform constants for the families $\mathfrak{C}^{1,\alpha}_M(F)$, given a smooth set $F\subseteq\T^n$ and a tubular neighborhood $N_\eps$ of $\pa F$ as in formula~\eqref{tubdef}, for any $M\in(0,\eps/2)$ and $\alpha\in(0,1)$. This is guaranteed if the constants in the Sobolev, Gagliardo--Nirenberg interpolation and Calder\'on--Zygmund inequalities, relative to all the smooth hypersurfaces $\pa E$ boundaries of the sets $E\in\mathfrak{C}^{1,\alpha}_M(F)$, are uniform, as it is proved in detail in~\cite{DDM}.}

\medskip

{\em We remind that in all the inequalities, the constants $C$ may vary from one line to another.}

\medskip

The next lemma provides some boundary estimates for harmonic functions.
\begin{lem}[Boundary estimates for harmonic functions]\label{harmonic estimates}
Let $F\subseteq\T^3$ be a smooth set and $E\in\mathfrak{C}^{1,\alpha}_M(F)$. Let $f \in C^\alpha(\partial E)$ with zero integral on $\partial E$ and let $u \in H^{1}(\T^3)$ be the (distributional) solution of 
\begin{equation*}
-\Delta u = f \mu\bigr|_ {\partial E}
\end{equation*} 
with zero integral on $\T^3$. Let $u^- = u|_{E}$ and $u^+ = u|_{E^c}$ and assume that $u^-$ and $u^+$ are of class $C^1$ up to the boundary $\pa E$. Then, for every $1<p<+\infty$ there exists a constant $C=C(F,M,\alpha,p)>0$, such that: 
\begin{enumerate}
\item[{\em (i)}] \ \vspace{-.5cm}
$$\| u \|_{L^p(\partial E)} \leq C \|f\|_{L^p(\partial E)}$$

\item[{\em (ii)}] \ \vspace{-.5cm}
\begin{equation*}
\| \partial_{\nu_E} u^+\|_{L^2(\partial E)} + \| \partial_{\nu_E} u^-\|_{L^2(\partial E)} \leq C \|u \|_{H^1(\partial E)}
\end{equation*} 
\item[{\em (iii)}] \ \vspace{-.5cm}
\begin{equation*}
\| \partial_{\nu_E} u^+\|_{L^p(\partial E)} + \| \partial_{\nu_E} u^-\|_{L^p(\partial E)} \leq C \|f\|_{L^p(\partial E)}
\end{equation*}
\item[{\em (iv)}] \ \vspace{-.5cm}
$$
\|u\|_{C^{0, \beta}(\pa E)}\leq C \|f\|_{L^{p}(\pa E)}
$$ 
for all $\beta\in \bigl(0, \frac{p-2}{p}\bigr)$, with $C$ depending also on $\beta$.
\end{enumerate}
Moreover, if $f \in H^1(\partial E)$, then for every $2\leq p< +\infty$ there exists a constant $C=C(F,M,\alpha,p)>0$, such that 
\begin{equation*}
\| f\|_{L^p(\partial E)} \leq C \|f\|_{H^1(\partial E)}^{{(p-1)}/{p}}\, \|u \|_{L^2(\partial E)}^{1/p}\,.
\end{equation*}
\end{lem}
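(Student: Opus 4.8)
The plan is to prove the five estimates in turn, relying on standard elliptic theory (Calderón--Zygmund, $L^p$ theory for the Neumann/transmission problem, trace theorems) together with the fact, emphasized in the excerpt, that all the relevant Sobolev, Gagliardo--Nirenberg and Calderón--Zygmund constants are uniform over the family $\mathfrak{C}^{1,\alpha}_M(F)$ (this is the content of~\cite{DDM}, which we may invoke). The key structural observation is that $u$ solves a transmission problem: $u^\pm$ are harmonic in $E$ and $E^c$ respectively, they agree on $\pa E$ (so $u$ is continuous across $\pa E$), and the jump of the normal derivative is prescribed, $[\pa_{\nu_E}u]=\pa_{\nu_E}u^+-\pa_{\nu_E}u^-=-f$, which is exactly the operator $\Delta_{\pa E}$ appearing in the discussion of the $\widetilde H^{-1/2}$--gradient flow. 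So, up to the single-layer potential on $\pa E$, $u$ is governed by the Dirichlet-to-Neumann operator of $E$ (and of $E^c$), which is pseudodifferential of order one.

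For item~(i), I would argue by duality: given $g\in L^{p'}(\pa E)$ with zero integral, let $z\in H^1(\T^3)$ solve $-\Delta z=g\mu|_{\pa E}$ with zero average; then $\int_{\pa E}ug\,d\mu=\int_{\T^3}\nabla u\cdot\nabla z\,dx=\int_{\pa E}fz\,d\mu$, so $\|u\|_{L^p(\pa E)}\le\|f\|_{L^p(\pa E)}\sup\{\|z\|_{L^{p'}(\pa E)}:\|g\|_{L^{p'}(\pa E)}=1\}$, and it remains to bound $\|z\|_{L^{p'}(\pa E)}$ in terms of $\|g\|_{L^{p'}(\pa E)}$. That is a mapping property of the single-layer potential: $g\mapsto z|_{\pa E}$ is a pseudodifferential operator of order $-1$ on the $2$--dimensional surface $\pa E$, hence maps $L^{p'}\to W^{1,p'}\hookrightarrow L^{p'}$ boundedly, with constant controlled uniformly on $\mathfrak{C}^{1,\alpha}_M(F)$ by the results of~\cite{DDM}. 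Item~(iv) follows in the same spirit but sharper: $z$ (hence $u$) lies in $W^{1,p}(\pa E)$, and on the $2$--dimensional surface $W^{1,p}\hookrightarrow C^{0,\beta}$ for $\beta<\frac{p-2}{p}$ by Sobolev embedding (again with uniform constant), giving $\|u\|_{C^{0,\beta}(\pa E)}\le C\|f\|_{L^p(\pa E)}$.

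For items~(ii) and~(iii) I would use the $L^p$ theory for harmonic functions with prescribed boundary data. Writing $u^-=u|_{\pa E}$ as Dirichlet data, $u^-$ is the harmonic extension into $E$, so $\pa_{\nu_E}u^-=\mathcal N_E(u|_{\pa E})$ where $\mathcal N_E$ is the Dirichlet-to-Neumann map, a first-order pseudodifferential operator; hence $\|\pa_{\nu_E}u^-\|_{L^p(\pa E)}\le C\|u\|_{W^{1,p}(\pa E)}$, and symmetrically for $u^+$ with $\mathcal N_{E^c}$. Taking $p=2$ and recalling $W^{1,2}(\pa E)=H^1(\pa E)$ gives~(ii). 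For~(iii) one combines this with the $W^{1,p}(\pa E)$-bound on $u$ established in proving~(i) (namely $\|u\|_{W^{1,p}(\pa E)}\le C\|f\|_{L^p(\pa E)}$), yielding $\|\pa_{\nu_E}u^\pm\|_{L^p(\pa E)}\le C\|f\|_{L^p(\pa E)}$. Throughout, the only subtlety — and what I expect to be the main obstacle — is the uniformity of all these constants over $\mathfrak{C}^{1,\alpha}_M(F)$: one needs that the surfaces $\pa E$, although only $C^{1,\alpha}$-close to $\pa F$, admit elliptic/pseudodifferential estimates with a constant depending only on $(F,M,\alpha,p)$, which is precisely what~\cite{DDM} supplies and which I would cite rather than reprove.

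Finally, for the last displayed inequality, assume $f\in H^1(\pa E)$ and $2\le p<\infty$. On the $2$--dimensional surface $\pa E$ the Gagliardo--Nirenberg interpolation inequality gives, for a suitable $\vartheta\in(0,1)$, $\|f\|_{L^p(\pa E)}\le C\|f\|_{H^1(\pa E)}^{1-\vartheta}\|f\|_{L^2(\pa E)}^{\vartheta}$; a direct computation of the exponents in dimension $2$ shows one may take $\vartheta=1/p$, so $\|f\|_{L^p(\pa E)}\le C\|f\|_{H^1(\pa E)}^{(p-1)/p}\|f\|_{L^2(\pa E)}^{1/p}$. It then suffices to replace $\|f\|_{L^2(\pa E)}$ by $\|u\|_{L^2(\pa E)}$: testing $-\Delta u=f\mu|_{\pa E}$ against $u$ itself, $\int_{\T^3}|\nabla u|^2\,dx=\int_{\pa E}fu\,d\mu\le\|f\|_{L^2(\pa E)}\|u\|_{L^2(\pa E)}$, while by~(ii) and trace theory $\|f\|_{L^2(\pa E)}=\|[\pa_{\nu_E}u]\|_{L^2(\pa E)}\le C\|u\|_{H^1(\pa E)}$; more directly, using that $\mathcal N_E+\mathcal N_{E^c}$ (whose symbol is comparable to $|\xi|$) has an inverse of order $-1$, one gets $\|f\|_{L^2(\pa E)}\le C\|u\|_{H^1(\pa E)}$ and, pairing with the energy identity, $\|f\|_{L^2(\pa E)}\le C\|u\|_{L^2(\pa E)}$ after absorbing; substituting this into the interpolation bound yields the claim with constants uniform on $\mathfrak{C}^{1,\alpha}_M(F)$.
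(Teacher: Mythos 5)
The most concrete problem is in your proof of the last inequality. On a $2$--dimensional surface, Gagliardo--Nirenberg gives $\|f\|_{L^p(\pa E)}\le C\,\|f\|_{H^1(\pa E)}^{(p-2)/p}\,\|f\|_{L^2(\pa E)}^{2/p}$, not the exponents $(p-1)/p$ and $1/p$ that you wrote; those are exactly the exponents in the paper's \emph{combined} inequality obtained after an additional interpolation of $\|f\|_{L^2(\pa E)}$ between $H^1(\pa E)$ and $H^{-1}(\pa E)$. Worse, the substitution $\|f\|_{L^2(\pa E)}\le C\|u\|_{L^2(\pa E)}$ that you try to force ``after absorbing'' is simply false: since $f=(\mathcal N_E+\mathcal N_{E^c})(u|_{\pa E})$ is the image of $u|_{\pa E}$ under a first-order operator, the best you can hope for is $\|f\|_{L^2(\pa E)}\lesssim\|u\|_{H^1(\pa E)}$ (which is item~(ii)), and there is no absorption available from the energy identity $\int_{\T^3}|\nabla u|^2\,dx=\int_{\pa E}fu\,d\mu$. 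The correct route — the one the paper takes — is to interpolate $\|f\|_{L^2}\le C\|f\|_{H^1}^{1/2}\|f\|_{H^{-1}}^{1/2}$ and then to prove the \emph{dual} bound $\|f\|_{H^{-1}(\pa E)}\le C\|u\|_{L^2(\pa E)}$ by pairing $f$ against a test function $\varphi\in H^1(\pa E)$, passing to its harmonic extension, integrating by parts twice, and applying~(ii); here the negative Sobolev norm exactly compensates the order $+1$ of the operator, which is why this works and your version does not.

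The other issue is your wholesale appeal to pseudodifferential calculus on $\pa E$: ``the single-layer potential is order $-1$'' and ``$\mathcal N_E$ is order $+1$'' for items~(i)--(iv). This is classical for smooth boundaries but far from automatic when $\pa E$ is merely $C^{1,\alpha}$, and it is \emph{not} what~\cite{DDM} provides (that reference gives uniform Sobolev, Gagliardo--Nirenberg, and Calder\'on--Zygmund constants over $\mathfrak{C}^{1,\alpha}_M(F)$, not a uniform pseudodifferential calculus). In particular your item~(ii) is close to circular: asserting that $\mathcal N_E$ maps $H^1(\pa E)\to L^2(\pa E)$ is essentially what must be proved. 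The paper works around the low regularity of $\pa E$ explicitly: for~(i) and~(iv) it writes $u$ as a single-layer potential and uses the $C^{1,\alpha}$ regularity only through the elementary pointwise kernel bound $|x-y|^{-1}$ together with H\"older/convolution estimates; for~(ii) it proves the DtN bound from scratch via a Rellich identity $\Div(2\langle\nabla u\vert x\rangle\nabla u-|\nabla u|^2 x+u\nabla u)=0$ in the style of Jerison--Kenig, using only the uniform geometric lower bound $\langle x\vert\nu_E(x)\rangle\ge 1/C_0$ on suitable balls; for~(iii) it computes the jump relation $\pa_{\nu_E}u^-=Kf+\tfrac12 f$ and shows that, thanks to the $C^{1,\alpha}$ graph structure, $K$ has a \emph{weakly} singular kernel $\lesssim|x-y|^{-(2-\alpha)}$, so its $L^p$ boundedness is an elementary convolution estimate rather than a Coifman--McIntosh--Meyer type theorem. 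Also note that your duality argument in~(i) reduces the $L^p$ bound for $u$ to the $L^{p'}$ bound for $z$, i.e.\ to the same statement with the dual exponent, so it gains nothing; the explicit kernel estimate has to be done anyway and already yields~(i) directly.
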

\begin{proof} We are not going to underline it every time, but it is easy to check that all the constants that will appear in the proof will depend with only on $F$, $M$, $\alpha$ and sometimes $p$, recalling the previous discussion about the ``uniform'' inequalities holding for the families of sets $\mathfrak{C}^{1,\alpha}_M(F)$.

\smallskip

{(i)} Recalling Remark~\ref{rm:potential}, we have
\begin{equation*}
u(x) = \int_{\partial E} G(x,y)f(y) \, d\mu(y).
\end{equation*} 
It is well known that it is always possible to write $G(x,y) = h(x-y) + r(x-y)$ where $h:\R\to\R$ is smooth away from $0$, one--periodic and $h(t)=\frac{1}{4\pi|t|}$ in a neighborhood of $0$, while $r:\R\to\R$ is smooth and one--periodic.
The conclusion then follows since
for $v(x)= \int_{\partial E} \frac{f(y)}{|x-y|} \, d\mu(y)$ there holds
\begin{equation*}
 \|v\|_{L^p(\partial E)} \leq C \|f\|_{L^p(\partial E)}\,,
\end{equation*}
with $C=C(F,M,\alpha,p)>0$.

\smallskip

{(ii)} We are going to adapt the proof of~\cite{JK} to the periodic setting.
First observe that since $u$ is harmonic in $E\subseteq\T^3$ we have 
\begin{equation}
\label{harmonic lemma 12}
\Div\left( 2\langle \nabla u\vert x\rangle \nabla u -|\nabla u|^2x + u \nabla u \right) = 0.
\end{equation}
Moreover, there exist constants $r>0$, $C_0$ and $N\in\N$, depending only on $F$, $M$, $\alpha$, such that we may cover $\partial E$ with $N$ balls $B_r(x_k)$, with every $x_k\in F$ and
\begin{equation}
\label{harmonic lemma 1}
\frac{1}{C_0} \leq \langle x \vert \nu_E(x)\rangle \leq C_0 \qquad\text{ for $x\in \pa E \cap B_{2r}(x_k)$}\,.
\end{equation}
for every that $E\in\mathfrak{C}^{1,\alpha}_M(F)$.\\
If then $0 \leq \varphi_k \leq 1$ is a smooth function with compact support in $B_{2r}(x_k)$ such that $\varphi_k \equiv 1$
in $B_r(x_k)$ and $|\nabla \varphi_k| \leq C/r$, by integrating the function
\begin{equation*}
\Div\left( \varphi_k \left(2\langle \nabla u\vert x\rangle \nabla u -|\nabla u|^2x + u \nabla u \right) \right) 
\end{equation*}
in $E$ and using equality~\eqref{harmonic lemma 12}, we get
\begin{align}
\int_E \bigl\langle \nabla \varphi_k \vert \,  2 &\left \langle \nabla u\vert x\right \rangle \nabla u -|\nabla u|^2x + u \nabla u\bigr\rangle \,dx\\
&=\int_E \Div\bigl(\varphi_k (2 \left \langle \nabla u\vert x\right \rangle \nabla u -|\nabla u|^2x + u \nabla u)\bigr)  \,dx \\
&= \int_{\pa E} \bigl(2\varphi_k \langle \nabla^{\T^3}\!\!u \vert x \rangle\pa_{\nu_E}u - \varphi_k |\nabla^{\T^3}\!\! u |^2 \langle x \vert \nu_E \rangle + \varphi_ku\pa_{\nu_E} u\bigr)  \,d\mu,
\end{align}
hence,
\begin{align}
\int_E \bigl\langle \nabla \varphi_k \vert  2& \bigl\langle \nabla^{\T^3}\!\! u\vert x\bigr\rangle \nabla u -|\nabla u|^2x + u \nabla u \bigr\rangle  \,dx - \int_{\pa E} \varphi_ku\pa_{\nu_E} u^-  \,\dmu- 2 \int_{\pa E} \varphi_k  \langle \nabla u\vert x \rangle  \pa_{\nu_E} u^-  \,\dmu \\
=&\,-\int_{\pa E} \varphi_k |\nabla^{\T^3}\!\! u^- |^2 \langle x \vert \nu_E \rangle\,d\mu
+2 \int_{\pa E} \varphi_k  |\pa_{\nu_E}u^-|^2\langle x|\nu_E\rangle  \,\dmu \\
=&\,\int_{\pa E} \varphi_k |\pa_{\nu_E} u^-|^2 \langle x \vert \nu_E \rangle\,\dmu - \int_{\pa E} \varphi_k |\nabla u|^2\langle x |\nu_E\rangle\,\dmu\,.
\end{align}
Using the Poincar\'e inequality on the torus $\T^3$ (recall that $u$ has zero integral) and estimate~\eqref{harmonic lemma 1}, this inequality implies
\begin{align}
\int_{\partial E \cap B_r(x_k)} |\pa_{\nu_E} u|^2\, d \mu &\leq C \int_{\partial E} (u^2 + | \nabla u|^2) \, d \mu + C\int_{\T^3} (u^2 +|\nabla u|^2) \, dx\\
&\leq C \int_{\partial E} (u^2 + | \nabla u|^2) \, d \mu + C\int_{\T^3} |\nabla u|^2 \, dx\,.
\end{align}
Putting together all the above estimates and repeating the argument on $E^c$, we get
\begin{equation*}
\int_{\partial E } (|\pa_{\nu_E} u^-|^2 + |\pa_{\nu_E} u^+|^2)\, d \mu \leq C \int_{\partial E} (u^2 + | \nabla u|^2) \, d \mu + C\int_{\T^3} |\nabla u|^2 \, dx\,.
\end{equation*}
The thesis then follows by observing that 
\begin{equation*}
\int_{\T^3} |\nabla u|^2 \, dx = \int_{\partial E} u (\pa_{\nu_E} u^- - \pa_{\nu_E} u^+) \, d \mu\,. 
\end{equation*}

\smallskip

{(iii)} Let us define
\begin{equation*}
Kf(x) = \int_{\partial E}\bigl \langle\nabla^{\T^3}_x\! G(x,y)| \nu_E(x)\bigr\rangle f(y) \, d\mu(y)\,.
\end{equation*}
We want to show that
\begin{equation}
\label{flow lemma 1}
\|Kf\|_{L^p(\partial E)} \leq C \|f\|_{L^p(\partial E)}.
\end{equation}
By the decomposition recalled at the point (i), we have $\nabla^{\T^3}_x\!G(x,y)=\nabla^{\T^3}_x\![h(x-y)] + \nabla^{\T^3}_x\![r(x-y)]$, where
$\nabla^{\T^3}_x\![h(x-y)]=-\frac{1}{4\pi}\frac{x-y}{|x-y|^3}$, for $|x-y|$ small enough and $\nabla^{\T^3}_x\![r(x-y)]$ is smooth.
Thus, by a standard partition of unity argument we may localize the estimate and reduce to show that if $\varphi \in C^{1,\alpha}_c(\R^{2})$ and $U \subseteq \R^{2}$ is a bounded domain setting
$\Gamma = \{(x', \varphi(x')) \, : \, x' \in U\}\subseteq\R^3$ and 
$$
Tf(x) = \int_\Gamma \frac{ \langle x-y\vert \nu_E(x)\rangle }{|x-y|^{3}} f(y) \, d\mu(y)
$$
for every $x\in \Gamma$, where $\nu_E$ is the ``upper'' normal to the graph $\Gamma$, then $Tf(x)$ is well 
defined at every $x \in \Gamma$ and 
\begin{equation*}
\|Tf\|_{L^p(\Gamma)} \leq C \|f\|_{L^p(\Gamma)}\,.
\end{equation*}
In order to show this we observe that we may write
\begin{equation*}
Tf(x) = \int_U \frac{ \varphi(x') - \varphi(y') - \left\langle \nabla\varphi(x')\vert x'-y'\right\rangle }{(|x'-y'|^2+ [\varphi(x') -\varphi(y')]^2 )^{3/2}} f(y', \varphi(y')) \, dy'.
\end{equation*} 
where we used the fact that 
$$
\Gamma=\{(x',y') \, : \, y'-\varphi(x')=F(x',y')=0\}
$$
and then that
$$
\nu_E= \frac{\nabla F}{|\nabla F|}=\frac{(-\nabla \varphi(x'),1)}{\sqrt{1+|\nabla \varphi(x')|^2}}.
$$
Therefore,
$$
|Tf(x)|\leq C \int_U \frac{ |x'-y'|^{1+ \alpha}}{(|x'-y'|^2+ [\varphi(x') -\varphi(y')]^2 )^{3/2}}|f(y', \varphi(y'))| \, dy' \leq C \int_U \frac{ |f(y', \varphi(y'))|}{|x'-y'|^{2-\alpha}} \, dy'.
$$
Thus, inequality~\eqref{flow lemma 1} follows from a standard convolution estimate.\\
For $x \in E$ we have
\begin{equation*}
\nabla u(x) = \int_{\partial E} \nabla^{\T^3}_x\!G(x,y)f(y) \, d\mu(y),
\end{equation*}
hence, for $x \in \partial E$ there holds
\begin{equation*}
\langle \nabla u(x - t\nu_E(x)) \vert \, \nu_E(x)\rangle = \int_{\partial E} \bigl\langle\nabla^{\T^3}_x\!G(x -t \nu_E(x),y) \vert \, \nu_E(x)\bigr\rangle f(y) \, d\mu(y).
\end{equation*}
We claim that 
\begin{equation}\label{flow lemma 2}
{\pa_{\nu_E} u^-}(x)=\lim_{t \to 0+} \left\langle\nabla u(x - t\nu_E(x)) \vert \, \nu_E(x)\right\rangle = Kf(x) +\frac{1}{2}f(x), 
\end{equation}
for every $x \in \partial E$, then the result follows from inequality~\eqref{flow lemma 1} and this limit, together with the analogous identity for $\pa_{\nu_E} u^+(x)$.\\
To show equality~\eqref{flow lemma 2} we first observe that 
\begin{align}
&\int_{\partial E} \bigl\langle\nabla^{\T^3}_x\!G(x,y) \vert \,\nu_E(y)\bigr\rangle \, d\mu(y) = 1- \vol(E) \quad\quad\, \text{if $x\in E\setminus\pa E$} \label{flow lemma divergence0}\\
& \int_{\partial E} \bigl\langle \nabla^{\T^3}_x\!G(x ,y) \vert \, \nu_E(y)\bigr\rangle \, d\mu(y) = {1}/{2}- \vol(E)\quad \text{if $x\in \pa E$}. \label{flow lemma divergence} 
\end{align}
Indeed, using Definition~\eqref{potential}, we have 
\begin{align}
\Delta v_E(x) =&\,\int_E \Delta_x G(x,y) \, dy - \int_{E^c} \Delta_x G(x,y) \, dy \\
=&\,-2 \int_{\pa E} \bigl\langle\nabla^{\T^3}_x\!G(x,y)\vert \, \nu_E(y) \bigr\rangle \, \dmu(y)\\
=&\, 2\vol(E) - 1-u_E(x),
\end{align}
then,
\begin{equation}
\int_{\pa E} \bigl \langle\nabla^{\T^3}_x\! G(x,y) \vert \, \nu_E(y) \bigr\rangle\,d\mu(y) =  1/2-\vol(E) +u_E(x)/2,
\end{equation}
which clearly implies equation~\eqref{flow lemma divergence0}. Equality~\eqref{flow lemma divergence} instead follows by an approximation argument, after decomposing the Green function as at the beginning of the proof of point (i), $G(x,y) = h(x-y) + r(x-y)$, with $h(t)=\frac{1}{4\pi|t|}$ in a neighborhood of $0$ and $r:\R\to\R$ a smooth function.\\
Therefore, we may write, for $x\in\pa E$ and $t>0$ (remind that $\nu_E$ is the {\em outer} unit normal vector, hence $x - t\nu_E(x)\in E$),
\begin{align}
\langle\nabla u(x - t\nu_E(x)) \vert \nu_E(x)\rangle 
=&\,\int_{\partial E} \bigl\langle\nabla^{\T^3}_x\!G(x - t\nu_E(x),y) \vert \nu_E(x)\bigr\rangle (f(y)-f(x)) \, d\mu(y) \\
&\,+ f(x) \int_{\partial E}\bigl\langle \nabla_x^{\T^3}\!G(x - t\nu_E(x),y) \vert \nu_E(x)-\nu_E(y)\bigr\rangle \, d\mu(y) \\
& \,+ f(x)(1-\vol(E))\,,\label{flow lema long}
\end{align}
by equality~\eqref{flow lemma divergence0}.\\
Let us now prove that 
\begin{align*}
\lim_{t \to 0^+} \int_{\partial E} &\,\bigl\langle\nabla_x^{\T^3}\!G(x- t\nu_E(x),y) \,\vert\, \nu_E(x)\bigr\rangle (f(y)-f(x)) \, d\mu(y)\\
&= \int_{\partial E} \bigl\langle\nabla^{\T^3}_x\!G(x,y)\, \vert\, \nu_E(x)\bigr\rangle (f(y)-f(x)) \, d\mu(y),
\end{align*}
observing that since $\pa E$ is of class $C^{1,\alpha}$ then for $|t|$ sufficiently small we have
\begin{equation}\label{vesa1}
|x-y-t\nu_E(x)|\geq\frac12|x-y|\qquad\text{for all $y\in\pa E$}\,.
\end{equation}
Then, in view of the decomposition of $\nabla_xG$ above, it is enough show that 
\begin{equation*}
\lim_{t \to 0^+} \int_{\partial E}\frac{\langle x -y - t\nu_E(x) \, \vert \, \nu_E(x)\rangle}{|x-y-t\nu_E(x)|^3} (f(y)-f(x)) \, d\mu(y)= \int_{\partial E} \frac{\langle x -y \, \vert \, \nu_E(x)\rangle}{|x-y|^3} (f(y)-f(x)) \, d\mu(y)\,,
\end{equation*}
which follows from the dominated convergence theorem, after observing that due to the $\alpha$--H\"older continuity of $f$ and 
to inequality~\eqref{vesa1}, the absolute value of both integrands can be estimated from above by $C/|x-y|^{2-\alpha}$ for some constant $C>0$.\\
Arguing analogously, we also get
$$
\lim_{t \to 0^+} \int_{\partial E}\bigl\langle \nabla_x^{\T^3}\!G(x - t\nu_E(x),y) \vert \nu_E(x)-\nu_E(y)\bigr\rangle \, d\mu(y)=\int_{\partial E}\bigl\langle \nabla_x^{\T^3}\!G(x,y) \vert \nu_E(x)-\nu_E(y)\bigr\rangle \, d\mu(y)\,.
$$
Then, letting $t \to 0^+$ in equality~\eqref{flow lema long}, for every $x\in\pa E$, we obtain
\begin{align*}
\lim_{t\to 0^+}\langle\nabla u(x - t\nu_E(x)) \vert \nu_E(x)\rangle
=&\,\int_{\pa E} \bigl \langle \nabla^{\T^3}_x\!G(x,y)\vert \nu_E(x) \bigr\rangle (f(y)-f(x)) \, \dmu(y)\\
&\,+ f(x) \int_{\pa E} \bigl\langle\nabla^{\T^3}_x\!G(x,y)|\nu_E(x)-\nu_E(y)\bigr\rangle\,\dmu(y)+ f(x)(1-\vol(E))\\
=&\,\int_{\pa E} \bigl \langle \nabla^{\T^3}_x\!G(x,y)\vert \nu_E(x) \bigr\rangle f(y)\, \dmu(y)\\
&\,-f(x) \int_{\pa E} \bigl\langle\nabla^{\T^3}_x\!G(x,y)|\nu_E(y)\bigr\rangle\,\dmu(y)+ f(x)(1-\vol(E))\\
=&\,Kf(x)+ f(x)(\vol(E)-1/2) + f(x)(1-\vol(E))\\
=&\,Kf(x)+\frac{1}{2}f(x),
\end{align*}
where we used equality~\eqref{flow lemma divergence}, then limit~\eqref{flow lemma 2} holds and the thesis follows.

\smallskip

{(iv)} Fixed $p>2$ and $\beta\in (0, \frac{p-2}{p})$, as before, due to the properties of the Green's function, it is sufficient to establish the statement for the function 
$$
 v(x) =\int_{\pa E}\frac{f(y)}{|x-y|}\, d\mu(y)\,.
$$
For $x_1$, $x_2\in \pa E$ we have
$$
|v(x_1)-v(x_2)|\leq \int_{\pa E}|f(y)|\frac{\big||x_1-y|-|x_2-y|\big|}{|x_1-y|\, |x_2-y|}\, d\mu(y)\,.
$$
In turn, by an elementary inequality, we have
$$
\frac{\big||x_1-y|-|x_2-y|\big|}{|x_1-y|\, |x_2-y|}\leq C(\beta)\frac{\big||x_1-y|^{1-\beta}+|x_2-y|^{1-\beta}\big|}{|x_1-y|\, |x_2-y|}|x_1-x_2|^\beta\,,
$$
thus, by H\"older inequality we have
\begin{align*}
|v(x_1)-v(x_2)| &\leq C(\beta) \int_{\pa E}|f(y)|\frac{\big||x_1-y|^{1-\beta}+|x_2-y|^{1-\beta}\big|}{|x_1-y|\, |x_2-y|}\, d\mu(y)\,\, |x_1-x_2|^\beta \\
&\leq C'(\beta)\|f\|_{L^p} |x_1-x_2|^\beta\,, 
\end{align*}
where we set 
$$
C'(\beta)=2C(\beta)\biggl(\sup_{z_1,\, z_2\in\pa E} \int_{\pa E}\frac{1}{|z_1-y|^{\beta p'}\, |z_2-y|^{p'}}\, d\mu(y)\biggr)^{1/{p'}}\,,
$$
with $p'=p/(p-1)$.

\smallskip

\noindent For the second part of the lemma, we start by observing that
$$
\norma{f}_{L^2(\pa E)}\leq C\norma{f}_{H^1(\pa E)}^{1/{2}}\norma{f}^{1/{2}}_{H^{-1}(\pa E)}.
$$
If $p>2$ we have, by Gagliardo--Nirenberg interpolation inequalities (see~\cite[Theorem 3.70]{Aubin}),
\begin{equation*}
\| f\|_{L^p(\partial E)} \leq C \|f\|_{H^1(\partial E)}^{^{{(p-2)}/{p}}} \|f\|_{L^2(\partial E)}^{2/p}.
\end{equation*}
Therefore, by combining the two previous inequalities we get that, for $p\geq2$, there holds 
\begin{equation*}
\| f\|_{L^p(\partial E)} \leq C \|f\|_{H^1(\partial E)}^{^{(p-1)/{p}}} \|f\|_{H^{-1}(\partial E)}^{1/p}.
\end{equation*}
Hence, the thesis follows once we show
\begin{equation*}
\|f \|_{H^{-1}(\partial E)} \leq C \|u\|_{L^2(\partial E)}.
\end{equation*}
To this aim, let us fix $\varphi \in H^1(\partial E)$ and with a little abuse of notation denote its harmonic extension to $\T^3$ still by $\varphi$. Then, by integrating by parts twice and by point (ii), we get 
\begin{align}
\int_{\partial E} \varphi f \, d \mu=&\,
- \int_{\pa E} \varphi \Delta u \, \dmu\\
=&\, - \int_{\partial E} u [\pa_{\nu_E} \varphi] \, d \mu \\
\leq&\, \|u \|_{L^2(\partial E)} \bigl\|[\pa_{\nu_E} \varphi]\bigr \|_{L^2(\partial E)} \\
\leq&\, \|u \|_{L^2(\partial E)} \bigl( \|\pa_{\nu_E} \varphi^+ \|_{L^2(\partial E)} + \|\pa_{\nu_E} \varphi^- \|_{L^2(\partial E)} \bigr) \\
\leq&\, C \|u\|_{L^2(\partial E)} \|\varphi \|_{H^1(\partial E)}.
\end{align} 
Therefore,
\begin{equation*}
\| f\|_{H^{-1}(\partial E)} = \sup_{\|\varphi\|_{H^1(\partial E)}\leq 1} \int_{\partial E} \varphi f \, d \mu \leq C \|u \|_{L^2(\partial E)}
\end{equation*}
and we are done.
\end{proof}
For any smooth set $E\subseteq\T^3$, the fractional Sobolev space $W^{s,p}(\pa E)$, usually obtained via local charts and partitions of unity, has an equivalent definition considering directly the Gagliardo $W^{s,p}$--seminorm of a function $f\in L^p(\pa E)$, for $s\in(0,1)$, as follows
$$
[f]_{W^{s,p}(\pa E)}^p=\int_{\pa E}\int_{\pa E}\frac{|f(x)-f(y)|^p}{|x-y|^{2+sp}}\,d\mu(x)d\mu(y)
$$
and setting $\Vert f\Vert_{W^{s,p}(\pa E)}=\Vert f\Vert_{L^p(\pa E)}+[f]_{W^{s,p}(\pa E)}$ (we refer to~\cite{AdamsFournier,Dem,NePaVa,RuSi} for details). As it is customary, we set $[f]_{H^s(\pa E)}=[f]_{W^{s,2}(\pa E)}$ and $H^s(\pa E)=W^{s,2}(\pa E)$.

Then, it can be shown that for all the sets $E\in\mathfrak{C}^{1,\alpha}_M(F)$, given a smooth set $F\subseteq\T^3$ and a tubular neighborhood $N_\eps$ of $\pa F$ as in formula~\eqref{tubdef}, for any $M\in(0,\eps/2)$ and $\alpha\in(0,1)$, the constants giving the equivalence between this norm above and the ``standard'' norm of $W^{s,p}(\pa E)$ can be chosen to be uniform, independent of $E$. Moreover, as for the ``usual'' (with integer order) Sobolev spaces, all the constants in the embeddings of the fractional Sobolev spaces are also uniform for this family. This is related to the possibility, due to the closeness in $C^{1,\alpha}$ and the graph representation, of ``localizing'' and using partitions of unity ``in a single common way'' for all the smooth hypersurfaces $\pa E$ boundaries of the sets $E\in\mathfrak{C}^{1,\alpha}_M(F)$, see~\cite{DDM} for details.

Then, we have the following technical lemma. 

\begin{lem}\label{nicola1}
Let $F\subseteq\T^3$ be a smooth set and $E\in\mathfrak{C}^{1,\alpha}_M(F)$. For every $\beta\in[0,1/2)$, there exists a constant $C=C(F,M,\alpha,\beta)$ such that if $f\in H^{1/2}(\pa E)$ and $g\in W^{1,4}(\pa E)$, then
$$
[fg]_{H^{{1}/{2}}(\pa E)}\leq C[f]_{H^{{1}/{2}}(\pa E)}\|g\|_{L^\infty(\pa E)}+C\|f\|_{L^{\frac{4}{1+\beta}}(\pa E)}\|g\|_{L^\infty(\pa E)}^\beta\|\nabla g\|_{L^4(\pa E)}^{1-\beta}\,.
$$
\end{lem}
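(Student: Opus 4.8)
The plan is to expand the numerator of the Gagliardo $H^{1/2}$--seminorm by the Leibniz--type identity $f(x)g(x)-f(y)g(y)=(f(x)-f(y))g(x)+f(y)(g(x)-g(y))$ and use $|a+b|^2\le 2|a|^2+2|b|^2$ (recall $\pa E$ is $2$--dimensional, so the kernel exponent is $2+2\cdot\tfrac12=3$). The term carrying $(f(x)-f(y))g(x)$ is immediately bounded by $2\|g\|_{L^\infty(\pa E)}^2[f]_{H^{1/2}(\pa E)}^2$, so the whole problem reduces to estimating
\[
I:=\int_{\pa E}\int_{\pa E}\frac{|f(y)|^2\,|g(x)-g(y)|^2}{|x-y|^3}\,d\mu(x)\,d\mu(y)=\int_{\pa E}|f(y)|^2\,h(y)\,d\mu(y),\qquad h(y):=\int_{\pa E}\frac{|g(x)-g(y)|^2}{|x-y|^3}\,d\mu(x).
\]
Applying H\"older's inequality in $y$ with the conjugate exponents $\tfrac{2}{1+\beta}$ and $\tfrac{2}{1-\beta}$ to $|f|^2$ and $h$ gives $I\le\|f\|_{L^{4/(1+\beta)}(\pa E)}^2\,\|h\|_{L^{2/(1-\beta)}(\pa E)}$. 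Hence, taking square roots in the two contributions and using $\sqrt{a+b}\le\sqrt a+\sqrt b$, the lemma follows once we prove
\[
\|h\|_{L^{2/(1-\beta)}(\pa E)}\le C\,\|g\|_{L^\infty(\pa E)}^{2\beta}\,\|\nabla g\|_{L^4(\pa E)}^{2(1-\beta)}.
\]

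To obtain this, set $\gamma:=2(1-\beta)\in(1,2]$, so that $\tfrac{2}{1-\beta}=\tfrac4\gamma$, and interpolate the integrand in $h$ by $|g(x)-g(y)|^2=|g(x)-g(y)|^{2\beta}|g(x)-g(y)|^{\gamma}\le(2\|g\|_{L^\infty(\pa E)})^{2\beta}|g(x)-g(y)|^{\gamma}$. Next, I use the maximal--function pointwise estimate $|g(x)-g(y)|\le C|x-y|(\mathcal M|\nabla g|(x)+\mathcal M|\nabla g|(y))$, valid for $\mu$--a.e.\ $x,y$ with a constant uniform over $\mathfrak{C}^{1,\alpha}_M(F)$ (for $|x-y|$ small this is the standard consequence of the Poincar\'e inequality on balls, read in the uniformly chosen graph charts; for $|x-y|$ bounded below it is trivial from $|g(x)-g(y)|\le2\|g\|_{L^\infty}$), where $\mathcal M$ denotes the Hardy--Littlewood maximal operator on $\pa E$. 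Writing $G:=\mathcal M|\nabla g|$ and using $|a+b|^\gamma\le C(|a|^\gamma+|b|^\gamma)$, this yields
\[
h(y)\le C\|g\|_{L^\infty(\pa E)}^{2\beta}\int_{\pa E}\frac{G(x)^\gamma+G(y)^\gamma}{|x-y|^{3-\gamma}}\,d\mu(x)\le C\|g\|_{L^\infty(\pa E)}^{2\beta}\Bigl(G(y)^\gamma+\int_{\pa E}\frac{G(x)^\gamma}{|x-y|^{3-\gamma}}\,d\mu(x)\Bigr),
\]
since $\sup_y\int_{\pa E}|x-y|^{-(3-\gamma)}\,d\mu(x)<+\infty$ (here $\gamma>1$, i.e.\ $\beta<1/2$, and $\dim\pa E=2$ are used, with the bound uniform over the family). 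The kernel $|x-y|^{-(3-\gamma)}$ is uniformly in $L^1$ in each variable, so by Schur's test the associated integral operator is bounded on $L^{4/\gamma}(\pa E)$ with uniform constant; combining this with the $L^{4/\gamma}$--boundedness of $\mathcal M$ (note $4/\gamma>1$) and the identity $\gamma\cdot\tfrac4\gamma=4$, we get
\[
\|h\|_{L^{4/\gamma}(\pa E)}\le C\|g\|_{L^\infty(\pa E)}^{2\beta}\,\|G^\gamma\|_{L^{4/\gamma}(\pa E)}=C\|g\|_{L^\infty(\pa E)}^{2\beta}\,\|G\|_{L^4(\pa E)}^{\gamma}\le C\|g\|_{L^\infty(\pa E)}^{2\beta}\,\|\nabla g\|_{L^4(\pa E)}^{2(1-\beta)},
\]
which is precisely the required estimate.

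The only genuinely delicate point is the uniformity of the various auxiliary constants over the family $\mathfrak{C}^{1,\alpha}_M(F)$: the constant in the maximal--function pointwise inequality, the $L^p$--operator norm of $\mathcal M$ on $\pa E$, and the Schur constant for the kernel $|x-y|^{-(3-\gamma)}$. All of these are of the type discussed above (uniform graph atlas, uniform doubling and Poincar\'e properties, see~\cite{DDM}), so they cause no difficulty. An alternative to the maximal--function step is to work in the graph charts with the localized Morrey inequality $|g(x)-g(y)|\le C|x-y|^{1/2}\|\nabla g\|_{L^4(B_{2|x-y|}(y))}$, split $h(y)$ over the regions $|x-y|<r$ and $|x-y|\ge r$ with $r$ comparable to the injectivity radius, and conclude by a routine computation; this gives the same bound. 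Finally, the endpoint $\beta=0$ is included, since then $\gamma=2$ and the kernel $|x-y|^{-1}$ still lies in $L^1(\pa E)$ because $\dim\pa E=2$, so no separate argument is needed.
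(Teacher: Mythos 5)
Your proof is correct in its essentials and follows a genuinely different route from the paper after the common first step. Both arguments begin with the Leibniz decomposition $fg(x)-fg(y)=(f(x)-f(y))g(x)+f(y)(g(x)-g(y))$, absorb the first contribution into $2\|g\|_{L^\infty}^2[f]_{H^{1/2}}^2$, and apply H\"older with the conjugate exponents $\tfrac{2}{1+\beta}$, $\tfrac{2}{1-\beta}$ to the remaining integral. The divergence is in how the factor carrying $|g(x)-g(y)|$ is handled. The paper keeps the estimate at the level of the double integral: it tilts the kernel by a small $\delta>0$ (available precisely because $\beta<1/2$), applies H\"older across the two singular factors $|x-y|^{-(6\beta+\delta)/(1+\beta)}$ and $|x-y|^{-(6-\delta/(1-\beta))/4}$, then recognizes the $g$--factor as a Gagliardo seminorm $[g]_{W^{1-\delta/(4(1-\beta)),4}}$ controlled via the fractional Sobolev embedding $W^{1,4}\hookrightarrow W^{s,4}$. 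You instead first integrate out $x$ to form $h(y)$, apply H\"older to the single integral in $y$ (so $f$ is isolated cleanly into $\|f\|_{L^{4/(1+\beta)}}^2$ with no residual singularity), and control $h$ pointwise with the Haj\l asz--type maximal--function estimate $|g(x)-g(y)|\leq C|x-y|(\mathcal M|\nabla g|(x)+\mathcal M|\nabla g|(y))$ plus the Schur test and the $L^4$--boundedness of $\mathcal M$. The same constraint $\beta<1/2$ manifests in both proofs (it makes $(6\beta+\delta)/(1+\beta)<2$ possible in the paper, and $3-\gamma<2$ integrable in yours). Your route is more hands--on and arguably conceptually cleaner in the H\"older step; the paper's $\delta$--tilt is the more common idiom in fractional--Sobolev arguments and avoids the need to invoke maximal--function machinery. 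Both proofs rely implicitly on Poincar\'e--type control (connectedness of the chart domain) to handle the far--field contribution $|x-y|\gtrsim 1$, a point which you gesture at but which is not fully resolved in either treatment; this is harmless because the lemma is applied in the paper only in local graph charts (disks), but your dismissal of the far regime as ``trivial from $|g(x)-g(y)|\le 2\|g\|_{L^\infty}$'' does not literally yield the pointwise bound in terms of $\mathcal M|\nabla g|$ --- it would be cleaner to split $h$ into near and far parts and treat the far part directly via the interpolation $|g(x)-g(y)|^{2}\le(2\|g\|_{L^\infty})^{2\beta}(C\|\nabla g\|_{L^4})^{2(1-\beta)}$, the latter bound coming from a uniform Morrey--Poincar\'e inequality on the chart domain.
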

\begin{proof}
We estimate with H\"older inequality, noticing that $6\beta/(1+\beta)<2$, as $\beta\in[0,1/2)$, hence there exists $\delta>0$ such that $(6\beta+\delta)/(1+\beta)<2$, 
\begin{align*}
[fg]_{H^{{1}/{2}}(\pa E)}^2
\leq&\,2[f]_{H^{{1}/{2}}(\pa E)}^2\|g\|_{L^\infty(\pa E)}^2+2\int_{\pa E}\int_{\pa E}|f(y)|^2\frac{|g(x)-g(y)|^2}{|x-y|^3}\,d\mu(x)d\mu(y)\\
\leq &\,2[f]_{H^{{1}/{2}}(\pa E)}^2\|g\|_{L^\infty(\pa E)}^2\\
&\,+C\int_{\pa E}\int_{\pa E}\frac{|f(y)|^2}{|x-y|^{3\beta+\delta/2}}\frac{|g(x)-g(y)|^{2(1-\beta)}}{|x-y|^{3(1-\beta)-\delta/2}}\Vert g\Vert_{L^\infty(\pa E)}^{2\beta}\,d\mu(x)d\mu(y)\\
\leq &\,2[f]_{H^{{1}/{2}}(\pa E)}^2\|g\|_{L^\infty(\pa E)}^2\\
&\,+C\Bigl(\int_{\pa E}{|f(y)|^{\frac{4}{1+\beta}}}\int_{\pa E}\frac{1}{|x-y|^{\frac{6\beta+\delta}{1+\beta}}}\,d\mu(x)d\mu(y)\Bigr)^{(1+\beta)/2}\Vert g\Vert_{L^\infty(\pa E)}^{2\beta}\\
&\,\phantom{+C\quad}\cdot\Bigl(\int_{\pa E}\int_{\pa E}\frac{|g(x)-g(y)|^4}{|x-y|^{6-\frac{\delta}{1-\beta}}}\,d\mu(x)d\mu(y)\Bigr)^{(1-\beta)/2}\\
\leq &\,2[f]_{H^{{1}/{2}}(\pa E)}^2\|g\|_{L^\infty(\pa E)}^2\\
&\,+C\Bigl(\int_{\pa E}{|f(y)|^{\frac{4}{1+\beta}}}\,d\mu(y)\Bigr)^{(1+\beta)/2}\Vert g\Vert_{L^\infty(\pa E)}^{2\beta}\,
[g]_{W^{1-\frac{\delta}{4(1-\beta)},4}(\pa E)}^{2(1-\beta)}\\
\leq &\,2[f]_{H^{{1}/{2}}(\pa E)}^2\|g\|_{L^\infty(\pa E)}^2+C\|f\|_{L^{\frac{4}{1+\beta}}(\pa E)}^2\|g\|_{L^\infty(\pa E)}^{2\beta}\|\nabla g\|_{L^4(\pa E)}^{2(1-\beta)}\,.
\end{align*}
Hence the thesis follows noticing that all the constants $C$ above depend only on $F$, $M$, $\alpha$ and $\beta$, by the previous discussion, before the lemma.
\end{proof}

As a corollary we have the following estimate.

\begin{lem}\label{5.2}
Let $F\subseteq\T^3$ be a smooth set and $E\in\mathfrak{C}^{1,\alpha}_M(F)$. Then, for $M$ small enough, there holds
$$
\|\psi_E\|_{W^{5/2,2}(\pa F)}\leq C(F,M,\alpha)\big(1+\|\HHH\|_{H^{1/2}(\pa E)}^2\big)\,,
$$
where $\HHH$ is the mean curvature of $\pa E$ and the function $\psi_E$ is defined by formula~\eqref{front}.
\end{lem}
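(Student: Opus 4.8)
The plan is to view the statement as an elliptic regularity (Schauder/Calderón–Zygmund-type, but $L^2$-based) estimate for the graph function $\psi_E$ in terms of the mean curvature of its graph, made uniform over the class $\mathfrak{C}^{1,\alpha}_M(F)$. Writing $\pa E$ as the graph $x\mapsto x+\psi_E(x)\nu_F(x)$ over $\pa F$, the mean curvature $\HHH$ of $\pa E$ is, in local coordinates, a second-order quasilinear elliptic operator applied to $\psi_E$: schematically $\HHH = a^{ij}(x,\psi_E,\nabla\psi_E)\,\nabla^2_{ij}\psi_E + b(x,\psi_E,\nabla\psi_E)$, where the coefficients $a^{ij}$, $b$ are smooth functions of their arguments (depending on $\pa F$, its second fundamental form, etc.) and, since $\|\psi_E\|_{C^{1,\alpha}(\pa F)}\le M$ with $M$ small, the $a^{ij}$ are uniformly elliptic and the coefficients are uniformly bounded in $C^{0,\alpha}$, with ellipticity and bounds depending only on $F$, $M$, $\alpha$. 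Freezing coefficients and absorbing the lower-order terms, one gets $\Delta_{\pa F}\psi_E = \HHH + (\text{terms involving } \psi_E,\nabla\psi_E,\nabla^2\psi_E \text{ with small or controlled coefficients})$.

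The key steps, in order: (1) First I would record the bootstrap starting point: from $\|\psi_E\|_{C^{1,\alpha}}\le M$ and $\HHH\in H^{1/2}(\pa E)$ one has $\HHH\in L^2$, hence by standard $L^2$ elliptic theory (Calderón–Zygmund on the closed manifold $\pa F$, uniform over the class by the discussion preceding Lemma~\ref{nicola1} and \cite{DDM}) $\psi_E\in W^{2,2}(\pa F)$ with $\|\psi_E\|_{W^{2,2}(\pa F)}\le C(1+\|\HHH\|_{L^2(\pa E)})$. (2) Next, improve to $W^{5/2,2}$: I would differentiate the curvature equation in the fractional sense, i.e. apply the estimate for the operator $\Delta_{\pa F}$ between $H^{1/2}$ and $H^{5/2}$, obtaining $\|\psi_E\|_{W^{5/2,2}(\pa F)}\le C\big(\|\HHH\|_{H^{1/2}(\pa E)} + \|R(\psi_E)\|_{H^{1/2}(\pa F)}\big)$, where $R(\psi_E)$ collects all the terms arising because the coefficients depend on $x,\psi_E,\nabla\psi_E$ and because one must transfer $\|\HHH\|_{H^{1/2}(\pa E)}$ to a norm over $\pa F$ (the two $H^{1/2}$ norms are comparable with constants uniform over the class). (3) The crucial point is to bound $R(\psi_E)$; its worst term is of the type ``$H^{1/2}$-seminorm of a product of a function in $L^2\cap H^{1/2}$ with derivatives of $\psi_E$ up to second order''. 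This is exactly where Lemma~\ref{nicola1} enters: taking $f$ to be (roughly) $\nabla^2\psi_E$ and $g$ a smooth function of $(x,\psi_E,\nabla\psi_E)$, one gets $[fg]_{H^{1/2}}\le C[f]_{H^{1/2}}\|g\|_{L^\infty} + C\|f\|_{L^{4/(1+\beta)}}\|g\|_{L^\infty}^\beta\|\nabla g\|_{L^4}^{1-\beta}$; since $g$ and $\nabla g$ are controlled in $L^\infty$ and $L^4$ by $\|\psi_E\|_{C^{1,\alpha}}\le M$, this reduces everything to the $H^{1/2}$-seminorm and the $L^{4/(1+\beta)}$-norm of (at most) second derivatives of $\psi_E$. (4) Finally I would interpolate: $[\nabla^2\psi_E]_{H^{1/2}(\pa F)}$ and $\|\nabla^2\psi_E\|_{L^{4/(1+\beta)}(\pa F)}$ are both bounded, via Gagliardo–Nirenberg on $\pa F$ (uniform constants, \cite{Aubin}), by $\|\psi_E\|_{W^{5/2,2}(\pa F)}^{\theta}\|\psi_E\|_{W^{2,2}(\pa F)}^{1-\theta}$ for a suitable $\theta<1$ (choosing $\beta$ close enough to $1/2$ so that the Sobolev exponents work in dimension $2$ for the surface $\pa F$), and then Young's inequality lets me absorb the $\|\psi_E\|_{W^{5/2,2}(\pa F)}^{\theta}$ factor into the left-hand side, at the cost of a power of $\|\psi_E\|_{W^{2,2}(\pa F)}$ on the right. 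Combining with step (1) and $\|\HHH\|_{L^2}\le C\|\HHH\|_{H^{1/2}}$ produces the quadratic bound $\|\psi_E\|_{W^{5/2,2}(\pa F)}\le C(1+\|\HHH\|_{H^{1/2}(\pa E)}^2)$.

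I expect the main obstacle to be bookkeeping the quasilinear structure uniformly: making precise that the coefficients of the curvature operator, the transition between $H^{1/2}(\pa E)$ and $H^{1/2}(\pa F)$, and all Sobolev/interpolation constants can be taken uniform over $\mathfrak{C}^{1,\alpha}_M(F)$ — this is the role of the smallness of $M$ and of the cited uniform-estimates results, and it must be invoked at each step rather than proven afresh. The genuinely nontrivial analytic input is the product estimate of Lemma~\ref{nicola1} combined with the right choice of $\beta$ so that the interpolation exponent $\theta$ is strictly less than $1$ and the final absorption is legitimate; everything else is the standard elliptic bootstrap. I would be slightly careful that the exponent in the statement is $2$ (not higher): this reflects that one loses a factor when passing from the linear estimate $\|\psi_E\|_{W^{5/2,2}}\lesssim \|\HHH\|_{H^{1/2}} + (\text{lower order})$ to the closed nonlinear estimate, because the ``lower order'' remainder, after absorption, contributes a term like $\|\psi_E\|_{W^{2,2}}^{2}\lesssim(1+\|\HHH\|_{L^2})^2\lesssim (1+\|\HHH\|_{H^{1/2}})^2$.
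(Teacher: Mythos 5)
Your overall strategy — localize, view the mean-curvature relation as a quasilinear second-order equation for $\psi_E$, apply Lemma~\ref{nicola1} to the products, and close the loop by some absorption — is the right one and is in the same spirit as the paper's argument. However, there is a genuine gap in step (4), namely in the absorption mechanism, and it cannot be repaired by the interpolation you propose.

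The highest-order contribution from Lemma~\ref{nicola1} is the term $[f]_{H^{1/2}}\|g\|_{L^\infty}$ with $f=\nabla^2(\varphi\psi_E)$. But $[\nabla^2(\varphi\psi_E)]_{H^{1/2}}$ \emph{is}, up to lower order, the target quantity $\|\psi_E\|_{W^{5/2,2}}$: the interpolation exponent between $W^{5/2,2}$ and $W^{2,2}$ for this seminorm is $\theta=1$, not $\theta<1$, so a Gagliardo--Nirenberg/Young absorption fails there. The paper does not use interpolation at all for this term; instead it exploits that the top-order coefficient is $g=\frac{\nabla\psi_E\otimes\nabla\psi_E}{1+|\nabla\psi_E|^2}$ with $\|g\|_{L^\infty}\leq M^2$, so the term appears with a small prefactor $M^2$ and is absorbed directly into the left-hand side once $M$ is taken small enough. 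That is the actual role of the smallness of $M$, and you invoked it only as giving ``uniform ellipticity'' rather than as the absorption mechanism.

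A second, related gap concerns the lower-order piece of Lemma~\ref{nicola1}: since $g$ is a nonlinear function of $\nabla\psi_E$, the factor $\|\nabla g\|_{L^4}$ forces you to control $\|\nabla^2\psi_E\|_{L^4(\pa F)}$ (not merely $\|\nabla^2\psi_E\|_{L^{4/(1+\beta)}}$), and on a two-dimensional boundary the embedding $W^{5/2,2}\hookrightarrow W^{2,4}$ is critical, so again no interpolation with $\theta<1$ is available from $W^{5/2,2}$ and $W^{2,2}$ alone. The paper obtains this $L^4$-Hessian bound by a \emph{separate} argument: the Calder\'on--Zygmund inequality~\eqref{Cald-Zyg} in $L^4$, combined with the explicit formula~\eqref{exprmeancurv} for $\HHH$ and another $M^2$-smallness absorption, yields $\|\mathrm{Hess}\,\psi_E\|_{L^4}\leq C(1+\|\HHH\|_{H^{1/2}(\pa E)})$. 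It is precisely the product of this factor with the one coming from the $H^{1/2}$-estimate — not a $\|\psi_E\|_{W^{2,2}}^2$ remainder after Young — that produces the quadratic right-hand side $(1+\|\HHH\|_{H^{1/2}})^2$ in the statement. (Also, the paper applies Lemma~\ref{nicola1} with $\beta=0$, not $\beta$ near $1/2$; the choice of $\beta>0$ is only needed later, in the $W^{5/2,2}$-convergence argument of Lemma~\ref{w52conv}.)

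In short: replace your ``$\theta<1$ + Young'' absorption by the $M^2$-coefficient absorption, and insert an independent $L^4$-Calder\'on--Zygmund step for $\mathrm{Hess}\,\psi_E$; the rest of your scaffold then goes through.
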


\begin{proof} By a standard localization/partition of unity/straightening argument, we may reduce ourselves to the case where the function $\psi_E$ is defined in a disk $D\subseteq\R^2$ and $\|\psi_E\|_{C^{1,\alpha}(D)}\leq M$. Fixed a smooth cut--off function $\varphi$ with compact support in $D$ and equal to one on a smaller disk $D'\subseteq D$, we have 
\begin{equation}\label{noia1}
\Delta(\varphi\psi_E)-\frac{\nabla^2(\varphi\psi_E)\nabla\psi_E \nabla\psi_E}{1+|\nabla\psi_E|^2}=\varphi\HHH\sqrt{1+|\nabla\psi_E|^2}+R(x,\psi_E,\nabla\psi_E)\,,
\end{equation}
where the remainder term $R(x, \psi_E, \nabla \psi_E)$ is a smooth Lipschitz function. Then, using Lemma~\ref{nicola1} with $\beta=0$ and recalling that $\|\psi_E\|_{C^{1,\alpha}(D)}\leq M$, we estimate
\begin{align}
[\Delta(\varphi\psi_E)]_{H^{{1}/{2}}(D)}\leq C(F,M,\alpha)\Bigl(&\,M^2[\nabla^2(\varphi\psi_E)]_{H^{{1}/{2}}(D)}+[\HHH]_{H^{{1}/{2}}(\pa E)}(1+\norma{\nabla\psi_E}_{L^\infty(D)})\\
&\,+\norma{\HHH}_{L^4(\pa E)}(1+\norma{\psi_E}_{W^{2,4}(D)})+1+\norma{\psi_E}_{W^{2,4}(D)}\Bigr)\,.
\end{align}
We now use the fact that, by a simple integration by part argument, if $u$ is a smooth function with compact support in $\R^2$, there holds
$$
[\Delta u]_{H^{{1}/{2}}(\R^2)}=[\nabla^2u]_{H^{{1}/{2}}(\R^2)}\,,
$$
hence,
\begin{align}
[\nabla^2(\varphi \psi_E)]_{H^{{1}/{2}}(D)}&\,=[\Delta (\varphi\psi_E)]_{H^{{1}/{2}}(D)}\\
&\,\leq C(F,M,\alpha)\Big(M^2[\nabla^2(\varphi\psi_E)]_{H^{{1}/{2}}(D)}
+[\HHH]_{H^{{1}/{2}}(\pa E)}(1+\norma{\nabla\psi_E}_{L^\infty(D)})\\
&\,\phantom{\leq C(F,M,\alpha)\bigl(\,}+\norma{\HHH}_{L^4(\pa E)}(1+\norma{\psi_E}_{W^{2,4}(D)})+1+\norma{\psi_E}_{W^{2,4}(D)}\Big),
\end{align}
then, if $M$ is small enough, we have
\beq \label{aub}
[\nabla^2(\varphi\psi_E)]_{H^{{1}/{2}}(D)}\leq C(F,M,\alpha)(1+ \norma{\HHH}_{H^{1/2}(\pa E)} )(1+\norma{\mathrm{Hess}\,\psi_E}_{L^4(D)}),
\eeq
as 
\beq\label{aub2}
\norma{\HHH}_{L^4(\pa E)} \leq C(F,M,\alpha) \norma{\HHH}_{H^{1/2}(\pa E)},
\eeq
where we used the continuous embedding of $H^{1/2}(\pa E)$ in $L^4(\pa E)$ (see for instance Theorem~$6.7$ in~\cite{NePaVa}, with $q=4$, $s=1/2$ and $p=2$).\\
By the Calder\'on--Zygmund estimates (holding uniformly for every hypersurface $\pa E$, with $E\in\mathfrak{C}^{1,\alpha}_M(F)$, see~\cite{DDM}),
\begin{equation}\label{Cald-Zyg}
\norma{\mathrm{Hess} \, \psi_E}_{L^{4}(D)} \leq C(F,M,\alpha)(\norma{\psi_E}_{L^4(D)}+ \norma{\Delta \psi_E}_{L^4(D)})
\end{equation}
and the expression of the mean curvature 
\begin{equation}\label{exprmeancurv}
\HHH= \frac{\Delta \psi_E}{\sqrt{1+ |\nabla \psi_E|^2}} - \frac{\mathrm{Hess} \, \psi_E (\nabla \psi_E \nabla \psi_E)}{(\sqrt {1 + |\nabla \psi_E|^2})^3} \, ,
\end{equation}
we obtain 
\begin{align}
\norma{\Delta \psi_E}_{L^4(D)} &\leq 2M \norma{\HHH}_{L^4(\pa E)} + M^2 \norma{\mathrm{Hess}\, \psi_E}_{L^4(D)} \\
&\leq 2M \norma{\HHH}_{L^4(\pa E)} + C(F,M,\alpha)M^2(\norma{\psi_E}_{L^4(D)} + \norma{\Delta \psi_E}_{L^4(D)}) \, . \label{normadeltapsi1}
\end{align}
Hence, possibly choosing a smaller $M$, we conclude
\begin{equation}
\norma{\Delta \psi_E}_{L^4(D)} \leq C(F,M,\alpha) (1 + \norma{\HHH}_{L^4(\pa E)})\leq C(F,M,\alpha)(1+ \norma{\HHH}_{H^{1/2}(\pa E)}), \label{normadeltapsi2}
\end{equation}
again by inequality~\eqref{aub2}.\\
Thus, by estimate~\eqref{Cald-Zyg}, we get
\beq\label{eqcar10030}
\norma{\mathrm{Hess}\,\psi_E}_{L^4(D)} \leq C(F,M,\alpha)(1+ \norma{\HHH}_{H^\frac12(\pa E)}),
\eeq
and using this inequality in estimate~\eqref{aub},
\beq
[\nabla^2(\varphi\psi_E)]_{H^{{1}/{2}}(D)}\leq C(F,M,\alpha)(1+ \norma{\HHH}_{H^\frac12(\pa E)})^2,
\eeq
hence,
\beq
[\nabla^2 \psi_E]_{H^{1/2}(D')}\leq C(F,M,\alpha)(1+ \norma{\HHH}_{H^\frac12(\pa E)})^2 \leq C(F,M,\alpha)(1+ \norma{\HHH}_{H^\frac12(\pa E)}^2).
\eeq
The inequality in the statement of the lemma then easily follows by this inequality, estimate~\eqref{eqcar10030} and $\norma{\psi_E}_{C^{1,\alpha}(D)} \leq M$, with a standard covering argument.
\end{proof}

We are now ready to prove the last lemma of this section.

\begin{lem}[Compactness]\label{w52conv}
Let $F\subseteq\T^3$ be a smooth set and $E_n\subseteq \mathfrak{C}^{1,\alpha}_M(F) $ a sequence of smooth sets such that 
$$
\sup_{n\in\N}\int_{\T^3}|\nabla w_{E_n}|^2\, dx<+\infty\,,
$$
where $w_{E_n}$ are the functions associated to $E_n$ by problem~\eqref{WE}.\\
Then, if $\alpha\in(0,1/2)$ and $M$ is small enough, there exists a smooth set $F'\in \mathfrak{C}^1_M(F)$ such that, up to a (non relabeled) subsequence, $E_n\to F'$ in $W^{2,p}$ for all $1\leq p<4$ (recall the definition of convergence of sets at the beginning of Subsection~\ref{stabsec}).\\
Moreover, if
$$
\int_{\T^3}|\nabla w_{E_n}|^2\, dx\to 0\,,
$$
then $F'$ is critical for the volume--constrained nonlocal Area functional $J$ and the convergence $E_n\to F'$ is in $W^{5/2,2}$.
\end{lem}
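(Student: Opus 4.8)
The plan is to couple a soft $C^{1,\beta}$--precompactness for the graph functions $\psi_{E_n}$ with the regularity gain hidden in the hypothesis $\sup_n\int_{\T^3}|\nabla w_{E_n}|^2\,dx<+\infty$: this energy bound, through the boundary estimates for harmonic functions of Lemma~\ref{harmonic estimates} and the delicate estimate of Lemma~\ref{5.2}, forces a \emph{uniform} $W^{5/2,2}(\pa F)$ bound on the $\psi_{E_n}$, and the compact embedding $W^{5/2,2}(\pa F)\hookrightarrow\hookrightarrow W^{2,p}(\pa F)$ for $p<4$ (a two--dimensional Sobolev count) then produces the $W^{2,p}$--convergence of the sets. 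The last part is obtained by passing to the limit in the trace relation, using lower semicontinuity of the Dirichlet energy, Proposition~\ref{critprop}, and an elliptic bootstrap. I expect the genuinely hard point to be the uniform $W^{5/2,2}$ bound, and within it the control of the \emph{full} $H^{1/2}$ norm of $\HHH_{E_n}$ (not merely its Gagliardo seminorm, which alone follows from the energy bound).

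First I would use $E_n\in\mathfrak{C}^{1,\alpha}_M(F)$: writing $\pa E_n=\{x+\psi_n(x)\nu_F(x):\,x\in\pa F\}$ with $\|\psi_n\|_{C^{1,\alpha}(\pa F)}\le M$, the compact embedding $C^{1,\alpha}(\pa F)\hookrightarrow\hookrightarrow C^{1,\beta}(\pa F)$, $\beta\in(0,\alpha)$, gives along a subsequence $\psi_n\to\psi$ in $C^{1,\beta}(\pa F)$ with $\|\psi\|_{C^{1,\alpha}}\le M$. Letting $F'$ be the set with $\pa F'=\{x+\psi(x)\nu_F(x):\,x\in\pa F\}$, one has $\vol(E_n\triangle F')\to0$, hence $\vol(F'\triangle F)\le M$ and $\|\psi\|_{C^1}\le M$, so $F'$ belongs to the class $\mathfrak{C}^1_M(F)$; its smoothness will be recovered at the end, in the convergent--energy case from the criticality equation and in the general case from the uniform higher bound below. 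It then remains to upgrade this convergence.

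The core step is to prove $\sup_n\|\psi_n\|_{W^{5/2,2}(\pa F)}<+\infty$, and here one uses throughout that the Sobolev, fractional--Sobolev and Calder\'on--Zygmund constants are uniform over the family $\mathfrak{C}^{1,\alpha}_M(F)$ (see~\cite{DDM}). By Lemma~\ref{5.2} it suffices to bound $\|\HHH_{E_n}\|_{H^{1/2}(\pa E_n)}$ uniformly. For the Gagliardo seminorm, $w_{E_n}$ is harmonic off $\pa E_n$ with trace $\HHH_{E_n}+4\gamma v_{E_n}$, and the trace--space norm equivalence recalled in Section~\ref{msfsdf} gives $[\HHH_{E_n}+4\gamma v_{E_n}]^2_{H^{1/2}(\pa E_n)}\le C\int_{\T^3}|\nabla w_{E_n}|^2\,dx\le C$; since $\|v_{E_n}\|_{W^{2,p}(\T^3)}\le C$ for all $p$ (the Remark after~\eqref{potential}), its trace on $\pa E_n$ is bounded in $C^{1,\gamma}\subset H^{1/2}(\pa E_n)$, so $[\HHH_{E_n}]_{H^{1/2}(\pa E_n)}\le C$. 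For the $L^2$ part, integrating the graph expression~\eqref{exprmeancurv} for $\HHH_{E_n}$ over $\pa E_n$ and discarding the divergence--form principal terms (using only $\|\psi_n\|_{C^{1,\alpha}}\le M$) shows $\bigl|\fint_{\pa E_n}\HHH_{E_n}\,d\mu_n\bigr|\le C$, whence by the $H^{1/2}$--Poincar\'e inequality $\|\HHH_{E_n}\|_{L^2(\pa E_n)}\le C[\HHH_{E_n}]_{H^{1/2}(\pa E_n)}+C\le C$. (The restriction $\alpha<1/2$ enters in running Lemma~\ref{5.2}, through Lemma~\ref{nicola1} with $\beta=0$.) Thus $\|\psi_n\|_{W^{5/2,2}(\pa F)}\le C$; passing to a subsequence converging weakly in $W^{5/2,2}(\pa F)$ and using compactness of the embedding into $W^{2,p}(\pa F)$, the same subsequence converges strongly to $\psi$ in $W^{2,p}(\pa F)$ for all $1\le p<4$, with $\psi\in W^{5/2,2}(\pa F)$. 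This gives $E_n\to F'$ in $W^{2,p}$ and proves the first assertion.

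For the ``moreover'' part, assume $\int_{\T^3}|\nabla w_{E_n}|^2\,dx\to0$. Taking $p\in(2,4)$, the $W^{2,p}$--convergence of the sets gives $\HHH_{E_n}\to\HHH_{F'}$ in $L^p(\pa F)$ (after transplanting to $\pa F$), while $v_{E_n}\to v_{F'}$ in $C^{1,\gamma}(\T^3)$ by standard elliptic estimates for~\eqref{potential} (as in the proof of Theorem~\ref{W2pMin}); together with the uniform $H^{1/2}$ bound this yields $\HHH_{E_n}+4\gamma v_{E_n}\rightharpoonup\HHH_{F'}+4\gamma v_{F'}$ weakly in $H^{1/2}$, and a routine transplantation plus weak compactness in $H^1(\T^3)$ identifies the weak limit of $w_{E_n}$ as $w_{F'}$, so $\int_{\T^3}|\nabla w_{F'}|^2\,dx\le\liminf_n\int_{\T^3}|\nabla w_{E_n}|^2\,dx=0$. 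Hence $w_{F'}$ is constant, $\HHH_{F'}+4\gamma v_{F'}$ is constant on $\pa F'$, so $F'$ is critical for the volume--constrained $J$ by Proposition~\ref{critprop}, and an elliptic bootstrap (on each side of $\pa F'$ the potential is smooth up to the boundary, so the criticality identity improves the regularity of $\pa F'$, and one iterates) shows $F'$ smooth. Finally, to get $W^{5/2,2}$--convergence I would upgrade the weak convergence to strong: $[\HHH_{E_n}]_{H^{1/2}}\le[\HHH_{E_n}+4\gamma v_{E_n}]_{H^{1/2}}+4\gamma[v_{E_n}]_{H^{1/2}}\to 4\gamma[v_{F'}]_{H^{1/2}}=[\HHH_{F'}]_{H^{1/2}}$ (as $\HHH_{F'}+4\gamma v_{F'}$ is constant), which with the $L^2$--convergence of $\HHH_{E_n}$ gives $\HHH_{E_n}\to\HHH_{F'}$ in $H^{1/2}(\pa F)$, and inserting this into the uniform elliptic estimate relating $\psi_n$ to $\HHH_{E_n}$ from the proof of Lemma~\ref{5.2} gives $\psi_n\to\psi$ in $W^{5/2,2}(\pa F)$.
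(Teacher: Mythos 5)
Your proposal follows the same overall route as the paper: bound the full $H^{1/2}(\pa E_n)$ norm of $\HHH_n$ (seminorm from the trace estimate for $w_n$ together with the uniform $C^{1,\gamma}$ bound on $v_n$; the mean of $\HHH_n$ from a localization to graph charts), push this through Lemma~\ref{5.2} to a uniform $W^{5/2,2}(\pa F)$ bound on $\psi_n$, and extract a convergent subsequence by compact embedding. One presentational slip: the parenthetical that the bound on $\fint_{\pa E_n}\HHH_n\,d\mu_n$ uses ``only $\|\psi_n\|_{C^{1,\alpha}}\le M$'' is not accurate. Integrating the divergence structure of~\eqref{exprmeancurv} over a chart disk $D$ controls $\int_D\HHH_n\,dx'$, but to pass from $dx'$ to $d\mu_n=\sqrt{1+|\nabla f_n|^2}\,dx'$ you must also control $\int_D \HHH_n\bigl(\sqrt{1+|\nabla f_n|^2}-1\bigr)\,dx'$, which needs an $L^1$ (or $L^2$) bound on $\HHH_n-\widetilde\HHH_n$; this is precisely what the seminorm bound supplies, so the two steps are intertwined rather than independent. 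Since you already have the seminorm bound in hand, this is fixable, not a true gap.

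The genuine gap is in the final upgrade to $W^{5/2,2}$ convergence in the ``moreover'' part. ``Inserting $\HHH_n\to\HHH_{F'}$ in $H^{1/2}$ into the uniform elliptic estimate from Lemma~\ref{5.2}'' is not enough: Lemma~\ref{5.2} is an a priori bound $\|\psi_E\|_{W^{5/2,2}(\pa F)}\le C(1+\|\HHH\|^2_{H^{1/2}(\pa E)})$ for a \emph{single} surface, and it does not bound $\|\psi_n-\psi\|_{W^{5/2,2}}$ in terms of $\|\HHH_n-\HHH_{F'}\|_{H^{1/2}}$. One must run the graph computation on the \emph{difference} of the two quasilinear equations, and the error term $\nabla^2(\varphi\psi_n)\bigl(a(\nabla\psi_n)-a(\nabla\psi)\bigr)$ is the obstruction: to make its $H^{1/2}$ seminorm small, Lemma~\ref{nicola1} must be invoked with some $\beta\in(0,1/2)$ (not $\beta=0$ as in Lemma~\ref{5.2}) so as to exploit the smallness of $\|\nabla\psi_n-\nabla\psi\|_{L^\infty}^\beta$ coming from the already established $W^{2,p}$--convergence. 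Without this difference estimate you only get weak convergence in $W^{5/2,2}$ plus some norm inequalities, which do not on their own yield strong convergence. This is exactly the analytic content of the paper's last two displayed estimates in the proof, and it is the step your sketch compresses away.
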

\begin{proof} Throughout all the proof we write $w_n$, $\HHH_n$, and $v_n$ instead of $w_{E_n}$, $\HHH_{\pa E_n}$, and $v_{E_n}$, respectively. Moreover, we denote by $\widehat w_n = \fint_{\T^3}w_n\,dx$ and we set $\widetilde w_n=\fint_{\pa E_n}w_n\,d\mu_n$ and $\widetilde{\HHH}_n=\fint_{\pa E_n}\HHH_n\,d\mu_n$.

First, we recall that 
\begin{equation}\label{w521}
w_n=\HHH_n+4\gamma v_n \quad\text{on }\pa E_n\qquad\text{and}\qquad \sup_{n\in\N}\,\|v_n\|_{C^{1,\alpha}(\T^3)}<+\infty\,,
\end{equation}
by standard elliptic estimates. 
We want to show that 
\begin{equation}\label{eqcar15000}
\norma{w_n-\widetilde w_n}^2_{H^{1/2}(\pa E_n)}\leq\|w_n-\widehat w_n\|^2_{H^{1/2}(\pa E_n)}.
\end{equation}
To this aim, we recall that for every constant $a$
\begin{equation}
\norma{w_n - a}_{L^2(\pa E_n)}^2 = \norma{w_n}^2_{L^2(\pa E_n)} + a^2 \A(\pa E_n) - 2 a \int_{\pa E_n} w_n \, d\mu_n
\end{equation}
then, 
\begin{equation}
\frac{d}{da} \norma{w_n -a}_{L^2(\pa E_n)}^2= 2a \A(\pa E_n) - 2 \int_{\pa E_n} w_n \, d\mu_n .
\end{equation}
The above equality vanishes if and only if $a= \fint _{\pa E_n} w_n \, d\mu_n$, hence, 
$$
\norma{w_n -\widetilde w_n}_{L^2(\pa E_n)} = \min_{a\in\R}\norma{w_n -a}_{L^2(\pa E_n)}
$$
and inequality~\eqref{eqcar15000} follows by the definition of $\Vert\cdot\Vert_{H^{1/2}(\pa E_n)}$ and the observation on the Gagliardo seminorms just before Lemma~\ref{nicola1}.\\
Then, from the {\em trace inequality} (see~\cite{Ev}), which holds with a ``uniform'' constant $C=C(F,M,\alpha)$, for all the sets $E\in\mathfrak{C}^{1,\alpha}_M(F)$ (see~\cite{DDM}), we obtain
\begin{equation}\label{w522}
\|w_n-\widetilde w_n\|^2_{H^{1/2}(\pa E_n)}\leq\|w_n-\widehat w_n\|^2_{H^{1/2}(\pa E_n)}\leq C\int_{\T^3}|\nabla w_n|^2\, dx<C<+\infty
\end{equation}
with a constant $C$ independent of $n\in\N$.\\
We claim now that 
\begin{equation}\label{claim1111}
\sup_{n\in\N}\,\|\HHH_n\|_{H^{1/2}(\pa E_n)}<+\infty.
\end{equation}
To see this note that by the uniform $C^{1,\alpha}$--bounds on $\pa E_n$, we may find a fixed solid cylinder of the form $C=D\times(-L,L)$, with $D\subseteq\R^{2}$ a ball centered at the origin and functions $f_n$, with
\begin{equation}\label{w523}
\sup_{n\in\N}\|f_n\|_{C^{1,\alpha}(\overline D)}<+\infty\,,
\end{equation}
such that $\pa E_n\cap C=\{(x',x_n)\in D\times(-L,L):\, x_n= f_n(x')\}$ with respect to a suitable coordinate frame (depending on $n\in\N$).  Then,
\begin{equation}
\int_{D}(\HHH_n-\widetilde{\HHH}_n)\, dx'+ \widetilde{\HHH}_n\,{\mathrm{Area}}(D)= \int_{D}\Div\Bigl(\frac{\nabla_{x'} f_n}{\sqrt{1+|\nabla_{x'} f_n|^2}}\Bigr)\, dx'=\int_{\pa D}\frac{\nabla_{x'} f_n}{\sqrt{1+|\nabla_{x'} f_n|^2}}\cdot \frac{x'}{|x'|}\, d\sigma\,, \label{intHn}
\end{equation}
where $\sigma$ is the canonical (standard) measure on the circle $\pa D$.\\
Hence, recalling the uniform bound~\eqref{w523} and the fact that $\|\HHH_n- \widetilde{\HHH}_n\|_{H^{1/2}(\pa E_n)}$ are equibounded thanks to inequalities~\eqref{w521} and~\eqref{w522}, we get that ${\widetilde{\HHH}_n}$ are also equibounded (by a standard ``localization'' argument, ``uniformly'' applied to all the hypersurfaces $\pa E_n$). Therefore, the claim~\eqref{claim1111} follows.\\
By applying the Sobolev embedding theorem on each connected component of $\partial F$, we have that 
$$
\norma{\HHH_n}_{L^p(\pa E_n)} \leq C \norma{\HHH_n}_{H^\frac{1}{2}(\pa E_n)} <C<+\infty\qquad \text{for all $p \in [1,4]$.}
$$
for a constant $C$ independent of $n\in\N$.\\
Now, by means of Calder\'on--Zygmund estimates, it is possible to show (see~\cite{DDM}) that there exists a constant $C>0$ depending only on $F$, $M$, $\alpha$ and $p>1$ such that for every $E\in \mathfrak{C}^{1,\alpha}_M(F)$, there holds 
\begin{equation}\label{CZG}
\norma{B}_{L^p(\pa E)} \leq C(1+  \norma{\HHH}_{L^p(\pa E)})\,.
\end{equation}
Then, if we write 
$$
\pa E_n =\{y+\psi_n(y)\nu_F(y):\, y\in \pa F\}\,,
$$ 
we have $\sup_{n\in\N}\|\psi_n\|_{W^{2,p}(\pa F)}<+\infty$, for all $p \in [1,4]$. Thus, by the Sobolev compact embedding $W^{2,p}(\pa F)\hookrightarrow C^{1,\alpha}(\pa F)$, up to a subsequence (not relabeled), there exists a set $F'\in \mathfrak{C}^{1,\alpha}_M(F)$ such that 
$$
\psi_n\to \psi_{F'} \text{ in $C^{1,\alpha}(\pa F)$}\quad\text{and}\quad v_n\to v_{F'}\text{ in $C^{1,\beta}(\T^3)$ }
$$
for all $\alpha\in (0,1/2)$ and $\beta\in (0,1)$.\\
From estimate~\eqref{claim1111} and Lemma~\ref{5.2} (possibly choosing a smaller $M$), we have then that the functions $\psi_n$ are bounded in $W^{5/2,2}(\pa F)$. Hence, possibly passing to another subsequence (again not relabeled), we conclude that $E_n \to F'$ in $W^{2,p}$ for every $p\in[1,4)$, by the Sobolev compact embedding (see for instance Theorem~$6.7$ in~\cite{NePaVa}, with $q\in[1,4)$, $s=1/2$ and $p=2$, applied to ${\mathrm{Hess}}\, \psi_n$).

If moreover we have
$$
\int_{\T^3}|\nabla w_n|^2\, dx\to 0 \, ,
$$
then, the above arguments yield the existence of $\lambda\in \R$ and a subsequence (not relabeled) such that $w_n\big(\cdot + \psi_n(\cdot)\nu_F(\cdot)\big)\to \lambda$ in $H^{1/2}(\pa F)$.
In turn,
$$
\HHH_n\big(\cdot + \psi_n(\cdot)\nu_F(\cdot)\big)\to \lambda-4\gamma v_{F'}\big(\cdot + \psi_{F'}(\cdot)\nu_F(\cdot)\big)=
\HHH\big(\cdot + \psi_{F'}(\cdot)\nu_F(\cdot)\big)
$$
in $H^{1/2}(\pa F)$, where $\HHH$ is the mean curvature of $F'$. Hence $F'$ is critical.\\
To conclude the proof we then only need to show that $\psi_n$ converge to $\psi =\psi_{F'}$ in $W^{5/2,2}(\pa F)$.\\
Fixed $\delta>0$, arguing as in the proof of Lemma~\ref{5.2}, we reduce ourselves to the case where the functions $\psi_n$ are defined on a disk $D\subseteq\R^2$, are bounded in $W^{5/2,2}(D)$, converge in $W^{2,p}(D)$ for all $p\in[1,4)$ to $\psi\in W^{5/2,2}(D)$ and  $\|\nabla\psi\|_{L^\infty(D)}\leq\delta$. 
Then, fixed a smooth cut--off function $\varphi$ with compact support in $D$ and equal to one on a smaller disk $D'\subseteq D$, we have
\begin{align*}
\frac{\Delta(\varphi\psi_n)}{\sqrt{1+|\nabla\psi_n|^2}}-\frac{\Delta(\varphi\psi)}{\sqrt{1+|\nabla\psi|^2}} =&\,(\nabla^2(\varphi\psi_n)-\nabla^2(\varphi\psi))\frac{\nabla\psi \nabla\psi}{(1+|\nabla\psi|^2)^{3/2}}\\
&\,+ \nabla^2(\varphi\psi_n)\Bigl(\frac{\nabla\psi_n\nabla\psi_n}{(1+|\nabla\psi_n|^2)^{3/2}}-\frac{\nabla\psi \nabla\psi}{(1+|\nabla\psi|^2)^{3/2}}\Bigr)\\
&\,+\varphi(\HHH_n-\HHH)+R(x,\psi_n,\nabla\psi_n)-R(x,\psi,\nabla\psi)\,,
\end{align*}
where $R$ is a smooth Lipschitz function. Then, using Lemma~\ref{nicola1} with $\beta\in(0,1/2)$, an argument similar to the one in the proof of Lemma~\ref{5.2} shows that
\begin{align*}
 &\bigg[\frac{\Delta(\varphi\psi_n)}{\sqrt{1+|\nabla\psi_n|^2}}-\frac{\Delta(\varphi\psi)}{\sqrt{1+|\nabla\psi|^2}} \bigg]_{H^{1/2}(D)}\leq C(M)\Bigl(\delta^2[\nabla^2(\varphi\psi_n)-\nabla^2(\varphi\psi)]_{H^{1/2}(D)}\\
& \qquad+\|\nabla^2(\varphi\psi_n)-\nabla^2(\varphi\psi)\|_{L^{\frac{4}{1+\beta}}(D)}\|\nabla\psi\|_{L^\infty(D)}^\beta\|\nabla^2\psi\|_{L^4(D)}^{1-\beta}+\\
& \qquad+\,[\nabla^2(\varphi\psi_n)]_{H^{1/2}(D)}\|\nabla\psi_n-\nabla\psi\|_{L^\infty(D)}\\
& \qquad+\|\nabla^2(\varphi\psi_n)\|_{L^{\frac{4}{1+\beta}}(D)}\|\nabla\psi_n-\nabla\psi\|_{L^\infty(D)}^\beta(\|\nabla^2\psi_n\|_{L^4(D)}+\|\nabla^2\psi\|_{L^4(D)})^{1-\beta}\\
& \qquad+\|\HHH_n-\HHH\|_{H^{1/2}(D)}+\|\psi_n-\psi\|_{W^{2,2}(D)}\Bigr)\,.
\end{align*}
Using Lemma~\ref{nicola1} again to estimate $[\Delta(\varphi\psi_n)-\Delta(\varphi\psi)]_{H^{1/2}(D)}$ with the seminorm on the left hand side of the previous inequality and arguing again as in the proof of Lemma~\ref{5.2}, we finally get
$$
[\nabla^2(\varphi\psi_n)-\nabla^2(\varphi\psi)]_{H^{1/2}(D)}\leq C(M)\Bigl(\|\psi_n-\psi\|_{W^{2,\frac{4}{1+\beta}}(D)}
+\|\nabla\psi_n-\nabla\psi\|_{L^\infty(D)}^\beta+\|\HHH_n-\HHH\|_{H^{1/2}(D)}\Bigr)\,,
$$
hence,
$$
[\nabla^2\psi_n-\nabla^2\psi]_{H^{1/2}(D')}\leq C(M)\Bigl(\|\psi_n-\psi\|_{W^{2,\frac{4}{1+\beta}}(D')}
+\|\nabla\psi_n-\nabla\psi\|_{L^\infty(D')}^\beta+\|\HHH_n-\HHH\|_{H^{1/2}(D')}\Bigr)\,,
$$
from which the conclusion follows, by the first part of the lemma with $p=4/(1+\beta)<4$ and a standard covering argument.
\end{proof}

\subsection{The modified Mullins--Sekerka flow -- The main theorem}\ \vskip.3em

We are ready to prove the long time existence/stability result.
 
\begin{thm}\label{existence}
Let $E\subseteq\T^n$ be a smooth strictly stable critical set for the nonlocal Area functional under a volume constraint and $N_\eps$ (with $\eps<1$) a tubular neighborhood of $\pa E$, as in formula~\eqref{tubdef}. For every $\alpha\in (0,1/2)$ there exists $M>0$ such that, if $E_0$ is a smooth set in $\mathfrak{C}^{1,\alpha}_M(E)$ satisfying $\vol( E_0)= \vol( E )$ and
$$ 
\int_{\T^3} \vert \nabla w_{E_0 }\vert^2\,dx \leq M\,
$$
where $w_0=w_{E_0}$ is the function relative to $E_0$ as in problem~\eqref{WE}, then the unique smooth solution $E_t$ of the modified Mullins--Sekerka flow (with parameter $\gamma\geq 0$) starting from $E_0$, given by Theorem~\ref{th:EscNis}, is defined for all $t\geq0$. Moreover, $E_t\to E+\eta$ exponentially fast in $W^{5/2,2}$ as $t\to +\infty$ (recall the definition of convergence of sets at the beginning of Subsection~\ref{stabsec}), for some $\eta\in \R^3$, with the meaning that the functions 
$\psi_{\eta, t} : \pa E+ \eta \to \R$ representing $\pa E_t$ as ``normal graphs'' on $\pa E + \eta$, that is,
$$
\pa E_t= \{ y+ \psi_{\eta,t} (y) \nu_{E+\eta}(y) \, : \, y \in \pa E+\eta \},
$$
satisfy
$$
\Vert \psi_{\eta, t}\Vert_{W^{5/2,2}(\pa E + \eta)}\leq Ce^{-\beta t},
$$
for every $t\in[0,+\infty)$, for some positive constants $C$ and $\beta$.
\end{thm}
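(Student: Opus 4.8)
\emph{Strategy and setup.} The plan is to combine the two energy identities of Lemma~\ref{calculations} with the uniform coercivity of the second variation near $E$ (Proposition~\ref{2.6}) in a continuity argument. By Theorem~\ref{th:EscNis} there is a unique smooth flow on a maximal interval $[0,T)$ which lies in $\mathfrak{C}^{2,\alpha}_{M_1}(E)$ for $t$ small; I would fix a threshold $M_1\in(M,\eps/2)$ and a constant $\Lambda$, and let $T^{\ast}\in(0,T]$ be the supremum of the times $\tau$ such that $E_t\in\mathfrak{C}^{1,\alpha}_{M_1}(E)$ and $\|\HHH_t\|_{H^{1/2}(\pa E_t)}\le\Lambda$ throughout $[0,\tau]$. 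On $[0,T^{\ast})$ identity~\eqref{der of J} shows that $t\mapsto J(E_t)$ is non--increasing, and by Theorem~\ref{W2pMin} it stays $\ge J(E)$; since $J$ is continuous on $\mathfrak{C}^{1,\alpha}_M(E)$, the quantity $J(E_0)-J(E)$ is as small as we wish provided $M$ is small, so that integrating~\eqref{der of J} gives $\int_0^{T^{\ast}}\!\int_{\T^3}|\nabla w_t|^2\,dx\,dt\le J(E_0)-J(E)$, small.

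\emph{The dissipation inequality and the main obstacle.} The core is to derive on $[0,T^{\ast})$ a differential inequality for the dissipation itself. Starting from~\eqref{der of dw}, the cubic remainder $\tfrac12\int_{\pa E_t}(\partial_{\nu_t}w_t^{+}+\partial_{\nu_t}w_t^{-})[\partial_{\nu_t}w_t]^2\,d\mu_t$ is controlled, via Lemma~\ref{harmonic estimates}~(ii)--(iii) applied to $u=w_t-\widehat w_t$ with $f=-[\partial_{\nu_t}w_t]$ together with Sobolev and interpolation inequalities on the surface $\pa E_t$, by $\omega\,\|[\partial_{\nu_t}w_t]\|_{H^1(\pa E_t)}^2$ with $\omega$ small because $\int_{\T^3}|\nabla w_t|^2\,dx=\|[\partial_{\nu_t}w_t]\|_{\widetilde H^{-1/2}(\pa E_t)}^2$ is small. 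Since the volume is preserved, $[\partial_{\nu_t}w_t]\in\widetilde H^1(\pa E_t)$, so Proposition~\ref{2.6} yields $\Pi_{E_t}([\partial_{\nu_t}w_t])\ge\sigma_\theta\|[\partial_{\nu_t}w_t]\|_{H^1(\pa E_t)}^2$ \emph{as soon as} the $L^2$--distance of $[\partial_{\nu_t}w_t]$ from $\{\langle\eta|\nu_{E_t}\rangle:\eta\in\OO_E\}$ is $\ge\theta\|[\partial_{\nu_t}w_t]\|_{L^2(\pa E_t)}$. The main obstacle is precisely the complementary ``degenerate'' regime, forced by the translation invariance of $J$: there one splits off the projection of $[\partial_{\nu_t}w_t]$ onto the translational directions, reads it as the normal component of a translational velocity, and defines $\eta(t)=\int_0^t(\text{that velocity})\,ds$; the time--integrated dissipation bound and Cauchy--Schwarz give $\dot\eta\in L^1(0,\infty)$, hence $\eta(t)\to\eta_\infty$, while modulo this drift the shape dissipation stays coercive. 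Combining the two cases with the elementary estimate $\int_{\T^3}|\nabla w_t|^2\,dx\le C\|[\partial_{\nu_t}w_t]\|_{H^1(\pa E_t)}^2$ and Gronwall's lemma, one obtains $\int_{\T^3}|\nabla w_t|^2\,dx\le Ce^{-\beta t}$ on $[0,T^{\ast})$.

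\emph{Closing the continuity argument.} Writing $w_t=\HHH_t+4\gamma v_t$ on $\pa E_t$, with $v_t$ uniformly bounded in $C^{1,\alpha}(\T^3)$ by elliptic estimates, and using the trace inequality on $\T^3$ with constants uniform over $\mathfrak{C}^{1,\alpha}_{M_1}(E)$ (as in the proof of Lemma~\ref{w52conv}), one gets $\|\HHH_t-\widetilde{\HHH}_t\|_{H^{1/2}(\pa E_t)}^2\le C\int_{\T^3}|\nabla w_t|^2\,dx$, and the mean $\widetilde{\HHH}_t$ is controlled by a fixed localization argument; together with the exponential decay just proved this makes $\|\HHH_t\|_{H^{1/2}(\pa E_t)}$ return below $\Lambda$ for $t$ bounded away from $0$, and then Lemma~\ref{5.2} bounds $\|\psi_{E_t}\|_{W^{5/2,2}(\pa E)}$ and hence, by Sobolev embedding, the $C^{1,\alpha}$--norm of $\psi_{E_t}$ strictly below $M_1$. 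Thus $E_t$ cannot exit the class at $T^{\ast}$, so $T^{\ast}=T$, and since the $\mathfrak{C}^{2,\alpha}$--bounds are propagated the flow is global, $T=+\infty$.

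\emph{Passing to the limit.} Finally, for any sequence $t_n\to+\infty$, Lemma~\ref{w52conv} (whose hypothesis $\int_{\T^3}|\nabla w_{E_{t_n}}|^2\,dx\to0$ now holds) gives a subsequence along which $E_{t_n}\to F'$ in $W^{5/2,2}$, with $F'$ critical for the volume--constrained $J$ and $W^{2,p}$--close to $E$; by Proposition~\ref{prop:nocrit}, $F'=E+\eta$ for some $\eta\in\R^3$. Independence of the subsequence follows from the convergence $\eta(t)\to\eta_\infty$ established above; recentring at $E+\eta_\infty$ and applying Lemma~\ref{5.2} together with the trace inequality once more converts $\int_{\T^3}|\nabla w_t|^2\,dx\le Ce^{-\beta t}$ into $\|\psi_{\eta_\infty,t}\|_{W^{5/2,2}(\pa E+\eta_\infty)}\le Ce^{-\beta t}$, which is the assertion. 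The $C^{k}$--version promised in Remark~\ref{existence+} then follows by standard parabolic bootstrapping and interpolation.
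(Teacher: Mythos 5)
Your overall architecture is close to the paper's: energy identities from Lemma~\ref{calculations}, coercivity of $\Pi_{E_t}$ from Proposition~\ref{2.6}, a stopping--time continuity argument, and compactness via Lemma~\ref{w52conv} at the end. Several of your individual observations (the trace inequality $\|\HHH_t-\widetilde{\HHH}_t\|^2_{H^{1/2}(\pa E_t)}\le C\int_{\T^3}|\nabla w_t|^2\,dx$, the use of Lemma~\ref{5.2} to convert curvature bounds into $W^{5/2,2}$--bounds on $\psi$, the use of Proposition~\ref{prop:nocrit} to identify the limit as a translate) match the paper's Steps~3--6.

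There is, however, a genuine gap at the step you correctly identify as ``the main obstacle'': the degenerate regime forced by translation invariance. Your plan is to allow the degenerate case and handle it by splitting off the translational projection, defining a drift $\eta(t)$, and arguing that $\dot\eta\in L^1$ while ``modulo this drift the shape dissipation stays coercive.'' This is where the argument breaks. First, in the degenerate regime $\Pi_{E_t}([\pa_{\nu_t}w_t])$ can a priori be arbitrarily small, so the differential inequality you want for Gronwall simply is not available there; the phrase ``modulo the drift the shape dissipation stays coercive'' is the claim that needs a proof, and you have not supplied one. Second, even accepting the split, the $L^1(0,\infty)$ integrability of $\dot\eta$ cannot be extracted from the time--integrated dissipation bound $\int_0^{T^*}\|[\pa_{\nu_t}w_t]\|_{\widetilde H^{-1/2}}^2\,dt<\infty$ without exponential decay already in hand (Cauchy--Schwarz converts $L^2$ to $L^1$ only on a finite interval), and exponential decay is precisely what the Gronwall step was supposed to produce — so the argument is circular.

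What the paper does instead (Step~2 of the proof of Theorem~\ref{existence}) is show that the degenerate regime \emph{cannot occur at all}, by contradiction: if $[\pa_{\nu_t}w_t]=\langle\eta_t|\nu_t\rangle+g$ with $\|g\|_{L^2}<\theta\|[\pa_{\nu_t}w_t]\|_{L^2}$, then, using the translation invariance of $J$ (so that $\int_{\pa E_t}w_t\langle\eta_t|\nu_t\rangle\,d\mu_t=0$) and the trace inequality, one gets $\int_{\T^3}|\nabla w_t|^2\,dx\le C\theta^2\|[\pa_{\nu_t}w_t]\|_{L^2}^2$; feeding this back through the harmonic extension of $\langle\eta_t|\nu_t\rangle$ yields $\|\langle\eta_t|\nu_t\rangle\|_{L^2}^2\le C\theta\|[\pa_{\nu_t}w_t]\|_{L^2}^2$, which for $\theta$ small contradicts the $L^2$--orthogonal decomposition $\|\langle\eta_t|\nu_t\rangle\|_{L^2}^2>(1-\theta^2)\|[\pa_{\nu_t}w_t]\|_{L^2}^2$. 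This makes the coercivity $\Pi_{E_t}([\pa_{\nu_t}w_t])\ge\sigma_\theta\|[\pa_{\nu_t}w_t]\|_{H^1}^2$ hold \emph{unconditionally} on $[0,\overline T)$, so the Gronwall argument goes through directly and the drift is then controlled a posteriori, by integrating the resulting exponential bound against $\frac{d}{dt}D_\eta(E_t)$ in Step~6. You need an argument of this type — or a genuine substitute — before the rest of your plan can close.
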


\begin{remark}\label{existence+}
With some extra effort, arguing as in the proof of Theorem~5.1 in~\cite{FusJulMor18} (last part -- see also Theorem~4.4 in the same paper), it can be shown that the convergence of $E_t\to E+\eta$ is actually smooth (see also Remark~\ref{existence2+}). Indeed, by means of standard parabolic estimates and interpolation (and Sobolev embeddings) the exponential decay in $W^{5/2,2}$ implies analogous estimates in $C^k$, for every $k\in\N$,
$$
\Vert \psi_{\eta, t}\Vert_{C^k(\pa E + \eta)}\leq C_ke^{-\beta_k t},
$$
for every $t\in[0,+\infty)$, for some positive constants $C_k$ and $\beta_k$.
\end{remark}

\begin{remark}\label{closedness}
We already said that the property of a set $E_0$ to belong to $\mathfrak{C}^{1,\alpha}_M(E)$ is a ``closedness'' condition in $L^1$ of $E_0$ and $E$ and in $C^{1,\alpha}$ of their boundaries. The extra condition in the theorem on the $L^2$--smallness of the gradient of $w_0$ (see the second part of Lemma~\ref{w52conv} and its proof) implies that the quantity $\HHH_0+4\gamma v_0$ on $\pa E_0$ is ``close'' to be constant, as it is the analogous quantity for the set $E$ (or actually for any critical set). Notice that this is a second order condition for the boundary of $E_0$, in addition to the first order one $E_0\in \mathfrak{C}^{1,\alpha}_M(E)$.
\end{remark}

\begin{proof}[Proof of Theorem~\ref{existence}]
Throughout the whole proof $C$ will denote a constant depending only on $E$, $M$ and $\alpha$, whose value may vary from line to line.
 
Assume that the modified Mullins--Sekerka flow $E_t$ is defined for $t$ in the maximal time interval $[0,T(E_0))$, where $T(E_0)\in (0,+\infty]$ and let the moving boundaries $\pa E_t$ be represented as ``normal graphs'' on $\pa E$ as
$$
\pa E_t= \{ y+ \psi_t(y) \nu_{E}(y) \, : \, y \in \pa E\},
$$
for some smooth functions $\psi_t:\pa E\to \R$. As before we set $\nu_t=\nu_{E_t}$, $v_t=v_{E_t}$ and $w_t=w_{E_t}$.\\
We recall that, by Theorem~\ref{th:EscNis}, for every $F\in \mathfrak{C}^{2,\alpha}_M(E)$, the flow is defined in the time interval $[0, T)$, with $T=T(E,M,\alpha)>0$.\\
We show the theorem for the smooth sets $E_0\subseteq\T^3$ satisfying
\begin{equation}
\vol(E_0\triangle E)\leq M_1,\quad\|\psi_0\|_{C^{1,\alpha}(\pa E)}\leq M_2\quad\text{and}\quad \int_{\T^3} |\nabla w_0|^2\, dx\leq M_3\,,
\end{equation}
for some positive constants $M_1,M_2,M_3$, then we get the thesis by setting $M=\min\{M_1,M_2,M_3\}$.\\
For any set $F\in \mathfrak{C}^{1,\alpha}_{M}(E)$ we introduce the following quantity
\begin{equation}\label{D(F)0}
D(F)=\int_{F\Delta E}d(x, \pa E)\, dx=\int_F d_E\, dx-\int_E d_E\, dx,
\end{equation}
where $d_E$ is the signed distance function defined in formula~\eqref{sign dist}. We observe that 
$$
\vol(F\Delta E)\leq C\|\psi_F\|_{L^1(\pa E)} \leq C\|\psi_F\|_{L^2(\pa E)}
$$
for a constant $C$ depending only on $E$ and, as $F\subseteq N_\eps$, 
\begin{equation}\label{D(F)0bis}
D(F)\leq \int_{F\Delta E}\eps\, dx\leq \eps\vol(F\Delta E).
\end{equation}
Moreover,
\begin{align}
\|\psi_F\|_{L^2(\pa E)}^2&=2\int_{\pa E} \int_0 ^{|\psi_F(y)|} t \, dt\,d\mu(y) \\
&=2\int_{\pa E} \int_0^{|\psi_F(y)|} d(L(y,t),\pa E) \, dt\,d\mu(y) \\
&=2\int_{E\Delta F} d(x,\pa E)\,JL^{-1}(x)\,dx\\
&\leq C D(F).
\end{align}
where $L:\partial E\times (-\eps,\eps)\to N_\eps$ the smooth diffeomorphism defined in formula~\eqref{eqcar410} and $JL$ its Jacobian. As we already said, the constant $C$ depends only on $E$ and $\eps$. This clearly implies
\begin{equation}\label{D(F)}
\vol(F\Delta E)\leq C\|\psi_F\|_{L^1(\pa E)} \leq C\|\psi_F\|_{L^2(\pa E)}\leq C\sqrt{D(F)}\,.
\end{equation}
Hence, by this discussion, the initial smooth set $E_0\in\mathfrak{C}^{1,\alpha}_M(E)$ satisfies
$D(E_0)\leq M\leq M_1$ (having chosen $\eps<1$).\\
By rereading the proof of Lemma~\ref{w52conv}, it follows that for $M_2,M_3$ small enough, if $\Vert\psi_F\Vert_{C^{1,\alpha}(\pa E)}\leq M_2$ and
\begin{equation}\label{ex-de02}
\int_{\T^3} |\nabla w_{F}|^2\, dx \leq M_3\,,
\end{equation}
then, 
\begin{equation}\label{eqcar50001}
\|\psi_F\|_{W^{2,3}(\pa E)}\leq \omega(\max\{M_2,M_3\})\,,
\end{equation}
where $s\mapsto\omega(s)$ is a positive nondecreasing function (defined on $\R$) such that $\omega(s)\to 0$ as $s\to 0^+$. Hence,   
\begin{equation}\label{eqcar50003}
\|\nu_F\|_{W^{1,3}(\pa F)}\leq \omega'(\max\{M_2,M_3\})\,,
\end{equation}
for a function $\omega'$ with the same properties of $\omega$. Both $\omega$ and $\omega'$ only depend on $E$ and $\alpha$, for $M$ small enough.
 
We split the proof of the theorem into steps. 
 
\smallskip

\noindent \textbf{Step ${\mathbf 1}$} ({Stopping--time})\textbf{.}\\ Let $\overline T\leq T(E_0)$ be the maximal time such that 
\begin{equation}\label{Tprimo}
\vol(E_t\triangle E)\leq 2M_1,\quad\|\psi_t\|_{C^{1,\alpha}(\pa E)}\leq 2M_2\quad\text{and}\quad \int_{\T^3} |\nabla w_t|^2\, dx\leq 2M_3\,,
\end{equation}
for all $t\in [0, \overline T)$. Hence, 
\begin{equation}\label{eqcar50005}
\|\psi_t\|_{W^{2,3}(\pa E)}\leq \omega(2\max\{M_2,M_3\})\,
\end{equation}
for all $t\in [0, \overline T')$, as in formula~\eqref{eqcar50001}. Note that such a maximal time is clearly positive, by the hypotheses on $E_0$.\\
We claim that by taking $M_1,M_2,M_3$ small enough, we have $\overline T=T(E_0)$.
 
\smallskip

\noindent \textbf{Step ${\mathbf 2}$} ({Estimate of the translational component of the flow})\textbf{.}\\ We want to see that there exists a small constant $\theta>0$ such that
\begin{equation} \label{not a translation}
\min_{\eta\in\OO_E}\big\|\,[\pa_{\nu_t}w_t]- \langle\eta \, \vert \, \nu_t\rangle \big\|_{L^2(\pa E_t)}\geq \theta\bigl\|[\pa_{\nu_t}w_t]\bigr\|_{L^2(\pa E_t)}\qquad\text{for all }t\in [0, \overline T)\,,
\end{equation}
where $\OO_E$ is defined by formula~\eqref{OOeq}.\\
If $M$ is small enough, clearly there exists a constant $C_0=C_0(E,M,\alpha)>0$ such that, for every $i\in\II_E$, we have $\Vert \langle e_i|\nu_t\rangle\Vert_{L^2(\pa E_t)}\geq C_0>0$, holding $\Vert \langle e_i|\nu_E\rangle\Vert_{L^2(\pa E)}>0$. It is then easy to show that the vector $\eta_t\in\OO_E$ realizing such minimum is unique and satisfies
\begin{equation} \label{not a translation 2}
[\partial_{\nu_t} w_t] = \langle \eta_t\, \vert \nu_t \rangle + g,
\end{equation} 
where $g\in L^2(\pa E_t)$ is a function $L^2$--orthogonal (with respect to the measure $\mu_t$ on $\partial E_t$) to the vector subspace of $L^2(\pa E_t)$ spanned by $\langle e_i|\nu_t\rangle$, with $i\in\II_E$, where $\{e_1,\dots,e_3\}$ is the orthonormal basis of $\R^3$ given by Remark~\ref{rembase}. Moreover, the inequality 
\begin{equation}\label{eqcar50002}
|\eta_t|\leq C\bigl\|[\pa_{\nu_t}w_t]\bigr\|_{L^2(\partial E_t)}
\end{equation}
holds, with a constant $C$ depending only on $E$, $M$ and $\alpha$.\\
We now argue by contradiction, assuming $\|g\|_{L^2(\pa E_t)} < \theta \bigl\|[\pa_{\nu_t}w_t]\bigr\|_{L^2(\pa E_t)}$. First, by formula~\eqref{first var} and the translation invariance of the functional $J$, we have
$$
0=\frac{d}{ds}J(E_t+s\eta_t)\biggl|_{s=0}=\int_{\pa E_t}(\HHH_{t}+4\gamma v_t)\langle \eta_t \, \vert \, \nu_t\rangle\, d\mu_t=\int_{\pa E_t}w_t\langle \eta_t\, \vert \, \nu_t\rangle\, d\mu_t\,.
$$
It follows that, by multiplying equality~\eqref{not a translation 2} by $w_t-\widehat w_t$, with $\widehat w_t=\fint_{\T^3}w_t\, dx$ and integrating over $\partial E_t$, we get
\begin{align*}
\int_{\T^3}|\nabla w_t|^2 \, dx =&\,-\int_{\pa E_t}w_t [\pa_{\nu_t}w_t]\, d\mu_t\\
=&\, -\int_{\pa E_t}(w_t-\widehat w_t) [\pa_{\nu_t}w_t]\, d\mu_t\\
=&\,-\int_{\pa E_t}(w_t-\widehat w_t) g\, d\mu_t\\
\leq&\,\,\theta \| w_t - \widehat w_t \|_{L^2(\partial E_t)} \bigl\|[\pa_{\nu_t}w_t]\bigr\|_{L^2(\partial E_t)}.
\end{align*}
Note that in the second and the third equality above we have used the fact that $[\pa_{\nu_t}w_t]$ and $\nu_t$ have zero integral on $\pa E_t$.\\
By the trace inequality (see~\cite{Ev}), we have
\begin{equation}\label{tracecar}
\|w_t-\widehat w_t\|^2_{L^2(\pa E_t)}\leq\|w_t-\widehat w_t\|^2_{H^{1/2}(\pa E_t)}\leq C\int_{\T^3}|\nabla w_t|^2\, dx\,,
\end{equation}
hence, by the previous estimate, we conclude
\begin{equation}
\int_{\T^3}|\nabla w_t|^2 \, dx \leq C\theta^2\bigl\|[\pa_{\nu_t}w_t]\bigr\|_{L^2(\partial E_t)}^2.\label{trans}
\end{equation}
Let us denote with $f:\T^3\to\R$ the harmonic extension of $\langle \eta_t\, \vert \, \nu_t\rangle $ to $\T^3$, we then have
\begin{equation} \label{bound for harm ext}
\|\nabla f\|_{L^2(\T^3)} \leq C \|\langle\eta_t\,\vert\, \nu_t\rangle\|_{H^{{1}/{2}}(\partial E_t)} \leq C |\eta_t| \|\nu_t\|_{W^{1,3}(\pa E_t)}\leq 
C \bigl\|[\pa_{\nu_t}w_t]\bigr\|_{L^2(\partial E_t)}\,,
\end{equation}
where the first inequality comes by standard elliptic estimates (holding with a constant $C=C(E,M,\alpha)>0$, see~\cite{DDM} for details), the second is trivial and the last one follows by inequalities~\eqref{eqcar50003} and~\eqref{eqcar50002}.\\
Thus, by equality~\eqref{not a translation 2} and estimates~\eqref{trans} and~\eqref{bound for harm ext}, we get
\begin{align}
\|\langle \eta_t\,\vert \, \nu_t\rangle\|^2_{L^2(\pa E_t)}
&=\int_{\pa E_t}[\pa_{\nu_t}w_t]\langle \eta_t\,\vert\nu_t\rangle\, d\mu\\
&= - \int_{\T^3} \langle \nabla w_t\,\vert \, \nabla f\rangle \, dx\\
&\leq \left(\int_{\T^3} |\nabla w_t|^2 \,dx\right)^{1/2} \left(\int_{\T^3} |\nabla f|^2\, dx\right)^{1/2} \\
&\leq C \theta \bigl\|[\pa_{\nu_t}w_t]\bigr\|^2_{L^2(\partial E_t)}\,. 
\end{align}
If then $\theta>0$ is chosen so small that $C\theta+\theta^2 <1$ in the last inequality, then we have a contradiction with equality~\eqref{not a translation 2} and the fact that $\|g\|_{L^2(\pa E_t)}<\theta \bigl\|[\pa_{\nu_t}w_t]\bigr\|_{L^2(\pa E_t)}$, as they imply (by $L^2$--orthogonality) that
$$
\|\langle \eta_t\,\vert \, \nu_t\rangle\|^2_{L^2(\pa E_t)}>(1-\theta^2)\bigl\|[\pa_{\nu_t}w_t]\bigr\|_{L^2(\pa E_t)}^2\,.
$$
All this argument shows that for such a choice of $\theta$ condition~\eqref{not a translation} holds.\\
By Propositions~\ref{2.6} and~\ref{prop:nocrit}, there exist positive constants $\sigma_\theta$ and $\delta$ with the following properties: for any set $F\in \mathfrak{C}^{1,\alpha}_M(E)$ such that  
$\|\psi_F\|_{W^{2,3}(\pa E)}<\de$, there holds
\begin{equation}\label{de03}
\Pi_F(\varphi)\geq \sigma_\theta\| \varphi\|_{H^1(\pa F)}^2
\end{equation}
for all $\varphi\in \Htilde^1(\pa F)$ such that $\min_{\eta\in\OO_E}\|\varphi-\langle\eta\,\vert\,\nu_F\rangle\|_{L^2(\pa F)}\geq \theta\|\varphi\|_{L^2(\pa F)}$ and if $E'$ is critical, $\vol(E')=\vol(E)$ with $\|\psi_{E'}\|_{W^{2,3}(\pa E)}<\de$, then 
\begin{equation}\label{de04}
E'=E+\eta
\end{equation}
for a suitable vector $\eta\in \R^3$. We then assume that $M_2,M_3$ are small enough such that
\begin{equation}\label{de05}
\omega(2\max\{M_2,M_3\})<\de/2\,
\end{equation}
where $\omega$ is the function introduced in formula~\eqref{eqcar50001}.

\smallskip

\noindent \textbf{Step ${\mathbf 3}$} ({The stopping time $\overline T$ is equal to the maximal time $T(E_0)$})\textbf{.}\\ We show now that, by taking $M_1,M_2,M_3$ smaller if needed, we have $\overline T=T(E_0)$.\\
By the previous point and the suitable choice of $M_2,M_3$ made in its final part, formula~\eqref{not a translation} holds, hence we have
$$
\Pi_{E_t}\bigl([\pa_{\nu_t}w_t]\bigr)\geq \sigma_\theta\bigl\|[\pa_{\nu_t}w_t]\bigr\|_{H^1(\pa E)}^2\qquad \text{ for all $t\in [0, \overline T)$.}
$$
In turn, by Lemma~\ref{calculations} we may estimate 
\begin{equation*}
\frac{d}{dt} \left(\frac{1}{2} \int_{\T^3} |\nabla w_t|^2\, dx \right) \leq-\sigma_\theta \bigl\|[\pa_{\nu_t}w_t]\bigr\|_{H^1(\pa E_t)}^2+ \frac{1}{2}\int_{\partial E_t} (\partial_{\nu_t} w^+_t+ \partial_{\nu_t} w_t^-) [\partial_{\nu_t} w_t]^2 \, d \mu_t
\end{equation*}
for every $t \leq \overline T$.\\
It is now easy to see that
\begin{equation}\label{Lapw}
\Delta w_t= [\partial_{\nu_t} w_t]\mu_t\,,
\end{equation}
then, by point (iii) of Lemma~\ref{harmonic estimates}, we estimate the last term as
\begin{equation*}
\int_{\partial E_t} (\partial_{\nu_t} w^+_t+ \partial_{\nu_t} w_t^-) [\partial_{\nu_t} w_t]^2 \, d \mu_t\leq C \int_{\partial E_t} (|\partial_{\nu_t} w^+_t|^3 + |\partial_{\nu_t} w^-_t|^3) \, d \mu_t\leq C \int_{\partial E_t}\bigl| [\partial_{\nu_t} w_t]\bigr|^3 \, d \mu_t\,,
\end{equation*}
thus, the last estimate in the statement of Lemma~\ref{harmonic estimates} implies
\begin{equation*}
\bigl\|[\pa_{\nu_t}w_t]\bigr\|_{L^3(\partial E_t)} \leq C \bigl\|[\pa_{\nu_t}w_t]\bigr\|_{H^1(\partial E_t)}^{2/3} \|w_t - \widehat w_t\|_{L^2(\partial E_t)}^{1/3}.
\end{equation*}
Therefore, combining the last three estimates, we get 
\begin{align}
\frac{d}{dt} \int_{\T^3} |\nabla w_t|^2\, dx
\leq&\,-2\sigma_\theta \bigl\|[\pa_{\nu_t}w_t]\bigr\|_{H^1(\pa E_t)}^2+C\|w_t - \widehat w_t\|_{L^2(\partial E_t)} \bigl\|[\pa_{\nu_t}w_t]\bigr\|_{H^1(\pa E_t)}^2\nonumber\\
\leq&\,-\sigma_\theta\bigl\|[\pa_{\nu_t}w_t]\bigr\|_{H^1(\pa E_t)}^2,\label{quasi}
\end{align}
for every $t\in[0,\overline{T})$, where in the last inequality we used the trace inequality~\eqref{tracecar} 
\begin{equation}\label{tttt}
\Vert w_t - \widehat w_t \Vert^{2}_{L^2(\pa E_t)} \leq \Vert w_t - \widehat w_t \Vert^{2}_{H^{{1}/{2}}(\pa E_t)} \leq C \int_{\T^3} |\nabla w_t|^2 \, dx \leq 2CM_3,
\end{equation}
possibly choosing a smaller $M$ such that $2CM_3<\sigma_{\theta}$.\\
This argument clearly says that the quantity $\int_{\T^3} |\nabla w_t|^2\, dx$ is nonincreasing in time, hence, if $M_2,M_3$ are small enough, the inequality $\int_{\T^3} |\nabla w_t|^2\, dx\leq 2M_3$ is preserved during the flow. If we assume by contradiction that $\overline T< T(E_0)$, then it must happen that $\vol(E_{\overline{T}}\triangle E)=2M_1$ or $\|\psi_{\overline T}\|_{C^{1,\alpha}(\pa E)}=2M_2$. Before showing that this is not possible, we prove that actually the quantity $\int_{\T^3} |\nabla w_t|^2\, dx$ decreases (non increases) exponentially.\\
Computing as in the previous step,
\begin{align}
\int_{\T^3} |\nabla w_t|^2\, dx 
&=- \int_{\partial E_t} w_t [\partial_{\nu_t} w_t] \, d \mu_t\\
&= - \int_{\partial E_t} (w_t - \widehat w_t) [\partial_{\nu_t} w_t] \, d \mu_t \\
&\leq \| w_t - \widehat w_t \|_{L^2(\partial E_t)} \bigl\|[\pa_{\nu_t}w_t]\bigr\|_{L^2(\partial E_t)}\\
&\leq  C\left(\int_{\T^3}|\nabla w_t|^2\, dx\right)^{1/2}\bigl\|[\pa_{\nu_t}w_t]\bigr\|_{L^2(\partial E_t)},
\end{align}
where we used again the trace inequality~\eqref{tracecar}. Then,
\begin{equation*}
\int_{\T^3}|\nabla w_t|^2 \, dx \leq C\bigl\|[\pa_{\nu_t}w_t]\bigr\|_{L^2(\partial E_t)}^2 \leq C\norma{[\pa_{\nu_t} w_t]}^2_{H^1(\pa E_t)},
\end{equation*}
and combining this inequality with estimate~\eqref{quasi}, we obtain 
$$
\frac{d}{dt} \int_{\T^3} |\nabla w_t|^2\, dx\leq-c_0 \int_{\T^3} |\nabla w_t|^2\, dx,
$$
for every $t \leq \overline T$ and for a suitable constant $c_0\geq 0$. Integrating this differential inequality, we get
\begin{equation}\label{expfinalmente}
\int_{\T^3} |\nabla w_{ t}|^2\, dx \leq e^{-c_0 t}\int_{\T^3} |\nabla w_0|^2\, dx \leq M_3 e^{-c_0 t}\leq M_3\,,
\end{equation}
for every $t \leq \overline T$.\\
Then, we assume that $\vol(E_{\overline{T}}\triangle E)=2M_1$ or $\|\psi_{\overline T}\|_{C^{1,\alpha}(\pa E_{\overline{T}})}=2M_2$. Recalling formula~\eqref{D(F)0} and denoting by $X_t$ the velocity field of the flow (see Definition~\ref{def:smoothflow} and the subsequent discussion), we compute
\begin{align}
\frac{d}{dt}D(E_t)&=\frac{d}{dt}\int_{E_t} d_E\, dx= \int_{E_t}\Div(d_E X_t)\, dx= \int_{\pa E_t}d_E\langle X_t\vert\nu_t\rangle\, d\mu_t\\
&= \int_{\pa E_t}d_E [\pa_{\nu_t}w_t]\, d\mu_t=- \int_{\T^3}\left\langle\nabla h\,\vert\, \nabla w_t\right\rangle\, dx\,,
\end{align}
where $h$ denotes the harmonic extension of $d_E$ to $\T^3$. Note that, by standard elliptic estimates and the properties of the signed distance function $d_E$, we have
$$
\|\nabla h\|_{L^2(\T^3)}\leq C\|d_E\|_{C^{1,\alpha}(\pa E)}\leq C=C(E)\,,
$$
then, by the previous equality and formula~\eqref{expfinalmente}, we get
\begin{equation}\label{Deqcar}
\frac{d}{dt} D(E_t) \leq C \|\nabla w_t\|_{L^2(\T^3)}\leq C\sqrt{M_3\,} e^{-c_0t/2}\,,
\end{equation}
for every $t \leq \overline T$.
By integrating this differential inequality over $[0, \overline T)$ and recalling estimate~\eqref{D(F)}, we get
\begin{equation}\label{step33}
\vol(E_{\overline T}\triangle E)\leq C\|\psi_{\overline T}\|_{L^2(\pa E_{\overline{T}})}\leq C\sqrt{D(E_{\overline T})}\leq C\sqrt{D(E_0)+C\sqrt{M_3}}\leq C\sqrt[4]{M_3}\,,
\end{equation}
as $D(E_0)\leq M_1$, provided that $M_1,M_3$ are chosen suitably small. This shows that 
$\vol(E_{\overline{T}}\triangle E)=2M_1$ cannot happen if we chose $C\sqrt[4]{M_3}\leq M_1$.\\
By arguing as in Lemma~\ref{w52conv} (keeping into account inequality~\eqref{Tprimo} and 
formula~\eqref{eqcar50001}), we can see that the $L^2$--estimate~\eqref{step33} implies a $W^{2,3}$--bound on $\psi_{\overline T}$ with a constant going to zero, keeping fixed $M_2$, as $\int_{\T^3} |\nabla w_{\overline{T}}|^2\, dx\to0$, hence, by estimate~\eqref{expfinalmente}, as $M_3\to0$. Then, by Sobolev embeddings, the same holds for $\|\psi_{\overline T}\|_{C^{1,\alpha}(\pa E_{\overline{T}})}$, hence, if $M_3$ is small enough, we have a contradiction with $\|\psi_{\overline T}\|_{C^{1,\alpha}(\pa E_{\overline{T}})}=2M_2$.\\
Thus, $\overline T=T(E_0)$ and 
\begin{equation}\label{finaldecay}
\vol(E_t\triangle E)\leq C\sqrt[4]{M_3}\,,\quad\quad\|\psi_t\|_{C^{1,\alpha}(\pa E_t)}\leq 2M_2\,,\quad\quad\int_{\T^3} |\nabla w_{t}|^2\, dx \leq  M_3e^{-c_0 t}\,,
\end{equation}
for every $t\in[0, T(E_0))$, by choosing $M_1,M_2,M_3$ small enough.

\smallskip

\noindent \textbf{Step ${\mathbf 4}$} ({Long time existence})\textbf{.}\\ We now show that, by taking $M_1,M_2,M_3$ smaller if needed, we have $T(E_0)=+\infty$, that is, the flow exists for all times.\\
We assume by contradiction that $T(E_0)<+\infty$ and we recall that, by estimate~\eqref{quasi} and the fact that $\overline T=T(E_0)$, we have
$$
\frac{d}{dt}\int_{\T^3} |\nabla w_t|^2\, dx +\sigma_\theta\bigl\|[\pa_{\nu_t}w_t]\bigr\|_{H^1(\pa E_t)}^2\leq 0
$$
for all $t\in [0,T(E_0))$. Integrating this differential inequality over the interval 
$$
\left[T(E_0)-{T}/2,T(E_0)-{T}/4\right]\,,
$$
where $T$ is given by Theorem~\ref{th:EscNis}, as we said at the beginning of the proof, we obtain
\begin{equation*}
\sigma_{{\theta}}\int_{T(E_0)-T/2}^{T(E_0)-T/4}\bigl\|[\pa_{\nu_t}w_t]\bigr\|_{H^1(\pa E_t)}^2\, dt\leq 
\int_{\T^3} |\nabla w_{T(E_0)-\frac{T}2}|^2\, dx- \int_{\T^3} |\nabla w_{T(E_0)-\frac{T}4}|^2\, dx\leq M_3\,,
\end{equation*}
where the last inequality follows from estimate~\eqref{finaldecay}. Thus, by the mean value theorem there exists $\overline t\in \left(T(E_0)-{T}/2,T(E_0)-{T}/4 \right)$ such that
$$
\bigl\|[\pa_{\nu_{\overline t}}w_{\overline t}] \bigr\|_{H^1(\pa E_t)}^2\leq \frac{4M_3}{T\sigma_\theta}\,.
$$
Note that for any smooth set $F\subseteq\T^3$, we have $\Vert v_{F}\Vert_{C^1(\T^3)}\leq L$, for some ``absolute'' constant $L$ and that $w_F$ is constant, then, since $H^1(\pa E_{\overline t})$ embeds into $L^p(\pa E_{\widehat t})$ for all $p>1$, by Lemma~\ref{harmonic estimates}, we in turn infer that
\begin{align}
[\HHH_{\overline t}(\cdot + \psi_{\overline t}(\cdot)&\,\nu_E(\cdot))-\HHH_E]^2_{C^{0,\alpha}(\pa E)}\\
\leq&\,C [w_{\overline t}(\cdot + \psi_{\overline t}(\cdot)\nu_E(\cdot))-w_E]^2_{C^{0,\alpha}(\pa E)}\\
&\,+ C [v_{\overline t}(\cdot + \psi_{\overline t}(\cdot)\nu_E(\cdot))-v_{\overline t}]^2_{C^{0,\alpha}(\pa E)} + C[v_{\overline t} - v_E]^2_{C^{0,\alpha}(\pa E)}\\
\leq&\,C [w_{\overline t}]^2_{C^{0,\alpha}(\pa E_{\overline t})}\norma{\psi_{\overline t}}^2_{C^{1,\alpha}(\pa F)} 
+ C L^2 \norma{\psi_{\overline t}}^2_{C^{1,\alpha}(\pa F)} + C \norma{u_{\overline t}-u_E}^2_{L^2(\T^3)}\\
\leq&\, C \frac{M_3}{T \sigma_\theta} + C L^2 \norma{\psi_{\overline t}}^2_{C^{1,\alpha}(\pa E)} + C\vol(E_{\overline t} \triangle E)^2 \, ,
\end{align} 
where $[\cdot ]_{C^{0,\alpha}(\pa E_{\overline t})}$ and $[\cdot]_{C^{0,\alpha}(\pa E)}$ stand for the $\alpha$--H\"older seminorms on $\pa E_{\overline t}$ and $\pa E$, respectively and remind that $v_{\overline t},v_E$ are the potentials, defined by formula~\eqref{potential1}, associated to 
$u_{\overline{t}}= \chi_{\text{\raisebox{-.5ex}{$\scriptstyle E_{\overline{t}}$}}} - \chi_{\text{\raisebox{-.5ex}{$\scriptstyle \T^n \setminus E_{\overline{t}}$}}}$ and $u_E= \chi_{\text{\raisebox{-.5ex}{$\scriptstyle E$}}} - \chi_{\text{\raisebox{-.5ex}{$\scriptstyle \T^n \setminus E$}}}$.\\ 
By means of Schauder estimates (as Calder\'on--Zygmund inequality implied estimate~\eqref{CZG}), it is possible to show (see~\cite{DDM}) that there exists a constant $C>0$ depending only on $E$, $M$, $\alpha$ and $p>1$ such that for every $F\in \mathfrak{C}^{1,\alpha}_M(E)$, choosing even smaller $M_1,M_2,M_3$, there holds 
\begin{equation}\label{CZGS}
\norma{B}_{C^{0,\alpha}(\pa F)} \leq C(1+  \norma{\HHH}_{C^{0,\alpha}(\pa F)})\,.
\end{equation}
Hence, by the above discussion, we can conclude that $E_{\overline t}\in \mathfrak{C}^{2,\alpha}_M(E)$. Therefore, the maximal time of existence of the classical solution starting from $E_{\overline t}$ is at least $T$, which means that the flow $E_t$ can be continued beyond $T(E_0)$, which is a contradiction.

\smallskip

\noindent \textbf{Step ${\mathbf 5}$} ({Convergence, up to subsequences, to a translate of $E$})\textbf{.}\\ Let $t_n\to +\infty$, then, by estimates~\eqref{finaldecay}, the sets $E_{t_n}$ satisfy the hypotheses of Lemma~\ref{w52conv}, hence, up to a (not relabeled) subsequence we have that there exists a critical set $E'\in \mathfrak{C}^{1,\alpha}_M(E)$ such that $E_{t_n}\to E'$ in $W^{5/2,2}$. Due to formulas~\eqref{eqcar50001} and~\eqref{de05} we also have $\|\psi_{E'}\|_{W^{2,3}(\pa E)}\leq \delta$ and $E'=E+\eta$ for some (small) $\eta\in \R^3$ (equality~\eqref{de04}).

\smallskip

\noindent \textbf{Step ${\mathbf 6}$} ({Exponential convergence of the full sequence})\textbf{.}\\ Consider now
$$
D_\eta(F)=\int_{F\Delta (E+\eta)}\mathrm{dist\,}(x, \pa E+\eta)\, dx\,.
$$
The very same calculations performed in Step~$3$ show that 
\begin{equation}\label{step6}
\Bigl\vert\frac{d}{dt} D_\eta(E_t)\Bigr\vert \leq C \|\nabla w_t\|_{L^2(\T^3)}\leq C\sqrt{M_3} e^{-{c_0}t/2}
\end{equation}
for all $t\geq0$, moreover, by means of the previous step, it follows $\lim_{t\to +\infty} D_\eta(E_t)=0$. In turn, by integrating this differential inequality and writing 
$$
\pa E_t=\{y+\psi_{\eta, t}(y)\nu_{E+\eta}(y): y\in \pa E+\eta\}\,,
$$
we get
\begin{equation}\label{step61}
\|\psi_{\eta, t}\|_{L^2(\pa E+\eta)}^2\leq C D_\eta(E_t)\leq\int_t^{+\infty}C\sqrt{M_3} e^{-c_0s/2}\, ds\leq  C\sqrt{M_3} e^{-c_0t/2}\,.
\end{equation}
Since by the previous steps $\|\psi_{\eta, t}\|_{W^{2,3}(\pa E+\eta)}$ is bounded, we infer from this inequality and interpolation estimates that also $\|\psi_{\eta, t}\|_{C^{1,\beta}(\pa E+\eta)}$ decays exponentially for all $\beta\in (0,1/3)$. Then, setting $p = \frac{2}{1 -\beta}$, we have, by estimates~\eqref{step61} and~\eqref{D(F)} (and standard elliptic estimates),
\begin{align}
\|v_t-v_{E+\eta}\|_{C^{1,\beta}(\T^3)} &\leq C \|v_t-v_{E+\eta}\|_{W^{2,p}(\T^3)} \leq C \|u_t-u_{E+\eta}\|_{L^{p}(\T^3)} \\
&\leq C \vol(E_t \triangle (E+\eta))^{{1}/{p}}\leq C \|\psi_{\eta, t}\|_{L^2(\pa E+\eta)}^{{1}/{p}}\\
&\leq CM_3^{{1}/{4p}} e^{-{c_0t}/{4p}t}\label{vdecay}
\end{align}
for all $\beta\in (0,1/3)$.
Denoting the average of $w_t$ on $\pa E_t$ by $\overline w_t$, as by estimates~\eqref{tracecar} and~\eqref{expfinalmente} (recalling the argument to show 
inequality~\eqref{eqcar15000}), we have that 
\begin{align}
\|w_t\big(\cdot + \psi_{\eta, t}(\cdot)\nu_{E+\eta}(\cdot)\big)-\overline w_t\|_{H^{1/2}(\pa E+\eta)}
&\leq C\|w_t-\overline w_t\|_{H^{1/2}(\pa E_t)}\|\psi_{\eta, t}\|_{C^1(\pa E+\eta)}\\
& \leq C\|\nabla w_t\|_{L^2(\T^3)}\\
&\leq C\sqrt{M_3} e^{-{c_0t}/2}\,.
\end{align}
It follows, taking into account inequality~\eqref{vdecay}, that 
\begin{equation}\label{quasiHdecay}
\bigl\|[\HHH_t\big(\cdot + \psi_{\eta, t}(\cdot)\nu_{E+\eta}(\cdot)\big)-\overline \HHH_t]
-[\HHH_{\pa E+\eta}-\overline \HHH_{\pa E+\eta}]\bigr\|_{H^{1/2}(\pa E+\eta)}\to0 
\end{equation}
exponentially fast, as $t\to+\infty$, where $\overline \HHH_t$ and $\overline \HHH_{\pa E+\eta}$ stand for the averages of $\HHH_t$ on $\pa E_t$ and of $\HHH_{\pa E+\eta}$ on $\pa E+\eta$, respectively.\\
Since $E_t\to E+\eta$ (up to a subsequence) in $W^{{5}/{2},2}$, it is easy to check that 
$\vert\overline \HHH_{t}-\overline \HHH_{\pa E+\eta}\vert\leq C\|\psi_{\eta, t}\|_{C^1(\pa E+\eta)}$ which decays exponentially, therefore, thanks to 
limit~\eqref{quasiHdecay}, we have
$$
\bigl\|\HHH_t\big(\cdot + \psi_{\eta, t}(\cdot)\nu_{E+\eta}(\cdot)\big) -\HHH_{\pa E+\eta}\bigr\|_{H^{1/2}(\pa E+\eta)}\to0
$$
exponentially fast.\\
The conclusion then follows arguing as at the end of Step~$4$.

\end{proof}

\subsection{A brief overview of the Neumann case}\label{Neucase}

Let $\Omega$ be a smooth bounded open subset of $\R^n$.
As before we consider the nonlocal Area functional
\beq\label{N.1}
J_N(E)=\A_{\Omega}(\pa E)+\gamma\int_{\Omega}|\nabla v_E|^2\, dx\,,
\eeq
for every $E\subseteq \Omega$ with $\pa E \cap \pa \Omega=$~\varempty, where $\gamma\geq 0$ is a real parameter and $v_E$ is the potential defined as follows, similarly to problem~\eqref{potential},
$$
\begin{cases}
-\Delta v_E=u_E-m\quad &\text{ in }\Omega\vspace{1pt}\\
\displaystyle\,\,\frac{\pa v_E}{\pa\nu_E}=0\quad &\text{ on } \pa\Omega\vspace{5pt}\\
\displaystyle\int_\Omega v_E\,dx=0\\
\end{cases}
$$
with $m=\fint_\Omega u_E\,dx$, $u_E= \chi_{\text{\raisebox{-.5ex}{$\scriptstyle E$}}} - \chi_{\text{\raisebox{-.5ex}{$\scriptstyle \Omega\setminus E$}}}$ and $\nu_E$ the outer unit normal to $E$.\\
As in formula~\eqref{G1}, we have
$$
\int_\Omega |\nabla v_E|^2\, dx=\int_\Omega\int_\Omega G(x,y)u_E(x)u_E(y)\, dxdy\,,
$$
where $G$ is the (distributional) solution of
$$
\begin{cases}
-\Delta_x G(x,y)=\delta_y -\frac1{\vol(\Omega)}\quad &\text{ for every $x\in\Omega$}\vspace{1pt}\\
\displaystyle \langle\nabla_x G(x,y)\vert \nu_E(x)\rangle=0\quad &\text{ for every $x\in\pa\Omega$}\vspace{5pt}\\
\displaystyle\int_\Omega G(x,y)\,dx=0
\end{cases}
$$
for every $y\in\Omega$.\\
Note that, unlike the ``periodic'' case (when the ambient is the torus $\T^n$), the functional $J_N$ is not translation invariant, therefore several arguments simplify.
The calculus of the first and second variations of $J_N$, under a volume constraint, is exactly the same as for $J$, then we say that a smooth set $E \subseteq \Omega$, with $\pa E \cap \pa \Omega =$~\varempty, is a \emph{critical set}, if it satisfies
the Euler--Lagrange equation 
$$
\HHH+4\gamma v_E=\lambda \qquad\text{on $\partial E$,}
$$
for a constant $\lambda \in \R$, instead, since $J_N$ is not translation invariant, the spaces $T(\pa E)$, $T^\perp (\pa E)$, and the decomposition~\eqref{decomp} are no longer needed and, defining the same quadratic form $\Pi_E$ as in formula~\eqref{Pieq}, we say that a smooth critical set $E$ is \emph{strictly stable} if 
$$
\Pi_E(\varphi)>0\qquad\text{for all $\varphi\in \Htilde^1(\pa E)\setminus \{0\}$. }
$$
Naturally, $E\subseteq \Omega$ is a \emph{local minimizer} if there exists a $\delta \geq 0$ such that 
$$
J_N(F)\geq J_N(E),
$$
for all $F\subseteq \Omega$, $\pa F \cap \pa\Omega=$~\varempty, $\vol(F)=\vol(E)$ and $\vol(E\triangle F)\leq \delta$.
Then, as in the periodic case, we have  a local minimality result with respect to  small $W^{2,p}$--perturbations. Precisely, the following (cleaner) counterpart to Theorem~\ref{W2pMin} holds (see also~\cite{JuPi}).

\begin{thm}\label{MinNeum}
Let $p>\max\{2, n-1\}$ and $E\subseteq\Omega$ a smooth strictly stable critical set for the nonlocal Area functional $J_N$ (under a volume constraint) with $N_\eps$ a tubular neighborhood of $E$ as in formula~\eqref{tubdef}. Then there exist constants $\delta,C>0$ such that
$$
J_N(F)\geq J_N(E)+C[\vol(E \triangle F)]^2\,,
$$
for all smooth sets $F\subseteq \T^n$ such that $\vol(F)=\vol(E)$, $\vol(F\triangle E)<\delta$, $\pa F \subseteq N_{\varepsilon}$ and
\begin{equation}
\pa F= \{y+\psi(y)\nu_E(y)\, : \, y \in \pa E\},
\end{equation}
for a smooth $\psi$ with $\norma{\psi}_{W^{2,p}(\pa E)} < \delta$.\\
As a consequence, $E$ is a $W^{2,p}$--local minimizer of $J_N$ (as defined above). Moreover, if $F$ is $W^{2,p}$--close enough to $E$ and $J_N(F)=J_N(E)$, then $F=E$, that is, $E$ is locally the unique $W^{2,p}$--local minimizer.
\end{thm}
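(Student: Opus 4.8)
The plan is to follow the scheme of the proof of Theorem~\ref{W2pMin}, which becomes considerably simpler in the Neumann setting because $J_N$ is not translation invariant: the degenerate subspace $T(\pa E)$ disappears, so the decomposition~\eqref{decomp}, the special basis of Remark~\ref{rembase}, and — crucially — the ``translation adjustment'' of Lemma~\ref{Lemma 3.8} are no longer needed. First I would observe that the second variation identity of Proposition~\ref{IIcritprop} holds verbatim here, so that along a volume--preserving variation $E_t$ of a critical set one has $\frac{d^2}{dt^2}J_N(E_t)|_{t=0}=\Pi_E(\langle X\vert\nu_E\rangle)$, with $\Pi_E$ the quadratic form~\eqref{Pieq} and $v_E$ now the Neumann potential. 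The starting point is then
$$
m_0=\inf\bigl\{\Pi_E(\varphi)\,:\,\varphi\in\Htilde^1(\pa E),\ \norma{\varphi}_{H^1(\pa E)}=1\bigr\}>0\,,
$$
proved exactly as in Step~$1$ of Theorem~\ref{W2pMin}: for a minimizing sequence $\varphi_i\wto\varphi_0$ weakly in $H^1(\pa E)$, if $\varphi_0\neq0$ then strict stability and lower semicontinuity give $m_0\ge\Pi_E(\varphi_0)>0$, while if $\varphi_0=0$ then $\varphi_i\to0$ in $L^2(\pa E)$ and $m_0=\lim\int_{\pa E}|\nabla\varphi_i|^2\,d\mu=\lim\norma{\varphi_i}_{H^1(\pa E)}^2=1$.

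Next I would establish persistence of positivity under $W^{2,p}$--perturbations: there is $\delta_1>0$ such that if $\pa F=\{y+\psi(y)\nu_E(y):y\in\pa E\}$ with $\norma{\psi}_{W^{2,p}(\pa E)}\le\delta_1$, $\vol(F)=\vol(E)$ and $\vol(F\triangle E)<\delta_1$, then $\Pi_F(\varphi)\ge m_0/2$ for all $\varphi\in\Htilde^1(\pa F)$ with $\norma{\varphi}_{H^1(\pa F)}=1$. This is the argument of Step~$2$ of Theorem~\ref{W2pMin}, run by contradiction with sets $F_i$, $\norma{\psi_i}_{W^{2,p}(\pa E)}\to0$, and $\varphi_i\in\Htilde^1(\pa F_i)$ with $\norma{\varphi_i}_{H^1(\pa F_i)}=1$ and $\Pi_{F_i}(\varphi_i)<m_0/2$: pulling back to $\widetilde\varphi_i\in\Htilde^1(\pa E)$ by the graph maps $\theta_i=\mathrm{Id}+\psi_i\nu_E$ and using $\theta_i\to\mathrm{Id}$ in $C^{1,\alpha}$, $\nu_{F_i}\circ\theta_i\to\nu_E$ in $C^{0,\alpha}$, $B_{\pa F_i}\circ\theta_i\to B_{\pa E}$ in $L^p$, together with $v_{F_i}\to v_E$ in $C^{1,\beta}$ from elliptic estimates for the Neumann problem and the corresponding convergence of the nonlocal term, one gets $\Pi_{F_i}(\varphi_i)-\Pi_E(\widetilde\varphi_i)\to0$ via the form~\eqref{Pieq2}; since $\norma{\widetilde\varphi_i}_{H^1(\pa E)}\to1$, this contradicts the definition of $m_0$ — and here, unlike in the toric case, no projection onto $T^\perp(\pa E)$ is required. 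The same compactness scheme, carried out as in Proposition~\ref{2.6}, also yields the uniform bound $\Pi_F(\varphi)\ge\sigma\norma{\varphi}^2_{H^1(\pa F)}$ that will actually be used, and the ``no nearby critical sets'' statement of Proposition~\ref{prop:nocrit} specializes to: a critical $F$ which is $W^{2,p}$--close to $E$ with $\vol(F)=\vol(E)$ must equal $E$.

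Finally I would carry out Step~$3$. Given $F$ as in the statement with $\delta$ small, the analogue of Lemma~\ref{lemma1} (which applies unchanged, since the tubular neighborhood $N_\eps$ is compactly contained in $\Omega$ and $X$ may be extended to vanish near $\pa\Omega$) produces an admissible vector field $X$ with $\Div X=0$ in $N_\eps$ whose flow $\Phi_t$ satisfies $\Phi_1(\pa E)=\pa F$, giving a volume--preserving variation $E_t$ with $E_0=E$, $E_1=F$, each $E_t$ being $W^{2,p}$--close to $E$ by~\eqref{normflow}. Taylor expanding,
$$
J_N(F)-J_N(E)=\int_0^1(1-t)\,\frac{d^2}{dt^2}J_N(E_t)\,dt=\int_0^1(1-t)\bigl(\Pi_{E_t}(\langle X\vert\nu_{E_t}\rangle)+R_t\bigr)\,dt\,,
$$
where, by the second form of the remainder in Theorem~\ref{secondvar} and $\Div X=0$ in $N_\eps$, one has $R_t=-\int_{\pa E_t}(\HHH_t+4\gamma v_{E_t}-\lambda)\,\Div(X_{\tau_t}\langle X\vert\nu_{E_t}\rangle)\,d\mu_t$. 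Using Lemma~\ref{lemmastima} to bound $\norma{\Div(X_{\tau_t}\langle X\vert\nu_{E_t}\rangle)}_{L^{p/(p-1)}(\pa E_t)}\le C\norma{\langle X\vert\nu_{E_t}\rangle}^2_{H^1(\pa E_t)}$, and the fact that $\norma{\HHH_t+4\gamma v_{E_t}-\lambda}_{L^p(\pa E_t)}$ is as small as desired for $\delta$ small (since $E$ is critical and $E_t$ is $W^{2,p}$--close to it), the remainder is absorbed into $\Pi_{E_t}(\langle X\vert\nu_{E_t}\rangle)\ge\sigma\norma{\langle X\vert\nu_{E_t}\rangle}^2_{H^1(\pa E_t)}$, giving
$$
J_N(F)-J_N(E)\ge\tfrac{\sigma}{2}\int_0^1(1-t)\norma{\langle X\vert\nu_{E_t}\rangle}^2_{H^1(\pa E_t)}\,dt\ge C\norma{\langle X\vert\nu_E\rangle}^2_{L^2(\pa E)}\ge C\norma{\psi}^2_{L^2(\pa E)}\ge C[\vol(E\triangle F)]^2\,,
$$
the last chain exactly as at the end of Step~$3$ of Theorem~\ref{W2pMin}, now without any preliminary translation of $F$. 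The $W^{2,p}$--local minimality follows at once, and if moreover $J_N(F)=J_N(E)$ then $\vol(E\triangle F)=0$, hence $F=E$. The main obstacle is, as in the toric case, the compactness and term--by--term convergence analysis of $\Pi_{F_i}$ in the second paragraph — in particular verifying that the Neumann Green function admits the same local singularity decomposition and the same elliptic estimates as in the torus case — but no new phenomenon arises there, and everything else is a streamlined version of the already developed machinery.
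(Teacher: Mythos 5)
Your proof follows exactly the same route as the paper's own (very brief) sketch: run the argument of Theorem~\ref{W2pMin}, dropping Lemma~\ref{Lemma 3.8} and the preliminary translation of $F$, and replacing the restricted infimum in Step~$2$ by the infimum over all of $\Htilde^1(\pa E)$. The additional details you supply — the $L^1$--to--$L^2$ chain at the end, the observation that the Neumann Green function admits the same singular/smooth decomposition, and the remark that Lemma~\ref{lemma1} applies unchanged since $N_\eps$ is compactly contained in $\Omega$ — are all correct and merely flesh out what the paper leaves implicit.
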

\begin{proof}[Sketch of the proof.]
Following the line of proof of Theorem~\ref{W2pMin}, since the functional is not translation invariant we do not need Lemma~\ref{Lemma 3.8} and
inequality~\eqref{3.38}, proved in Step~2 of the proof of such theorem, simplifies to 
$$
\inf\Bigl\{\Pi_F(\varphi):\, \varphi\in \Htilde^1(\pa F)\,, \|\varphi\|_{H^1(\pa F)}=1\Bigr\}\geq\frac{m_0}2\,,
$$
where $m_0$ is the constant defined in formula~\eqref{m0}. The proof of this inequality then goes exactly as there.\\
Coming to Step~3 of the proof of Theorem~\ref{W2pMin}, we do not need inequality~\eqref{assumption}, thus we do not need to replace $F$ by a suitable translated set $F-\eta$. Instead, we only need to observe that inequality~\eqref{3.46} is still satisfied. The rest of the proof remains unchanged.
\end{proof}

The short time existence and uniqueness Theorem~\ref{th:EscNis}, proved in~\cite{EsNi} in any dimension, holds also in the ``Neumann case" for the modified Mullins--Sekerka flow with parameter $\gamma\geq0$, obtained (as in Definition~\ref{MSF def}) by letting the outer normal velocity $V_t$ of the moving boundaries given by
\begin{equation}\label{MSnl2}
V_t= [\partial_{\nu_t} w_{t}] \quad\text{ on $\partial E_t$ for all $t\in [0, T)$,}
\end{equation}
where $\nu_t=\nu_{E_t}$ and $w_t=w_{E_t}$ is the unique solution in $H^1(\Omega)$ of the problem
\begin{equation}\label{WE2}
\begin{cases}
\Delta w_{E_t}=0 & \text{in }\Omega\setminus \pa E_t\\
w_{E_t}= \HHH + 4\gamma v_{E_t} & \text{on } \, \partial E_t,
\end{cases}
\end{equation}
with $v_{E_t}$ the potential defined above and, as before, $[\pa_{\nu_t} w_t]$ is the jump of the outer normal derivative of $w_{E_t}$ on $\pa E_t$. 

Then, we conclude by stating the following analogue of Theorem~\ref{existence} (taking into account~Remark~\ref{existence+}).

\begin{thm}\label{mainN}
Let $\Omega$ be an open smooth subset of $\R^3$ and let $E\subseteq\Omega$ be a smooth strictly stable critical set for the nonlocal Area functional under a volume constraint, with $\pa E \cap \pa \Omega =$~{\rm{\varempty}} and $N_\eps$ (with $\eps<1$) a tubular neighborhood of $\pa E$, as in formula~\eqref{tubdef}. Then, for every $\alpha\in (0,1/2)$ there exists $M>0$ such that, if $E_0$ is a smooth set in $\mathfrak{C}^{1,\alpha}_M(E)$ satisfying $\vol( E_0)= \vol( E )$ and
$$
 \int_{\Omega}|\nabla w_{E_0}|^2\, dx\leq M\,
$$
where $w_0=w_{E_0}$ is the function relative to $E_0$ as in problem~\eqref{WE} (with $\Omega$ in place of $\T^3\setminus\pa E$), then, the unique smooth solution $E_t$ to  the Mullins--Sekerka flow (with parameter $\gamma\geq 0$) starting from $E_0$, given by Theorem~\ref{th:EscNis}, is  defined  for all $t\geq0$. Moreover,  $E_t\to E$ exponentially fast in $C^k$,as $t\to +\infty$, for every $k\in\N$, with the meaning that the functions 
$\psi_{\eta, t} : \pa E \to \R$ representing $\pa E_t$ as ``normal graphs'' on $\pa E$, that is,
$$
\pa E_t= \{ y+ \psi_{\eta,t} (y) \nu_{E+\eta}(y) \, : \, y \in \pa E\},
$$
satisfy, for every $k\in\N$,
$$
\Vert \psi_{\eta, t}\Vert_{C^k(\pa E + \eta)}\leq C_ke^{-\beta_k t},
$$
for every $t\in[0,+\infty)$, for some positive constants $C_k$ and $\beta_k$.
\end{thm}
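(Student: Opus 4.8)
The plan is to run, in the Neumann setting, the argument used for Theorem~\ref{existence}, exploiting the fact that $J_N$ is \emph{not} translation invariant: this removes the degenerate subspace $T(\pa E)$, the splitting~\eqref{decomp}, the ``estimate of the translational component'' (Step~$2$ of the proof of Theorem~\ref{existence}) and Lemma~\ref{Lemma 3.8} from the picture, and the asymptotic limit is $E$ itself rather than a translate. I would first record the Neumann counterpart of Lemma~\ref{calculations}: writing $w_t=w_{E_t}$ for the solution of problem~\eqref{WE2} (harmonic in $\Omega\setminus\pa E_t$, equal to $\HHH_t+4\gamma v_t$ on $\pa E_t$, with homogeneous Neumann datum on $\pa\Omega$), the computations in the proof of Lemma~\ref{calculations} --- relying on the first variation formula of Theorem~\ref{first var}, the evolution law~\eqref{derH} of the mean curvature, and the expression of $\pa_t v_t$ on $\pa E_t$ --- give
\begin{equation*}
\frac{d}{dt}J_N(E_t)=-\int_\Omega|\nabla w_t|^2\,dx
\end{equation*}
and
\begin{equation*}
\frac{d}{dt}\,\frac12\int_\Omega|\nabla w_t|^2\,dx=-\Pi_{E_t}\bigl([\pa_{\nu_t}w_t]\bigr)+\frac12\int_{\pa E_t}\bigl(\pa_{\nu_t}w_t^++\pa_{\nu_t}w_t^-\bigr)[\pa_{\nu_t}w_t]^2\,d\mu_t\,,
\end{equation*}
all boundary terms on $\pa\Omega$ dropping out by the Neumann condition and because $d(E_t,\pa\Omega)>0$ along the flow.

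Next I would note that every auxiliary estimate used in the proof of Theorem~\ref{existence} has a verbatim Neumann analogue, since all of them are \emph{local} around $\pa E$, which stays uniformly interior to $\Omega$: Lemma~\ref{harmonic estimates} (with the Neumann Green function of $\Omega$ in place of the toric one, still decomposed as $G=h+r$ near the diagonal), the trace and interpolation inequalities, the Calder\'on--Zygmund and Schauder bounds with constants uniform over $\mathfrak{C}^{1,\alpha}_M(E)$, as well as the compactness Lemma~\ref{w52conv} --- whose conclusion now reads: a subsequence converges in $W^{2,p}$, and in $W^{5/2,2}$ if $\int_\Omega|\nabla w_{E_n}|^2\,dx\to0$, to a critical set of $J_N$, with no translational ambiguity. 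Likewise, the Neumann versions of Proposition~\ref{2.6} and Proposition~\ref{prop:nocrit} hold in the cleaner form indicated in the sketch of Theorem~\ref{MinNeum}: near a strictly stable critical set $E$ one has $\Pi_F(\varphi)\ge\sigma\|\varphi\|_{H^1(\pa F)}^2$ for \emph{all} $\varphi\in\Htilde^1(\pa F)$ --- no constraint involving $\OO_E$ --- and the only critical set $W^{2,p}$--close to $E$ with the same volume is $E$ itself.

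I would then carry out the stopping-time scheme: let $\overline T\le T(E_0)$ be maximal such that $\vol(E_t\triangle E)\le2M_1$, $\|\psi_t\|_{C^{1,\alpha}(\pa E)}\le2M_2$ and $\int_\Omega|\nabla w_t|^2\,dx\le2M_3$ on $[0,\overline T)$, with $E_t$ represented as a normal graph $\psi_t$ over $\pa E$. The key simplification is that, by the uniform positivity above, $\Pi_{E_t}\bigl([\pa_{\nu_t}w_t]\bigr)\ge\sigma\|[\pa_{\nu_t}w_t]\|_{H^1(\pa E_t)}^2$ for all $t<\overline T$ \emph{directly}, with no orthogonality analysis. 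Inserting this into the second energy identity, bounding the cubic remainder via point~(iii) and the last estimate of Lemma~\ref{harmonic estimates} together with the trace inequality, and absorbing the small factor $\|w_t-\widehat w_t\|_{L^2(\pa E_t)}$, yields $\frac{d}{dt}\int_\Omega|\nabla w_t|^2\,dx\le-\sigma\|[\pa_{\nu_t}w_t]\|_{H^1(\pa E_t)}^2$ and then $\frac{d}{dt}\int_\Omega|\nabla w_t|^2\,dx\le-c_0\int_\Omega|\nabla w_t|^2\,dx$, hence $\int_\Omega|\nabla w_t|^2\,dx\le M_3e^{-c_0t}$. Setting $D_E(F)=\int_{F\triangle E}d(x,\pa E)\,dx$, differentiation along the flow gives $\bigl|\frac{d}{dt}D_E(E_t)\bigr|\le C\|\nabla w_t\|_{L^2(\Omega)}\le C\sqrt{M_3}\,e^{-c_0t/2}$, so $\vol(E_t\triangle E)\le C\sqrt{D_E(E_t)}$ stays well below $2M_1$, and the $W^{2,3}$--bound built into the proof of Lemma~\ref{w52conv} keeps $\|\psi_t\|_{C^{1,\alpha}(\pa E)}$ well below $2M_2$; hence $\overline T=T(E_0)$, and a mean-value argument over an interval of the fixed length $T$ of Theorem~\ref{th:EscNis}, combined with the Schauder bound~\eqref{CZGS}, produces a time $\overline t$ with $E_{\overline t}\in\mathfrak{C}^{2,\alpha}_M(E)$, so the flow continues past $T(E_0)$ unless $T(E_0)=+\infty$.

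Finally, for the asymptotics: along any $t_n\to+\infty$, the uniform bounds and $\int_\Omega|\nabla w_{t_n}|^2\,dx\to0$ let Lemma~\ref{w52conv} extract a subsequence with $E_{t_n}\to E'$ in $W^{5/2,2}$, $E'$ critical, $\vol(E')=\vol(E)$ and $\psi_{E'}$ small in $W^{2,3}$, whence $E'=E$ by the Neumann rigidity statement. Then $D_E(E_t)\to0$ together with $\bigl|\frac{d}{dt}D_E(E_t)\bigr|\le C\sqrt{M_3}\,e^{-c_0t/2}$ gives $\|\psi_t\|_{L^2(\pa E)}^2\le C D_E(E_t)\le C\sqrt{M_3}\,e^{-c_0t/2}$ for the whole orbit; interpolating against the uniform $W^{2,3}$--bound promotes this to exponential decay of $\|\psi_t\|_{C^{1,\beta}(\pa E)}$, then elliptic estimates for $v_t$, the trace estimate for the harmonic $w_t$ (whose average-subtracted boundary trace decays like $\int_\Omega|\nabla w_t|^2\,dx$), and the relation $w_t=\HHH_t+4\gamma v_t$ on $\pa E_t$ give exponential decay of $\|\HHH_t(\cdot+\psi_t(\cdot)\nu_E(\cdot))-\HHH_{\pa E}\|_{H^{1/2}(\pa E)}$, i.e.\ $E_t\to E$ exponentially in $W^{5/2,2}$. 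The upgrade to exponential $C^k$ convergence for every $k$ follows as in Remark~\ref{existence+}, by parabolic Schauder estimates and interpolation bootstrapped on the decay already obtained. The only real point needing care --- essentially bookkeeping --- is checking that Lemma~\ref{harmonic estimates}, the uniform functional inequalities on $\mathfrak{C}^{1,\alpha}_M(E)$, and Lemma~\ref{w52conv} survive the passage from $\T^3$ to the bounded Neumann domain $\Omega$; since $\pa E$ remains at positive distance from $\pa\Omega$ throughout, this is routine, and in fact the Neumann case is \emph{simpler} than the toric one of Theorem~\ref{existence}.
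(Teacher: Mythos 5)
Your proposal matches the paper's own (brief) proof exactly: the paper dispatches Theorem~\ref{mainN} by stating that the proof is the same as that of Theorem~\ref{existence} but simpler, precisely because the Neumann functional $J_N$ is not translation invariant, so Step~$2$ (the translational-component estimate), the decomposition~\eqref{decomp}, Lemma~\ref{Lemma 3.8}, and the $\OO_E$--restricted form of Proposition~\ref{2.6} all become unnecessary, and Proposition~\ref{prop:nocrit} gives $E'=E$ directly. Your elaboration of the details --- Neumann energy identities with boundary terms on $\pa\Omega$ vanishing, uniform estimates in $\mathfrak{C}^{1,\alpha}_M(E)$ surviving because $\pa E$ stays uniformly interior to $\Omega$, the compactness lemma without translational ambiguity, the stopping-time/exponential-decay scheme, and the $C^k$ upgrade via Remark~\ref{existence+} --- is a faithful and correct unpacking of the same argument.
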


The proof of this result is similar to the one of Theorem~\ref{existence} and actually it is simpler since we do not need the argument used in Step~2 of such proof, where we controlled the translational component of the flow. Note also that in the statement of Proposition~\ref{2.6}, in this case, inequality~\eqref{2.13} holds for all $\varphi\in\Htilde^1(\pa F)$.  Finally, observe that under the hypotheses of Proposition~\ref{prop:nocrit} we may actually conclude that $E^\prime=E$, that is, there are no other critical sets close to $E$.

\subsection{The surface diffusion flow -- Preliminary lemmas}\ \vskip.3em

As for the modified Mullins--Sekerka flow, we start with the technical lemmas for the global existence result.

\begin{lem}[Energy identities] \label{calculations2}
Let $E_t\subseteq\T^n$ be a surface diffusion flow. Then, the  following 
identities hold:
\begin{equation}
\label{der of A}
\frac{d}{dt} \A(\pa E_t) = -  \int_{\pa E_t} \abs{\nabla\HHH_t}^2\, \dmu_t\,,
\end{equation}
and
\begin{align}
\frac{d}{dt}\frac{1}{2}\int_{\pa E_t} \abs{ \nabla\HHH_t}^2\, \dmu_t= & -\Pi_{E_t}(\Delta_t\HHH_t )
-\int_{\partial E_t}  B_t (\nabla\HHH_t, \nabla\HHH_t)  \Delta_t\HHH_t \, \dmu_t\\
&+ \frac{1}{2}\int_{\partial E_t}\HHH_t \abs{\nabla\HHH_t}^2  \Delta_t\HHH_t\, \dmu_t\,, \label{der of DH}
\end{align}
where $\Pi_{E_t}$ is the quadratic form defined in formula~\eqref{Pieq} (with $\gamma=0$).
\end{lem}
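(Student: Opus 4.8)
The plan is to mimic exactly the structure of the proof of Lemma~\ref{calculations} (the energy identities for the modified Mullins--Sekerka flow), replacing the harmonic-extension potential $w_t$ by the mean curvature $\HHH_t$ and the jump $[\pa_{\nu_t}w_t]$ by $\Delta_t\HHH_t$, which is the normal velocity in this case. First I would parametrize the flow by the smooth family $\psi_t$ with $\frac{\partial \psi_t}{\partial t}=(\Delta_t\HHH_t)\nu_t$, so that the velocity field along $\pa E_t$ is $X_t=(\Delta_t\HHH_t)\nu_t$, purely normal, hence $X_\tau=0$. For the first identity~\eqref{der of A} I would simply invoke Theorem~\ref{first var} with $\gamma=0$: $\frac{d}{dt}\A(\pa E_t)=\int_{\pa E_t}\HHH_t\langle X_t|\nu_t\rangle\,d\mu_t=\int_{\pa E_t}\HHH_t\,\Delta_t\HHH_t\,d\mu_t$, and then integrate by parts on the closed hypersurface $\pa E_t$ to get $-\int_{\pa E_t}|\nabla\HHH_t|^2\,d\mu_t$.

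For the second identity~\eqref{der of DH}, the key computation is to differentiate $\frac12\int_{\pa E_t}|\nabla\HHH_t|^2\,d\mu_t$ in time. Here I would use the transport/evolution formulas already assembled in the paper: formula~\eqref{dermu2} for $\partial_t\sqrt{\det g_{ij}}=(\Div X_\tau+\HHH\langle X|\nu\rangle)\sqrt{\det g_{ij}}$, which with $X_\tau=0$ and $\langle X_t|\nu_t\rangle=\Delta_t\HHH_t$ gives $\partial_t d\mu_t=\HHH_t\,\Delta_t\HHH_t\,d\mu_t$; and formula~\eqref{derH} for the evolution of the mean curvature, $\partial_t\HHH_t\big|=-\varphi|B_t|^2-\Delta_t\varphi+\langle X_\tau|\nabla\HHH_t\rangle$, which with $\varphi=\Delta_t\HHH_t$ and $X_\tau=0$ becomes $\partial_t\HHH_t=-(\Delta_t\HHH_t)|B_t|^2-\Delta_t\Delta_t\HHH_t$. (One must also account for the fact that the gradient operator $\nabla$ itself depends on the metric $g_t$, contributing a term with $\partial_t g^{ij}$; this is handled by~\eqref{1Acalc}, again simplifying because $X_\tau=0$.) Putting these together and integrating by parts on $\pa E_t$ will produce the leading fourth-order term $-\int_{\pa E_t}(\Delta_t\HHH_t)^2\cdot(\text{something})$ and several curvature-weighted lower-order terms; the task is then to recognize the combination
$$
-\int_{\pa E_t}\Bigl(|\nabla(\Delta_t\HHH_t)|^2-(\Delta_t\HHH_t)^2|B_t|^2\Bigr)\,d\mu_t=-\Pi_{E_t}(\Delta_t\HHH_t),
$$
using the definition~\eqref{Pieq} with $\gamma=0$ and recalling that $\Delta_t\HHH_t$ has zero integral on the closed hypersurface (so it lies in $\Htilde^1(\pa E_t)$, as required for $\Pi_{E_t}$ to be applied). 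The remaining terms, namely the one coming from the metric variation inside $|\nabla\HHH_t|^2$ and from $\partial_t d\mu_t$, should collect precisely into $-\int_{\pa E_t}B_t(\nabla\HHH_t,\nabla\HHH_t)\Delta_t\HHH_t\,d\mu_t+\frac12\int_{\pa E_t}\HHH_t|\nabla\HHH_t|^2\Delta_t\HHH_t\,d\mu_t$.

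The main obstacle — really the only nontrivial bookkeeping — is tracking the contribution of the time derivative of the induced metric to $\partial_t|\nabla\HHH_t|^2=\partial_t(g_t^{ij}\nabla_i\HHH_t\nabla_j\HHH_t)$: the $\partial_t g^{ij}$ piece, evaluated via~\eqref{1Acalc} with $X_\tau=0$ (so $\partial_t g^{pk}=-2h^{pk}\varphi$ with $\varphi=\Delta_t\HHH_t$), yields exactly the $-2B_t(\nabla\HHH_t,\nabla\HHH_t)\Delta_t\HHH_t$ term, and it is essential to combine it correctly with the integration-by-parts boundary-free terms so that the $|B_t|^2$ term with the right sign lands inside $\Pi_{E_t}$ rather than outside. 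I would organize the computation by first writing $\frac{d}{dt}\frac12\int|\nabla\HHH_t|^2\,d\mu_t=\int\langle\nabla\partial_t\HHH_t|\nabla\HHH_t\rangle\,d\mu_t+\frac12\int(\partial_t g^{ij})\nabla_i\HHH_t\nabla_j\HHH_t\,d\mu_t+\frac12\int|\nabla\HHH_t|^2\,\partial_t(d\mu_t)$, then substitute the three evolution formulas, integrate the first term by parts (using $\int\langle\nabla(\Delta_t\Delta_t\HHH_t)|\nabla\HHH_t\rangle=\int|\nabla(\Delta_t\HHH_t)|^2$, moving one Laplacian across), and finally regroup, exactly as in the proof of Lemma~\ref{calculations}. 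No genuinely new analytic input beyond the geometric evolution formulas already derived in Section~\ref{nonlocsec} is needed; this is a self-contained computation.
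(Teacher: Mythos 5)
Your proposal is correct and follows essentially the same route as the paper's own proof of Lemma~\ref{calculations2}: parametrize the flow normally so $X_\tau=0$, use the evolution formulas for $g_{ij}$, $g^{ij}$, $\HHH_t$ and $d\mu_t$ (formulas~\eqref{dermu2},~\eqref{1Acalc},~\eqref{derH} specialized to $\varphi=\Delta_t\HHH_t$) to compute $\frac{d}{dt}\frac12\int_{\pa E_t}|\nabla\HHH_t|^2\,d\mu_t$, integrate by parts the leading terms, and recognize $-\Pi_{E_t}(\Delta_t\HHH_t)$ from~\eqref{Pieq} with $\gamma=0$. The only minor difference is presentational — you separate the three contributions (metric variation in $g^{ij}$, variation of $\HHH_t$, variation of $d\mu_t$) more explicitly than the paper's condensed write-up — but the calculation and all the substitutions are the same.
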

\begin{proof}
Let $\psi_t$ the smooth family of maps describing the flow as in 
formula~\eqref{sdf2}. By formula~\eqref{dermu2}, where $X$ is the smooth (velocity) vector field $X_t=\frac{\partial\psi_t}{\partial t}=(\Delta_t\HHH_t)\nu_{E_t}$ along $\pa E_t$, hence $X_\tau=X_t-\langle X_t \vert \nu_{E_t}\rangle\nu_{E_t}=0$ (as usual $\nu_{E_t}$ is the outer normal unit vector of $\partial E_t$), following computation~\eqref{local}, we have 
\begin{align}
\frac{d}{dt} \A(\pa E_t)& = \frac{d}{dt} \int_{\pa E_t}{\dmu_t}\\
&= \int_{\pa E_t} (\Div X_\tau+\HHH_t  \langle X \vert  \nu_{E_t} \rangle ) \,  d\mu_t\\
&= \int_{\pa E_t}\HHH_t\Delta_t\HHH_t\, \dmu_t\\
&=- \int_{\pa E_t} \abs{\nabla\HHH_t}^2 \, \dmu_t \, ,
\end{align}
where the last equality follows integrating by parts. This establishes 
relation~\eqref{der of A}.

In order to get relation~\eqref{der of DH} we also need the time derivatives of the evolving metric and of the mean curvature of $\pa E_t$, that we already computed in 
formulas~\eqref{derg2},~\eqref{1Acalc} and~\eqref{derH} (where the function $\varphi$ in this case is equal to $\Delta_t\HHH_t$ and $X_\tau=0$), that is,
\begin{equation}
\frac{\partial g_{ij}}{\partial t}= 2h_{ij}\Delta_t\HHH_t\qquad\text{ and }\qquad 
\frac{\pa g^{ij}}{\pa t} = - 2  h^{ij}\Delta_t\HHH_t\,,
\end{equation}
\begin{equation}\label{3bisbis}
\frac{\pa\HHH_t}{\pa t}=-  \abs{B_t}^2\Delta_t\HHH_t  - \Delta_t \Delta_t\HHH_t
\end{equation}
Then, we compute
\begin{align}
\frac{d}{dt}\frac12  \int_{\pa E_t} \abs{\nabla\HHH_t}^2 \, \dmu_t
=&\,\frac12  \int_{\pa E_t}\HHH_t\abs{\nabla\HHH_t}^2 \,\Delta_t\HHH_t\,\dmu_t
-\int_{\pa E_t} h^{ij}\nabla_i\HHH_t\nabla_j\HHH_t\,\Delta_t\HHH_t\, d\mu_t\\
&\,-\int_{\pa E_t} g^{ij}\nabla_i\HHH_t\nabla_j\bigl(\abs{B}^2\Delta_t\HHH_t+\Delta_t \Delta_t\HHH_t\bigr)\, d\mu_t\\
=&\,\frac12  \int_{\pa E_t}\HHH_t\abs{\nabla\HHH_t}^2 \,\Delta_t\HHH_t\,\dmu_t-\int_{\pa E_t} B(\nabla\HHH_t,\nabla\HHH_t)\,\Delta_t\HHH_t\, d\mu_t\\
&\,+\int_{\pa E_t} \abs{B_t}^2(\Delta_t\HHH_t)^2\, d\mu_t
+\int_{\pa E_t} \Delta_t\HHH_t\,\Delta_t \Delta_t\HHH_t\, d\mu_t\\
=&\,\frac12  \int_{\pa E_t}\HHH_t\abs{\nabla\HHH_t}^2 \,\Delta_t\HHH_t\,\dmu_t-\int_{\pa E_t} B_t(\nabla\HHH_t,\nabla\HHH_t)\,\Delta_t\HHH_t\, d\mu_t\\
&\,+\int_{\pa E_t} \abs{B_t}^2(\Delta_t\HHH_t)^2\, d\mu_t
-\int_{\pa E_t} \vert\nabla\Delta_t\HHH_t\vert^2\, d\mu_t\,,
\end{align}
which is formula~\eqref{der of DH}, recalling the definition of $\Pi_{E_t}$ in formula~\eqref{Pieq}.
\end{proof}

{\em From now on, as before due to the dimension--dependence of the estimates that follow, we restrict ourselves to the three--dimensional case.}

\medskip

The following lemma is an easy consequence of Theorem~3.70 in~\cite{Aubin}, with $j=0$, $m=1$, $n=2$ and $r=q=2$, taking into account the previous discussion.

\begin{lem}[Interpolation  on boundaries]\label{interpolation}
Let $F\subseteq\T^3$ be a smooth set. In the previous notations, for every $p\in[2,+\infty)$ there exists a constant $C=C(F,M,\alpha,p)>0$ such that for every set $E \in \mathfrak{C}^{1,\alpha}_M(F)$ and $g \in H^1(\partial E)$, we have
$$
\norma{g}_{L^p(\partial E)} \leq C ( \norma{\nabla g}_{L^2(\pa E)}^{\theta} \norma{g}_{L^2(\pa E)}^{1-\theta} + \norma{g}_{L^2(\pa E)} )\,,
$$
with $\theta=1-2/p$.\\
Moreover, the following Poincar\'e inequality holds
$$
\norma{g-\overline{g}}_{L^p(\pa E)} \leq C \norma{\nabla g}_{L^2(\partial E)} \, , 
$$
where $\overline{g}(x)= \fint_{\Gamma} g\, \dmu $, if $x$ belongs to a connected component  $\Gamma$ of $\partial E$.
\end{lem}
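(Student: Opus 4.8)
The plan is to obtain the first estimate as a direct instance of the Gagliardo--Nirenberg interpolation inequality on the compact Riemannian surface $\pa E$, namely Theorem~3.70 of~\cite{Aubin} with $j=0$, $m=1$, $n=2$ and $q=r=2$: on any smooth compact Riemannian $2$--manifold without boundary one has $\|g\|_{L^p}\le C\bigl(\|\nabla g\|_{L^2}^{\theta}\|g\|_{L^2}^{1-\theta}+\|g\|_{L^2}\bigr)$, where the admissible exponent is fixed by the scaling relation $\tfrac1p=\theta\bigl(\tfrac12-\tfrac12\bigr)+(1-\theta)\tfrac12=\tfrac{1-\theta}2$, i.e. $\theta=1-2/p$, which is precisely the value in the statement (for $p=2$ the inequality is trivial, $\theta=0$). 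So the first thing I would do is record this one--line check of the interpolation exponent, so that the abstract statement applies verbatim to each $\pa E$.

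The only genuine issue is the uniformity of the constant $C$ over the whole family $\mathfrak{C}^{1,\alpha}_M(F)$, and this is what I expect to be the ``hard part'' — although it is already contained in the material recalled just before the lemma. I would argue as follows (referring to~\cite{DDM} for the details): since every $E\in\mathfrak{C}^{1,\alpha}_M(F)$ has $\pa E=\{y+\psi_E(y)\nu_F(y):\,y\in\pa F\}$ with $\|\psi_E\|_{C^{1,\alpha}(\pa F)}\le M$ and $M$ small, one can fix once and for all a finite atlas of $\pa F$ together with a subordinate partition of unity, transplant it to $\pa E$ via the graph map, and thereby reduce the interpolation inequality on $\pa E$ to the Euclidean Gagliardo--Nirenberg inequality on a \emph{fixed} finite collection of bounded domains in $\R^2$; the pull--back of the metric and the Jacobian of the change of variables have all relevant derivatives bounded in terms of $M$ (and $F$, $\alpha$) only, so the constant finally produced depends solely on $F,M,\alpha,p$. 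This is the same ``uniform constants'' mechanism used for the Sobolev and Calder\'on--Zygmund inequalities earlier in the section, so no new idea is required.

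For the Poincar\'e part I would combine the interpolation inequality with the standard $L^2$--Poincar\'e inequality on each connected component $\Gamma$ of $\pa E$, $\|g-\overline g\|_{L^2(\Gamma)}\le C\|\nabla g\|_{L^2(\Gamma)}$ (again with a constant uniform over $\mathfrak{C}^{1,\alpha}_M(F)$ by the same localization argument, and with the number of components of $\pa E$ bounded since $\pa E$ is $C^{1,\alpha}$--close to $\pa F$). Inserting $g-\overline g$ into the first inequality on each $\Gamma$ gives
$$
\|g-\overline g\|_{L^p(\Gamma)}\le C\bigl(\|\nabla g\|_{L^2(\Gamma)}^{\theta}\|g-\overline g\|_{L^2(\Gamma)}^{1-\theta}+\|g-\overline g\|_{L^2(\Gamma)}\bigr)\le C\|\nabla g\|_{L^2(\Gamma)},
$$
and summing the finitely many contributions of the components yields the claimed bound on all of $\pa E$. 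In short, the whole proof reduces to quoting Theorem~3.70 of~\cite{Aubin} plus the uniform--localization argument of~\cite{DDM}, and I would present it in essentially that order.
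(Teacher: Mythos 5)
Your approach is correct and is essentially the paper's own: the paper gives no proof beyond remarking that the lemma ``is an easy consequence of Theorem~3.70 in~\cite{Aubin}, with $j=0$, $m=1$, $n=2$ and $r=q=2$, taking into account the previous discussion,'' which is exactly the citation and the uniform--constants mechanism (via \cite{DDM}) that you invoke. You have simply spelled out the exponent check and the derivation of the Poincar\'e part from the first inequality plus the $L^2$--Poincar\'e inequality on each component, which is the intended (and correct) way to read the paper's one--line justification.
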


Then, we have the following mixed ``analytic--geometric'' estimate. 

\begin{lem}[$H^2$--estimates on boundaries]\label{laplacian}
Let $F \subseteq \T^3$ be a smooth set. Then there exists a constant $C=C(F,M,\alpha,p)>0$ such that if $E\in \mathfrak{C}^{1,\alpha}_M(F)$ and $f\in H^1(\pa E)$ with $\Delta f\in L^2(\pa E)$, then $f\in H^2(\pa E)$ and  
$$
\norma{\nabla^2 f}_{L^2(\partial E)}\leq C  \norma{\Delta f}_{L^2(\pa E)}(1+  \norma{\HHH}_{L^4(\pa E)}^2) \, .
$$
\end{lem}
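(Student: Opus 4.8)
The plan is to derive the estimate from the integrated Bochner (Reilly) identity on the closed surface $\pa E$, combined with the Gauss equation and with the interpolation and Calder\'on--Zygmund inequalities already available for the family $\mathfrak{C}^{1,\alpha}_M(F)$. First I would dispose of the regularity claim: since the sets to which the lemma is applied are smooth, $\pa E$ is a smooth compact surface and standard elliptic theory for the Laplace--Beltrami operator (the map $\Delta-1:H^2(\pa E)\to L^2(\pa E)$ being an isomorphism) gives $f\in H^2(\pa E)$ whenever $f\in H^1(\pa E)$ and $\Delta f\in L^2(\pa E)$; by density of $C^\infty(\pa E)$ in $H^2(\pa E)$ it then suffices to prove the quantitative bound for smooth $f$ and pass to the limit. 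For smooth $f$, integrating the Bochner formula $\tfrac12\Delta|\nabla f|^2=|\nabla^2 f|^2+\langle\nabla f|\nabla\Delta f\rangle+\Ric(\nabla f,\nabla f)$ over $\pa E$, using $\int_{\pa E}\Delta(\cdot)\,d\mu=0$ (formula~\eqref{corollariodivteo}) and one integration by parts, yields
$$\int_{\pa E}|\nabla^2 f|^2\,d\mu=\int_{\pa E}(\Delta f)^2\,d\mu-\int_{\pa E}\Ric(\nabla f,\nabla f)\,d\mu\,.$$

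Next I would substitute the Gauss equation, which for a hypersurface of the flat torus $\T^3$ reads $\Ric_{ij}=\HHH\,h_{ij}-h_{ik}g^{kl}h_{lj}$, so that $|\Ric(\nabla f,\nabla f)|\le(|\HHH|\,|B|+|B|^2)|\nabla f|^2\le C|B|^2|\nabla f|^2$ by the elementary bound $|\HHH|\le\sqrt2\,|B|$ valid on surfaces. Hence, by H\"older's inequality,
$$\int_{\pa E}|\nabla^2 f|^2\,d\mu\le\int_{\pa E}(\Delta f)^2\,d\mu+C\,\norma{B}_{L^4(\pa E)}^2\,\norma{\nabla f}_{L^4(\pa E)}^2\,.$$
Now I would invoke the Calder\'on--Zygmund estimate~\eqref{CZG} to get $\norma{B}_{L^4(\pa E)}^2\le C(1+\norma{\HHH}_{L^4(\pa E)}^2)$ with $C$ uniform over $\mathfrak{C}^{1,\alpha}_M(F)$, together with the interpolation inequality of Lemma~\ref{interpolation} applied to $g=|\nabla f|\in H^1(\pa E)$ (using $|\nabla|\nabla f||\le|\nabla^2 f|$), with $p=4$ and $\theta=1/2$, which gives $\norma{\nabla f}_{L^4(\pa E)}^2\le C(\norma{\nabla^2 f}_{L^2(\pa E)}\norma{\nabla f}_{L^2(\pa E)}+\norma{\nabla f}_{L^2(\pa E)}^2)$. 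Finally, the Poincar\'e inequality of Lemma~\ref{interpolation}, applied on the connected components of $\pa E$, in the form $\norma{\nabla f}_{L^2(\pa E)}^2=-\int_{\pa E}(f-\overline f)\Delta f\,d\mu\le C\norma{\nabla f}_{L^2(\pa E)}\norma{\Delta f}_{L^2(\pa E)}$, yields $\norma{\nabla f}_{L^2(\pa E)}\le C\norma{\Delta f}_{L^2(\pa E)}$. Assembling these estimates one obtains
$$\norma{\nabla^2 f}_{L^2(\pa E)}^2\le\norma{\Delta f}_{L^2(\pa E)}^2+C(1+\norma{\HHH}_{L^4(\pa E)}^2)\bigl(\norma{\nabla^2 f}_{L^2(\pa E)}\norma{\Delta f}_{L^2(\pa E)}+\norma{\Delta f}_{L^2(\pa E)}^2\bigr)\,,$$
and, $\norma{\nabla^2 f}_{L^2(\pa E)}$ being finite, a single application of Young's inequality absorbs the mixed term into the left-hand side, giving $\norma{\nabla^2 f}_{L^2(\pa E)}^2\le C(1+\norma{\HHH}_{L^4(\pa E)}^2)^2\norma{\Delta f}_{L^2(\pa E)}^2$; taking square roots gives the assertion.

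The only delicate point is structural rather than analytic: the interpolation step feeds back the very quantity $\norma{\nabla^2 f}_{L^2(\pa E)}$ that is being estimated, so the argument is a genuine \emph{a priori} estimate and one must secure $f\in H^2(\pa E)$ beforehand in order to legitimately absorb that term. One must also check — which is immediate from the cited statements, since the Bochner identity contributes no constant, the Gauss equation is algebraic, and both Lemma~\ref{interpolation} and~\eqref{CZG} are stated with constants depending only on $F$, $M$, $\alpha$, $p$ — that all constants involved are uniform over the family $\mathfrak{C}^{1,\alpha}_M(F)$, as required by the statement.
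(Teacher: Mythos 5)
Your proof is correct and follows the same route as the paper: the integrated Bochner/Reilly identity you invoke is exactly what the paper obtains by double integration by parts and commutation of covariant derivatives, and the remaining chain --- Gauss equation, H\"older, the Calder\'on--Zygmund estimate~\eqref{CZG}, the interpolation and Poincar\'e inequalities of Lemma~\ref{interpolation}, and a Young absorption --- matches the paper's argument step by step. Your remark that the absorption makes this an a priori estimate, so that $f\in H^2(\pa E)$ must be secured beforehand, is a small refinement the paper leaves implicit, and your writing of the curvature term as an inequality $|\Ric(\nabla f,\nabla f)|\le C|B|^2|\nabla f|^2$ is slightly cleaner than the paper's corresponding display (which states an equality that is really a one--sided bound).
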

\begin{proof}
We first claim that the following inequality holds,
\begin{equation}
\label{laplacian 1}
\int_{\partial E} \abs{\nabla ^2 f}^2 \, \dmu \leq  \int_{\partial E} \abs{ \Delta f}^2 \, d\mu  + C\int_{\partial E} \abs{B}^2 \abs{\nabla f}^2  \, \dmu \, .
\end{equation}
Indeed, if we integrate by parts the left--hand side, we obtain (the Hessian of a function is symmetric)
$$
\int_{\pa E} g^{ik}g^{jl}\nabla^2_{ij}f \nabla^2_{kl}f \, \dmu = - \int_{\pa E}g^{ik}g^{jl} \nabla_k \nabla_j\nabla_if \nabla_l f \, \dmu \, .
$$
Hence, interchanging the covariant derivatives and integrating by parts, we get
\begin{align}
- \int_{\pa E}g^{ik}g^{jl} \nabla_k \nabla_j\nabla_if \nabla_l f \, \dmu
=&\,- \int_{\pa E} g^{ik}g^{jl}\nabla_j \nabla_k\nabla_if \nabla_l f \, \dmu\\
&\,- \int_{\pa E} g^{ik}g^{jl}R_{kjip}g^{ps}\nabla_sf\nabla_l f \, \dmu \\
=&\,- \int_{\pa E} g^{jl}\nabla_j \Delta f \nabla_l f\, \dmu-\int_{\pa E} \Ric(\nabla f, \nabla f)\, \dmu  \\
=&\,\int_{\pa E} \abs{\Delta f}^2 \, \dmu+\int_{\pa E}\bigl[\abs{B}^2 |\nabla f|^2-\HHH B (\nabla f, \nabla f)\bigr]\, \dmu\\
\leq  & \, \int_{\pa E} \abs{\Delta f}^2 \, \dmu + C\int_{\pa E} \abs{B}^2 \abs{\nabla f}^2 \, \dmu\, ,
\end{align}
thus, inequality~\eqref{laplacian 1} holds (in the last passage we applied Cauchy--Schwarz inequality and the well known relation $|\HHH|\leq\sqrt{2}|B|$, then $C=1+\sqrt{2}$).

We now estimate the last term in formula~\eqref{laplacian 1} by means of 
Lemma~\ref{interpolation} (which is easily extended to vector valued functions $g:\pa E\to\R^m$) with $g=\nabla f$ and $p=4$:
\begin{align}
\int_{\partial E} \abs{B}^2\abs{\nabla f}^2  \, \dmu &\leq \norma{B}_{L^4(\pa E)}^2  \norma{\nabla  f}_{L^4(\pa E)}^2 \\
&\leq  C \norma{B}_{L^4(\pa E)}^2\bigl( \norma{\nabla^2 f}_{L^2(\pa E)}^{1/2}  \norma{\nabla f}_{L^2(\pa E)}^{1/2}  + \norma{\nabla  f}_{L^2(\pa E)}\bigr)^2\\
&\leq  C \norma{B}_{L^4(\pa E)}^2\bigl( \norma{\nabla^2 f}_{L^2(\pa E)}
\norma{\nabla f}_{L^2(\pa E)}  + \norma{\nabla  f}_{L^2(\pa E)}^2\bigr)\,.
\end{align}
Hence, expanding the product on the last line, using Peter--Paul (Young) inequality on the first term of such expansion and ``adsorbing'' in the left hand side of inequality~\eqref{laplacian 1} the small fraction of the term $\norma{\nabla^2 f}_{L^2(\pa E)}^2$ that then appears, we obtain
\begin{align}
\norma{\nabla^2 f}_{L^2(\pa E)}^2 &\leq  C ( \norma{\Delta f}_{L^2(\pa E)}^2 +  \norma{\nabla  f}_{L^2(\pa E)}^2 ( \norma{B}_{L^4(\pa E)}^2 +  \norma{B}_{L^4(\pa E)}^4 ) ) \nonumber \\
&\leq  C ( \norma{\Delta f}_{L^2(\pa E)}^2 +  \norma{\nabla f}_{L^2(\pa E)}^2 ( 1+  \norma{B}_{L^4(\pa E}^4 ) ) \,. \label{aY}
\end{align}
By the fact that $\Delta f$ has zero average on each connected component of $\pa E$, there holds
\begin{align}
\norma{\nabla  f}_{L^2(\pa E)}^2  &= -\int_{\partial E}  f \Delta f \, \dmu\\
&= -  \int_{\pa E}(f- \overline f)  \Delta f \, \dmu \nonumber \\
&\leq \norma{f- \overline f}_{L^2(\pa E)}\norma{\Delta f }_{L^2(\pa E)}\\
&\leq C \norma{\nabla  f}_{L^2(\pa E)} \norma{\Delta f }_{L^2(\pa E)}\,,\label{poin}
\end{align}
where we used Lemma~\ref{interpolation} again, hence,
\begin{equation}
\norma{\nabla  f}_{L^2(\pa E)}\leq C \norma{\Delta f }_{L^2(\pa E)}\,.\label{poin2}
\end{equation}
Thus, from inequality~\eqref{aY}, we deduce  
\begin{equation}\label{eq10010}
\norma{\nabla ^2 f}_{L^2(\pa E)}^2 \leq  C \norma{\Delta f}_{L^2(\pa E)}^2 ( 1 + \norma{B}_{L^4(\pa E)}^4 )\, .
\end{equation}
Now, by means of Calder\'on--Zygmund estimates, it is possible to show (see~\cite{DDM}) that there exists a constant $C>0$ depending only on $F$, $M$, $\alpha$ and $q>1$ such that for every $E\in \mathfrak{C}^{1,\alpha}_M(F)$, there holds 
\begin{equation}\label{CZG2}
\norma{B}_{L^q(\pa E)} \leq C(1+  \norma{\HHH}_{L^q(\pa E)})\,.
\end{equation}
Then, since it is easy to check that also all the other constant in the previous inequalities (and the ones coming from Lemma~\ref{interpolation} also) depend only on $F$, $M$, $\alpha$ and $p$, if $E\in \mathfrak{C}^{1,\alpha}_M(F)$, substituting this estimate, with $q=4$, in formula~\eqref{eq10010}, the thesis of the lemma follows.
\end{proof}
The following lemma provides a crucial ``geometric interpolation'' that will be needed in the proof of the main theorem.
\begin{lem}[Geometric interpolation]\label{nasty}
Let $F \subseteq \T^3$ be a smooth set. Then there exists a constant $C=C(F,M,\alpha)>0$ such that the following estimates holds
$$
\int_{\partial E} \abs{B}\abs{\nabla\HHH}^2 \abs{\Delta\HHH} \, \dmu
\leq C \norma{\nabla\Delta\HHH}_{L^2(\pa E)}^2 \,  \norma{\nabla \HHH}_{L^2(\pa E)}\, (1+ \norma{\HHH}_{L^6(\pa E)}^3 )\,,
$$
for every $E\in \mathfrak{C}^{1,\alpha}_M(F)$.
\end{lem}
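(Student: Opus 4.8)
The plan is to estimate $\int_{\partial E} \abs{B}\abs{\nabla\HHH}^2 \abs{\Delta\HHH} \, \dmu$ by a repeated application of H\"older's inequality together with the interpolation and elliptic estimates established in Lemmas~\ref{interpolation},~\ref{laplacian}, and~\ref{nicola1}-style Calder\'on--Zygmund bounds. First I would split the four factors with H\"older, putting $\abs{B}$ in $L^6(\pa E)$, $\abs{\nabla\HHH}^2$ in $L^3(\pa E)$ (i.e. $\abs{\nabla\HHH}$ in $L^6$), and $\abs{\Delta\HHH}$ in $L^2(\pa E)$; then $\tfrac16+\tfrac13+\tfrac12=1$, so
$$
\int_{\partial E} \abs{B}\abs{\nabla\HHH}^2 \abs{\Delta\HHH} \, \dmu
\leq \norma{B}_{L^6(\pa E)}\,\norma{\nabla\HHH}_{L^6(\pa E)}^2\,\norma{\Delta\HHH}_{L^2(\pa E)}\,.
$$
By the Calder\'on--Zygmund estimate~\eqref{CZG2} (with $q=6$), $\norma{B}_{L^6(\pa E)}\leq C(1+\norma{\HHH}_{L^6(\pa E)})$, which will eventually contribute the $(1+\norma{\HHH}_{L^6(\pa E)}^3)$ factor once combined with the other two $L^6$-norm contributions of $\HHH$ hidden in the interpolation of $\nabla\HHH$.

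The core of the argument is then to interpolate $\norma{\nabla\HHH}_{L^6(\pa E)}^2$ and $\norma{\Delta\HHH}_{L^2(\pa E)}$ against $\norma{\nabla\Delta\HHH}_{L^2(\pa E)}$ and $\norma{\nabla\HHH}_{L^2(\pa E)}$ so as to land exactly on the right-hand side of the statement, i.e. with total ``order'' matching $2+1 = 3$ derivatives distributed as $\norma{\nabla\Delta\HHH}_{L^2}^2\norma{\nabla\HHH}_{L^2}$. Applying Lemma~\ref{interpolation} with $g=\nabla\HHH$ (vector-valued, $n=2$, $p=6$, so $\theta=2/3$) gives
$$
\norma{\nabla\HHH}_{L^6(\pa E)}\leq C\bigl(\norma{\nabla^2\HHH}_{L^2(\pa E)}^{2/3}\norma{\nabla\HHH}_{L^2(\pa E)}^{1/3}+\norma{\nabla\HHH}_{L^2(\pa E)}\bigr)\,,
$$
and the key point is that $\nabla^2\HHH$ must be controlled via Lemma~\ref{laplacian}: $\norma{\nabla^2\HHH}_{L^2(\pa E)}\leq C\norma{\Delta\HHH}_{L^2(\pa E)}(1+\norma{\HHH}_{L^4(\pa E)}^2)$, and in turn $\norma{\Delta\HHH}_{L^2(\pa E)}$ is interpolated by $\norma{\nabla\Delta\HHH}_{L^2(\pa E)}^{1/2}\norma{\nabla\HHH}_{L^2(\pa E)}^{1/2}$ up to a Poincar\'e-type lower-order term (recall $\Delta\HHH$ has zero average on each connected component, as in~\eqref{poin}--\eqref{poin2}, so the cleaner bound $\norma{\Delta\HHH}_{L^2}\leq C\norma{\nabla\HHH}_{L^2}$ is also available and $\norma{\nabla\Delta\HHH}_{L^2}\geq c\norma{\Delta\HHH}_{L^2}$, again by Poincar\'e on each component). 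Substituting these into the H\"older product and carefully bookkeeping the exponents of $\norma{\nabla\Delta\HHH}_{L^2}$, $\norma{\nabla\HHH}_{L^2}$, and the $L^4$/$L^6$ norms of $\HHH$ (using $\norma{\HHH}_{L^4}\leq C\norma{\HHH}_{L^6}$ by H\"older on the compact hypersurface, or simply absorbing everything into $1+\norma{\HHH}_{L^6}^3$) should produce exactly the claimed inequality, with all constants uniform over $\mathfrak{C}^{1,\alpha}_M(F)$ by the discussion preceding Lemma~\ref{harmonic estimates}.

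The main obstacle I anticipate is the exponent bookkeeping: one has to choose the H\"older split and the interpolation parameters so that the powers of $\norma{\nabla\Delta\HHH}_{L^2(\pa E)}$ come out to exactly $2$ and the power of $\norma{\nabla\HHH}_{L^2(\pa E)}$ to exactly $1$, while all remaining slack is spent on lower-order $\HHH$-norms that can be folded into $(1+\norma{\HHH}_{L^6(\pa E)}^3)$ — and one may need Young's inequality (Peter--Paul) at an intermediate step to absorb a fractional power of $\norma{\nabla^2\HHH}_{L^2}$ or $\norma{\nabla\Delta\HHH}_{L^2}$ back to the left, as was done in Lemma~\ref{laplacian}. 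A secondary subtlety is that Lemma~\ref{interpolation} and Lemma~\ref{laplacian} both carry additive lower-order ``$+\norma{g}_{L^2}$'' remainders; these must be tracked and shown to be dominated by the principal term (or likewise absorbed into the $1+\norma{\HHH}_{L^6}^3$ factor, using that $\norma{\nabla\HHH}_{L^2}$ appears to the first power on the right). Once the exponents are pinned down, the estimate is a routine — if slightly tedious — chain of the three lemmas and H\"older's inequality.
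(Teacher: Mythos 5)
Your H\"older split $\int_{\pa E}\abs{B}\abs{\nabla\HHH}^2\abs{\Delta\HHH}\,\dmu \leq \norma{B}_{L^6}\norma{\nabla\HHH}_{L^6}^2\norma{\Delta\HHH}_{L^2}$ is genuinely different from the paper's, which uses the two--step split $\norma{\Delta\HHH}_{L^3}\bigl(\int_{\pa E}\abs{B}^{3/2}\abs{\nabla\HHH}^3\,\dmu\bigr)^{2/3}\leq \norma{\Delta\HHH}_{L^3}\norma{\nabla\HHH}_{L^4}^2\norma{B}_{L^6}$, and the difference is not cosmetic. Lemma~\ref{interpolation} at $p=6$ gives $\norma{\nabla\HHH}_{L^6}^2 \leq C\norma{\nabla^2\HHH}_{L^2}^{4/3}\norma{\nabla\HHH}_{L^2}^{2/3}+C\norma{\nabla\HHH}_{L^2}^2$, carrying a $4/3$--power of $\norma{\nabla^2\HHH}_{L^2}$, while the paper's $L^4$ choice carries only a first power. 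If you then use plain Poincar\'e $\norma{\Delta\HHH}_{L^2}\leq C\norma{\nabla\Delta\HHH}_{L^2}$ together with Lemma~\ref{laplacian} on $\norma{\nabla^2\HHH}_{L^2}^{4/3}$, the leading term is $\norma{\nabla\Delta\HHH}_{L^2}^{7/3}\norma{\nabla\HHH}_{L^2}^{2/3}$, which \emph{cannot} be reduced to $\norma{\nabla\Delta\HHH}_{L^2}^2\norma{\nabla\HHH}_{L^2}$: the only comparison available is $\norma{\nabla\HHH}_{L^2}\leq C\norma{\nabla\Delta\HHH}_{L^2}$, which points the wrong way. Young/Peter--Paul cannot rescue this either, because --- unlike in Lemma~\ref{laplacian} --- there is no squared term of the same kind on a left--hand side to absorb the excess. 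You do mention the correct remedy in passing, namely $\norma{\Delta\HHH}_{L^2}\leq\norma{\nabla\HHH}_{L^2}^{1/2}\norma{\nabla\Delta\HHH}_{L^2}^{1/2}$ (integration by parts and Cauchy--Schwarz on the closed manifold $\pa E$); used both on the explicit $\norma{\Delta\HHH}_{L^2}$ and inside Lemma~\ref{laplacian}, the leading term becomes $\norma{\nabla\Delta\HHH}_{L^2}^{7/6}\norma{\nabla\HHH}_{L^2}^{11/6}$, and $\norma{\nabla\HHH}_{L^2}^{5/6}\leq C\norma{\nabla\Delta\HHH}_{L^2}^{5/6}$ then rebalances the exponents to $2$ and $1$ as needed.

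Even with that fix, however, your bookkeeping does \emph{not} ``produce exactly the claimed inequality'': the curvature factor accumulates as $(1+\norma{\HHH}_{L^6})(1+\norma{\HHH}_{L^4}^2)^{4/3}\leq C(1+\norma{\HHH}_{L^6}^{11/3})$, strictly larger than the stated $(1+\norma{\HHH}_{L^6}^3)$ (the ratio blows up as $\norma{\HHH}_{L^6}\to\infty$), because the extra $1/3$--power of $\norma{\nabla^2\HHH}_{L^2}$ forced by the $L^6$ interpolation drags along an extra $(1+\norma{\HHH}_{L^4}^2)^{1/3}$ from Lemma~\ref{laplacian}. The paper's $L^3/L^4/L^6$ calibration is exactly what avoids both issues at once: one power of $\nabla\Delta\HHH$ from $\norma{\Delta\HHH}_{L^3}$ via Poincar\'e, one power of $\nabla^2\HHH$ (hence one power of $1+\norma{\HHH}_{L^4}^2$) from $\norma{\nabla\HHH}_{L^4}^2$, one power of $1+\norma{\HHH}_{L^6}$ from $\norma{B}_{L^6}$, all tallying cleanly to $\norma{\nabla\Delta\HHH}_{L^2}^2\norma{\nabla\HHH}_{L^2}(1+\norma{\HHH}_{L^6}^3)$ with no slack and no need for the sharper $\norma{\Delta\HHH}_{L^2}$ interpolation. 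Your route yields a weaker estimate that would still suffice for the application in Theorem~\ref{existence2} (where $\norma{\HHH}_{L^6}$ is uniformly bounded anyway), but it does not prove the lemma as stated.
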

\begin{proof}
First, by a standard application of H\"older inequality, we have
$$
\int_{\partial E} \abs{B} \abs{\nabla\HHH}^2 \abs{\Delta\HHH} \, \dmu \leq \norma{\Delta\HHH}_{L^3(\pa E)} \Bigl(  \int_{\partial E} \abs{B}^\frac{3}{2}\abs{\nabla\HHH}^3 \, \dmu \Bigl)^{2/3}.
$$
Then, using the Poincar\'e inequality stated in Lemma~\ref{interpolation} and the fact that $\Delta\HHH$ has zero average on each connected component of $\pa E$, we get 
$$
 \norma{\Delta\HHH}_{L^3(\pa E)} \leq C  \norma{ \nabla\Delta\HHH }_{L^2(\pa E)}.
$$
Now, we use H\"older inequality again
$$
 \Bigl(  \int_{\partial E} \abs{B}^\frac{3}{2}\abs{\nabla\HHH}^3 \, \dmu \Bigr)^{2/3} \leq  \Bigl(  \int_{\partial E}\abs{\nabla\HHH}^{4} \, \dmu \Bigr)^{1/2}\Bigl(  \int_{\partial E}\abs{B}^{6} \, \dmu \Bigr)^{1/6} \, ,
$$
and we apply Lemma~\ref{interpolation} with $p=4$,
$$
\Bigl(  \int_{\partial E}\abs{\nabla\HHH}^{4} \, \dmu \Bigr)^{1/2} \leq C  ( \norma{\nabla^2\HHH}_{L^2(\pa E)}  \norma{\nabla\HHH}_{L^2(\pa E)}  + \norma{\nabla\HHH}_{L^2(\pa E)}^2  )\,.
$$
Combining all these inequalities, we conclude
$$
\int_{\partial E} \abs{B} \abs{\nabla\HHH}^2 \abs{\Delta\HHH} \, \dmu \leq C\norma{\nabla\Delta\HHH }_{L^2(\pa E)}  \,  \norma{B}_{L^6(\pa E)} \,  \norma{\nabla\HHH}_{L^2(\pa E)}(\norma{\nabla^2\HHH}_{L^2(\pa E)}  + \norma{\nabla\HHH}_{L^2(\pa E)})\,.
$$
By Lemma~\ref{laplacian} and estimate~\eqref{poin2}, with $\HHH$ in place of $f$, the right--hand side of the previous inequality can be bounded from above by 
\begin{equation}\label{stima}
C \norma{\nabla\Delta\HHH }_{L^2(\pa E)}  \, \norma{B}_{L^6(\pa E)}  \,   \norma{\Delta\HHH}_{L^2(\pa E)} \,  \norma{\nabla\HHH}_{L^2(\pa E)}  \, (1  + \norma{H}_{L^4(\pa E)}^2).
\end{equation}
Hence, using again Poincar\'e inequality and estimate~\eqref{CZG2} with $q=6$, we have
$$
\norma{\Delta\HHH}_{L^2(\pa E)} \leq C  \norma{\nabla\Delta\HHH }_{L^2(\pa E)}
$$
and 
$$
\norma{B}_{L^6(\pa E)} \leq C(1+  \norma{\HHH}_{L^6(\pa E)})\,. 
$$
Finally, using this relations and H\"older inequality, we obtain the thesis
$$
\int_{\partial E} \abs{B} \abs{\nabla\HHH}^2 \abs{\Delta\HHH} \, \dmu \leq C \norma{\nabla\Delta\HHH }_{L^2(\pa E)}^2  \, \norma{\nabla\HHH}_{L^2(\pa E)} \, (1+  \norma{\HHH}^3_{L^6(\pa E)})\,.
$$
\end{proof}

We now remind that since $\partial E$ can be disconnected (as in the case of lamellae), the Poincar\'e inequality could fail for $\partial E$. However, if $E$ is sufficiently close to a stable critical set then it is true for the mean curvature of $\pa E$.

\begin{lem}[Geometric Poincar\'e inequality]\label{lm:geopoinc}
Fixed $p>2$ and a smooth strictly stable critical set $F\subseteq\T^3$, let $\delta>0$ be the constant provided by Proposition~\ref{2.6}, with $\theta=1$. Then, for $M$ small enough, there exists a constant $C=C(F,M,\alpha,p)>0$ such that 
\begin{equation}\label{geopoinc}
\int_{\pa E} \abs{\HHH-\overline{\HHH}}^2\, \dmu \leq C\int_{\pa E}\abs{\nabla\HHH}^2\, \dmu \,,
\end{equation}
for every set $E\in \mathfrak{C}^{1,\alpha}_M(F)$ such that $\pa E\subseteq N_\eps$ with
\begin{equation}
\pa E= \{y + \psi (y) \nu_F(y) \, : \, y \in \pa F \}\, ,
\end{equation}
for a smooth function $\psi$ with $\norma{\psi}_{W^{2,p}(\pa F)}<\delta$.
\end{lem}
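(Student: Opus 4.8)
The plan is to deduce the geometric Poincaré inequality on $\pa E$ from the uniform positivity of the quadratic form $\Pi_F$ on $\Htilde^1(\pa F)$ away from its degenerate subspace, transported via the closeness of $\pa E$ to $\pa F$, following exactly the philosophy used in Step~2 of the proof of Theorem~\ref{W2pMin} and in Proposition~\ref{2.6}. The key observation is that, with $\gamma=0$, one has $\Pi_F(\varphi)=\int_{\pa F}(|\nabla\varphi|^2-\varphi^2|B_{\pa F}|^2)\,d\mu$, and Proposition~\ref{2.6} with $\theta=1$ gives a constant $\sigma_1>0$ and $\delta>0$ such that $\Pi_E(\varphi)\geq\sigma_1\|\varphi\|_{H^1(\pa E)}^2$ for every $\varphi\in\Htilde^1(\pa E)$ satisfying $\min_{\eta\in\OO_F}\|\varphi-\langle\eta|\nu_E\rangle\|_{L^2(\pa E)}\geq\|\varphi\|_{L^2(\pa E)}$ — that is, in the case $\theta=1$ the side condition $\min_{\eta}\|\varphi-\langle\eta|\nu_E\rangle\|_{L^2}\geq\theta\|\varphi\|_{L^2}$ is automatically satisfied whenever $\langle\eta|\nu_E\rangle$ does not move $\varphi$ closer than its full norm, and in fact for $\theta=1$ the constraint becomes vacuous up to replacing $\varphi$ by its projection (more precisely, one checks directly that the condition holds for \emph{every} $\varphi$ when $\theta=1$, since $0\in\OO_F$ forces $\min_{\eta}\|\varphi-\langle\eta|\nu_E\rangle\|_{L^2}\leq\|\varphi\|_{L^2}$, and the reverse is the requirement). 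Hence, for $M$ (and the corresponding $\|\psi\|_{W^{2,p}(\pa F)}$) small enough, $\Pi_E(\varphi)\geq\sigma_1\|\varphi\|_{H^1(\pa E)}^2\geq\sigma_1\|\varphi\|_{L^2(\pa E)}^2$ for all $\varphi\in\Htilde^1(\pa E)$.

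First I would reduce to a function with zero integral on $\pa E$. Given $E\in\mathfrak{C}^{1,\alpha}_M(F)$ as in the statement, set $\widehat\HHH=\fint_{\pa E}\HHH\,d\mu$ and $\varphi=\HHH-\widehat\HHH$, so that $\varphi\in\Htilde^1(\pa E)$. Applying the coercivity of $\Pi_E$ just recalled,
\begin{equation}
\int_{\pa E}|\nabla\HHH|^2\,d\mu-\int_{\pa E}\varphi^2|B_{\pa E}|^2\,d\mu=\Pi_E(\varphi)\geq\sigma_1\|\varphi\|_{L^2(\pa E)}^2=\sigma_1\int_{\pa E}|\HHH-\widehat\HHH|^2\,d\mu\,.
\end{equation}
The point is then to absorb the indefinite term $\int_{\pa E}\varphi^2|B_{\pa E}|^2\,d\mu$. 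For this I would use Hölder and the interpolation/Sobolev inequalities of Lemma~\ref{interpolation} uniformly on the family $\mathfrak{C}^{1,\alpha}_M(F)$: writing $\int_{\pa E}\varphi^2|B_{\pa E}|^2\,d\mu\leq\|B_{\pa E}\|_{L^4(\pa E)}^2\|\varphi\|_{L^4(\pa E)}^2$, then $\|\varphi\|_{L^4(\pa E)}^2\leq C(\|\nabla\varphi\|_{L^2(\pa E)}\|\varphi\|_{L^2(\pa E)}+\|\varphi\|_{L^2(\pa E)}^2)$ by Lemma~\ref{interpolation} with $p=4$ (the Poincaré part of that lemma applies componentwise on connected components, but here $\varphi=\HHH-\widehat\HHH$ is normalized only globally, so I keep the full interpolation form), and $\|B_{\pa E}\|_{L^4(\pa E)}\leq C(1+\|\HHH\|_{L^4(\pa E)})$ by the Calderón--Zygmund estimate~\eqref{CZG} (equivalently~\eqref{CZG2} with $q=4$), which holds uniformly for $E\in\mathfrak{C}^{1,\alpha}_M(F)$.

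The crux is to show that for $M$ small the coefficient in front of $\|\nabla\HHH\|_{L^2}^2$ (equivalently $\|\nabla\varphi\|_{L^2}^2$) that appears after Young's inequality stays below $1$, while the remaining term is controlled by $\|\varphi\|_{L^2}^2$ with small constant and then re-absorbed by the $\sigma_1\|\varphi\|_{L^2}^2$ on the right. Here I expect the main obstacle: a priori $\|\HHH\|_{L^4(\pa E)}$ need not be small just because $\|\psi\|_{W^{2,p}(\pa F)}$ is small — it is merely \emph{bounded} by a constant depending on $F,M,\alpha$ (through $\|\HHH_{\pa F}\|$ plus $C\|\psi\|_{W^{2,p}(\pa F)}$, using $p>2$ so $W^{2,p}\hookrightarrow C^{1,\alpha'}$ controls the second fundamental form in $L^p\subseteq L^4$). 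So the indefinite term is of the form $C(F,M,\alpha)(\|\nabla\varphi\|_{L^2}\|\varphi\|_{L^2}+\|\varphi\|_{L^2}^2)$ with $C$ \emph{not} small. To handle this one splits $\Pi_E(\varphi)\geq\sigma_1\|\nabla\varphi\|_{L^2}^2+\sigma_1\|\varphi\|_{L^2}^2-\int\varphi^2|B|^2$ from the $H^1$-coercivity, i.e. one uses the full strength $\Pi_E(\varphi)\geq\sigma_1\|\varphi\|_{H^1(\pa E)}^2$ rather than only the $L^2$ bound; then $\|\nabla\HHH\|_{L^2}^2=\|\nabla\varphi\|_{L^2}^2\geq\Pi_E(\varphi)+\int\varphi^2|B|^2-\|\varphi\|_{L^2}^2\geq\sigma_1\|\varphi\|_{H^1}^2-\|\varphi\|_{L^2}^2$, and combining with the interpolation bound on $\int\varphi^2|B|^2$ and Young's inequality (Peter--Paul, splitting $\|\nabla\varphi\|_{L^2}\|\varphi\|_{L^2}\leq\epsilon\|\nabla\varphi\|_{L^2}^2+C_\epsilon\|\varphi\|_{L^2}^2$ with $\epsilon$ chosen so that $\epsilon\,C(F,M,\alpha)^2<\sigma_1$, which does not require $M$ small, only the \emph{existence} of the uniform constant) one gets, after moving the small multiple of $\|\nabla\varphi\|_{L^2}^2$ to the left,
\begin{equation}
c_1\|\nabla\HHH\|_{L^2(\pa E)}^2\geq\sigma_1\|\varphi\|_{L^2(\pa E)}^2-c_2\|\varphi\|_{L^2(\pa E)}^2
\end{equation}
for constants $c_1,c_2$ depending on $F,M,\alpha,p$. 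At this final step smallness of $M$ is genuinely used: it forces the bad constant $c_2$ (which carries the factor $\|B_{\pa E}\|_{L^4}^2$, hence is continuous in $\psi$ and reduces, as $M\to0$, to the corresponding quantity for the \emph{critical} set $F$, where the inequality $\Pi_F(\varphi)\geq\sigma_1\|\varphi\|_{L^2}^2$ already self-improves) to be strictly less than $\sigma_1/2$, so that $\|\HHH-\overline\HHH\|_{L^2(\pa E)}^2\leq(2c_1/\sigma_1)\|\nabla\HHH\|_{L^2(\pa E)}^2$; finally $\|\HHH-\overline\HHH\|_{L^2(\pa E)}\leq\|\HHH-\widehat\HHH\|_{L^2(\pa E)}$ by minimality of the mean (as in the argument for~\eqref{eqcar15000}), giving~\eqref{geopoinc}. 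An alternative, slicker route to the same conclusion — which I would use if the absorption bookkeeping gets delicate — is a contradiction/compactness argument mirroring Proposition~\ref{2.6}: if the inequality failed there would be $E_n\in\mathfrak{C}^{1,\alpha}_M(F)$ with $\psi_n\to0$ in $W^{2,p}(\pa F)$ and $\HHH_{\pa E_n}$ violating~\eqref{geopoinc}; normalize $\varphi_n=(\HHH_{\pa E_n}-\overline\HHH_{\pa E_n})/\|\HHH_{\pa E_n}-\overline\HHH_{\pa E_n}\|_{L^2}$, transplant to $\pa F$ exactly as in Step~2 of Theorem~\ref{W2pMin}, pass to a weak $H^1$-limit $\varphi_0\in\Htilde^1(\pa F)$, and contradict the strict stability of $F$ (via~\eqref{uusi stability}, noting the transplanted functions satisfy the $\theta=1$ side condition in the limit, equivalently $\varphi_0\notin T(\pa F)$ unless $\varphi_0=0$, and if $\varphi_0=0$ one contradicts $\|\nabla\varphi_n\|_{L^2}\to0$ forced by the failing inequality together with $\|\varphi_n\|_{L^2}=1$).
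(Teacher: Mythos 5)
Your high-level idea — apply Proposition~\ref{2.6} with $\theta=1$ to $\varphi=\HHH-\overline\HHH$ — is the paper's, but two of the logical steps are wrong, and correcting them collapses the proof to a few lines. First, the $\theta=1$ side condition of Proposition~\ref{2.6} is \emph{not} vacuous: for a general $\varphi\in\Htilde^1(\pa E)$ the requirement $\min_{\eta\in\OO_E}\|\varphi-\langle\eta|\nu_E\rangle\|_{L^2}\geq\|\varphi\|_{L^2}$ fails (take $\varphi=\langle\eta_0|\nu_E\rangle$ with $\eta_0\in\OO_E$, for which the minimum is $0$). What makes the condition hold for the specific choice $\varphi=\HHH-\overline\HHH$ is a geometric identity you never invoke: since $\Delta_{\pa E}x=-\HHH\nu_E$ and $\int_{\pa E}\nu_E\,d\mu=0$ on each closed component, one has $\int_{\pa E}(\HHH-\overline\HHH)\nu_E\,d\mu=0$, i.e.\ $\HHH-\overline\HHH$ is $L^2$--orthogonal to $T(\pa E)$. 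By Pythagoras this forces $\min_\eta\|\HHH-\overline\HHH-\langle\eta|\nu_E\rangle\|_{L^2}^2=\|\HHH-\overline\HHH\|_{L^2}^2+\|\langle\eta|\nu_E\rangle\|_{L^2}^2$ to be attained at $\eta=0$ with value $\|\HHH-\overline\HHH\|_{L^2}$, which is precisely the $\theta=1$ hypothesis. This orthogonality is the crux of the lemma, and your justification for the hypothesis is incorrect.

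Second, the whole ``absorption'' machinery (H\"older, Calder\'on--Zygmund, interpolation, Peter--Paul, and the worry about whether $M$ is small enough) is pointed the wrong way and is entirely superfluous. Since $\Pi_E(\varphi)=\int_{\pa E}|\nabla\varphi|^2\,d\mu-\int_{\pa E}\varphi^2|B|^2\,d\mu$ (note the minus sign), the coercivity $\Pi_E(\varphi)\geq\sigma_1\|\varphi\|_{L^2(\pa E)}^2$ rearranges to $\int_{\pa E}|\nabla\HHH|^2\,d\mu\geq\sigma_1\|\varphi\|_{L^2(\pa E)}^2+\int_{\pa E}\varphi^2|B|^2\,d\mu$, and since $\int_{\pa E}\varphi^2|B|^2\,d\mu\geq0$ it can simply be discarded: $\int_{\pa E}|\nabla\HHH|^2\,d\mu\geq\sigma_1\|\HHH-\overline\HHH\|_{L^2(\pa E)}^2$ with no further work. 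Calling $\int_{\pa E}\varphi^2|B|^2\,d\mu$ ``indefinite'' and trying to absorb it reveals a sign confusion; the term has the favorable sign. Moreover, the absorption step you outline would not actually close, since the constant $c_2$ you introduce carries $\|B_{\pa E}\|_{L^4}^2$, which converges to $\|B_{\pa F}\|_{L^4}^2$ rather than to $0$ as $M\to0$, so it cannot be made smaller than $\sigma_1/2$ by shrinking $M$. The compactness route you mention at the end would work in principle, but it re-proves the content of Proposition~\ref{2.6} for one specific $\varphi$, which is unnecessary once the orthogonality and the sign are handled correctly.
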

\begin{proof}
Since 
$$
\int_{\pa E}(\HHH -\overline\HHH)\nu_E \, \dmu=0 \, ,
$$
there holds
$$
\int_{\pa E} \abs{\HHH- \overline\HHH - \langle \eta \vert  \nu_E \rangle }^2 \, \dmu =\norma{\HHH - \overline\HHH }_{L^2(\pa E)}^2+\int_{\pa E} \langle \eta \vert  \nu_E \rangle^2 \, \dmu\geq\norma{\HHH - \overline\HHH }_{L^2(\pa E)}^2
$$
for all $\eta \in\R^3$. Choosing $M<\delta$, we may then apply Proposition~\ref{2.6} with $\theta=1$ and $\varphi=\HHH-\overline\HHH$, obtaining
\begin{equation}
\sigma_1 \int_{\pa E}\abs{\HHH-\overline\HHH}^2\, \dmu \leq \int_{\pa E}\abs{\nabla\HHH}^2\, \dmu-\int_{\pa E}\abs{B}^2 \abs{\HHH-\overline\HHH}^2\, \dmu \leq \int_{\pa E}\abs{\nabla\HHH}^2\, \dmu\,.
\end{equation}
\end{proof}

The following lemma is straightforward.

\begin{lem}\label{5.1}
Let $E \subseteq \T^3$ be a smooth set. If $f\in H^1(\pa E)$ and $g\in W^{1,4}(\pa E)$, then
$$
\norma{\nabla(fg)}_{L^2(\pa E)}\leq C\norma{\nabla f}_{L^2(\pa E)}\|g\|_{L^\infty(\pa E)}+C\|f\|_{L^4(\pa E)}\|\nabla g\|_{L^4(\pa E)}\,,
$$
for a constant $C$ independent of $E$.
\end{lem}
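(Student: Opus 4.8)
The plan is to simply combine the Leibniz rule for the covariant derivative with H\"older's inequality; the statement is elementary and the only thing worth being careful about is that no geometric constant (hence no dependence on $E$) enters. First I would record the pointwise identity $\nabla(fg)=g\,\nabla f+f\,\nabla g$, valid for the Levi--Civita connection of $\pa E$ applied to functions and therefore independent of the metric, which gives the pointwise bound
$$
|\nabla(fg)|\leq |g|\,|\nabla f|+|f|\,|\nabla g|\qquad\text{on }\pa E\,.
$$
Taking $L^2(\pa E)$--norms and using the triangle inequality yields
$$
\norma{\nabla(fg)}_{L^2(\pa E)}\leq \norma{g\,\nabla f}_{L^2(\pa E)}+\norma{f\,\nabla g}_{L^2(\pa E)}\,.
$$

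Next I would estimate the two terms separately: the first one trivially by $\norma{g\,\nabla f}_{L^2(\pa E)}\leq \norma{g}_{L^\infty(\pa E)}\norma{\nabla f}_{L^2(\pa E)}$, and the second one by H\"older's inequality with the pair of exponents $(4,4)$, since $\tfrac12=\tfrac14+\tfrac14$, obtaining $\norma{f\,\nabla g}_{L^2(\pa E)}\leq \norma{f}_{L^4(\pa E)}\norma{\nabla g}_{L^4(\pa E)}$. Adding the two bounds gives exactly the inequality in the statement, in fact with $C=1$; since neither the Leibniz identity nor H\"older's inequality involve the Riemannian metric of $\pa E$, the constant is independent of $E$. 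Implicit here is that $fg\in H^1(\pa E)$, so that the left-hand side is finite; this follows a posteriori from the same computation together with $\norma{fg}_{L^2(\pa E)}\leq \norma{g}_{L^\infty(\pa E)}\norma{f}_{L^2(\pa E)}$.

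There is no real obstacle in this proof. If one wished to be fully rigorous about the low regularity of $f$ and $g$, one would first establish the inequality for $f,g\in C^\infty(\pa E)$, where the Leibniz rule is unambiguous, and then pass to the general case by approximation, using the density of $C^\infty(\pa E)$ in $H^1(\pa E)$ and in $W^{1,4}(\pa E)$ and the continuity of both sides of the inequality with respect to these norms (which is again a consequence of H\"older's inequality, exactly as above).
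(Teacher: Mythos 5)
Your proof is correct and follows essentially the same route as the paper: Leibniz rule for the gradient, triangle inequality, and H\"older with exponents $(4,4)$ on the second term. The only cosmetic difference is that the paper squares first and uses $(a+b)^2\leq 2a^2+2b^2$ (obtaining $C=\sqrt2$), while you take $L^2$--norms directly and get the sharper constant $C=1$; your added remarks on well--definedness and density are optional but sound.
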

\begin{proof}
We estimate with Cauchy--Schwarz inequality,
\begin{align*}
\norma{\nabla(fg)}_{L^2(\pa E)}^2\leq&\,2\norma{\nabla f}_{L^2(\pa E)}^2\norma{g}_{L^\infty(\pa E)}^2
+2\int_{\pa E}|f|^2|\nabla g|^2\,d\mu\\
\leq&\, 2\norma{\nabla f}_{L^2(\pa E)}^2\|g\|_{L^\infty(\pa E)}^2+2\|f\|_{L^4(\pa E)}^2\|\nabla g\|_{L^4(\pa E)}^2\,,
\end{align*}
hence the thesis follows.
\end{proof}

As a consequence, we prove the following result.

\begin{lem}\label{5.2sdf}
Let $F\subseteq\T^3$ be a smooth set and $E\in\mathfrak{C}^{1,\alpha}_M(F)$. Then, for $M$ small enough, there holds
$$
\|\psi_E\|_{W^{3,2}(\pa F)}\leq C(F,M,\alpha)(1+\|\HHH\|_{H^1(\pa E)}^2)\,,
$$
where $\HHH$ is the mean curvature of $\pa E$ (the function $\psi_E$ is defined by formula~\eqref{front}).
\end{lem}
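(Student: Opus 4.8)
\begin{proof}[Proof of Lemma~\ref{5.2sdf}] The plan is to repeat the argument of Lemma~\ref{5.2}, gaining one extra derivative by working with the full $L^2$--norm of the third derivatives of $\psi_E$ (equivalently, the $H^1$--norm of its Hessian) in place of the $H^{1/2}$--seminorm, and using Lemma~\ref{5.1} in the role that Lemma~\ref{nicola1} played there. By a standard localization, straightening and partition of unity argument, we may reduce to the case where $\psi_E$ is defined on a disk $D\subseteq\R^2$ with $\|\psi_E\|_{C^{1,\alpha}(D)}\leq M$. Fix a smooth cut--off function $\varphi$ with compact support in $D$ and equal to one on a smaller disk $D'\subseteq D$. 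Then, as in formula~\eqref{noia1}, the function $\varphi\psi_E$ satisfies
\[
\Delta(\varphi\psi_E)=\frac{\nabla^2(\varphi\psi_E)\nabla\psi_E \nabla\psi_E}{1+|\nabla\psi_E|^2}+\varphi\HHH\sqrt{1+|\nabla\psi_E|^2}+R(x,\psi_E,\nabla\psi_E)\,,
\]
where the remainder term $R(x, \psi_E, \nabla \psi_E)$ is a smooth Lipschitz function.

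Applying $\nabla$ to this identity and estimating the $L^2(D)$--norm of $\nabla\Delta(\varphi\psi_E)$ term by term by means of Lemma~\ref{5.1}, we argue as follows. For the first term we take $f=\nabla^2(\varphi\psi_E)$ and $g=\nabla\psi_E\otimes\nabla\psi_E/(1+|\nabla\psi_E|^2)$; since $\|\psi_E\|_{C^{1,\alpha}(D)}\leq M$ we have $\|g\|_{L^\infty(D)}\leq CM^2$ and $\|\nabla g\|_{L^4(D)}\leq C\|\nabla^2\psi_E\|_{L^4(D)}$, so Lemma~\ref{5.1} bounds the $L^2$--norm of the gradient of this term by $CM^2\|\nabla^3(\varphi\psi_E)\|_{L^2(D)}+C(1+\|\nabla^2\psi_E\|_{L^4(D)})\|\nabla^2\psi_E\|_{L^4(D)}$. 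For the second term we take $f=\varphi\HHH$ and $g=\sqrt{1+|\nabla\psi_E|^2}$: here $\|g\|_{L^\infty(D)}\leq C$, $\|\nabla g\|_{L^4(D)}\leq C\|\nabla^2\psi_E\|_{L^4(D)}$, $\|f\|_{L^4(D)}\leq C\|\HHH\|_{L^4(\pa E)}$ and $\|\nabla f\|_{L^2(D)}\leq C\|\HHH\|_{H^1(\pa E)}$, so Lemma~\ref{5.1} gives the bound $C\|\HHH\|_{H^1(\pa E)}+C\|\HHH\|_{L^4(\pa E)}\|\nabla^2\psi_E\|_{L^4(D)}$. Finally $\nabla R$ is controlled by $C(1+\|\nabla^2\psi_E\|_{L^2(D)})\leq C(1+\|\nabla^2\psi_E\|_{L^4(D)})$. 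Using now the elementary identity $\|\nabla\Delta u\|_{L^2(\R^2)}=\|\nabla^3 u\|_{L^2(\R^2)}$, valid for any smooth compactly supported $u$ (for instance via Plancherel), and absorbing, if $M$ is small enough, the term $CM^2\|\nabla^3(\varphi\psi_E)\|_{L^2(D)}$ on the left--hand side, we obtain
\[
\|\nabla^3(\varphi\psi_E)\|_{L^2(D)}\leq C(F,M,\alpha)\bigl(1+\|\HHH\|_{H^1(\pa E)}\bigr)\bigl(1+\|\nabla^2\psi_E\|_{L^4(D)}\bigr)\,.
\]

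It remains to bound $\|\nabla^2\psi_E\|_{L^4(D)}$. By the Sobolev embedding $H^1(\pa E)\hookrightarrow L^4(\pa E)$ (uniform over $\mathfrak{C}^{1,\alpha}_M(F)$) we have $\|\HHH\|_{L^4(\pa E)}\leq C(F,M,\alpha)\|\HHH\|_{H^1(\pa E)}$, and exactly as in the proof of Lemma~\ref{5.2} --- combining the Calder\'on--Zygmund estimate~\eqref{Cald-Zyg}, the expression~\eqref{exprmeancurv} of the mean curvature and absorbing the top--order term for $M$ small --- one gets $\|\Delta\psi_E\|_{L^4(D)}\leq C(1+\|\HHH\|_{L^4(\pa E)})$ and hence $\|\nabla^2\psi_E\|_{L^4(D)}\leq C(1+\|\HHH\|_{L^4(\pa E)})\leq C(1+\|\HHH\|_{H^1(\pa E)})$. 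Plugging this into the previous display yields
\[
\|\nabla^3(\varphi\psi_E)\|_{L^2(D)}\leq C(F,M,\alpha)\bigl(1+\|\HHH\|_{H^1(\pa E)}\bigr)^2\leq C(F,M,\alpha)\bigl(1+\|\HHH\|_{H^1(\pa E)}^2\bigr)\,,
\]
so that $\|\nabla^3\psi_E\|_{L^2(D')}\leq C(F,M,\alpha)(1+\|\HHH\|_{H^1(\pa E)}^2)$. The inequality in the statement then follows from this estimate, the bound $\|\nabla^2\psi_E\|_{L^4(D)}\leq C(1+\|\HHH\|_{H^1(\pa E)})$ (hence in particular in $L^2$), and $\|\psi_E\|_{C^{1,\alpha}(D)}\leq M$, by a standard covering argument, all the constants being uniform over $\mathfrak{C}^{1,\alpha}_M(F)$ by the discussion preceding Lemma~\ref{nicola1} and by~\cite{DDM}.
\end{proof}

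\medskip

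\noindent\emph{(Remark on the proof.)} The only delicate point, exactly as in Lemma~\ref{5.2}, is the absorption of the highest--order term: one must make sure that differentiating the quasilinear part produces a coefficient of $\|\nabla^3(\varphi\psi_E)\|_{L^2(D)}$ of size $O(M^2)$, which is where the $C^{1,\alpha}$--smallness $\|\psi_E\|_{C^{1,\alpha}}\leq M$ enters, and that all the constants appearing (Calder\'on--Zygmund, Sobolev, and those of Lemma~\ref{5.1}) are uniform over the class $\mathfrak{C}^{1,\alpha}_M(F)$.
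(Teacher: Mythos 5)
Your proposal is correct and follows essentially the same line as the paper's own proof: localize to a disk, differentiate the identity~\eqref{noia1} and estimate term by term with Lemma~\ref{5.1}, use $\|\nabla\Delta u\|_{L^2}=\|\nabla^3 u\|_{L^2}$, absorb the $O(M^2)\|\nabla^3(\varphi\psi_E)\|_{L^2}$ term for $M$ small, bound $\|\mathrm{Hess}\,\psi_E\|_{L^4}$ via Calder\'on--Zygmund and the mean curvature expression~\eqref{exprmeancurv}, and conclude by a covering argument. The only added detail you provide is the explicit choice of $f$ and $g$ in each application of Lemma~\ref{5.1}, which is a minor elaboration and does not change the structure of the argument.
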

\begin{proof}
As we do in Lemma~\ref{5.2}, by a standard localization/partition of unity/straightening argument, we may reduce ourselves to the case where the function $\psi_E$ is defined in a disk $D\subseteq\R^2$ and $\|\psi_E\|_{C^{1,\alpha}(D)}\leq M$. Fixed a smooth cut--off function $\varphi$ with compact support in $D$ and equal to one on a smaller disk $D'\subseteq D$, we have again relation~\eqref{noia1} (see also~\cite{Man}). \\
Then, using Lemma~\ref{5.1} and recalling that $\|\psi_E\|_{C^{1,\alpha}(D)}\leq M$, we estimate
\begin{align}
\norma{\nabla\Delta(\varphi\psi_E)}_{L^2(D)}\leq C(F,M,\alpha)\bigl(&\,M^2\norma{\nabla^3(\varphi\psi_E)}_{L^2(D)}+\Vert\nabla\HHH\Vert_{L^2(\pa E)}(1+\norma{\nabla\psi_E}_{L^\infty(D)})\\
&\,+\norma{\HHH}_{L^4(\pa E)}(1+\norma{\psi_E}_{W^{2,4}(D)})+1+\norma{\psi_E}_{W^{2,4}(D)}\bigr)\,.
\end{align}
We now use the fact that, by a simple integration by part argument, if $u$ is a smooth function with compact support in $\R^2$, there holds
$$
\Vert\nabla\Delta u\Vert_{L^2(\R^2)}=\Vert\nabla^3u\Vert_{L^2(\R^2)}\,,
$$
hence, 
\begin{align*}
\Vert\nabla^3(\varphi\psi_E)\Vert_{L^2(D)}
&\,=\Vert\nabla\Delta(\varphi\psi_E)\Vert_{L^2(D)}\\
&\,\leq C(F,M,\alpha)\bigl(M^2\norma{\nabla^3(\varphi\psi_E)}_{L^2(D)}+\Vert\nabla\HHH\Vert_{L^2(\pa E)}(1+\norma{\nabla\psi_E}_{L^\infty(D)})\\
&\,\phantom{\leq C(F,M,\alpha)\bigl(\,}+\norma{\HHH}_{L^4(\pa E)}(1+\norma{\psi_E}_{W^{2,4}(D)})+1+\norma{\psi_E}_{W^{2,4}(D)}\bigr)\,,
\end{align*}
then, if $M$ is small enough, we have
\begin{equation}\label{eqcar10020}
\Vert\nabla^3(\varphi\psi_E)\Vert_{L^2(D)}
\leq C(F,M,\alpha)(1+\Vert\HHH\Vert_{H^1(\pa E)})(1+\norma{\mathrm{Hess}\,\psi_E}_{L^4(D)})\,,
\end{equation}
as
\begin{equation}\label{eqcar10040}
\Vert\HHH\Vert_{L^4(\pa E)}\leq C(F,M,\alpha)\Vert\HHH\Vert_{H^1(\pa E)}\,,
\end{equation}
by Theorem~3.70 in~\cite{Aubin}.\\ 
By the Calder\'on--Zygmund estimates (holding uniformly for every hypersurface $\pa E$, with $E\in\mathfrak{C}^{1,\alpha}_M(F)$, see~\cite{DDM}), we have again the inequality~\eqref{Cald-Zyg} and the most useful estimation~\eqref{normadeltapsi1}.

Hence, possibly choosing a smaller $M$, we conclude (as in inequality~\eqref{normadeltapsi2})
\begin{equation}
\norma{\Delta \psi_E}_{L^4(D)} \leq C(F,M,\alpha) (1 + \norma{\HHH}_{L^4(\pa E)}) \leq C(F,M,\alpha) (1 + \norma{\HHH}_{H^1(\pa E)})\,,
\end{equation}
again by inequality~\eqref{eqcar10040}.\\
Thus, by estimate~\eqref{Cald-Zyg}, we get
\begin{equation}\label{eqcar10030sdf}
\norma{\mathrm{Hess} \, \psi_E}_{L^{4}(D)}\leq C(F,M,\alpha) (1 + \norma{\HHH}_{H^1(\pa E)})\,,
\end{equation}
and using this inequality in estimate~\eqref{eqcar10020}, 
$$
\Vert\nabla^3(\varphi\psi_E)\Vert_{L^2(D)}
\leq C(F,M,\alpha)(1+\Vert\HHH\Vert_{H^1(\pa E)})^2\,,
$$
hence,
$$
\Vert\nabla^3\psi_E\Vert_{L^2(D')}
\leq C(F,M,\alpha)(1+\Vert\HHH\Vert_{H^1(\pa E)})^2\leq C(F,M,\alpha)(1+\Vert\HHH\Vert_{H^1(\pa E)}^2)\,.
$$
The inequality in the statement of the lemma then easily follows by this inequality, estimate~\eqref{eqcar10030sdf} and $\Vert\psi_E\Vert_{C^{1,\alpha}(D)}\leq M$, with a standard covering argument.
\end{proof}

Now, we state a compactness result whose proof is very close in spirit to the proof of Lemma~\ref{w52conv}, however we present it explicitly in order to show how the lemmas above come differently into play.

\begin{lem}[Compactness]\label{w32conv}
Let $F\subseteq\T^3$ be a smooth set and $E_n\subseteq \mathfrak{C}^{1,\alpha}_M(F) $ a sequence of smooth sets such that 
$$
\sup_{n\in\N}\,\int_{\pa E_n}|\nabla \HHH_n|^2\, d\mu_n<+\infty\,.
$$
Then, if $\alpha\in(0,1/2)$ and $M$ is small enough, there exists a smooth set $F'\in \mathfrak{C}^1_M(F)$ such that, up to a (non relabeled) subsequence, $E_n\to F'$ in $W^{2,p}$ for all $1\leq p<+\infty$.\\
Moreover, if inequality~\eqref{geopoinc} holds for every set $E_n$ with a constant $C$ independent of $n$ and 
$$
\int_{\pa E_n}|\nabla\HHH_n|^2\, d\mu_n\to 0\,,
$$
then $F'$ is critical for the volume--constrained Area functional $\A$ and the convergence $E_n\to F'$ is in $W^{3,2}$.
\end{lem}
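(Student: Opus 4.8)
The plan is to follow closely the argument for the modified Mullins--Sekerka flow in Lemma~\ref{w52conv}, with the r\^ole of the Dirichlet energy $\int_{\T^3}|\nabla w_{E_n}|^2\,dx$ now played by $\int_{\pa E_n}|\nabla\HHH_n|^2\,d\mu_n$, and with the elliptic estimates for the auxiliary function $w_n$ replaced by the purely geometric estimates of Lemmas~\ref{interpolation}, \ref{laplacian}, \ref{5.1} and~\ref{5.2sdf}. First I would write each $\pa E_n$ as a normal graph $\pa E_n=\{y+\psi_n(y)\nu_F(y):\,y\in\pa F\}$ over $\pa F$, as in~\eqref{front}, and reduce the whole statement to obtaining a uniform bound $\sup_{n}\|\HHH_n\|_{H^1(\pa E_n)}<+\infty$. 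Indeed, granted such a bound, the Sobolev embedding $H^1(\pa E_n)\hookrightarrow L^p(\pa E_n)$ (valid with uniform constants for $E_n\in\mathfrak{C}^{1,\alpha}_M(F)$, since $\pa E_n$ is two--dimensional, see~\cite{DDM}) gives $\sup_n\|\HHH_n\|_{L^p(\pa E_n)}<+\infty$ for every $p\in[1,+\infty)$; the Calder\'on--Zygmund inequality~\eqref{CZG2} then yields $\sup_n\|B_n\|_{L^p(\pa E_n)}<+\infty$ for all $p<+\infty$, hence $\sup_n\|\psi_n\|_{W^{2,p}(\pa F)}<+\infty$; and Lemma~\ref{5.2sdf} upgrades this to $\sup_n\|\psi_n\|_{W^{3,2}(\pa F)}<+\infty$. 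By the compact embeddings $W^{3,2}(\pa F)\hookrightarrow\hookrightarrow W^{2,p}(\pa F)\hookrightarrow\hookrightarrow C^{1,\beta}(\pa F)$ (with $\beta<1-2/p$, which for $\alpha<1/2$ can be taken $>\alpha$ by choosing $p$ large), after passing to a non relabelled subsequence we would get $\psi_n\to\psi_{F'}$ in $W^{2,p}$ for all $p<+\infty$ and in $C^{1,\alpha}$, for a limit set $F'$ which, by the uniform bounds $\vol(E_n\triangle F)\le M$ and $\|\psi_n\|_{C^1(\pa F)}\le M$, lies in $\mathfrak{C}^1_M(F)$; this is precisely the first assertion.

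The core of the proof is therefore the bound on $\|\HHH_n\|_{H^1(\pa E_n)}$, and I expect this to be the main obstacle. Its gradient part $\|\nabla\HHH_n\|_{L^2(\pa E_n)}$ is exactly the hypothesis, so only $\|\HHH_n\|_{L^2(\pa E_n)}$ needs to be controlled; the delicate point is that $\pa E_n$ may be disconnected (as for lamellae), so the naive Poincar\'e inequality on $\pa E_n$ is unavailable. I would proceed as in Lemma~\ref{w52conv}: using the uniform $C^{1,\alpha}$--closeness of $\pa E_n$ to $\pa F$, cover $\pa F$ by finitely many solid cylinders $C=D\times(-L,L)$, with $D\subseteq\R^2$ a ball centred at the origin, in which $\pa E_n$ is a graph $x_3=f_n(x')$ with $\sup_n\|f_n\|_{C^{1,\alpha}(\overline D)}<+\infty$; integrating the identity $\HHH_n\sqrt{1+|\nabla f_n|^2}=\Div\bigl(\nabla f_n/\sqrt{1+|\nabla f_n|^2}\bigr)$ over $D$ and using the divergence theorem bounds $\int_D\HHH_n\,dx'$ in terms of the $C^1$--norm of $f_n$ on $\pa D$, hence uniformly in $n$. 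On the other hand, the Poincar\'e inequality of Lemma~\ref{interpolation} on each connected component $\Gamma$ of $\pa E_n$ bounds $\|\HHH_n-\overline\HHH_n^{\,\Gamma}\|_{L^2(\Gamma)}$ by $\|\nabla\HHH_n\|_{L^2(\Gamma)}$, where $\overline\HHH_n^{\,\Gamma}$ is the average of $\HHH_n$ on $\Gamma$. Combining the two, and choosing for each connected component of $\pa F$ one of the cylinders meeting the corresponding component of $\pa E_n$, one sees that all the averages $\overline\HHH_n^{\,\Gamma}$ are equibounded, whence $\sup_n\|\HHH_n\|_{L^2(\pa E_n)}<+\infty$ and the required uniform $H^1$ bound follows; all the remaining constants are uniform over $\mathfrak{C}^{1,\alpha}_M(F)$ by~\cite{DDM}.

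Finally, suppose in addition that~\eqref{geopoinc} holds for every $E_n$ with a constant independent of $n$ and that $\int_{\pa E_n}|\nabla\HHH_n|^2\,d\mu_n\to0$. Then~\eqref{geopoinc} forces $\|\HHH_n-\overline\HHH_n\|_{L^2(\pa E_n)}\to0$, while the localisation argument above gives $\sup_n|\overline\HHH_n|<+\infty$; passing to a further subsequence, $\overline\HHH_n\to\lambda$ for some $\lambda\in\R$, so that $\HHH_n\to\lambda$ in $L^2(\pa E_n)$ and, since $\|\nabla\HHH_n\|_{L^2(\pa E_n)}\to0$, we obtain $\HHH_n\bigl(\cdot+\psi_n(\cdot)\nu_F(\cdot)\bigr)\to\lambda$ in $H^1(\pa F)$. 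On the other hand the $W^{2,p}$--convergence $\psi_n\to\psi_{F'}$ already established gives $\HHH_n\bigl(\cdot+\psi_n(\cdot)\nu_F(\cdot)\bigr)\to\HHH\bigl(\cdot+\psi_{F'}(\cdot)\nu_F(\cdot)\bigr)$ in $L^p(\pa F)$, where $\HHH$ is the mean curvature of $\pa F'$; hence $\HHH\equiv\lambda$ on $\pa F'$, so $F'$ is a critical set for the volume--constrained Area functional by Proposition~\ref{critprop} (with $\gamma=0$). To upgrade the convergence to $W^{3,2}$, I would argue as at the end of the proof of Lemma~\ref{w52conv}: writing, in local graph coordinates, the graph form of the mean curvature equation for $\psi_n$ and for $\psi_{F'}$ and subtracting, one controls the seminorm of $\nabla^2(\varphi\psi_n)-\nabla^2(\varphi\psi_{F'})$ in $H^{1}$, that is $\|\nabla^3(\psi_n-\psi_{F'})\|_{L^2}$, by means of Lemma~\ref{5.1} and Lemma~\ref{laplacian} in terms of $\|\psi_n-\psi_{F'}\|_{W^{2,p}}$, $\|\nabla\psi_n-\nabla\psi_{F'}\|_{L^\infty}$ and $\|\HHH_n-\HHH\|_{H^1}$, all of which tend to $0$; a standard covering argument then gives $\psi_n\to\psi_{F'}$ in $W^{3,2}(\pa F)$, i.e.\ $E_n\to F'$ in $W^{3,2}$.
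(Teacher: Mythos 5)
Your proposal is correct and follows the same overall structure as the paper's proof (uniform $H^1$--bound on $\HHH_n$ $\Rightarrow$ $L^p$--bound via Sobolev embedding $\Rightarrow$ $L^p$--bound on $B_n$ via Calder\'on--Zygmund $\Rightarrow$ $W^{2,p}$--bound on $\psi_n$ $\Rightarrow$ $W^{3,2}$--bound via Lemma~\ref{5.2sdf} $\Rightarrow$ compactness; then, under the extra hypotheses, criticality of the limit and $W^{3,2}$--convergence via the graph--form identity, Lemma~\ref{5.1} and a covering argument). The one place you differ is the key $H^1$--bound $\sup_n\|\HHH_n\|_{H^1(\pa E_n)}<+\infty$: you obtain it by combining the per--component Poincar\'e inequality of Lemma~\ref{interpolation} with the cylinder/localization argument (formula~\eqref{intHn}) to control each component average $\overline\HHH_n^{\,\Gamma}$, whereas the paper first applies the ``geometric'' Poincar\'e inequality of Lemma~\ref{lm:geopoinc} to control $\|\HHH_n-\widetilde\HHH_n\|_{L^2(\pa E_n)}$ globally and then uses the cylinders only to bound the single global average $\widetilde\HHH_n$. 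Your variant is slightly more robust: Lemma~\ref{lm:geopoinc} is stated under the hypothesis that $F$ is a \emph{strictly stable critical} set, while the first assertion of Lemma~\ref{w32conv} assumes only that $F$ is smooth, so deriving the $H^1$--bound from the per--component Poincar\'e inequality of Lemma~\ref{interpolation} (which holds for every $E\in\mathfrak{C}^{1,\alpha}_M(F)$ with uniform constant) matches the stated hypotheses more faithfully and avoids a tacit strengthening of them; the geometric Poincar\'e inequality then enters, exactly as you propose, only in the second assertion where it is explicitly assumed. The remaining details (uniformity of the constants over $\mathfrak{C}^{1,\alpha}_M(F)$, identification of the limit $\lambda$, the subtraction of the graph equations and absorption of the small $\delta^2$--term) are handled correctly.
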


\begin{proof}
We first claim that 
\begin{equation}\label{claim1111sdf}
\sup_{n\in\N}\,\|\HHH_n\|_{H^{1}(\pa E_n)}<+\infty.
\end{equation}
We set $\widetilde{\HHH}_n=\fint_{\pa E_n}\HHH_n\,d\mu_n$, then, by the ``geometric'' Poincar\'e inequality of Lemma~\ref{lm:geopoinc}, which holds with a ``uniform'' constant $C=C(F,M,\alpha)$, for all the sets $E\in\mathfrak{C}^{1,\alpha}_M(F)$ (see~\cite{DDM}), if $M$ is small enough, we have
\begin{equation}
\|\HHH_n-\widetilde{\HHH}_n\|^2_{H^{1}(\pa E_n)}\leq\sup_{n\in\N}\,\int_{\pa E_n}|\nabla \HHH_n|^2\, d\mu_n<C<+\infty
\end{equation}
with a constant $C$ independent of $n\in\N$.\\
Then, we note that, as in Lemma~\ref{w52conv}, by the uniform $C^{1,\alpha}$--bounds on $\pa E_n$, we may find a solid cylinder of the form $C=D\times(-L,L)$, with $D\subseteq\R^{2}$ a ball centered at the origin and functions $f_n$, with
\begin{equation}\label{w32}
\sup_{n\in\N}\|f_n\|_{C^{1,\alpha}(\overline D)}<+\infty\,,
\end{equation}
such that $\pa E_n\cap C=\{(x',x_n)\in D\times(-L,L):\, x_n= f_n(x')\}$ with respect to a suitable coordinate frame (depending on $n\in\N$). Hence, recalling the formula~\eqref{intHn}, the uniform bound~\eqref{w32} and the fact that $\|\HHH_n- \widetilde{\HHH}_n\|_{H^{1}(\pa E_n)}$ are equibounded, we get that ${\widetilde{\HHH}_n}$ are also equibounded (by a standard ``localization'' argument, ``uniformly'' applied to all the hypersurfaces $\pa E_n$). Therefore, the claim~\eqref{claim1111sdf} follows.\\
By applying the Sobolev embedding theorem on each connected component of $\partial F$, we have that 
$$
\norma{\HHH_n}_{L^p(\pa E_n)} \leq C \norma{\HHH_n}_{H^{1}(\pa E_n)} <C<+\infty\qquad \text{for all $p \in [1,+\infty)$.}
$$
for a constant $C$ independent of $n\in\N$.\\
Now, as before, we obtain
$$
\norma{B}_{L^p(\pa E)} \leq C(1+  \norma{\HHH}_{L^p(\pa E)})\,.
$$
for every $E\in \mathfrak{C}^{1,\alpha}_M(F)$ with a uniform constant $C$. Then, if we write 
$$
\pa E_n =\{y+\psi_n(y)\nu_F(y):\, y\in \pa F\}\,,
$$ 
we have $\sup_{n\in\N}\|\psi_n\|_{W^{2,p}(\pa F)}<+\infty$, for all $p \in [1,+\infty)$.\\
Thus, by the Sobolev compact embedding $W^{2,p}(\pa F)\hookrightarrow C^{1,\alpha}(\pa F)$, up to a subsequence (not relabeled), there exists a set $F'\in \mathfrak{C}^{1,\alpha}_M(F)$ such that 
$$
\psi_n\to \psi_{F'} \text{ in $C^{1,\alpha}(\pa F)$,}
$$
for all $\alpha\in (0,1/2)$.\\
From estimate~\eqref{claim1111sdf} and Lemma~\ref{5.2sdf} (possibly choosing a smaller $M$), we have then that the functions $\psi_n$ are bounded in $W^{3,2}(\pa F)$. Hence, possibly passing to another subsequence (again not relabeled), we conclude that $E_n \to F'$ in $W^{2,p}$ for every $p\in[1,+\infty)$, by the Sobolev compact embeddings.\\
For the second part of the lemma, we first observe that if
$$
\int_{\pa E_n}|\nabla\HHH_n|^2\, d\mu_n\to 0\,,
$$
then there exists $\lambda\in \R$ and a subsequence $E_n$ (not relabeled) such that 
$$
\HHH_n\big(\cdot + \psi_n(\cdot)\nu_F(\cdot)\big)\to \lambda=
\HHH\big(\cdot + \psi_{F'}(\cdot)\nu_F(\cdot)\big)
$$
in $H^{1}(\pa F)$, where $\HHH$ is the mean curvature of $F'$. Hence $F'$ is critical.\\
To conclude the proof we only need to show that $\psi_n$ converge to $\psi =\psi_{F'}$ in $W^{3,2}(\pa F)$.\\
Fixed $\delta>0$, arguing as in the proof of Lemma~\ref{5.2sdf}, we reduce ourselves to the case where the functions $\psi_n$ are defined on a disk $D\subseteq\R^2$, are bounded in $W^{3,2}(D)$, converge in $W^{2,p}(D)$ for all $p\in[1,+\infty)$ to $\psi\in W^{3,2}(D)$ and  $\|\nabla\psi\|_{L^\infty(D)}\leq\delta$. 
Then, fixed a smooth cut--off function $\varphi$ with compact support in $D$ and equal to one on a smaller disk $D'\subseteq D$, we have
\begin{align*}
\frac{\Delta(\varphi\psi_n)}{\sqrt{1+|\nabla\psi_n|^2}}-\frac{\Delta(\varphi\psi)}{\sqrt{1+|\nabla\psi|^2}} =&\,(\nabla^2(\varphi\psi_n)-\nabla^2(\varphi\psi))\frac{\nabla\psi \nabla\psi}{(1+|\nabla\psi|^2)^{3/2}}\\
&\,+ \nabla^2(\varphi\psi_n)\Bigl(\frac{\nabla\psi_n\nabla\psi_n}{(1+|\nabla\psi_n|^2)^{3/2}}-\frac{\nabla\psi \nabla\psi}{(1+|\nabla\psi|^2)^{3/2}}\Bigr)\\
&\,+\varphi(\HHH_n-\HHH)+R(x,\psi_n,\nabla\psi_n)-R(x,\psi,\nabla\psi)\,,
\end{align*}
where $R$ is a smooth Lipschitz function.\\
Then, using Lemma~\ref{5.1}, an argument similar to the one of the proof of Lemma~\ref{5.2sdf} shows that
\begin{align*}
\bigg \Vert \nabla\Big( \frac{\Delta(\varphi\psi_n)}{\sqrt{1+|\nabla\psi_n|^2}}-\frac{\Delta(\varphi\psi)}{\sqrt{1+|\nabla\psi|^2}} \Big ) \bigg \Vert_{L^2(D)}
& \, \leq C(M)\big(\delta^2\norma{\nabla^3(\varphi\psi_n)-\nabla^3(\varphi\psi)}_{L^2(D)}\\
& \,+\|\nabla^2(\varphi\psi_n)-\nabla^2(\varphi\psi)\|_{L^4 (D)}\|\nabla^2\psi\|_{L^4(D)} \\& \, +
\|\nabla^3(\varphi\psi_n)\|_{L^2(D)}\|\nabla\psi_n-\nabla\psi\|_{L^\infty(D)}\\
& \,+\|\nabla^2(\varphi\psi_n)\|_{L^4(D)}(\|\nabla^2\psi_n\|_{L^4}+\|\nabla^2\psi\|_{L^4(D)})\\
& \,
+\|\nabla\HHH_n-\nabla\HHH\|_{L^2(D)}+\|\psi_n-\psi\|_{W^{2,4}(D)}\big)\,.
\end{align*}
Being $\HHH$ constant, that is $\nabla\HHH=0$, by using Lemma~\ref{5.1} again and arguing as in the proof of Lemma~\ref{5.2sdf}, we finally get
$$
\norma{\nabla^3(\varphi\psi_n)-\nabla^3(\varphi\psi)}_{L^2(D)}\leq C(M)\big(\|\psi_n-\psi\|_{W^{2,4}(D)}
+\|\nabla\psi_n-\nabla\psi\|_{L^\infty(D)}+\| \nabla\HHH_n\|_{L^2(D)}\big)\,,
$$
hence,
$$
\norma{\nabla^3\psi_n-\nabla^3\psi}_{L^2(D')}\leq C(M)\big(\|\psi_n-\psi\|_{W^{2,4}(D)}+\|\nabla\psi_n-\nabla\psi\|_{L^\infty(D)}+\| \nabla\HHH_n\|_{L^2(D)}\big)\,,
$$
from which the conclusion follows, by the first part of the lemma and a standard covering argument.

\end{proof}

\subsection{The surface diffusion flow -- The main theorem}\ \vskip.3em

We now show the global existence result for the surface diffusion flow, whose proof is very similar to the one of Theorem~\ref{existence}. However, in order to make it clear, we present it in a detailed way.

\begin{thm}\label{existence2}
Let $E\subseteq\T^3$ be a strictly stable critical set for the Area functional under a volume constraint and let $N_\eps$ be a tubular neighborhood of $\pa E$, as in formula~\eqref{tubdef}. For every $\alpha\in (0,1/2)$ there exists $M>0$ such that, if $E_0$ is a smooth set in $C^{1,\alpha}_M(E)$ satisfying $\vol( E_0)= \vol( E )$ and
$$
\int_{\pa E_0} \vert \nabla \HHH_0\vert^2\, \dmu_0 \leq M,
$$
then the unique smooth solution $E_t$ of the surface diffusion flow starting from $E_0$, given by Proposition~\ref{th:EMS1}, is defined for all $t\geq0$. Moreover, $E_t\to E+\eta$ exponentially fast in $W^{3,2}$ as $t\to +\infty$ (recall the definition of convergence of sets in Subsection~\ref{stabsec}), for some $\eta\in \R^3$, with the meaning that the functions 
$\psi_{\eta, t} : \pa E+ \eta \to \R$ representing $\pa E_t$ as ``normal graphs'' on $\pa E+ \eta$, that is,
$$
\pa E_t= \{ y+ \psi_{\eta,t} (y) \nu_{E+\eta}(y) \, : \, y \in \pa E+\eta \},
$$
satisfy
$$
\Vert \psi_{\eta, t}\Vert_{W^{3,2}(\pa E + \eta)}\leq Ce^{-\beta t} \, ,
$$
for every $t \in [0, +\infty)$, for some positive constants $C$ and $\beta$.
\end{thm}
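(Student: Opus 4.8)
The plan is to transpose the proof of Theorem~\ref{existence} to the surface diffusion setting, replacing the Dirichlet energy $\int_{\T^3}|\nabla w_t|^2\,dx$ by the curvature energy $\int_{\pa E_t}|\nabla\HHH_t|^2\,d\mu_t$, the boundary estimates for harmonic functions (Lemma~\ref{harmonic estimates}) by the intrinsic estimates of Lemmas~\ref{interpolation}, \ref{laplacian} and~\ref{nasty}, and the compactness Lemma~\ref{w52conv} by Lemma~\ref{w32conv}. By Proposition~\ref{th:EMS1}, for every $F\in\mathfrak{C}^{2,\alpha}_M(E)$ the surface diffusion flow from $F$ exists on a common interval $[0,T)$ with $T=T(E,M,\alpha)>0$. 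Given $E_0$ as in the statement, one writes the moving boundary as $\pa E_t=\{y+\psi_t(y)\nu_E(y):y\in\pa E\}$ on the maximal existence interval $[0,T(E_0))$ and, exactly as in the proof of Theorem~\ref{existence}, introduces $D(E_t)=\int_{E_t}d_E\,dx-\int_E d_E\,dx$ with $d_E$ the signed distance of~\eqref{sign dist}; this quantity satisfies $\vol(E_t\triangle E)\le C\|\psi_t\|_{L^2(\pa E)}\le C\sqrt{D(E_t)}$ and $D(E_t)\le\eps\,\vol(E_t\triangle E)$. One also records, via Lemma~\ref{5.2sdf}, that smallness of $\|\psi_t\|_{C^{1,\alpha}(\pa E)}$ together with smallness of $\int_{\pa E_t}|\nabla\HHH_t|^2\,d\mu_t$ forces $\|\psi_t\|_{W^{2,3}(\pa E)}$ to be small.

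The core of the argument is a stopping-time estimate. Let $\overline T\le T(E_0)$ be the largest time such that $\vol(E_t\triangle E)\le 2M_1$, $\|\psi_t\|_{C^{1,\alpha}(\pa E)}\le 2M_2$ and $\int_{\pa E_t}|\nabla\HHH_t|^2\,d\mu_t\le 2M_3$ for $t\in[0,\overline T)$, with $M_1,M_2,M_3$ small. On this interval one first shows, arguing by contradiction as in Step~2 of the proof of Theorem~\ref{existence}, that the normal velocity $\Delta_t\HHH_t$ is not asymptotically a pure infinitesimal translation, i.e.
\[
\min_{\eta\in\OO_E}\bigl\|\Delta_t\HHH_t-\langle\eta\,|\,\nu_t\rangle\bigr\|_{L^2(\pa E_t)}\ge\theta\,\|\Delta_t\HHH_t\|_{L^2(\pa E_t)},
\]
using the translation invariance of $\A$ (so that $\int_{\pa E_t}\HHH_t\langle\eta\,|\,\nu_t\rangle\,d\mu_t=0$), the geometric Poincar\'e inequality of Lemma~\ref{lm:geopoinc} to bound $\|\HHH_t-\overline\HHH_t\|_{L^2(\pa E_t)}$ by $C\|\nabla\HHH_t\|_{L^2(\pa E_t)}$, and elliptic estimates for the harmonic extension of $\langle\eta\,|\,\nu_t\rangle$. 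Then by Propositions~\ref{2.6} and~\ref{prop:nocrit}, choosing $M$ small enough that $\|\psi_t\|_{W^{2,3}(\pa E)}$ stays below the threshold $\delta$ there, one gets $\Pi_{E_t}(\Delta_t\HHH_t)\ge\sigma_\theta\|\Delta_t\HHH_t\|_{H^1(\pa E_t)}^2$. Inserting this into the energy identity~\eqref{der of DH} and estimating the two remainder terms $\int_{\pa E_t}B_t(\nabla\HHH_t,\nabla\HHH_t)\Delta_t\HHH_t\,d\mu_t$ and $\tfrac12\int_{\pa E_t}\HHH_t|\nabla\HHH_t|^2\Delta_t\HHH_t\,d\mu_t$ by Lemma~\ref{nasty}, together with the $L^6$-bound on $\HHH_t$ coming from Lemmas~\ref{laplacian} and~\ref{interpolation} and the Calder\'on--Zygmund estimate~\eqref{CZG2}, one obtains, for $M_3$ small,
\[
\frac{d}{dt}\int_{\pa E_t}|\nabla\HHH_t|^2\,d\mu_t\le-\sigma_\theta\,\|\Delta_t\HHH_t\|_{H^1(\pa E_t)}^2\le-c_0\int_{\pa E_t}|\nabla\HHH_t|^2\,d\mu_t,
\]
so the curvature energy is nonincreasing and in fact decays exponentially. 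Since $\tfrac{d}{dt}D(E_t)=\int_{\pa E_t}d_E\,\Delta_t\HHH_t\,d\mu_t=-\int_{\pa E_t}\langle\nabla^{\top}d_E\,|\,\nabla\HHH_t\rangle\,d\mu_t$ is bounded by $C\|\nabla\HHH_t\|_{L^2(\pa E_t)}\le C\sqrt{M_3}\,e^{-c_0t/2}$, integrating in time keeps $\vol(E_t\triangle E)$ strictly below $2M_1$, and Lemma~\ref{w32conv} (through the $W^{2,3}$-bound on $\psi_t$) keeps $\|\psi_t\|_{C^{1,\alpha}(\pa E)}$ strictly below $2M_2$; hence $\overline T=T(E_0)$, and the three a priori bounds hold on $[0,T(E_0))$, with the curvature energy decaying exponentially.

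Global existence then follows as in Step~4 of the proof of Theorem~\ref{existence}: if $T(E_0)<+\infty$, integrating the differential inequality above on $[T(E_0)-T/2,T(E_0)-T/4]$ and using the mean value theorem yields a time $\overline t$ with $\|\Delta_{\overline t}\HHH_{\overline t}\|_{H^1(\pa E_{\overline t})}$ arbitrarily small; by Lemmas~\ref{laplacian} and~\ref{interpolation} and a Schauder-type estimate analogous to~\eqref{CZGS}, this places $E_{\overline t}$ in $\mathfrak{C}^{2,\alpha}_M(E)$, so the flow extends past $T(E_0)$, a contradiction. For the asymptotics, along any sequence $t_n\to+\infty$ the sets $E_{t_n}$ satisfy the hypotheses of Lemma~\ref{w32conv}, so a subsequence converges in $W^{3,2}$ to a critical set $E'\in\mathfrak{C}^{1,\alpha}_M(E)$, which by Proposition~\ref{prop:nocrit} equals $E+\eta$ for some $\eta\in\R^3$. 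Finally, replacing $D$ by $D_\eta(F)=\int_{F\triangle(E+\eta)}\mathrm{dist\,}(x,\pa E+\eta)\,dx$ and repeating the computation gives $\bigl|\tfrac{d}{dt}D_\eta(E_t)\bigr|\le C\sqrt{M}\,e^{-c_0t/2}$; since $D_\eta(E_t)\to0$ and $\|\psi_{\eta,t}\|_{L^2(\pa E+\eta)}^2\le CD_\eta(E_t)$, this yields exponential decay of $\|\psi_{\eta,t}\|_{L^2(\pa E+\eta)}$, and interpolating against the uniform $W^{3,2}$-bound from Lemma~\ref{5.2sdf} upgrades it to exponential decay in $W^{3,2}(\pa E+\eta)$ and, in particular, pins down the limit of the full flow (not merely of a subsequence).

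The main obstacle is the control of the cubic remainder terms in~\eqref{der of DH}: unlike for the second-order mean curvature flow, $\int_{\pa E_t}B_t(\nabla\HHH_t,\nabla\HHH_t)\Delta_t\HHH_t\,d\mu_t$ and $\int_{\pa E_t}\HHH_t|\nabla\HHH_t|^2\Delta_t\HHH_t\,d\mu_t$ are genuinely of the same order as $\Pi_{E_t}(\Delta_t\HHH_t)$, and only the geometric interpolation of Lemma~\ref{nasty}---which trades one factor of $\|\nabla\HHH_t\|_{L^2}$, kept small by the stopping-time bound, against the highest-order norm $\|\nabla\Delta_t\HHH_t\|_{L^2}$ that is then absorbed into the coercive term---makes the sign of $\tfrac{d}{dt}\int_{\pa E_t}|\nabla\HHH_t|^2\,d\mu_t$ come out right. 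This is precisely where the dimension enters: on a surface in $\T^3$ the relevant Sobolev exponents ($L^6$ for $\HHH_t$, $L^4$ for $\nabla\HHH_t$) are borderline, whereas for $n>3$ the interpolation is too weak. The secondary difficulty, handled as in Theorem~\ref{existence}, is the degeneracy of $\Pi_{E_t}$ on the translation modes $T(\pa E_t)$: one has to rule out that the velocity becomes (asymptotically) an infinitesimal translation, which again requires Lemma~\ref{lm:geopoinc} to hold uniformly along the flow.
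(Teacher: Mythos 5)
Your proposal follows the paper's proof of this theorem essentially step for step, and the overall architecture (stopping time, exclusion of the translational mode, coercivity of $\Pi_{E_t}$ via Proposition~\ref{2.6}, absorption of the cubic remainders in~\eqref{der of DH} by Lemma~\ref{nasty}, compactness via Lemma~\ref{w32conv}, and the $D_\eta$ argument for convergence of the full flow) is correct and matches the paper's.

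Two small inaccuracies are worth flagging. First, in the translational--component step you invoke ``elliptic estimates for the harmonic extension of $\langle\eta\,|\,\nu_t\rangle$'': this is a vestige of the Mullins--Sekerka proof. For the surface diffusion flow the pairing $\int_{\pa E_t}\Delta_t\HHH_t\,\langle\eta_t\,|\,\nu_t\rangle\,d\mu_t$ is handled \emph{intrinsically} by integrating by parts on $\pa E_t$, giving $-\int_{\pa E_t}\langle\nabla\HHH_t\,|\,\nabla\langle\eta_t\,|\,\nu_t\rangle\rangle\,d\mu_t$, which is then bounded using $|\eta_t|\,\|\nabla\nu_t\|_{L^2(\pa E_t)}\,\|\nabla\HHH_t\|_{L^2(\pa E_t)}$ and the $W^{2,6}$-bound on $\psi_t$; no harmonic extension of $\langle\eta_t\,|\,\nu_t\rangle$ to $\T^3$ is used or needed, and introducing one would be a detour since the relevant quantity $\Delta_t\HHH_t$ is purely a surface object. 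Second, interpolating the exponential $L^2$-decay of $\psi_{\eta,t}$ against a uniform $W^{3,2}$-bound yields exponential decay only in $W^{s,2}$ for $s<3$; the decay at the endpoint $W^{3,2}$ needs the additional step (as in the paper's Step~6) of first deducing from~\eqref{extraeq_2} and~\eqref{exp_2} the exponential decay of $\|\HHH_t(\cdot+\psi_{\eta,t}\nu_{E+\eta})-\HHH_{\pa E+\eta}\|_{H^1(\pa E+\eta)}$ and then feeding this into the elliptic relation of Lemma~\ref{5.2sdf} applied to differences, rather than by interpolation alone.
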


\begin{remark}\label{existence2+}
The convergence of $E_t\to E+\eta$ is actually smooth, that is, for every $k\in\N$, there holds
$$
\Vert \psi_{\eta, t}\Vert_{C^k(\pa E + \eta)}\leq C_ke^{-\beta_k t},
$$
for every $t\in[0,+\infty)$, for some positive constants $C_k$ and $\beta_k$. This is a particular case of Theorem~5.1 in~\cite{FusJulMor18}, proved by means of standard parabolic estimates and interpolation (and Sobolev embeddings), using the exponential decay in $W^{3,2}$, analogously to the modified Mullins--Sekerka flow (Remark~\ref{existence+}).
\end{remark}

\begin{remark}
The extra condition in the theorem on the $L^2$--smallness of the gradient of $\HHH_0$ (see the second part of Lemma~\ref{w32conv} and its proof) implies that the mean curvature of $\pa E_0$ is ``close'' to be constant, as it is for the set $E$ or actually for any critical set (recall Remark~\ref{closedness}).
\end{remark}

\begin{proof}[Proof of Theorem~\ref{existence2}]
As in proof of Theorem~\ref{existence}, $C$ will denote a constant depending only on $E$, $M$ and $\alpha$, whose value may vary from line to line.

Assume that the surface diffusion flow $E_t$ is defined for $t$ in the maximal time interval $[0,T(E_0))$, where $T(E_0)\in (0,+\infty]$ and let the moving boundaries $\pa E_t$ be represented as ``normal graphs'' on $\pa E$ as
$$
\pa E_t= \{ y+ \psi_t(y) \nu_{E}(y) \, : \, y \in \pa E\} \, ,
$$
for some smooth functions $\psi_t:\pa E\to \R$.\\
We recall that, by Proposition~\ref{th:EMS1}, for every $F\in \mathfrak{C}^{2,\alpha}_M(E)$, the flow is defined in the time interval $[0, T)$, with $T=T(E,M,\alpha)>0$.\\
As before, we show the theorem for the smooth sets $E_0\subseteq\T^3$ satisfying
\begin{equation}\label{initial_2}
\vol(E_0\Delta E)\leq M_1,\quad\|\psi_0\|_{C^{1,\alpha}(\pa E)}\leq M_2\quad\text{and}\quad \int_{\pa E_0} |\nabla \HHH_0|^2\, d\mu_0\leq M_3\,,
\end{equation}
for some positive constants $M_1,M_2,M_3$, then we get the thesis by setting $M=\min\{M_1,M_2,M_3\}$.\\
For any set $F\in \mathfrak{C}^{1,\alpha}_{M}(E)$, we define quantity in~\eqref{D(F)0} and by the same arguments we obtain estimation~\eqref{D(F)}.\\
Hence, by this discussion, the initial smooth set $E_0\in\mathfrak{C}^{1,\alpha}_M(E)$ satisfies
$D(E_0)\leq M\leq M_1$ (having chosen $\eps<1$).\\
By rereading the proof of Lemma~\ref{w32conv}, it follows that for $M_2,M_3$ small enough, if 
$$
\Vert\psi_F\Vert_{C^{1,\alpha}(\pa E)}\leq M_2
$$
and
\begin{equation}\label{ex-de02_2}
\int_{\pa F} |\nabla \HHH|^2\,\dmu \leq M_3\,,
\end{equation}
then 
\begin{equation}\label{eqcar50001_2}
\|\psi_F\|_{W^{2,6}(\pa E)}\leq \omega(\max\{M_2,M_3\})\,,
\end{equation}
where $s\mapsto\omega(s)$ is a positive nondecreasing function (defined on $\R$) such that $\omega(s)\to 0$ as $s\to 0^+$. This clearly implies 
\begin{equation}\label{eqcar50003_2}
\|\nu_F\|_{W^{1,6}(\pa F)}\leq \omega'(\max\{M_2,M_3\})\,,
\end{equation}
for a function $\omega'$ with the same properties of $\omega$ (also in this case, $\omega$ and $\omega'$ only depend on $E$ and $\alpha$, for $M$ small enough). Moreover, thanks to Lemma~\ref{lm:geopoinc}, there exists $C>0$ such that, choosing $M_2,M_3$ small enough, in order that $\omega(\max\{M_2,M_3\})$ is small enough, we have
\begin{equation}\label{de02bis_2}
\int_{\pa F}\abs{\HHH-\overline \HHH}^2\, \dmu \leq C\int_{\pa F}|\nabla \HHH|^2\, \dmu \, ,
\end{equation}
where, as usual, $\overline \HHH$ is the average of $\HHH$ over $\pa F$.

We again split the proof of the theorem into steps.

\smallskip

\noindent \textbf{Step ${\mathbf 1}$} ({Stopping--time})\textbf{.}\\ Let $\overline T\leq T(E_0)$ be the maximal time such that 
\begin{equation}\label{Tprimo_2}
\vol(E_t\Delta E)\leq 2M_1,\quad\|\psi_t\|_{C^{1,\alpha}(\pa E)}\leq 2M_2\quad\text{and}\quad \int_{\pa E_t} |\nabla \HHH_t|^2\, \dmu_t\leq 2M_3\,,
\end{equation}
for all $t\in [0, \overline T)$. Hence, 
\begin{equation}\label{eqcar50005_2}
\|\psi_t\|_{W^{2,6}(\pa E)}\leq \omega(2\max\{M_2,M_3\})\,
\end{equation}
for all $t\in [0, \overline T)$, as in formula~\eqref{eqcar50001_2}.\\
As before, we claim that by taking $M_1,M_2,M_3$ small enough, we have $\overline T=T(E_0)$.

\smallskip
\noindent \textbf{Step ${\mathbf 2}$} ({Estimate of the translational component of the flow})\textbf{.}\\ We want to show that there exists a small constant $\theta>0$ such that
\begin{equation} \label{not a translation_2}
\min_{\eta\in\OO_E} \norma{ \Delta \HHH_t- \langle\eta , \nu_t\rangle }_{L^2(\pa E_t)}\geq \theta\norma{\Delta \HHH_t}_{L^2(\pa E_t)}\qquad\text{for all }t\in [0, \overline T)\,,
\end{equation}
where $\OO_F$ is defined by formula~\eqref{OOeq}.\\
If $M$ is small enough, clearly there exists a constant $C_0=C_0(E,M,\alpha)>0$ such that, for every $i\in\II_E$, we have $\Vert \langle e_i,\nu_t\rangle\Vert_{L^2(\pa E_t)}\geq C_0>0$, holding $\Vert \langle e_i,\nu_E\rangle\Vert_{L^2(\pa E)}>0$. It is then easy to show that the vector $\eta_t\in\OO_E$ realizing such minimum is unique and satisfies
\begin{equation} \label{not a translation 2_2}
\Delta \HHH_t = \langle \eta_t , \nu_t \rangle + g,
\end{equation} 
where $g\in L^2(\pa E_t)$ is chosen as in relation~\eqref{not a translation 2}. Moreover, the inequality 
\begin{equation}\label{eqcar50002_2}
|\eta_t|\leq C \norma{\Delta \HHH_t}_{L^2(\partial E_t)}
\end{equation}
holds, with a constant $C$ depending only on $E$, $M$ and $\alpha$.\\
We now argue by contradiction, assuming $\|g\|_{L^2(\pa E_t)} < \theta \norma{\Delta \HHH_t}_{L^2(\pa E_t)}$.\\
First we recall that $\Delta \HHH_t$ has zero average. Then, setting $\overline \HHH=\fint_{\pa E_t}\HHH\, \dmu_t$,  and recalling relation~\eqref{de02bis_2}, we get 
\begin{align}
\norma{\HHH_t -\overline \HHH_t }_{L^2(\pa E_t)}^2 & \leq C \int_{\pa E_t}  \abs{\nabla \HHH_t}^2\, \dmu_t \nonumber \\
&= -C\int_{\pa E_t} \HHH_t\Delta \HHH_t \,\dmu_t\\
&= -C\int_{\pa E_t}  \Delta \HHH_t  (\HHH_t-\overline \HHH_t )\, \dmu_t \nonumber \\
&\leq C \norma{\HHH_t -\overline \HHH_t }_{L^2(\pa E_t)}\norma{\Delta \HHH_t}_{L^2(\pa E_t)} \,.\label{extraeq_2}
\end{align}
Hence, we conclude
\begin{equation}
\norma{\HHH_t -\overline \HHH_t }_{L^2(\pa E_t)}\leq C\norma{\Delta \HHH_t}_{L^2(\pa E_t)}\,.\label{sfiguz_2}
\end{equation}
Since, there holds 
$$
\int_{\pa E_t}\HHH_t \, \nu_t\, \dmu_t=\int_{\pa E_t}\nu_t\, \dmu_t=0 \,,
$$
by multiplying relation~\eqref{not a translation 2_2} by $\HHH_t-\overline \HHH_t$, integrating over $\pa E_t$, and using inequality~\eqref{sfiguz_2},  we get
\begin{align}
\Bigl \rvert \int_{\partial E_t}  (\HHH_t -\overline \HHH_t)\Delta \HHH_t   \, \dmu_t  \Bigl \lvert &= 
\Bigl \rvert \int_{\partial E_t}  (\HHH_t -\overline \HHH_t)g  \, \dmu_t  \Bigl \rvert \\
&< \theta \norma{ \HHH_t -\overline \HHH_t}_{L^2(\partial E_t)} \norma{ \Delta \HHH_t }_{L^2(\pa E_t)}\\
&\leq C\theta\norma{ \Delta \HHH_t }_{L^2(\pa E_t)}^2 \, .
\end{align}
Recalling now estimate~\eqref{eqcar50002_2}, as $g$ is orthogonal to $\langle \eta_t, \nu_t \rangle$, computing as in the first three lines of formula~\eqref{extraeq_2}, we have
\begin{align}
\norma{ \langle \eta_t, \nu_t \rangle }^2_{L^2(\pa E_t)}&=\int_{\pa E_t}\Delta \HHH_t \langle \eta_t,\nu_t \rangle \, \dmu_t\\
&= - \int_{\partial E_t}   \langle \nabla \HHH_t ,   \nabla \langle \eta_t, \nu_t\rangle \rangle \, \dmu_t\\
&\leq\abs{\eta_t}\norma{\nabla\nu_t}_{L^2(\pa E_t)} \norma{ \nabla \HHH_t }_{L^2(\pa E_t)}\\
&\leq C \norma{\nabla\nu_t}_{L^2(\pa E_t)} \norma{ \Delta \HHH_t}_{L^2(\pa E_t)} \Bigl\vert\int_{\partial E_t} (\HHH_t -\overline \HHH_t)\Delta \HHH_t \, \dmu_t\, \Bigr\vert^{1/2}\\
&\leq C\sqrt{\theta}\norma{\nabla\nu_t}_{L^2(\pa E_t)}\norma{ \Delta \HHH_t}_{L^2(\pa E_t)}^2\\
&\leq C \sqrt{\theta}\norma{\Delta \HHH_t}_{L^2(\pa E_t)}^2 \,,
\end{align}
where in the last inequality we estimated $\norma{\nabla\nu_t}_{L^2(\pa E_t)}$ with $C\norma{\psi_t}_{W^{2,6}(\pa E_t)}$ and we used inequality~\eqref{eqcar50005_2}.\\
If then $\theta>0$ is chosen so small that $C\sqrt{\theta}+\theta^2 <1$ in the last inequality, then we have a contradiction with equality~\eqref{not a translation 2_2} and the fact that $\|g\|_{L^2(\pa E_t)}<\theta \norma{\Delta \HHH_t}_{L^2(\pa E_t)}$, as they imply (by $L^2$--orthogonality) that
$$
\|\langle \eta_t,  \nu_t\rangle\|^2_{L^2(\pa E_t)}>(1-\theta^2)\norma{\Delta \HHH_t}_{L^2(\pa E_t)}^2\,.
$$
All this argument shows that for such a choice of $\theta$ condition~\eqref{not a translation_2} holds.\\
Then, we can conclude as in Step~$2$ of Theorem~\ref{existence}, by replacing the $W^{2,3}$--norm on $\pa E$ with the $W^{2,6}$--norm on the same boundary.
\smallskip

\noindent \textbf{Step ${\mathbf 3}$} ({The stopping time $\overline T$ is equal to the maximal time $T(E_0)$})\textbf{.}\\ We show now that, by taking $M_1,M_2,M_3$ smaller if needed, we have $\overline T=T(E_0)$.\\
By the previous point and the suitable choice of $M_2,M_3$ made in its final part, formula~\eqref{not a translation_2} holds, hence we have
$$
\Pi_{E_t}(\Delta \HHH_t)\geq \sigma_\theta \norma{\Delta \HHH_t}_{H^1(\pa E_t)}^2\qquad \text{ for all $t\in [0, \overline T)$.}
$$
In turn, by Lemma~\ref{calculations2} and~\ref{nasty} we may estimate 
\begin{align}
\frac{d}{dt}\frac{1}{2}\int_{\partial E_t}{ \abs{\nabla \HHH_t}^2\, \dmu_t}
 \leq &  - \sigma_{\theta} \norma{ \Delta \HHH_t}^2_{H^1(\pa E_t)}  + \int_{\partial E_t} \abs{B} \abs{\nabla \HHH_t }^2 \abs{\Delta \HHH_t} \, \dmu_t \\
  \leq & \,  - \sigma_{\theta} \norma{\Delta \HHH_t}^2_{H^1(\pa E_t)} \\& \, + C\norma{ \nabla (\Delta \HHH_t)}_{L^2(\pa E_t)}^2 \norma{\nabla \HHH_t}_{L^2(\pa E_t)} (1+ \norma{\HHH_t}_{L^6(\pa E_t)}^3) \\
 {\leq} & \,  - \sigma_{\theta} \norma{\Delta \HHH_t}^2_{H^1(\pa E_t)} \\
&\,  + C \sqrt{M_3}\norma{ \nabla (\Delta \HHH_t)}_{L^2(\pa E_t)}^2  (1+ \norma{\HHH_t}_{L^6(\pa E_t)}^3) \\
{\leq} & \,   - \sigma_{\theta} \norma{\Delta \HHH_t}^2_{H^1(\pa E_t)}\\
&\,+ C \sqrt{M_3}\norma{\Delta \HHH_t}_{H^1(\pa E_t)}^2(1+C\omega(\max\{M_2,M_3\}))\label{eqcar6000_2}
\end{align}
for every $t \leq \overline T$, where in the last step we used relations~\eqref{Tprimo_2} and~\eqref{eqcar50005_2}.\\
Noticing that from formulas~\eqref{extraeq_2} and~\eqref{sfiguz_2} it follows
\begin{equation}\label{sfiguz2_2}
\norma{ \nabla \HHH_t}_{L^2(\pa E_t)}\leq C \norma{\Delta \HHH_t}_{L^2(\pa E_t)}\leq C \norma{\Delta \HHH_t}_{H^1(\pa E_t)}\,,
\end{equation} 
keeping fixed $M_2$ and choosing a suitably small $M_3$, we conclude
$$
\frac{d}{dt}\int_{\partial E_t} \abs{\nabla \HHH_t}^2\, \dmu_t\leq - \frac{\sigma_{\theta}}2 \norma{\Delta \HHH_t}^2_{H^1(\pa E_t)}\leq - c_0 \norma{\nabla \HHH_t}^2_{L^2(\pa E_t)}\, .
$$
This argument clearly says that the quantity $\int_{\pa E_t} |\nabla \HHH_t|^2\, \dmu_t$ is nonincreasing in time, hence, if $M_2,M_3$ are small enough, the inequality $\int_{\pa E_t } |\nabla \HHH_t|^2\, \dmu_t\leq M_3$ is preserved during the flow. As before, if we assume by contradiction that $\overline T< T(E_0)$, then it must happen that $\vol(E_{\overline{T}}\Delta E)=2M_1$ or $\|\psi_{\overline T}\|_{C^{1,\alpha}(\pa E)}=2M_2$.\\
Before showing that this is not possible, we prove that actually the quantity $\int_{\pa E_t} |\nabla \HHH_t|^2\, \dmu_t$ decreases (non increases) exponentially. Indeed, integrating the differential inequality above and recalling proprieties~\eqref{initial_2}, we obtain
\begin{equation}\label{exp_2}
\int_{\partial E_t} \abs{\nabla \HHH_t}^2\, \dmu_t \leq e^{-c_0 t}\int_{\partial E_0} \abs{\nabla \HHH_{\pa E_0}}^2\, \dmu_0 \leq M_3  e^{-c_0 t} \leq M_3
\end{equation}
for every $t \leq \overline T$.
Then, we assume that $\vol(E_{\overline{T}}\Delta E)=2M_1$ or $\|\psi_{\overline T}\|_{C^{1,\alpha}(\pa E_{\overline{T}})}=2M_2$. Recalling formula~\eqref{D(F)0} and denoting by $X_t$ the velocity field of the flow (see Definition~\ref{def:smoothflow} and the subsequent discussion), we compute
\begin{align}
\frac{d}{dt}D(E_t)&=\frac{d}{dt}\int_{E_t} d_E\, dx= \int_{E_t}\Div(d_E X_t)\, dx= \int_{\pa E_t}d_E\langle X_t, \nu_t\rangle\, d\mu_t\\
&=\int_{\pa E_t}d_E\, \Delta \HHH_t\, \dmu_t = \int_{\pa E_t}\langle \nabla d_E, \nabla \HHH_t \rangle \,\dmu_t\\
&\leq C \norma{\nabla \HHH_t}_{L^2(\pa E_t)}\leq C\sqrt{M_3} \mathrm{e}^{-c_0 t/2}\,,
\end{align}
for all $t \leq \overline T$, where the last inequality clearly follows from inequality~\eqref{exp_2}.\\
\\
By integrating this differential inequality over $[0, \overline T)$ and recalling estimate~\eqref{D(F)}, we get
\begin{align}
\vol(E_{\overline T}\Delta E) & \leq C\|\psi_{\overline T}\|_{L^2(\pa E_{\overline{T}})}\leq C\sqrt{D(E_{\overline T})}\nonumber  \\& \leq C\sqrt{D(E_0)+C\sqrt{M_3}}\leq C\sqrt[4]{M_3}\,,\label{step33_2}
\end{align}
as $D(E_0)\leq M_1$, provided that $M_1,M_3$ are chosen suitably small. This shows that 
$\vol(E_{\overline{T}}\Delta E)=2M_1$ cannot happen if we chose $C\sqrt[4]{M_3}\leq M_1$.\\
By arguing as in Lemma~\ref{w32conv} (keeping into account inequality~\eqref{Tprimo_2} and 
formula~\eqref{eqcar50001_2}), we can see that the $L^2$--estimate~\eqref{step33_2} implies a $W^{2,6}$--bound on $\psi_{\overline T}$ with a constant going to zero, keeping fixed $M_2$, as $\int_{\pa E_t} |\nabla \HHH_{\overline{T}}|^2\, d\mu_t \to0$, hence, by estimate~\eqref{exp_2}, as $M_3\to0$. Then, by Sobolev embeddings, the same holds for $\|\psi_{\overline T}\|_{C^{1,\alpha}(\pa E_{\overline{T}})}$, hence, if $M_3$ is small enough, we have a contradiction with $\|\psi_{\overline T}\|_{C^{1,\alpha}(\pa E_{\overline{T}})}=2M_2$.\\
Thus, $\overline T=T(E_0)$ and 
\begin{equation}\label{finaldecay_2}
\vol(E_t\Delta E) \leq C\sqrt[4]{M_3}\,,\quad \|\psi_t\|_{C^{1,\alpha}(\pa E_t)}\leq 2M_2\,, \quad \int_{\pa E_t} |\nabla \HHH_{t}|^2\, \dmu_t \leq  M_3e^{-c_0 t}\,,
\end{equation}
for every $t\in[0, T(E_0))$, by choosing $M_1,M_2,M_3$ small enough.

\smallskip

\noindent \textbf{Step ${\mathbf 4}$} ({Long time existence})\textbf{.}\\ We now show that, by taking $M_1,M_2,M_3$ smaller if needed, we have $T(E_0)=+\infty$, that is, the flow exists for all times.\\
We assume by contradiction that $T(E_0)<+\infty$ and we notice that, by computation~\eqref{eqcar6000_2} and the fact that $\overline T=T(E_0)$, we have
$$
\frac{d}{dt}\int_{\pa E_t} |\nabla\HHH_t|^2\, d\mu_t +\sigma_\theta\|\Delta\HHH_t\|_{H^1(\pa E_t)}^2\leq 0
$$
for all $t\in [0,T(E_0))$. Integrating this differential inequality over the interval $\left[T(E_0)-{T}/2,T(E_0)-{T}/4\right]$, where $T$ is given by Proposition~\ref{th:EMS1}, as we said at the beginning of the proof, we obtain
\begin{align}
\sigma_{{\theta}}\int_{T(E_0)-T/2}^{T(E_0)-T/4}\|\Delta\HHH_t\|_{H^1(\pa E_t)}^2\, dt\leq&\, 
\int_{\pa E_{T(E_0)-\frac{T}2}} |\nabla\HHH |^2\, d\mu_{T(E_0)-\frac{T}2}\\
&\,- \int_{\pa E_{T(E_0)-\frac{T}4}} |\nabla\HHH|^2\, d\mu_{T(E_0)-\frac{T}4}\\
&\leq M_3\,,
\end{align}
where the last inequality follows from estimate~\eqref{finaldecay_2}.
Thus, by the mean value theorem there exists $\overline t\in \left(T(E_0)-{T}/2,T(E_0)-{T}/4 \right)$ such that
$$
\|\Delta\HHH_{\overline{t}}\|_{H^1(\pa E_{\overline{t}})}^2\leq \frac{4M_3}{T\sigma_\theta}\,.
$$
Then, by Lemma~\ref{laplacian}
\begin{align}
\norma{\nabla^2 \HHH_{\overline{t}}}_{L^2(\pa E_{\overline{t}})}^2
\leq&\,C\norma{\Delta\HHH_{\overline{t}}}^2_{L^2(\pa E_{\overline{t}})}(1+\norma{\HHH_{\overline{t}}}^4_{L^4(\pa E_{\overline{t}})})\\
\leq&\,CM_3(1+\omega^4(2\max\{M_2,M_3\}))\,
\end{align}
where in the last inequality we also used the curvature bounds provided by formula~\eqref{eqcar50005_2}. In turn, for $p\in\R$ large enough, we get
\begin{equation}
[\HHH_{\overline{t}}]^2_{C^{0,\alpha}(\pa E_{\overline{t}})}\leq C\norma{\nabla \HHH_{\overline{t}}}^2_{L^p(\pa E_{\overline{t}})}\leq C \norma{\nabla \HHH_{\overline{t}}}^2_{H^1(\pa E_{\overline{t}})}\leq CM_3(M_2,M_3)\,,
\end{equation}
where $[ \cdot ]_{C^{0, \alpha}(\pa E_{\overline{t}})}$ stands for the $\alpha$--H\"older seminorm on $\pa E_{\overline{t}}$ and in the last inequality we used the previous estimate.\\
Then, arguing as in Step~$4$ of Theorem~\ref{existence}, it is possible to show that flow $E_t$ exists beyond $T(E_0)$, which is a contradiction.

\smallskip

\noindent \textbf{Step ${\mathbf 5}$} ({Convergence, up to subsequences, to a translate of $F$})\textbf{.}\\ Let $t_n\to +\infty$, then, by estimates~\eqref{finaldecay_2}, the sets $E_{t_n}$ satisfy the hypotheses of Lemma~\ref{w32conv}, hence, up to a (not relabeled) subsequence we have that there exists a critical set $E'\in \mathfrak{C}^{1,\alpha}_M(E)$ such that $E_{t_n}\to E'$ in $W^{3,2}$. Due to formulas~\eqref{eqcar50001_2} (and estimation~\eqref{de05}, that also holds in this case) we have $\|\psi_{E'}\|_{W^{2,6}(\pa E)}\leq \delta$ and $E'=E+\eta$ for some (small) $\eta\in \R^3$.

\smallskip

\noindent \textbf{Step $\mathbf 6$} (Exponential convergence of the full sequence)\textbf{.}\\
Consider now
$$
D_\eta(F)=\int_{F\Delta (E+\eta)}\mathrm{dist\,}(x, \pa E+\eta)\, dx\,.
$$
The very same calculations performed in Step~$3$ show that 
\begin{equation}\label{step6_2}
\Bigl\vert\frac{d}{dt} D_\eta(E_t)\Bigr\vert \leq C \|\nabla\HHH_t\|_{L^2(\pa E_t)}\leq C\sqrt{M_3} e^{-{c_0}t/2}
\end{equation}
for all $t\geq0$, moreover, by means of the previous step, it follows $\lim_{t\to +\infty} D_\eta(E_t)=0$. In turn, by integrating this differential inequality and writing 
$$
\pa E_t=\{y+\psi_{\eta, t}(y)\nu_{E+\eta}(y): y\in \pa E+\eta\}\,,
$$
we get
\begin{equation}\label{step61_2}
\|\psi_{\eta, t}\|_{L^2(\pa E+\eta)}^2\leq C D_\eta(E_t)\leq\int_t^{+\infty}C\sqrt{M_3} e^{-c_0s/2}\, ds\leq  C\sqrt{M_3} e^{-c_0t/2}\,.
\end{equation}
Since by the previous steps $\norma{\psi_{\eta, t}}_{W^{2,6}(\pa E+\eta)}$ is bounded, we infer from this inequality, Sobolev embeddings and standard interpolation estimates that also $\norma{\psi_{\eta, t}}_{C^{1,\beta}(\pa E+\eta)}$ decays exponentially for $\beta\in (0,2/3)$.\\
Denoting the average of $\HHH_t$ on $\pa E_t$ by $\overline\HHH_t$, as by estimates~\eqref{extraeq_2} and~\eqref{exp_2}, we have that 
\begin{align}
\|\HHH_t(\cdot + \psi_{\eta, t}(\cdot)\nu_{E+\eta}(\cdot))&-\overline \HHH_t\|_{H^{1}(\pa E+\eta)}\\
&\leq C\|\HHH_t-\overline \HHH_t\|_{H^{1}(\pa E_t)}\|\psi_{\eta, t}\|_{C^1(\pa E+\eta)}\\
& \leq C\|\nabla \HHH_t\|_{L^2(\pa E_t)}\\
&\leq C\sqrt{M_3} e^{-{c_0t}/2}\,.
\end{align}
It follows that
\begin{equation}\label{quasiHdecay_2}
\|[\HHH_t(\cdot + \psi_{\eta, t}(\cdot)\nu_{E+\eta}(\cdot))-\overline \HHH_t]
-[\HHH_{\pa E+\eta}-\overline \HHH_{\pa E+\eta}]\|_{H^{1}(\pa E+\eta)}\to0 
\end{equation}
exponentially fast, as $t\to+\infty$, where $\overline \HHH_{\pa E+\eta}$ stands for the average of $\HHH_{\pa E+\eta}$ on $\pa E+\eta$.\\
Since $E_t\to E+\eta$ (up to a subsequence) in $W^{3,2}$, it is easy to check that 
$\vert\overline \HHH_{t}-\overline \HHH_{\pa E+\eta}\vert\leq C\|\psi_{\eta, t}\|_{C^1(\pa E+\eta)}$ which decays exponentially, therefore, thanks to 
limit~\eqref{quasiHdecay_2}, we have
$$
\|\HHH_t(\cdot + \psi_{\eta, t}(\cdot)\nu_{E+\eta}(\cdot)) -\HHH_{\pa E+\eta}\|_{H^{1}(\pa E+\eta)}\to0
$$
exponentially fast.\\
The conclusion then follows arguing as at the end of Step~$4$ of Theorem~\ref{existence}.
\end{proof}

\section{The classification of the stable critical sets}\label{classification}

In this final section, we are going to discuss the classes of smooth sets to which Theorems~\ref{existence} and~\ref{existence2} can be applied, hence, ``dynamically exponentially stable'' for the modified Mullins--Sekerka and surface diffusion flow. Much is known for the stable and strictly stable  critical sets $E\subseteq\T^n$ (or of $\R^n$) of the Area functional (hence, for the {\em unmodified} Mullins--Sekerka and surface diffusion flows), characterized by having constant mean curvature $\HHH$ and satisfying respectively
$$
\Pi_E(\varphi)=\int_{\pa E}\bigl(|\nabla  \varphi|^2- \varphi^2|B|^2\bigr)\, d\mu\geq 0
$$
for every  $\varphi \in \Htilde^1(\pa E)=\bigl\{\varphi \in H^1(\partial E)\, : \, \int_{\partial E} \varphi \,\dmu = 0 \bigr\}$ and
$$
\Pi_E(\varphi)=\int_{\pa E}\bigl(|\nabla  \varphi|^2- \varphi^2|B|^2\bigr)\, d\mu>0
$$
for every  $\varphi \in \Tort(\pa E)=\bigl \{\varphi \in H^1(\pa E) \, : \,\int_{\pa E} \varphi \, \dmu = 0 \,\,\text{ and }\, \int_{\pa E} \varphi \nu_E \, \dmu = 0\bigr \}$, according to Definition~\ref{str stab}. Instead, considerably less can be said for the ``nonlocal case'', relative to the modified (with $\gamma>0$) Mullins--Sekerka flow, for which in the above formulas we need to consider analogously the positivity properties of form 
\begin{align}
\Pi_E(\varphi)=&\, \int_{\pa E}\bigl(|\nabla  \varphi|^2- \varphi^2|B|^2\bigr)\, d\mu+8\gamma \int_{\pa E}  \int_{\pa E}G(x,y)\varphi(x)\varphi(y)\,d\mu(x)\,d\mu(y)\nonumber\\
&\,+4\gamma\int_{\pa E}\pa_{\nu_E} v_E \varphi ^2\,d\mu\,,
\end{align}
on the critical sets $E$ (in $\T^n$ or in domains of $\R^n$, with ``Neumann conditions'' at the boundary) satisfying $\HHH+4\gamma v_E=0$ on $\pa E$.

Concentrating for a while on the Area functional, we observe that it is easy to see that (by a dilation/contraction argument) any strictly stable smooth critical set must be connected, but actually, being the normal velocity of the surface diffusion flow at every point defined by the {\em local} quantity $\Delta\HHH$, it follows that Theorem~\ref{existence2} can be applied also to finite unions of boundaries of strictly stable critical sets (see~\cite{FusJulMor18} and the Figure~\ref{figurauno} below). Moreover, by the very definition above, if $\pa E$ in $\T^n$ is composed by flat pieces, hence its second fundamental form $B$ is identically zero, the set $E$ is critical and stable and with a little effort, actually strictly stable. It is a little more difficult to show that any ball in any dimension $n\in\N$ is strictly stable (it is obviously a critical set), which is connected to the study of the eigenvalues of the Laplacian on the sphere $\SSS^{n-1}$, see~\cite[Theorem~5.4.1]{groemer}, for instance. The same then holds for all the ``cylinders'' $\R^k\times\SSS^{n-k-1}\subseteq\R^n$, bounding $E\subseteq\T^n$ after taking their quotient by the same equivalence relation defining $\T^n$, determined by the standard integer lattice of $\R^n$.

Notice that if $n=2$, it follows that the only bounded strictly stable critical sets of the (in this case) {\em Length} functional in the plane are the disks and in $\T^2$ they are the disks and the ``strips'' with straight borders. This is clearly in agreement with the  two--dimensional convergence/stability result of Elliott and Garcke~\cite{EllGar}, mentioned at the end of Section~\ref{msfsdf}.

In the three--dimensional case, a first classification of the smooth stable ``periodic'' critical sets for the volume--constrained Area functional, was given by Ros in~\cite{Ros}, where it is shown that in the flat torus $\T^3$,  they are {\em balls}, {\em $2$--tori}, {\em gyroids} or {\em lamellae}.

\medskip

\begin{figure}[H]
\begin{center}
\includegraphics[scale=0.6]{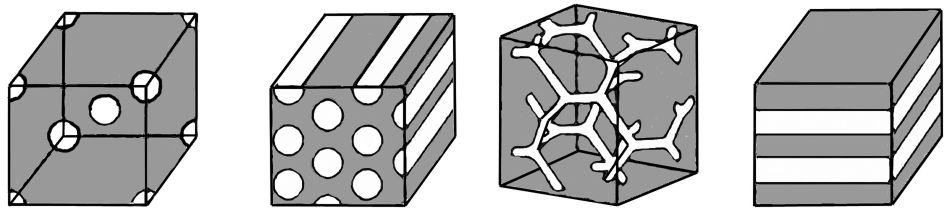}
\caption{From left to right: balls, $2$--tori, gyroids and lamellae.}
\label{figurauno}
\end{center}
\end{figure}

\noindent Notice that, despite their name, the {\em lamellae} are (after taking the quotient) parallel planar $2$--tori and the {\em $2$--tori} are quotients of circular cylinders in $\R^3$. As we said, with the balls, these surfaces are actually strictly stable, while in~\cite{Gr,GrWo,Ross} the authors established the strict stability of gyroids only in some cases. To give an example, we refer to~\cite{GrWo} where Grosse--Brauckmann and Wohlgemuth showed the strictly stability of the gyroids that are fixed with respect to translations. We remind that the gyroids, that were discovered by the crystallographer Schoen in the $1970$ (see~\cite{Schoen}), are the unique non--trivial embedded members of the family of the Schwarz P surfaces and then conjugate to the D surfaces, that are the simplest and most well--known triply--periodic minimal surfaces (see~\cite{Ross}). 

For the case $\gamma >0$, that is, for the nonlocal Area functional, a complete classification of the stable periodic structures is instead, up to now, still missing. 

It is worth to mention what is shown in~\cite{AcFuMo} about the minimizers of $J$. The authors proved that if a horizontal strip $L$ is the unique global minimizer of the Area functional in $\T^n$, then it is also the unique global minimizer of the nonlocal Area functional under a volume constraint, provided that $\gamma>0$ is sufficiently small. Precisely, the following result holds.

\begin{thm}\label{globalminofJ}
Assume that $L\subseteq\T^n$ is the unique, up to rigid motions, global minimizer of the Area functional, under a volume constraint. Then the same set is also the unique global minimizer of the nonlocal Area functional~\eqref{area}, provided that $\gamma>0$ is sufficiently small.
\end{thm}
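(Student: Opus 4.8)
The plan is to exploit the fact that the Area functional term dominates and is ``rigid'' at its minimizer $L$, while the nonlocal term is a bounded perturbation of order $\gamma$. First I would fix notation: let $m=\vol(L)$ and normalize so that we compare sets of volume $m$. Denote by $\A_0=\A(\partial L)$ the minimal area, and recall that by hypothesis $\A(\partial F)\ge\A_0$ for all $F\subseteq\T^n$ with $\vol(F)=m$, with equality if and only if $F$ is a rigid motion of $L$. The key quantitative input I would need is a \emph{stability} (or quantitative uniqueness) estimate for the isoperimetric-type problem solved by $L$: there exist $c>0$ and a modulus such that
\begin{equation}
\A(\partial F)\ge \A_0 + c\,[\beta(F,L)]^2,
\end{equation}
where $\beta(F,L)=\min_{R}\vol(F\triangle R(L))$ is the minimal symmetric-difference distance over rigid motions $R$ (this is the analogue of the ``distance'' $\alpha$ from Theorem~\ref{W2pMin}, but global rather than local; for the strip such a global quantitative estimate is available and I would invoke it). If only a qualitative uniqueness is available, one argues by contradiction/compactness instead, which I sketch below as the fallback.

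Second, I would control the nonlocal term uniformly. Since $\|v_F\|_{W^{2,p}(\T^n)}\le C(n,p)$ for every $F$ of the prescribed volume (by the Remark after~\eqref{potential}), in particular $\int_{\T^n}|\nabla v_F|^2\,dx\le C_0$ for an absolute constant $C_0=C_0(n)$, and moreover the map $F\mapsto\int_{\T^n}|\nabla v_F|^2\,dx$ is continuous with respect to $L^1$-convergence of sets (from $v_F\to v_{F'}$ in $H^1$ when $\chi_F\to\chi_{F'}$ in $L^1$, via the elliptic equation~\eqref{potential}). Write $J(F)=\A(\partial F)+\gamma N(F)$ with $N(F)=\int_{\T^n}|\nabla v_F|^2\,dx$. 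Then for any competitor $F$ and any rigid motion $R$,
\begin{equation}
J(F)-J(L)=\bigl[\A(\partial F)-\A_0\bigr]+\gamma\bigl[N(F)-N(L)\bigr]\ge c\,[\beta(F,L)]^2-2\gamma C_0.
\end{equation}
This already shows $J(F)\ge J(L)-2\gamma C_0$, but I need the \emph{strict} inequality unless $F$ is a translate of $L$, so a crude bound is not enough near $L$; the argument must be split into a ``far'' regime and a ``near'' regime.

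Third — the near regime, which is where the real work is. For $\beta(F,L)\ge\delta_0$ (fixed small), the quantitative isoperimetric bound gives $\A(\partial F)-\A_0\ge c\delta_0^2$, so choosing $\gamma<c\delta_0^2/(2C_0)$ forces $J(F)>J(L)$. For $\beta(F,L)<\delta_0$, after applying a rigid motion I may assume $\vol(F\triangle L)<\delta_0$, so $F$ is $L^1$-close to $L$; here I would pass to the $W^{2,p}$-local picture. The delicate point is that closeness in $L^1$ does not by itself give closeness in $W^{2,p}$ of $\partial F$, so one needs an additional regularity/compactness step: either restrict attention (as is standard, since global minimizers are smooth and the problem is a minimization) to using the regularity theory for $\Lambda$-minimizers of perimeter to upgrade $L^1$-closeness of a minimizing sequence to $C^{1,\alpha}$- and then $W^{2,p}$-closeness of the boundaries, or — cleaner for a survey-style proof — argue by contradiction: suppose there are $\gamma_j\downarrow 0$ and sets $F_j$ with $\vol(F_j)=m$, $J(F_j)\le J(L)$, and $F_j$ not a translate of $L$; by the uniform area bound and compactness in $BV$, up to subsequences and rigid motions $\chi_{F_j}\to\chi_{F_\infty}$ in $L^1$ with $\A(\partial F_\infty)\le\liminf\A(\partial F_j)\le\liminf\bigl(J(L)-\gamma_j N(F_j)\bigr)=\A_0$ (using $0\le N\le C_0$ and $\gamma_j\to0$), hence $F_\infty$ is a translate of $L$ by the uniqueness hypothesis; then standard regularity for (almost-)perimeter-minimizers shows $F_j\to F_\infty$ in $C^{1,\alpha}$ and in fact in $W^{2,p}$; finally, since $L$ is a strictly stable critical set of $J$ for every $\gamma\ge0$ (its boundary being flat, $B\equiv 0$, so $\Pi_L(\varphi)=\int|\nabla\varphi|^2+8\gamma\int\!\int G\varphi\varphi+4\gamma\int\partial_\nu v_L\varphi^2$, and $\partial_\nu v_L$ and the Green term contribute nonnegatively / are controlled — one checks strict positivity on $T^\perp(\partial L)$ directly, as indicated in Section~\ref{classification}), Theorem~\ref{W2pMin} applied with $F=F_j$ for $j$ large yields $J(F_j)>J(L)$ unless $F_j$ is a translate of $L$, a contradiction. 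Note the $\gamma$ in Theorem~\ref{W2pMin} is fixed there, but the constants $\delta,C$ depend on $\gamma$ continuously and the flat strip is strictly stable uniformly for $\gamma$ in a neighborhood of $0$, so the threshold can be taken uniform for small $\gamma$.

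\textbf{Main obstacle.} The genuine difficulty is the passage from $L^1$-closeness to $W^{2,p}$-closeness of the boundaries in the near regime: one cannot directly invoke Theorem~\ref{W2pMin} without first knowing $\partial F$ is a $W^{2,p}$-normal graph over $\partial L$ with small norm. This forces either a quantitative isoperimetric inequality for the strip together with $\varepsilon$-regularity for perimeter-almost-minimizers (to control the $J$-minimizing competitors), or the contradiction/compactness scheme above which hides the regularity input inside ``standard regularity theory for almost-minimizers of perimeter.'' Everything else — the uniform bound on the nonlocal term, its $L^1$-continuity, and the splitting into far/near regimes — is routine. I would present the contradiction argument as the main line, since it keeps the exposition in the spirit of the rest of the paper (compactness plus the already-established local minimality Theorem~\ref{W2pMin}).
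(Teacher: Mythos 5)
The paper states this theorem without proof, attributing it to~\cite{AcFuMo}; your ``fallback'' contradiction--and--compactness argument is precisely the scheme of the cited proof, and you correctly identify the crucial step, namely upgrading $L^1$--closeness of the minimizers of $J_{\gamma_j}$ to $W^{2,p}$--closeness via the regularity theory for $\Lambda$--minimizers of perimeter, so that Theorem~\ref{W2pMin} can be applied with constants uniform in small $\gamma$.

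Two remarks. First, the ``primary'' route you sketch, via a global quantitative isoperimetric inequality for the lamella, is not known in the generality required here, so the contradiction argument is the actual argument rather than a fallback and you should lead with it. Second, the aside that the term $4\gamma\int_{\pa L}\partial_{\nu_L}v_L\,\varphi^2\,d\mu$ ``contributes nonnegatively'' is false: for the lamella $v_L$ depends on the single variable transversal to the strips and is concave inside $L$ and convex outside, so $\partial_{\nu_L}v_L<0$ on $\pa L$ and this term is strictly negative. Strict stability of $L$ for $J$ with $\gamma$ small nevertheless holds, but by a perturbation argument rather than termwise positivity: for $\varphi\in T^\perp(\pa L)$ the two conditions $\int_{\pa L}\varphi\,d\mu=0$ and $\int_{\pa L}\varphi\,\nu_L\,d\mu=0$ force $\varphi$ to have zero mean on each of the two flat components of $\pa L$, hence by Poincar\'e's inequality $\int_{\pa L}|\nabla\varphi|^2\,d\mu\geq\lambda_1\|\varphi\|^2_{L^2(\pa L)}$ for a fixed $\lambda_1>0$; since the Green term is nonnegative and $\|\partial_{\nu_L}v_L\|_{L^\infty(\pa L)}$ is bounded, one obtains $\Pi_L(\varphi)\geq\bigl(\lambda_1-4\gamma\|\partial_{\nu_L}v_L\|_{L^\infty(\pa L)}\bigr)\|\varphi\|^2_{L^2(\pa L)}>0$ for $\gamma$ small, uniformly, and this is also exactly what yields the uniformity in $\gamma_j\to 0$ of the constant $m_0$ from Step~1 of the proof of Theorem~\ref{W2pMin} that your argument relies on.
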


This theorem then allows to conclude that the global minimizers are lamellae in several cases in low dimensions (two and three), for suitable parameters $\gamma$ and volume constraint. Moreover, in~\cite{AcFuMo}, it is also shown that lamellae with multiple strips are local minimizers of the functional $J$, if the number of strips is large enough.

Finally, we conclude by citing the papers~\cite{ChSt,CicaLeo,Cristof,MorStern,RenWei6,RenWei5,RenWei4,RenWei3,RenWei2,RenWei1} with related and partial results on the classification problem which is at the moment fully open.

\section*{Acknowledgments }
We wish to thank Nicola Fusco for many discussions about his work on the topic and several suggestions. We also thank the anonymous referee for the careful reading and several suggestions.  

\section*{Conflict of interest}
The authors  declare no conflict of interest.

\providecommand{\bysame}{\leavevmode\hbox to3em{\hrulefill}\thinspace}

\end{document}